%% file: main.tex
\documentclass[11pt]{amsart}
\input{preamble}

\input{supp-labels}

\begin{document}

\title[Facets of the doubly stochastic X-ray polytope]{Facets of the X-ray polytope of doubly stochastic matrices}

\author{Prashant Chaudhary}
\address{Independent Researcher, San Francisco Bay Area, CA, USA}
\email{prashant@prashantchaudhary.com}
\urladdr{https://orcid.org/0009-0001-1980-109X}

\author{Kapil Khandelwal}
\address{Department of Civil and Environmental Engineering and Earth Sciences, University of Notre Dame, Notre Dame, IN, USA}
\urladdr{https://orcid.org/0000-0002-5748-6019}

\date{October 2026}

\begin{abstract}
\input{b0_abstract}
\end{abstract}

\subjclass[2020]{Primary 52B12; Secondary 15B51, 52B05, 05A17, 14T15, 90C57, 60E05}
\keywords{doubly stochastic matrices, Birkhoff polytope, X-rays, discrete tomography, facets, tropical fan, total support, sums of dependent uniform random variables}

\maketitle

\input{b1_intro}
\input{b2_prelim}
\input{b3_criterion}
\input{b4_constructions}
\input{b5_steps}
\input{b6_census}
\input{b7_conclusion}

\appendix
\input{bA_computation}

\section*{Acknowledgments}
Part of the computation was carried out on the computing resources of the Center for Research Computing at the University of Notre Dame.

\emph{Use of AI.} During the preparation of this work the authors used Claude (Anthropic) Opus and ChatGPT (OpenAI) Sol in order to check proofs, write and verify the computation code, search the literature, and help with drafting the text. After using these tools, the authors reviewed and edited the content as needed. The authors assume responsibility for all content.

\bibliographystyle{amsplain-names}
\bibliography{refs}

\end{document}

%% file: preamble.tex
\usepackage[margin=1.1in]{geometry}
\usepackage{amsmath,amssymb,amsthm,mathtools}
\usepackage{enumitem}
\usepackage{booktabs}
\usepackage[hidelinks]{hyperref}
\usepackage[capitalize,nameinlink]{cleveref}

\newtheorem{theorem}{Theorem}[section]
\usepackage{aliascnt}
\makeatletter
\newcommand\sharedthm[2]{\newaliascnt{#1}{theorem}\newtheorem{#1}[#1]{#2}\aliascntresetthe{#1}\@addtoreset{#1}{section}}
\makeatother
\sharedthm{proposition}{Proposition}
\sharedthm{lemma}{Lemma}
\sharedthm{corollary}{Corollary}
\sharedthm{conjecture}{Conjecture}
\theoremstyle{definition}
\sharedthm{definition}{Definition}
\sharedthm{example}{Example}
\sharedthm{problem}{Problem}
\theoremstyle{remark}
\sharedthm{remark}{Remark}

\crefname{conjecture}{Conjecture}{Conjectures}
\crefname{problem}{Problem}{Problems}

\makeatletter\let\@writetocindents\relax\makeatother

\input{macros}

%% file: macros.tex
\newcommand{\R}{\mathbb{R}}
\newcommand{\Z}{\mathbb{Z}}

\newcommand{\cD}{\mathcal{D}}

\newcommand{\cE}{\mathcal{E}}
\newcommand{\Ph}{\widehat{P}}
\newcommand{\DU}{\cD_{2}^{U}}

\newcommand{\law}{\operatorname{law}}

\newcommand{\spn}{\operatorname{span}}
\newcommand{\one}{\mathbf{1}}
\newcommand{\ind}[1]{\mathbf{1}\{#1\}}
\newcommand{\Es}{\cE^{*}}
\newif\ifanon

\newcommand{\VWEXACTK}{12}

%% file: b0_abstract.tex
We study the polytope of anti-diagonal X-rays of $k\times k$ doubly stochastic matrices, the image of the Birkhoff polytope under the map that sums the entries along each anti-diagonal. Its facets other than the coordinate facets are the minimal joint tail bounds for lattice laws of a sum of two dependent uniform random variables. We give an exact facet criterion in terms of the allowed edges of a tight graph. For rays of the underlying tropical fan, total support of the tight graph suffices for a facet but is not necessary, as an example with $k=19$ shows. Up to affine functions and positive scaling, the potentials that define a facet form a polytope whose vertices are its rays. Facets propagate by padding and by grid refinement under explicit hypotheses. We give infinite families of facets from no step potential, facets of quadratic height, superpolynomially many facets, and a classification of three-step potentials. Total support of a step potential is characterized by a Hall-type condition on block starts; with computer assistance, total support decides facets of step potentials for $k\le26$. The facet counts up to $k=12$ are found directly for $k\le3$ and, for $k\ge4$, come from an enumeration by a provably complete algorithm; an independent exact recomputation reproduces its ray lists for $5\le k\le12$.

%% file: b1_intro.tex
\section{Introduction}\label{sec:intro}

Add up the entries of a doubly stochastic matrix along each anti-diagonal. Which vectors can come out? For $3\times3$ matrices, with rows and columns numbered from $0$, one of the constraints fits on a line: every doubly stochastic $Z$ satisfies
\[
Z_{12}+Z_{21}+Z_{22}\le1+Z_{00}.
\]
Each entry on the left can be as large as one, yet their sum exceeds one only when the top-left entry is positive. This inequality defines a facet of the polytope of anti-diagonal sums. For $k\ge3$ the Birkhoff polytope itself has only $k^2$ facets, one for each entry; its image under this map already has 7,041 facets at $k=12$, and their number grows faster than any polynomial in $k$. This paper is about those facets.

Let $\Ph(k)$ be the set of \emph{anti-diagonal X-rays} of $k\times k$ doubly stochastic matrices: the vectors $x=(x(0),\dots,x(2k-2))$ with $x(t)=\sum_{i+j=t}Z_{ij}$ for a doubly stochastic matrix $Z$. It is the image of the Birkhoff polytope under a linear map, a polytope of dimension $2k-3$ whose vertices are X-rays of permutation matrices.

X-rays of permutations are classical objects of discrete tomography \cite{BebeacuaMansourPostnikovSeverini2005,BrualdiFritscher2014,Nordh2017}, and deciding whether a vector is the X-ray of a permutation is NP-complete \cite{BrunettiDelLungoGritzmannDeVries2008}. The doubly stochastic relaxation was studied recently by Li, Liu and Yao \cite{LiLiuYao2026}, in the equivalent convention of diagonal X-rays. For a \emph{potential} $u:\{0,\dots,k-1\}\to\R$ let $v(t)=\min_{i+j=t}(u(i)+u(j))$. Every permutation $\sigma$ has $v(i+\sigma(i))\le u(i)+u(\sigma(i))$, so the min-plus inequality
\[
I(u):\qquad\sum_{t=0}^{2k-2}v(t)\,x(t)\ \le\ 2\sum_{i=0}^{k-1}u(i)
\]
is valid on $\Ph(k)$. The $3\times3$ inequality above reads $x(3)+x(4)\le1+x(0)$; it is $I(u)$ for $u=(0,1,1)$, rewritten with $\sum_tx(t)=3$. Li, Liu and Yao show that these inequalities, with $x\ge0$ and the affine hull, describe $\Ph(k)$, that every facet is a coordinate inequality $x(t)\ge0$ or an $I(u)$ with $u$ on a ray of a tropical fan, and they compute the case $k=5$ \cite[Example~3.5]{LiLiuYao2026}. They note that ``it is unclear whether these rays have any simple descriptions.'' Not every ray gives a facet. We determine which do, and we describe large classes of them.

The inequality $I(u)$ has the flavor of the subadditivity conditions in the Gomory--Johnson theory of the infinite group relaxation and of master knapsack polyhedra \cite{AraozEvansGomoryJohnson2003,BasuHildebrandKoppe2016a,BasuHildebrandKoppe2016b,Gomory1969}: rays modulo affine functions play the part of extreme functions, and staircases the part of piecewise linear functions with few slopes. We know of no theorem there that applies directly, but the analogy suggests tools for the open problems of \cref{sec:conclusion}.

\subsection{Probabilistic motivation}
The polytope arises in the study of sums of dependent random variables. Let $X$ and $Y$ be uniform on $[0,1]$ with arbitrary dependence. A law on $\{1/k,\dots,(2k-1)/k\}$ is the law of such a sum $X+Y$ if and only if its mass vector, multiplied by $k$, lies in $\Ph(k)$ (\cref{prop:C}). Under this dictionary the inequality $I(u)$ for a step potential is a joint bound on a sum of tail probabilities of $X+Y$: a single step at $\eta$ gives $\mathbb{P}(X+Y\ge2\eta)+\mathbb{P}(X+Y\ge1+\eta)\le2(1-\eta)$, which is stronger than the sum of the two individual worst cases, and a potential with integer increments gives such a bound with repeated thresholds. In the companion paper \cite[Theorem~4.1]{CKcompanion} we show that a law on $[0,2]$ with mean one is the law of $X+Y$ if and only if it satisfies all of these joint tail bounds. On the grid $(1/k)\Z$ finitely many bounds suffice, and the facets of $\Ph(k)$ other than the coordinate facets are exactly the minimal list of joint tail bounds for lattice laws of $X+Y$. Convex order, which decides the analogous question for three or more uniform summands, accounts for only $2(k-2)$ of these facets: the convex hinges give $k-2$ of them and the concave hinges another $k-2$ (\cref{prop:hinges}). No result of this paper depends on the companion; the probabilistic reading is used only for motivation.

\subsection{Main results}
A potential $u$ that is not affine is a \emph{ray} if the potentials $z$ for which $z(i)+z(j)$ depends only on $i+j$ along the tight pairs of $u$ are exactly the combinations of $1$, $i$ and $u$ (\cref{def:ray}); these are the rays of the fan. After adding an affine function and scaling, a ray has nonnegative integer increments with minimum $0$ and greatest common divisor $1$; the largest increment is its \emph{height} (\cref{def:height}), and height one means a step potential. The pairs $(i,j)$ with $u(i)+u(j)=v(i+j)$ form the \emph{tight graph} of $u$; a tight edge is \emph{allowed} if it lies in a permutation all of whose pairs are tight, and the tight graph has \emph{total support} if every tight edge is allowed. A ray $u$ for which $I(u)$ defines a facet is a \emph{facet ray}. Our results on the facets are the following.
\begin{itemize}[leftmargin=*]
\item \emph{An exact facet criterion} (\cref{thm:empty,thm:F,thm:R}). If $u$ has a tight permutation, $I(u)$ defines a facet if and only if the allowed edges meet every anti-diagonal and a space of potentials attached to them, reduced by the bipartite components of the allowed graph, has dimension three. In particular one anti-diagonal without allowed edges excludes a facet, and a ray whose tight graph has total support gives a facet. Total support is not necessary: at $k=19$ a facet ray lacks it (\cref{ex:noTS}).
\item \emph{The fiber of a facet} (\cref{thm:fiber,rem:cubical}). Up to affine functions and positive scaling, the potentials that define a given facet form a polytope whose vertices are the rays that define it, and its dimension is the number of bipartite components of the allowed graph. So a facet is defined by a single ray iff that ray has total support, and at $k=19f$ some facet is defined by $2^f$ rays, the vertices of an $f$-dimensional cube.
\item \emph{Step potentials} (\cref{thm:S,thm:S4,thm:dec,thm:T3}). Single, double and consecutive steps give facets in both signs. Staircases whose block lengths after the first are nonincreasing give facets for every $k$; there are at least $\sum_{n<k}\mathrm{p}(n)-1$ of them, so the number of facets of $\Ph(k)$ grows faster than any polynomial (\cref{cor:dec}). Three-step potentials are classified: with block lengths $(g_0,\dots,g_3)$ and sign $+$, the staircase gives a facet unless $g_1,g_2,g_3$ are pairwise distinct with $g_1<g_3$. With a complement symmetry this classifies every step potential with at most three or at least $k-4$ steps.
\item \emph{Total support of step potentials} (\cref{thm:H,cor:probA,cor:probB}). Total support of a staircase fails iff some nonempty set $X$ of block starts other than the first has at most $|X|$ tight neighbors. With this, facet implies total support for staircases with at most seven or at least $k-8$ steps (computer-assisted, \cref{thm:Eflat6}), for unimodal and several other classes of tails of every length (\cref{prop:Eflatclasses}), and by exact computation for $k\le26$; total support implies a ray for $k\le30$. So total support decides the step potentials for $k\le26$, and the remaining statements concern the sequence of block ends alone.
\item \emph{Padding and refinement} (\cref{thm:pad,thm:refine,prop:descent}). Let $u$ be a ray whose tight graph is connected and has total support. If $u$ satisfies a condition $(\mathrm{P}_L)$ on its flat steps, padding it with a flat step at the left gives a facet of $\Ph(k+1)$, and if it satisfies the mirror condition $(\mathrm{P}_R)$, padding it with a steepest step at the right does; the padded potential again satisfies the hypotheses, so padding can be repeated (\cref{cor:pad}). Under the same hypotheses on connectivity and total support, refining the grid by a factor $m$ gives a facet of $\Ph(mk)$: a facet-defining step function on $(1/k)\Z$ that satisfies them stays facet-defining on every grid $(1/(mk))\Z$. The hypotheses hold for every facet ray with $k\le12$, and they are not automatic: a ray without $(\mathrm{P}_L)$ pads to a non-ray.
\item \emph{Beyond steps} (\cref{thm:fam,thm:height}). Explicit families of potentials that are not step potentials give facets for all large $k$, and the height of facets grows at least quadratically: some facet ray of $\Ph(k)$ has height at least $\lfloor(k-2)/3\rfloor^2$ for $k\ge8$.
\item \emph{Exact counts} (\cref{sec:census,app:computation}). The facet counts of $\Ph(k)$ for $k=2,\dots,12$ are $2,6,13,23,45,85,187,426,1033,2633,7041$. The count for $k=5$ is in \cite[Example~3.5]{LiLiuYao2026}; the values for $k\ge6$ are new. For $k\ge4$ they come from an exact enumeration of the fan (for $k\le3$ the facets are found directly): we walk the facet graph of a Minkowski sum of simplices whose normal fan is the fan, listing the ridges at each facet as flats of a difference matroid. Started from a certified facet, the walk provably visits every facet (\cref{prop:complete}). An independent walk on the vertices of the sum, in exact integer arithmetic, reproduces every ray list for $5\le k\le\VWEXACTK$, and a floating-point version of it agrees.
\end{itemize}

\subsection{Methods}
The face of $\Ph(k)$ exposed by $I(u)$ is the image under the X-ray map of a face of the Birkhoff polytope, namely the doubly stochastic matrices supported on the allowed graph. Faces of the Birkhoff polytope are described by Brualdi and Gibson \cite{BrualdiGibson1977} through bipartite graphs with total support (see also \cite{Paffenholz2015}); the facet criterion pushes this description forward through a linear map that aggregates the entries along anti-diagonals. So the questions become questions about bipartite matchings in the tight graph (Hall's theorem, alternating cycles, the Dulmage--Mendelsohn decomposition \cite{DulmageMendelsohn1958}) and about the linear relations that the tight pairs impose on potentials. For staircases the tight graph is determined by the sequence of block ends, and total support reduces to a Hall-type condition on block starts. On the enumerative side, the fan of Li, Liu and Yao is the normal fan of a Minkowski sum of $2k-1$ simplices \cite{DevelinSturmfels2004,GritzmannSturmfels1993}, and its rays are the facet normals of that sum; we enumerate them by an exact, certified walk on its facet graph.

At $k=12$, 5,748 of the 7,041 facets come from no step potential; padding explains 3,182 of them (\cref{tab:cores} and \cref{prob:C}). The convex hinges $u(i)=(i-j)^+$ give $k-2$ facets, the convex and concave hinges together $2(k-2)$ (\cref{prop:hinges}), and for X-rays with all entries at most one the convex hinges alone decide membership in $\Ph(k)$ (\cref{cor:smallatoms}).

\subsection{Notation}
\Cref{tab:notation} lists the main symbols. Throughout, $k$ is the size of the matrices; $m$ is always a refinement factor (\cref{sec:refinement}).

\begin{table}[ht]
\centering
\small
\caption{Main notation.}
\label{tab:notation}
\begin{tabular}{@{}ll@{}}
\toprule
Symbol & Meaning\\
\midrule
$\Ph(k)$ & X-rays of $k\times k$ doubly stochastic matrices (\cref{sec:count})\\
$x(t)$ & mass on the anti-diagonal $i+j=t$, $0\le t\le2k-2$\\
$u$, $v=u\,\square\,u$ & potential on $\{0,\dots,k-1\}$; $v(t)=\min_{i+j=t}(u(i)+u(j))$\\
$I(u)$ & the inequality $\sum_tv(t)x(t)\le2\sum_iu(i)$ (\cref{sec:duality})\\
$\one$, $\iota$ & constant potential $1$; potential $\iota(i)=i$\\
$\cE(u)$ & tight graph: pairs with $u(i)+u(j)=v(i+j)$\\
$\Es(u)$ & allowed graph: tight edges that lie in a tight permutation\\
total support & $\Es(u)=\cE(u)$ and a tight permutation exists\\
$Z(u)$, $Z^*(u)$ & $z$ with $z(i)+z(j)$ a function of $i+j$ on $\cE(u)$, resp.\ $\Es(u)$\\
ray & non-affine $u$ with $Z(u)=\spn\{\one,\iota,u\}$ (\cref{def:ray})\\
height & largest increment of a normalized integer ray (\cref{def:height})\\
$\operatorname{bip}(u)$ & number of bipartite components of $\Es(u)$ (\cref{sec:criterion})\\
$u_J$, $u(g)$ & staircase with step set $J$, with block lengths $g=(g_0,\dots,g_r)$\\
$S_l$, $E_l$ & first and last index of block $l$ of a staircase\\
$M(s)$ & largest $E_p+E_q$ with $p+q=s$ (\cref{sec:steps})\\
$\operatorname{tri}(p,q)$ & number of anti-diagonals of tight pairs in the corner of blocks $p$, $q$ (\cref{lem:corner})\\
thresholds & steps $j/k$ in the tail-sum reading (\cref{rem:multiplicity})\\
$(\mathrm{P}_L)$, $(\mathrm{P}_R)$ & padding conditions (\cref{sec:padding})\\
$m$, $\operatorname{spr}_m$, $\operatorname{ref}_m$ & refinement factor; spread and refinement by $m$ (\cref{sec:refinement})\\
$f$, $u_f$ & multiplicity of the cubical family; its potential on $k=19f$ points (\cref{prop:cubical})\\
$Q=\sum_tQ_t$ & Minkowski sum of the simplices $Q_t$; its facet normals are the rays (\cref{sec:census})\\
core & facet ray that is not an end padding of a facet ray (\cref{sec:padding})\\
\bottomrule
\end{tabular}
\end{table}

\subsection{Organization}
\Cref{sec:prelim} sets up the polytope, potentials, rays and height. \Cref{sec:criterion} proves the facet criterion and describes the fiber of a facet, \cref{sec:constructions} treats padding, refinement, the non-step families and height, \cref{sec:steps} treats step potentials, \cref{sec:census} reports the exact counts and the results on total support of staircases, and \cref{sec:conclusion} lists open problems. \Cref{app:computation} describes the certified computations. The supplement contains the longer proofs and case analyses, the remaining census tables and the certificate formats.

%% file: b2_prelim.tex
\section{The polytope and potentials}\label{sec:prelim}\label{sec:grid}

\subsection{Count coordinates}\label{sec:count}
Let $\Ph(k)$ be the set of \emph{anti-diagonal X-rays} $x=(x(0),\dots,x(2k-2))$ of $k\times k$ doubly stochastic matrices $Z$, where $x(t)=\sum_{i+j=t}Z_{ij}$. Equivalently, $x\in\Ph(k)$ if and only if the linear system in the unknowns $Z_{ij}\ge0$ consisting of the $2k$ line-sum equations $\sum_jZ_{ij}=1$, $\sum_iZ_{ij}=1$ and the $2k-1$ anti-diagonal equations $\sum_{i+j=t}Z_{ij}=x(t)$ is feasible; its constraint matrix stacks the row-and-column incidence matrix of the complete bipartite graph $K_{k,k}$ on top of the incidence matrix of the anti-diagonals, and only the right-hand side $(\one,\one,x)$ depends on $x$. So $\Ph(k)$ is the image of the Birkhoff polytope under the linear \emph{X-ray map} $A:Z\mapsto x$. Its affine hull is $\{\sum_t x(t)=k,\ \sum_t t\,x(t)=k(k-1)\}$ and $\dim\Ph(k)=2k-3$ for $k\ge2$ (\cref{cor:affhull}; $\Ph(1)$ is a point). A \emph{facet} of $\Ph(k)$ is a face of dimension $2k-4$, and the inequalities $x(t)\ge0$ are the \emph{coordinate inequalities}.

Write $\DU$ for the set of laws of $X+Y$ over all couplings of two random variables $X,Y$ uniform on $[0,1]$. The following proposition is the dictionary between $\Ph(k)$ and these laws.

\begin{proposition}[Lattice laws and X-rays]\label{prop:C}
A law $F$ on $\{1/k,2/k,\dots,(2k-1)/k\}$ belongs to $\DU$ if and only if the vector with entries $x(t)=k\,F(\{(t+1)/k\})$, $t=0,\dots,2k-2$, lies in $\Ph(k)$. No law in $\DU$ supported on $(1/k)\Z$ charges $0$ or $2$.
\end{proposition}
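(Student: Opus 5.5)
We prove the two directions separately. The tool is the correspondence between couplings of two uniforms and doubly stochastic matrices obtained by cutting $[0,1]^2$ into the $k\times k$ grid of cells $C_{ij}=[i/k,(i+1)/k)\times[j/k,(j+1)/k)$.

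\emph{Sufficiency.} Let $x\in\Ph(k)$ with $x=AZ$ for a doubly stochastic $Z$. The product $\prod_i\pi_{i}$ of the cells' contributions is not what we want, since a coupling that is uniform on each cell would give a sum with a triangular density rather than atoms. Instead, on each cell $C_{ij}$ with $Z_{ij}>0$ we put mass $Z_{ij}/k$ on the anti-diagonal segment $\{(i/k+s,\,(j+1)/k-s):0\le s<1/k\}$, spread uniformly in $s$. Along this segment $X+Y=(i+j+1)/k$ is constant. The first marginal restricted to $[i/k,(i+1)/k)$ is uniform with total mass $\sum_jZ_{ij}/k=1/k$, because the row sums of $Z$ are $1$. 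By the column sums the same holds for the second marginal, so $X$ and $Y$ are both uniform on $[0,1]$. The law of $X+Y$ then gives mass $\sum_{i+j=t}Z_{ij}/k=x(t)/k$ to the point $(t+1)/k$, as required.

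\emph{Necessity.} Suppose $F=\law(X+Y)$ is supported on $\{1/k,\dots,(2k-1)/k\}$. We set $Z_{ij}=k\,\mathbb P((X,Y)\in C_{ij})$, taking half-open cells and closing the last one so that $1$ is included. Uniformity of the marginals makes $Z$ doubly stochastic. The problem is that a cell $C_{ij}$ meets the lines $X+Y=(i+j)/k$, $(i+j+1)/k$ and $(i+j+2)/k$, so mass on the cell need not sit on the middle atom. To handle this we use the atoms themselves. Almost surely $X+Y=(t+1)/k$ for some $t$, so $(X,Y)$ lies on the union of the lines $x+y=(t+1)/k$. A point of such a line lies in a cell with $i+j=t$, except for the corners $x=i/k$ (after the half-open convention). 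The corners can be neglected: the event that $X\in(1/k)\Z$ has probability zero because $X$ is uniform. Hence almost surely the cell containing $(X,Y)$ has $i+j=t$ exactly when $X+Y=(t+1)/k$. Indeed, writing $X=i/k+a$ and $Y=j/k+b$ with $a,b\in(0,1/k)$, we get $a+b\in(0,2/k)$, and $X+Y\in(1/k)\Z$ forces $a+b=1/k$. This gives $x(t)=kF(\{(t+1)/k\})$, so $x=AZ$.

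\emph{Second claim.} For a law in $\DU$ supported on $(1/k)\Z$, $\mathbb P(X+Y=0)\le\mathbb P(X=0)=0$, and likewise $\mathbb P(X+Y=2)\le\mathbb P(X=1)=0$.

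The main obstacle is the boundary bookkeeping in the necessity direction. The step that needs care is the argument that the fractional parts $a,b$ are almost surely nonzero, which rules out the misassignment of mass to a wrong cell; it rests only on the continuity of the marginals.
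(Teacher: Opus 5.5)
Your proof is correct and follows the paper's argument. Your uniform mass on anti-diagonal segments in each cell is the paper's $X=(I+U)/k$, $Y=(J+1-U)/k$, and your cell decomposition with nonzero fractional parts summing to $1/k$ is the paper's $I=\lfloor kX\rfloor$, $J=\lfloor kY\rfloor$ argument. The stray sentence about $\prod_i\pi_i$ uses undefined notation and should simply be deleted.
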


\begin{proof}
If $S=X+Y\in(1/k)\Z$ almost surely, put $I=\lfloor kX\rfloor$ and $J=\lfloor kY\rfloor$. The fractional parts of $kX$ and $kY$ then add to $1$ almost surely, so $kS=I+J+1$, and $(I,J)$ is a coupling of two uniform laws on $\{0,\dots,k-1\}$. Conversely, given such a coupling and an independent $U$ uniform on $[0,1]$, the variables $X=(I+U)/k$ and $Y=(J+1-U)/k$ are uniform on $[0,1]$ and $k(X+Y)=I+J+1$. Multiplying the coupling probabilities by $k$ gives a doubly stochastic matrix, whose X-ray is $x$. A mass at $0$ or $2$ would need an atom of a uniform marginal.
\end{proof}

By \cref{prop:C}, every linear inequality valid on $\Ph(k)$, divided by $k$, is a constraint on the lattice laws in $\DU$, and the facets of $\Ph(k)$ are the irredundant ones. \Cref{prop:C} is also \cite[Proposition~3.1]{CKcompanion} (proof included). Doubly stochastic matrices are the discrete copulas on the uniform grid \cite{PerroneSolusUhler2019}, which gives another reading of \cref{prop:C}.

\subsection{Potentials and the inequality \texorpdfstring{$I(u)$}{I(u)}}\label{sec:duality}
A \emph{potential} is a function $u:\{0,\dots,k-1\}\to\R$. Put
\[
v(t)=(u\,\square\,u)(t)=\min_{i+j=t}\bigl(u(i)+u(j)\bigr),\qquad t=0,\dots,2k-2,
\]
the discrete infimal convolution of $u$ with itself. Every permutation $\sigma$ satisfies $v(i+\sigma(i))\le u(i)+u(\sigma(i))$ termwise, so
\[
I(u):\qquad \sum_{t=0}^{2k-2}v(t)\,x(t)\ \le\ 2\sum_{i=0}^{k-1}u(i)
\]
is valid on $\Ph(k)$. Li, Liu and Yao \cite[Proposition~3.1]{LiLiuYao2026} showed that these inequalities, together with $x\ge0$ and the affine hull, describe $\Ph(k)$. This is finite linear programming duality for the assignment problem: a linear functional $c$ on X-rays pulls back to the assignment cost $c(i+j)$, an optimal dual pair $(\alpha,\beta)$ can be replaced by the potential $u=(\alpha+\beta)/2$ because the cost is symmetric, and then $c\le v$ (\cref{lem:griddual}). The potentials $\pm\one$ and $\pm\iota$, where $\iota(i)=i$, give the two equations of the affine hull.

In the reading of \cref{prop:C}, let $S$ have a law on $\{1/k,\dots,(2k-1)/k\}$, put $T=kS-1$, so that $x(t)=k\,\mathbb{P}(T=t)$, and let $U_k$ be uniform on $\{0,\dots,k-1\}$. Then $I(u)$ divided by $k$ reads $\mathbb{E}\,v(T)\le2\,\mathbb{E}\,u(U_k)$. Let $\tilde u$ be the upper semicontinuous step function on $[0,1]$ equal to $u(i)$ on $(i/k,(i+1)/k)$ and to $\max(u(i-1),u(i))$ at $i/k$, with $u(-1)=u(k)=-\infty$. Then $(\tilde u\,\square\,\tilde u)((t+1)/k)=v(t)$ for $0\le t\le2k-2$ and $\int_0^1\tilde u=\frac1k\sum_iu(i)$, so on lattice laws $I(u)$ divided by $k$ is the inequality $\mathbb{E}(\tilde u\,\square\,\tilde u)(X+Y)\le2\,\mathbb{E}\tilde u(X)$, which holds for every coupling of two uniforms \cite[Sections~3.2 and~3.3]{CKcompanion}. Li, Liu and Yao index X-rays by the diagonals $i-j=d$; reversing the columns, $j\mapsto k-1-j$, turns their diagonals into our anti-diagonals and their results into the statements used here.

\subsection{Tight graphs and rays}\label{sec:tight}\label{sec:potentials}
The \emph{tight graph} of $u$ is $\cE(u)=\{(i,j):u(i)+u(j)=v(i+j)\}$, a symmetric set of pairs; every anti-diagonal $\{i+j=t\}$ carries at least one of its edges, namely a minimizing pair. A permutation attains equality in $I(u)$ iff all pairs $(i,\sigma(i))$ lie in $\cE(u)$; call such permutations \emph{tight}. The \emph{allowed graph} $\Es(u)\subseteq\cE(u)$ is the set of edges that lie in at least one tight permutation; its edges are \emph{allowed}. We say $\cE(u)$ has \emph{total support} if a tight permutation exists and $\Es(u)=\cE(u)$. This is total support of the $0/1$ matrix of $\cE(u)$ in the sense of Sinkhorn and Knopp \cite{SinkhornKnopp1967}: every nonzero entry lies on a diagonal of nonzero entries; by their theorem it holds iff the matrix can be scaled to a doubly stochastic matrix with the same zero pattern. When a tight permutation exists, the allowed edges are the edges inside the irreducible blocks of the Dulmage--Mendelsohn decomposition of the bipartite graph of $\cE(u)$ \cite{DulmageMendelsohn1958}; we compute them from the switching digraph of \cref{lem:D}.

Let $Z(u)$ be the space of $z\in\R^k$ such that $z(i)+z(j)$ depends only on $i+j$ along $\cE(u)$, and define $Z^*(u)$ in the same way with $\Es(u)$ in place of $\cE(u)$. Always $\spn\{\one,\iota,u\}\subseteq Z(u)\subseteq Z^*(u)$.

\begin{definition}\label{def:ray}
A non-affine potential $u$ is a \emph{ray} if $Z(u)=\spn\{\one,\iota,u\}$.
\end{definition}

\begin{remark}[Rays of the tropical fan]\label{rem:fan}
Li, Liu and Yao \cite[Proposition~3.4]{LiLiuYao2026} fix $u(0)=u(k-1)=0$ (which removes the lineality space $\spn\{\one,\iota\}$) and consider the arrangement of the tropical hypersurfaces $\mathcal{T}_t$, $1\le t\le 2k-3$, where $\mathcal{T}_t$ is the set of potentials for which the minimum defining $v(t)$ is attained by at least two distinct unordered pairs $\{i,j\}$. Its cones are the closures of the sets of potentials with a fixed set of minimizing pairs on every anti-diagonal. The potentials with the same minimizing pairs as $u$ form a relatively open subset of $Z(u)$ containing $u$, so the cone through $u$ spans $Z(u)$; it is one-dimensional modulo the lineality space iff $\dim Z(u)=3$. So our rays are exactly the rays of their arrangement, and their Proposition~3.4 says: \emph{every facet of $\Ph(k)$ is a coordinate inequality $x(t)\ge0$ or $I(u)$ for a ray $u$.} Not every ray gives a facet. The rest of the paper uses this statement; its proof is recalled in \cref{sec:SM-fan}.
\end{remark}

For $J\subseteq\{1,\dots,k-1\}$ the \emph{step potential} is $u_J(i)=\#\{j\in J:j\le i\}$. The sign $+$ refers to $u_J$ and the sign $-$ to $-u_J$. A \emph{staircase} is a step potential with sign $+$; by \cref{lem:A} below, every signed step potential gives the same inequality as some staircase, so \cref{sec:steps} works with staircases only.

\begin{definition}[Height]\label{def:height}
The space $Z(u)$ is cut out by linear equations with integer coefficients, so for a ray $u$ the line $Z(u)/\spn\{\one,\iota\}$ is spanned by the class of an integer vector. Hence every ray is, after adding an affine function and multiplying by a positive constant, a potential $u$ with $u(0)=0$ whose increments $d(i)=u(i+1)-u(i)$, $0\le i\le k-2$, are nonnegative integers with minimum $0$ and greatest common divisor $1$. The largest increment $\max_id(i)$ is the \emph{height} of the ray.
\end{definition}

By \cref{lem:A} the inequality, the tight graph and $Z(u)$ do not change under the affine change, and a positive multiple defines the same inequality. If $u$ and $-u$ are both rays, they have the same height, because adding $H\iota$ to $-u$, with $H$ the height of $u$, gives the increments $H-d(i)$. Height one means a step potential. A ray $u$ for which $I(u)$ defines a facet of $\Ph(k)$ is a \emph{facet ray}; we usually take it in the form of \cref{def:height}.

\begin{remark}[Facets as tail-sum bounds]\label{rem:multiplicity}
For $u$ as in \cref{def:height}, $u=\sum_{j=1}^{k-1}d(j-1)\,u_{\{j\}}$ is a sum of single steps in which the step at $j$ is repeated $d(j-1)$ times. In the reading of \cref{prop:C}, $I(u)$ divided by $k$ is a joint bound on a sum of tail probabilities of $X+Y$ in which the threshold $j/k$ occurs with multiplicity $d(j-1)$ \cite[Lemma~4.1 and Remark~4.2]{CKcompanion}; for distinct thresholds this is \cref{cor:tails}. So the height of a facet ray is the largest multiplicity of a threshold in its tail-sum bound, and by \cref{rem:fan} the facets of $\Ph(k)$ other than the coordinate facets form the minimal list of such bounds on the grid $(1/k)\Z$. The facets of height at least two, 5,748 of the 7,018 facets of $\Ph(12)$ other than the coordinate facets (\cref{tab:shapes}), need repeated thresholds on that grid.
\end{remark}

\subsection{Lemmas}

\begin{lemma}[Affine invariance and symmetries]\label{lem:A}
If $u'(i)=u(i)+\alpha+\beta i$, then $v'(t)=v(t)+2\alpha+\beta t$, and $I(u')$ and $I(u)$ define the same face of $\Ph(k)$, with $\cE(u')=\cE(u)$. Because $u_J+u_{J^c}=\iota$, the sign $-$ at $J$ gives the same inequality as the sign $+$ at the complement $J^c$. The reflection $i\mapsto k-1-i$ maps $\Ph(k)$ to itself by $x(t)\mapsto x(2k-2-t)$ and sends $(J,+)$ to $(k-J,-)$, where $k-J=\{k-j:j\in J\}$.
\end{lemma}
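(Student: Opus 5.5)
The plan is to verify the four claims of \cref{lem:A} one after another, working directly from the definitions of $v$, $I(u)$ and $\cE(u)$.

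\emph{Affine change.} For $i+j=t$ we have $u'(i)+u'(j)=u(i)+u(j)+2\alpha+\beta t$. The added term depends only on $t$, so it can be pulled out of the minimum over the anti-diagonal. This gives $v'(t)=v(t)+2\alpha+\beta t$. It also shows that a pair minimizes for $u'$ exactly when it minimizes for $u$, hence $\cE(u')=\cE(u)$. Next compare the two inequalities. Using $\sum_i i=k(k-1)/2$, we get
\[
I(u')-I(u):\quad 2\alpha\sum_tx(t)+\beta\sum_t t\,x(t)\ \le\ 2k\alpha+\beta k(k-1).
\]
On the affine hull $\sum_tx(t)=k$, $\sum_t t\,x(t)=k(k-1)$ both sides are equal. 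So on $\Ph(k)$ the two inequalities differ by an identity, and they expose the same face. Alternatively, the tight permutations coincide because the tight graphs do, so the exposed faces coincide as images of the same Birkhoff face.

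\emph{Complement.} We have $u_J(i)+u_{J^c}(i)=\#\{j\in\{1,\dots,k-1\}:j\le i\}=i=\iota(i)$. Hence $-u_J=u_{J^c}-\iota$, which is an affine change of $u_{J^c}$. By the first part, $I(-u_J)$ and $I(u_{J^c})$ define the same face.

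\emph{Reflection.} Put $\rho(i)=k-1-i$. Conjugating $Z$ by the reversal of both rows and columns preserves double stochasticity and sends the anti-diagonal $t$ to $2k-2-t$. So $x\mapsto(x(2k-2-t))_t$ maps $\Ph(k)$ onto itself. For the potential $u\circ\rho$, the min over $i+j=t$ becomes the min over $i'+j'=2k-2-t$, so $v_{u\circ\rho}(t)=v(2k-2-t)$. Hence $I(u\circ\rho)$ is the image of $I(u)$ under the reflection.

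It remains to identify $u_J\circ\rho$. We compute $u_J(k-1-i)=\#\{j\in J:j\le k-1-i\}=\#\{j'\in k-J:j'\ge i+1\}$. This equals $|J|-u_{k-J}(i)$, a constant minus the staircase at $k-J$, which is the sign $-$ at $k-J$ up to an affine change. By the first part it defines the same face as $(k-J,-)$.

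The only point that needs care is the bookkeeping in this last step. The boundary condition $j\le k-1-i$ has to become $k-j\ge i+1$, and the result must match the convention $u_J(i)=\#\{j\le i\}$ with strict versus weak inequalities handled correctly. Everything else is routine.
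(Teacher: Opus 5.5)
Your proof is correct and takes the same route as the paper. The paper verifies only that the change $2k\alpha+\beta k(k-1)$ of the right-hand side equals the change of the left-hand side on the affine hull, and calls the complement and reflection claims direct; you check exactly these, including the correct identification $u_J\circ\rho=|J|-u_{k-J}$ with $k-J\subseteq\{1,\dots,k-1\}$, and your alternative argument through the equal tight graphs and the common Birkhoff face is also valid.
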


\begin{proof}
The right-hand side changes by $2k\alpha+\beta k(k-1)$, which equals the change of the left-hand side on the affine hull $\{\sum_tx(t)=k,\ \sum_t t\,x(t)=k(k-1)\}$. The rest is direct.
\end{proof}

\begin{lemma}[Face dimension]\label{lem:T}
Let $\mathcal{F}(u)$ be the face of $\Ph(k)$ on which $I(u)$ is tight, and assume it is nonempty. Then $\dim \mathcal{F}(u)=(2k-1)-\dim W(u)$, where $W(u)$ is the space of $c\in\R^{2k-1}$ for which there exist $\alpha,\beta\in\R^k$ with $c(i+j)=\alpha(i)+\beta(j)$ for every $(i,j)\in\Es(u)$.
\end{lemma}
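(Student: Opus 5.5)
The plan is to identify the face $\mathcal{F}(u)$ with the image under the X-ray map $A$ of a face of the Birkhoff polytope, and then compute the dimension of that image by linear algebra.

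\emph{Identifying the face.} A point $x=AZ$ satisfies $I(u)$ with equality iff $\sum_{i,j}Z_{ij}\bigl(u(i)+u(j)-v(i+j)\bigr)=0$. Here we use that $\sum_{i,j}Z_{ij}(u(i)+u(j))=2\sum_iu(i)$ for doubly stochastic $Z$. Each term in the sum is nonnegative, so equality holds iff $Z$ is supported on $\cE(u)$. By Birkhoff--von Neumann, such a $Z$ is a convex combination of permutation matrices supported on $\cE(u)$, that is, of tight permutations. So in fact $Z$ is supported on $\Es(u)$. Hence $\mathcal{F}(u)=A(B^*)$, where $B^*$ is the face of the Birkhoff polytope consisting of doubly stochastic matrices supported on $\Es(u)$. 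It is nonempty by assumption.

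\emph{Computing the dimension.} The affine hull of $A(B^*)$ is $A$ applied to the affine hull of $B^*$, so it suffices to compute the span of $A(B^*-B^*)$. I claim that the affine hull of $B^*$ is $Z_0+L$, where $Z_0\in B^*$ is in the relative interior and
\[
L=\{Y\in\R^{k\times k}:\ \operatorname{supp}Y\subseteq\Es(u),\ \text{all row and column sums of }Y\text{ vanish}\}.
\]
To prove this, take $Z_0$ to be the average of all tight permutation matrices. Then $Z_0>0$ on every edge of $\Es(u)$. For any $Y\in L$, the matrix $Z_0+\varepsilon Y$ lies in $B^*$ for small $\varepsilon$, so $L$ is contained in the direction space of the affine hull. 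The reverse inclusion is clear. (This is the Brualdi--Gibson description.) Hence
\[
\dim\mathcal{F}(u)=\dim A(L)=(2k-1)-\dim A(L)^{\perp}.
\]

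\emph{Identifying the annihilator.} A vector $c\in\R^{2k-1}$ annihilates $A(L)$ iff the functional $Y\mapsto\sum_{(i,j)}c(i+j)Y_{ij}$ vanishes on $L$. The space $L$ is the kernel of the row- and column-sum map restricted to matrices supported on $\Es(u)$. By finite-dimensional duality, the annihilator of $L$ inside $\R^{\Es(u)}$ is the image of the transpose of that map. That image consists of exactly the functions of the form $(i,j)\mapsto\alpha(i)+\beta(j)$ on $\Es(u)$. So $c\in A(L)^\perp$ iff $c(i+j)=\alpha(i)+\beta(j)$ on $\Es(u)$ for some $\alpha,\beta\in\R^k$; that is, $A(L)^\perp=W(u)$. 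This gives $\dim\mathcal{F}(u)=(2k-1)-\dim W(u)$.

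The main point needing care is the affine hull of $B^*$. In particular, one must make sure that only edges in $\Es(u)$, and not all of $\cE(u)$, enter the description. This is exactly why we pass through tight permutations and a relative-interior point with full support on $\Es(u)$. The remaining steps are routine linear algebra.
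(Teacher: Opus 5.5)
Your proposal is correct and takes essentially the same route as the paper. You identify $\mathcal{F}(u)$ as the X-ray image of the Birkhoff face of doubly stochastic matrices supported on $\Es(u)$. The average of the tight permutations, which is positive on all of $\Es(u)$, shows that the direction space of that face is the space of matrices supported on $\Es(u)$ with zero line sums. The annihilator of its image under $A$ is then the pullback through $A^{\top}$ of the row space of the incidence matrix, which is $W(u)$. The only difference is that you spell out, via the slack identity and Birkhoff--von Neumann, why every preimage of a point of the face is supported on $\Es(u)$; the paper states this more briefly.
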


\begin{proof}
The face of the Birkhoff polytope on which the pulled-back functional is tight is the convex hull of the tight permutation matrices. It equals the set of doubly stochastic matrices supported on $\Es(u)$, because the vertices of the latter are permutation matrices supported in $\Es(u)\subseteq\cE(u)$, all tight. The average of tight permutations that together cover $\Es(u)$ is positive on all of $\Es(u)$, so the affine hull of this face is the set of matrices supported on $\Es(u)$ with unit line sums; its direction space is $\ker B$, where $B$ is the row-and-column incidence matrix of $\Es(u)$. The X-ray map $A$ sends the face onto $\mathcal{F}(u)$, so $\dim \mathcal{F}(u)=\dim A(\ker B)$. A vector $c$ annihilates $A(\ker B)$ iff $A^{\top}c$ lies in the row space of $B$, that is, iff $c(i+j)=\alpha(i)+\beta(j)$ on $\Es(u)$.
\end{proof}

\Cref{lem:T} is the Brualdi--Gibson description of faces of the Birkhoff polytope \cite{BrualdiGibson1977} (see \cite{Paffenholz2015} for the combinatorics of these faces), pushed forward by the X-ray map. The lemma needs the allowed graph $\Es(u)$; with the tight graph $\cE(u)$ in its place it fails (\cref{sec:SM-prelim-ex}).

\begin{lemma}[Total support from a universal vertex]\label{lem:TS}
Let $G$ be a symmetric bipartite graph on rows and columns $\{0,\dots,k-1\}$. Suppose $G$ has a perfect matching $\sigma$ with $\sigma(e)=e$ for an index $e$ adjacent to every column. Then every edge of $G$ lies in a perfect matching.
\end{lemma}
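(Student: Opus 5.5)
We take an arbitrary edge $(i,j)\in G$ and exhibit a perfect matching of $G$ containing it, obtained from $\sigma$ by a single exchange around $e$. Because $e$ is adjacent to every column and $G$ is symmetric, row $e$ is adjacent to every column and column $e$ is adjacent to every row. So every pair $(e,c)$ and $(r,e)$ is an edge of $G$.

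We split into cases.

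\emph{Case $i=e$ or $j=e$.} Say $i=e$; the case $j=e$ is symmetric. If $j=e$, then $\sigma$ itself contains the edge $(e,e)$. Otherwise let $r=\sigma^{-1}(j)$, so $r\neq e$. Define $\tau$ by $\tau(e)=j$, $\tau(r)=e$, and $\tau=\sigma$ elsewhere. This is a transposition of $\sigma$ on the rows $e,r$. The new edges are $(e,j)\in G$ and $(r,e)\in G$, the latter because column $e$ is universal. So $\tau$ is a perfect matching of $G$ containing $(i,j)$.

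\emph{Case $i\neq e$ and $j\neq e$.} Let $a=\sigma(i)$ and $r=\sigma^{-1}(j)$.
\begin{itemize}
\item If $a=j$, then $\sigma$ already contains $(i,j)$.
\item Otherwise $a\neq e$ and $r\neq e$, since $\sigma(e)=e$ and $i,j\neq e$. Also $r\neq i$, and $r$ and $i$ are both distinct from $e$. Define the three-cycle modification $\tau(i)=j$, $\tau(r)=e$, $\tau(e)=a$, and $\tau=\sigma$ elsewhere.
\end{itemize}
We check that $\tau$ is a bijection. The rows $i,r,e$ are distinct. They previously went to $a,j,e$, and now they go to $j,e,a$, which is the same set of columns. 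The new edges are $(i,j)$, which is the given edge; $(r,e)$, which is an edge because column $e$ is universal; and $(e,a)$, which is an edge because row $e$ is universal. So $\tau$ is a perfect matching of $G$ through $(i,j)$.

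The only point needing care is using symmetry of $G$ to convert "$e$ adjacent to every column" into universality of column $e$ as well. Beyond that, the main thing to check is the bookkeeping of distinctness in the three-cycle case; there is no real obstacle.
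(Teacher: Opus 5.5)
Your proof is correct and is the paper's argument made explicit. The paper builds the switching digraph with arcs $i\to\sigma^{-1}(j)$ and notes that $e$ has arcs to and from every vertex, so the digraph is strongly connected; your exchange $\tau(i)=j$, $\tau(r)=e$, $\tau(e)=\sigma(i)$ is exactly the switch along the directed cycle $i\to\sigma^{-1}(j)\to e\to i$ of that digraph, written out with the distinctness checks done by hand instead of through the general alternating-cycle criterion.
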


\begin{proof}
Form the digraph on $\{0,\dots,k-1\}$ with an arc $i\to \sigma^{-1}(j)$ for each edge $(i,j)$ of $G$. An edge $(i,j)$ with $j=\sigma(i')$ lies in a perfect matching iff $i=i'$ or the arc $i\to i'$ lies on a directed cycle (switch along the alternating cycle). From $e$ there is an arc to every vertex, and every vertex $i$ has an arc to $\sigma^{-1}(e)=e$, because $(i,e)\in G$ by symmetry. So the digraph is strongly connected.
\end{proof}

The same argument without the universal vertex gives:

\begin{lemma}[Switching digraph]\label{lem:D}
If $\sigma$ is a tight permutation and the digraph $D_\sigma$ with an arc $i\to\sigma^{-1}(j)$ for every tight edge $(i,j)$ is strongly connected, then $\cE(u)$ has total support.
\end{lemma}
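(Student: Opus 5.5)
The idea is to reduce Lemma~\ref{lem:D} to the switching argument already used in the proof of \cref{lem:TS}, with strong connectivity of $D_\sigma$ now taken as a hypothesis instead of being derived from a universal vertex. Since $\sigma$ is tight, a tight permutation exists. So the only thing left is to show that every tight edge $(i,j)\in\cE(u)$ lies in some tight permutation, i.e.\ that $\Es(u)=\cE(u)$.

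Fix a tight edge $(i,j)$ and let $i'=\sigma^{-1}(j)$. If $i'=i$, the edge is an edge of $\sigma$ and is allowed. Otherwise $D_\sigma$ contains the arc $i\to i'$, and by strong connectivity there is a directed path from $i'$ back to $i$. Together with the arc this gives a directed cycle $i=i_0\to i_1=i'\to i_2\to\dots\to i_{m}=i_0$; shortening it if needed, we may take the vertices distinct. Each arc $i_s\to i_{s+1}$ comes from a tight edge $(i_s,\sigma(i_{s+1}))$.

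Now define $\tau$ by $\tau(i_s)=\sigma(i_{s+1})$ on the cycle and $\tau=\sigma$ elsewhere. The map $\tau$ is a bijection, because on the cycle it only permutes the column values $\{\sigma(i_s)\}$ cyclically. Every pair $(l,\tau(l))$ lies in $\cE(u)$: off the cycle it is an edge of $\sigma$, and on the cycle it is one of the tight edges underlying the arcs. So $\tau$ is a tight permutation, and $\tau(i)=\sigma(i')=j$, so $(i,j)$ is allowed.

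There is no real obstacle. The only point needing care is the case distinction $i=i'$, since there is no self-loop argument, and the requirement that the cycle be simple so that $\tau$ is well defined. Both are handled as above.
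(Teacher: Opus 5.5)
Your proof is correct and is essentially the paper's argument. The paper proves this lemma by the switching argument of \cref{lem:TS}: if the arc $i\to\sigma^{-1}(j)$ lies on a directed cycle of $D_\sigma$, switching $\sigma$ along the corresponding alternating cycle gives a tight permutation that uses $(i,j)$, and strong connectivity supplies the cycle; your explicit construction of $\tau$ along a simple cycle spells out the switching step that the paper leaves implicit.
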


\begin{lemma}[Reduction to a potential space]\label{lem:U}
The allowed graph $\Es(u)$ is symmetric, since the inverse of a tight permutation is tight. Suppose $\Es(u)$ contains $(e,e)$ for an index $e$ adjacent in $\Es(u)$ to every index, and meets every anti-diagonal. Then $W(u)$ and $Z^*(u)$ are linearly isomorphic. Consequently, if $u$ is not affine, $I(u)$ is a facet if and only if $Z^*(u)=\spn\{\one,\iota,u\}$.
\end{lemma}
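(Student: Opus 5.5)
Symmetry of $\Es(u)$ is immediate. If $\sigma$ is a tight permutation, then so is $\sigma^{-1}$, because $\cE(u)$ is symmetric. Hence an allowed edge $(i,\sigma(i))$ has its transpose $(\sigma(i),i)$ in the tight permutation $\sigma^{-1}$.

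For the isomorphism I would construct maps in both directions. First take $c\in W(u)$, with $c(i+j)=\alpha(i)+\beta(j)$ on $\Es(u)$. Using symmetry we also have $c(i+j)=\alpha(j)+\beta(i)$ on $\Es(u)$, so averaging gives $c(i+j)=z(i)+z(j)$ on $\Es(u)$ with $z=(\alpha+\beta)/2$. Thus $z\in Z^*(u)$. The choice of $(\alpha,\beta)$ is not unique, so I would normalize through the universal index $e$. From $(e,j)\in\Es(u)$ for every $j$ we get $z(j)=c(e+j)-z(e)$, and from $(e,e)$ we get $z(e)=c(2e)/2$. So $z$ is uniquely determined by $c$: put $z(j)=c(e+j)-c(2e)/2$. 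This defines a linear map $\Phi:W(u)\to Z^*(u)$, and it is independent of $(\alpha,\beta)$ because any valid symmetrized $z$ is forced to be this formula.

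Conversely, take $z\in Z^*(u)$. Then $z(i)+z(j)$ depends only on $i+j$ along $\Es(u)$. Since $\Es(u)$ meets every anti-diagonal, this defines $c(t)$ for every $t=0,\dots,2k-2$. This $c$ lies in $W(u)$ with $\alpha=\beta=z$, which gives a linear map $\Psi:Z^*(u)\to W(u)$.

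It remains to show the two maps are mutually inverse. For $\Phi\Psi=\mathrm{id}$, recover $z$ from $c$ by the formula above; it returns $z$ because $c(e+j)=z(e)+z(j)$ and $c(2e)=2z(e)$. For $\Psi\Phi=\mathrm{id}$, note that $c$ is determined on all of $0,\dots,2k-2$ by its values on allowed edges, since every anti-diagonal meets $\Es(u)$. On those edges $c(i+j)=z(i)+z(j)$ with $z=\Phi(c)$, so $c$ agrees with $\Psi\Phi(c)$. The one subtle point is that the universal index is what pins down the symmetric representative uniquely. Without it, $z$ would only be determined up to the kernel of $z\mapsto(z(i)+z(j))_{\Es}$, which is nonzero on bipartite components and is the source of the $\operatorname{bip}(u)$ correction mentioned elsewhere.

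For the facet consequence, assume $u$ is not affine. The face $\mathcal F(u)$ is nonempty because a tight permutation exists; indeed $(e,e)$ is an allowed edge. By \cref{lem:T}, $\dim\mathcal F(u)=2k-1-\dim W(u)=2k-1-\dim Z^*(u)$. A facet has dimension $2k-4$, so $I(u)$ is a facet iff $\dim Z^*(u)=3$. We always have $\spn\{\one,\iota,u\}\subseteq Z(u)\subseteq Z^*(u)$, and this span is $3$-dimensional when $u$ is not affine. Hence $\dim Z^*(u)=3$ iff $Z^*(u)=\spn\{\one,\iota,u\}$. There is no real obstacle here; the only care needed is the well-definedness of $\Phi$, which the universal vertex handles.
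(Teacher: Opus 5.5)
Your proof is correct and follows the paper's argument essentially step for step. You symmetrize the witnesses to $z=(\alpha+\beta)/2$, pin $z$ down via $z(e)=c(2e)/2$ and $z(j)=c(e+j)-z(e)$, build the inverse map from the fact that every anti-diagonal carries an allowed edge, and read off the facet condition from \cref{lem:T} and the independence of $\one,\iota,u$; your explicit remark that $(e,e)\in\Es(u)$ yields a tight permutation, so $\mathcal{F}(u)$ is nonempty and \cref{lem:T} applies, makes explicit a point the paper leaves implicit.
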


\begin{proof}
Given $c\in W(u)$ with $\alpha,\beta$, symmetry gives also $c(i+j)=\alpha(j)+\beta(i)$ on $\Es(u)$, so $z=(\alpha+\beta)/2$ satisfies $c(i+j)=z(i)+z(j)$ on $\Es(u)$. This $z$ is determined by $c$: $z(e)=c(2e)/2$ and $z(i)=c(e+i)-z(e)$. Conversely every $z\in Z^*(u)$ defines $c(t)$ on every $t$, because every anti-diagonal carries an edge. The two maps are linear and mutually inverse. If $u$ is not affine, $\one,\iota,u$ are independent, so $\dim W(u)\ge3$ and $\dim \mathcal{F}(u)\le 2k-4$, with equality iff $Z^*(u)=\spn\{\one,\iota,u\}$.
\end{proof}

\begin{corollary}\label{cor:affhull}
For $k\ge2$, $\dim\Ph(k)=2k-3$, and its affine hull is $\{\sum_tx(t)=k,\ \sum_tt\,x(t)=k(k-1)\}$.
\end{corollary}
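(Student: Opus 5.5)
We prove \cref{cor:affhull} by applying \cref{lem:T} with $u=0$. Both inclusions are needed: the affine hull of $\Ph(k)$ lies in the stated set, and its dimension equals $2k-3$.

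For the first inclusion, any doubly stochastic $Z$ has $\sum_tx(t)=\sum_{i,j}Z_{ij}=k$. It also has $\sum_t t\,x(t)=\sum_{i,j}(i+j)Z_{ij}=\sum_i i\cdot 1+\sum_j j\cdot 1=k(k-1)$, using the unit row and column sums. These two linear functionals are independent on $\R^{2k-1}$ whenever $2k-1\ge2$, that is, for $k\ge2$. So the affine hull of $\Ph(k)$ lies in an affine subspace of dimension $2k-3$.

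For the reverse inequality, take the zero potential $u=0$. Then $v\equiv0$, every pair is tight, and $I(u)$ reads $0\le0$, so $\mathcal{F}(0)=\Ph(k)$, which is nonempty. Every edge of the complete bipartite graph lies in some permutation, so $\Es(0)$ is the set of all pairs $(i,j)$. \Cref{lem:U} applies: take $e=0$; the pair $(0,0)$ is allowed, $0$ is adjacent to every index, and every anti-diagonal carries an edge. Hence $W(0)\cong Z^*(0)$.

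It remains to compute $Z^*(0)$, the space of $z$ such that $z(i)+z(j)$ depends only on $i+j$ over all pairs. Comparing the pairs $(i+1,j)$ and $(i,j+1)$, which lie on the same anti-diagonal, gives $z(i+1)-z(i)=z(j+1)-z(j)$ for all $i,j$. So $z$ has constant increments and is affine, and $Z^*(0)=\spn\{\one,\iota\}$, which has dimension $2$ for $k\ge2$. \Cref{lem:T} then gives $\dim\Ph(k)=(2k-1)-2=2k-3$. Combined with the first inclusion, this shows the affine hull is exactly the stated set.

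None of these steps is a real obstacle. The only point to check is that \cref{lem:T} is stated for a nonempty face, and the whole polytope counts as the face of the trivially valid inequality $I(0)$.
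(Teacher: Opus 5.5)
Your proposal is correct and is essentially the paper's proof. The paper also takes a constant potential, notes that $\Es(u)$ is complete, identifies $W(u)$ with $Z^*(u)$ via \cref{lem:U}, gets the affine sequences by comparing $(i,j+1)$ with $(i+1,j)$, and applies \cref{lem:T} to the face $\mathcal{F}(u)=\Ph(k)$, checking the two equations on permutations.
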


\begin{proof}
For constant $u$, $\Es(u)$ is the complete graph and $Z^*(u)$ is the space of affine sequences (compare $(i,j+1)$ with $(i+1,j)$). By \cref{lem:U} with any $e$, $\dim W(u)=2$, and \cref{lem:T} with $\mathcal{F}(u)=\Ph(k)$ gives the dimension; the two equations are valid for every permutation.
\end{proof}

\begin{lemma}[Coordinate facets]\label{lem:coord}
For $k\ge4$, each of the $2k-1$ inequalities $x(t)\ge0$ defines a facet of $\Ph(k)$, and these $2k-1$ facets are distinct.
\end{lemma}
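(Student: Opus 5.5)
Fix $t_0\in\{0,\dots,2k-2\}$. The face $\{x(t_0)=0\}$ of $\Ph(k)$ is the image under the X-ray map $A$ of the face of the Birkhoff polytope of doubly stochastic matrices supported on $G_{t_0}=\{(i,j):i+j\ne t_0\}$. I will compute its dimension with the argument of \cref{lem:T}, and I will not go through a potential $u$. \Cref{lem:T} only uses that the face is the set of doubly stochastic matrices supported on a given graph, and that this graph is the union of the supports of the permutations it contains.

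\textbf{Step 1: total support of $G_{t_0}$.} I must show that every edge $(i,j)$ with $i+j\ne t_0$ lies in a permutation avoiding the anti-diagonal $t_0$. Since $k\ge4$, I would argue directly. Fix the assignment $i\mapsto j$ and complete it on the remaining $(k-1)\times(k-1)$ grid, where each row has at most one forbidden column. Such a completion exists by Hall's theorem as soon as $k-1\ge2$, except in degenerate cases: each remaining row and column loses at most one entry, so a set of $r$ rows reaches at least $k-2$ columns, which is at least $r$ unless $r=k-1$. When $r=k-1$, every remaining column has some remaining row that is allowed to use it, because each column has at most one forbidden row and $k-1\ge2$.

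\textbf{Step 2: dimension.} Here $G_{t_0}$ is symmetric and contains the edges $(e,j)$ for all $j\ne t_0-e$. By the proof of \cref{lem:T}, $\dim\{x(t_0)=0\}\cap\Ph(k)=(2k-1)-\dim W$, where $W$ is the space of $c$ with $c(i+j)=\alpha(i)+\beta(j)$ on $G_{t_0}$. The coordinate $c(t_0)$ is unconstrained. On the remaining coordinates, symmetrizing as in \cref{lem:U} gives $c(i+j)=z(i)+z(j)$ for $i+j\ne t_0$. Consider the relations $z(i)+z(j+1)=z(i+1)+z(j)$ obtained by comparing $(i,j+1)$ and $(i+1,j)$. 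These hold for every pair of anti-diagonals other than $t_0$, and they force $z$ to be affine provided the graph of the relations "$z(i+1)-z(i)=z(j+1)-z(j)$" connects all increments. For $k\ge4$ each increment can be linked to another through some $t=i+j+1\ne t_0$, because at least two choices of $t$ are available. I would check this by a short case analysis on whether $t_0$ is at the ends or in the middle. Then $W$ consists of $c(t_0)$ together with the affine sequences, so $\dim W=3$ and the face has dimension $2k-4$, a facet. Nonemptiness is automatic once a permutation avoiding $t_0$ exists.

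\textbf{Step 3: distinctness.} Every facet contains a vertex, the X-ray of some permutation, with $x(t)>0$ for any prescribed $t\ne t_0$; Step 1 provides one containing an edge on anti-diagonal $t$. Hence the facets $\{x(t_0)=0\}$ and $\{x(t)=0\}$ differ.

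The main obstacle is Step 2, specifically the connectivity of the difference relations near the corner anti-diagonals $t_0\in\{0,1,2k-3,2k-2\}$. There the anti-diagonals are short, and one must make sure that enough overlapping squares remain to force all increments equal. This is presumably where $k\ge4$ is needed, since for $k=3$ some coordinate inequalities are not facets.
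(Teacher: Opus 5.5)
Your proposal is correct and follows the paper's route: total support of $K_{k,k}$ minus one anti-diagonal matching, then the dimension count of \cref{lem:T}, where comparing $(i,j+1)$ with $(i+1,j)$ leaves three free parameters; for distinctness you exhibit a permutation X-ray that lies on one facet but not the other, whereas the paper notes that two coordinate functionals are not positive multiples of each other modulo the affine hull. Your connectivity check in Step 2 needs no case analysis, and ``each increment can be linked to another'' is not by itself enough, so replace it with this argument: the only comparisons lost are between increments $i\ne j$ with $i+j=t_0-1$, which form a matching on the $k-1\ge3$ increments, and a complete graph on at least three vertices minus a matching is connected. The same observation shows that the corners $t_0\in\{0,1,2k-3,2k-2\}$ lose no comparison at all, so the obstruction sits in the middle, as the failure of $x(2)\ge0$ at $k=3$ shows.
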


\begin{proof}[Idea of the proof]
The face $\{x(t)=0\}$ is the X-ray image of the face of the Birkhoff polytope supported on the complete bipartite graph minus one anti-diagonal matching; every edge of this graph lies in a perfect matching, so the argument of \cref{lem:T} applies, and for $k\ge4$ the comparisons of $(i,j+1)$ with $(i+1,j)$ leave exactly three free parameters. Distinct $t$ give distinct facets, because two coordinate functionals are not positive multiples of each other modulo the affine hull. The proof is in \cref{sec:SM-coord}.
\end{proof}

\begin{lemma}[Block tools]\label{lem:K}
Let $[s,t]$ be a block of consecutive indices.
\begin{enumerate}[label=(B\arabic*)]
\item If $[s,t]\times[s,t]\subseteq\Es(u)$, every $z\in Z^*(u)$ is affine on $[s,t]$.
\item If $\Es(u)$ contains every $(i,j)\in[s,t]\times[s,t]$ with $i+j\ge s+t$, every $z\in Z^*(u)$ is affine on $[s,t]$.
\end{enumerate}
\end{lemma}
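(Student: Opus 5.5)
The plan rests on one observation: membership of $z$ in $Z^*(u)$ means $z(i)+z(j)$ depends only on $i+j$ along allowed edges. So whenever two allowed edges $(i,j)$ and $(i',j')$ satisfy $i+j=i'+j'$, we get the linear relation $z(i)+z(j)=z(i')+z(j')$. It therefore suffices to produce enough such relations to force the discrete second differences of $z$ on $[s,t]$ to vanish. That is, we will show $z(i+1)-z(i)=z(i)-z(i-1)$ for every $s<i<t$.

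For (B1), take any $i$ with $s<i<t$. All pairs inside $[s,t]\times[s,t]$ are allowed, so in particular $(i-1,i+1)$ and $(i,i)$ are allowed edges on the same anti-diagonal $2i$. This gives
\[
z(i-1)+z(i+1)=2z(i).
\]
This holds for every interior $i$, so the increments of $z$ are constant on $[s,t]$ and $z$ is affine there. Blocks with $t\le s+1$ are trivially affine.

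For (B2), we only have the pairs with $i+j\ge s+t$. The diagonal pairs $(i,i)$ with $2i<s+t$ are therefore unavailable, and we use instead comparisons involving the top index $t$. Both $(i,t)$ and $(i+1,t-1)$ lie in the allowed region whenever $s\le i$ and $i+1\le t$, since their anti-diagonal $i+t\ge s+t$. This yields
\[
z(i)+z(t)=z(i+1)+z(t-1),
\]
so $z(i+1)-z(i)=z(t)-z(t-1)$ for every $s\le i\le t-1$. Hence all consecutive increments of $z$ on $[s,t]$ equal the single constant $z(t)-z(t-1)$, and $z$ is affine on $[s,t]$. For $i=t-1$ the relation is a tautology, which is harmless.

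The only point needing care is the index bookkeeping in (B2): one must check that each pair used lies in $[s,t]\times[s,t]$ and has sum at least $s+t$. For the pair $(i+1,t-1)$ this requires $t-1\ge s$, which holds whenever $t>s$. This is routine, so no real obstacle is expected. The same comparison also proves (B1), because full allowedness implies the hypothesis of (B2). I would therefore present (B2) first and derive (B1) from it.
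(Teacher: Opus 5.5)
Your proof is correct and essentially the paper's: for (B2) the paper uses exactly your comparison of $(i,t)$ with $(i+1,t-1)$, and your (B1) comparison of $(i-1,i+1)$ with $(i,i)$ is the special case $j=i-1$ of the paper's comparison of $(i,j+1)$ with $(i+1,j)$. Your remark that (B1) also follows directly from (B2) is likewise valid.
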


\begin{proof}
(B1) Compare $(i,j+1)$ with $(i+1,j)$. (B2) Compare $(i,t)$ with $(i+1,t-1)$; both have sum $i+t\ge s+t$.
\end{proof}

%% file: b3_criterion.tex
\section{The facet criterion and the fiber of a facet}\label{sec:criterion}
We view a symmetric set $G$ of pairs as a graph on $\{0,\dots,k-1\}$ with an edge $\{i,j\}$ for every $(i,j)\in G$, a loop at $i$ when $(i,i)\in G$. A connected component of this graph is \emph{bipartite} if it contains no closed walk of odd length; a loop is a closed walk of length one, and an isolated vertex is a bipartite component. Let $\operatorname{bip}(u)$ be the number of bipartite components of the allowed graph $\Es(u)$, and
\[
Y^*(u)=\bigl\{y\in\R^k:\ y(i)+y(j)=0\ \text{for every }(i,j)\in\Es(u)\bigr\}.
\]
Then $Y^*(u)\subseteq Z^*(u)$, and $\dim Y^*(u)=\operatorname{bip}(u)$: along an edge $y(j)=-y(i)$, so on a component $y$ is determined by its value at one vertex $i_0$, a walk of length $\ell$ from $i_0$ to $j$ gives $y(j)=(-1)^\ell y(i_0)$, and these values are consistent for an arbitrary $y(i_0)$ if the component is bipartite, while a closed walk of odd length through a vertex $i$ forces $y(i)=-y(i)=0$ otherwise.

\begin{theorem}[Empty anti-diagonals]\label{thm:empty}
Let $k\ge2$ and let $u$ be a potential.
\begin{enumerate}[label=(\alph*)]
\item No inequality $I(u)$ defines a facet of the form $\Ph(k)\cap\{x(t)=0\}$.
\item If some anti-diagonal $t$ carries no edge of $\Es(u)$, then $I(u)$ does not define a facet of $\Ph(k)$.
\end{enumerate}
\end{theorem}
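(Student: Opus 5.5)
The plan for (a) is to compare facet normals and then reach a contradiction through the tight permutations. For (b) the plan is to reduce to (a) by containment of faces.

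For (a), suppose $I(u)$ defines the facet $\mathcal F=\Ph(k)\cap\{x(t)=0\}$. The first step is to identify the tight permutations. A permutation $\sigma$ is tight iff its X-ray lies in $\mathcal F(u)=\mathcal F$, i.e. iff $\sigma$ has no pair on the anti-diagonal $t$. So every permutation avoiding anti-diagonal $t$ is tight. The second step is a combinatorial check: for $k\ge3$, every pair $(i,j)$ with $i+j\ne t$ extends to a permutation avoiding anti-diagonal $t$. I would argue this by completing $\{(i,j)\}$ inside $K_{k-1,k-1}$ minus a partial matching, using Hall's theorem or a direct cyclic-shift construction as in \cref{lem:coord}. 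The cases $k=2$ and $k=3$ can be checked by hand; for $k=2$ one of the two coordinate faces is a single point, so there is little to check. Consequently $u(i)+u(j)=v(i+j)$ for all $i+j\ne t$.

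Next I use the normals. Two inequalities defining the same facet agree up to a positive multiple modulo the affine hull $\{\sum_t x(t)=k,\ \sum_t t\,x(t)=k(k-1)\}$ of \cref{cor:affhull}. Comparing with the coordinate inequality $-x(t)\le0$, this gives $v(s)=a+bs-\lambda\,\delta_{s,t}$ for some $a,b$ and some $\lambda>0$. Take a minimizing pair $(i,j)$ with $i+j=t$, and choose $i'\ne i$, $j'\ne j$ such that $i+j'$, $i'+j$, $i'+j'$ all differ from $t$. This choice is possible for $k\ge3$ with a small case check; it is the fiddly step. Then
\[
v(t)=u(i)+u(j)=\bigl(u(i)+u(j')\bigr)+\bigl(u(i')+u(j)\bigr)-\bigl(u(i')+u(j')\bigr).
\]
The three sums on the right are affine in the index sum, so the right-hand side equals $a+bt$. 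This contradicts $\lambda>0$.

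For (b), if anti-diagonal $t$ carries no allowed edge, then no tight permutation uses anti-diagonal $t$. The face $\mathcal F(u)$ is the convex hull of the X-rays of the tight permutations, so $\mathcal F(u)\subseteq\{x(t)=0\}$. The face $\Ph(k)\cap\{x(t)=0\}$ is proper, because some permutation meets every anti-diagonal. If $\mathcal F(u)$ were a facet, it would be a facet contained in a proper face, hence equal to $\Ph(k)\cap\{x(t)=0\}$, which (a) excludes. (For $k=2$, where some coordinate face could be empty, I would verify the claim directly.) I expect the main obstacle to be the extension of pairs to permutations avoiding anti-diagonal $t$, together with the choice of $i',j'$, in the small cases.
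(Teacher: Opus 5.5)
Your proof is correct. Part (b) is the paper's argument: tight permutations avoid $t$, so $\mathcal F(u)\subseteq\Ph(k)\cap\{x(t)=0\}$. A facet inside a proper face equals that face, and (a) excludes this.

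Part (a) takes a different route from the paper.

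\begin{itemize}
\item \emph{The paper's route.} It normalizes by \cref{lem:A} so that $v=-\lambda\ind{s=t}$ and then uses only validity. The inequalities $u(i)+u(j)\ge v(i+j)=0$ at pairs off the anti-diagonal $t$ force $u\ge0$, hence $v(t)\ge0$, contradicting $\lambda>0$.
\item \emph{Your route.} You use the equality structure of the facet instead. The tight permutations are exactly the permutations avoiding $t$. A Hall argument (the content of \cref{lem:coord}) then makes every pair off $t$ tight. Finally, the exchange identity on the rectangle $(i,j),(i,j'),(i',j),(i',j')$ pins $v(t)$ to $a+bt$.
\end{itemize}

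The paper's route is shorter and needs no matching lemma. Yours needs $k\ge3$ for the extension step. In exchange, it treats a minimizing pair $(t/2,t/2)$ like any other. A sign argument based on loops must handle the index $t/2$ separately, because its loop lies on the anti-diagonal $t$.

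Three small corrections.

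\begin{itemize}
\item The step you call fiddly is immediate. Since $i'\ne i$ and $j'\ne j$, the sums $i+j'$ and $i'+j$ differ from $t$ automatically. Only $j'\ne t-i'$ must be arranged, which is possible once $k\ge3$.
\item The sentence ``some permutation meets every anti-diagonal'' is false as written, since a permutation meets at most $k$ of the $2k-1$ anti-diagonals. What you need is that for the given $t$ some permutation meets $t$. This holds because any pair $(i,t-i)$ extends to a permutation.
\item For $k=2$ every potential is affine, so $I(u)$ holds with equality on all of $\Ph(2)$ and defines no facet. This settles both parts for $k=2$ at once. Also, no coordinate face of $\Ph(2)$ is empty: each is one of the two endpoints of the segment.
\end{itemize}
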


\begin{proof}[Idea of the proof]
(a) If $I(u)$ defined the facet $\{x(t)=0\}$, then after an affine change $v=-\lambda\ind{s=t}$ with $\lambda>0$, and the inequalities $u(i)+u(j)\ge v(i+j)$ at pairs avoiding the anti-diagonal $t$ force $u\ge0$ and hence $v(t)\ge0$. (b) The face of $I(u)$ is spanned by X-rays of tight permutations, so it lies in the proper face $\{x(t)=0\}$, which is a facet by (a) only if the two coincide. The proof is in \cref{sec:SM-empty}.
\end{proof}

So, for every potential, a single anti-diagonal without allowed edges certifies that $I(u)$ is not a facet. The other condition of the criterion below concerns potentials that expose the same face.

\begin{theorem}[A symmetric dual potential]\label{thm:symdual}
Let $u$ have a tight permutation, and suppose that every anti-diagonal carries an edge of $\Es(u)$. Then there is a potential $w$ with $\sum_iw(i)=\sum_iu(i)$ and $w(i)+w(j)\ge v(i+j)$ for all $i,j$, with equality if and only if $(i,j)\in\Es(u)$. Every such $w$ has $\min_{i+j=t}\bigl(w(i)+w(j)\bigr)=v(t)$ for all $t$, so $I(w)$ is the inequality $I(u)$; moreover $\cE(w)=\Es(u)$, the tight graph of $w$ has total support, and $u-w\in Y^*(u)$.
\end{theorem}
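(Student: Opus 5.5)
The plan is to read the statement as strict complementarity for the assignment LP whose cost is $c(i,j)=v(i+j)$. The primal is to maximize $\sum_{i,j}v(i+j)Z_{ij}$ over doubly stochastic $Z$. The dual is to minimize $\sum_i\alpha(i)+\sum_j\beta(j)$ subject to $\alpha(i)+\beta(j)\ge v(i+j)$. The pair $\alpha=\beta=u$ is dual feasible, since $u(i)+u(j)\ge v(i+j)$ by the definition of $v$. Because $u$ has a tight permutation, that permutation matrix attains $2\sum_iu(i)$, so both optima equal $2\sum_iu(i)$. By the proof of \cref{lem:T}, the optimal primal face is the convex hull of the tight permutation matrices, and it equals the set of doubly stochastic matrices supported on $\Es(u)$. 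So the union of the supports of the optimal primal solutions is exactly $\Es(u)$: every allowed edge lies in a tight permutation, and every tight permutation uses only allowed edges.

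Next I would invoke the Goldman--Tucker strict complementarity theorem for this finite LP. It gives an optimal dual pair $(\alpha,\beta)$ with $\alpha(i)+\beta(j)=v(i+j)$ exactly on the edges that are positive in some optimal primal solution, that is on $\Es(u)$, and with strict inequality elsewhere. This is the step where something substantive is used, and I expect it to be the main obstacle. If one prefers to avoid the general theorem, the fallback is the standard argument: for each non-allowed pair $(i,j)$, the LP maximizing the slack $\alpha(i)+\beta(j)-v(i+j)$ over the optimal dual face has positive value, because otherwise duality would put a positive optimal primal weight on $(i,j)$. Averaging these optimal duals over all such pairs then gives one dual that is strict at every non-allowed pair simultaneously.

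Then I would symmetrize. The cost $v(i+j)$ is symmetric and $\Es(u)$ is symmetric by \cref{lem:U}, so $(\beta,\alpha)$ is also an optimal, strictly complementary dual. Put $w=(\alpha+\beta)/2$. Then $w(i)+w(j)$ is the average of $\alpha(i)+\beta(j)$ and $\beta(i)+\alpha(j)$. This average is at least $v(i+j)$, it equals $v(i+j)$ on $\Es(u)$, and it is strictly larger off $\Es(u)$, because both terms are strict there. Moreover $\sum_i w(i)=\tfrac12(\sum\alpha+\sum\beta)=\sum_iu(i)$.

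The remaining assertions follow directly.
\begin{enumerate}[label=(\roman*)]
\item Fix an anti-diagonal $t$ and any $w$ with the stated properties. Every pair on it has $w(i)+w(j)\ge v(t)$, and by hypothesis some edge of $\Es(u)$ lies on it and attains $v(t)$. So $\min_{i+j=t}\bigl(w(i)+w(j)\bigr)=v(t)$.
\item Since $w\square w=v$ and $2\sum_iw(i)=2\sum_iu(i)$, the inequality $I(w)$ is literally $I(u)$.
\item By (i), $\cE(w)$ is the set of pairs where equality holds, which is $\Es(u)$.
\item The tight permutations of $w$ are the permutations inside $\Es(u)$. Every edge of $\Es(u)$ lies in a tight permutation of $u$, and such a permutation lies inside $\Es(u)$. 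Hence $\cE(w)$ has total support.
\item On $\Es(u)$ we have $u(i)+u(j)=v(i+j)=w(i)+w(j)$, so $u-w\in Y^*(u)$.
\end{enumerate}
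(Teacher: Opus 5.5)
Your proof is correct. It differs from the paper in how it gets the key object, a dual pair $(\alpha,\beta)$ that is tight exactly on $\Es(u)$.

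You take this pair from the Goldman--Tucker strict complementarity theorem, or from its Farkas-plus-averaging proof. The paper's sketch builds it explicitly instead. It fixes a tight permutation $\sigma$ and takes $h$ constant on the strong components of the switching digraph $D_\sigma$ of \cref{lem:D}, ordered by a linear extension of the component order, so that $h$ decreases along arcs between different components. It then sets $\alpha=u+\varepsilon h$ and $\beta=u-\varepsilon\, h\circ\sigma^{-1}$ for small $\varepsilon>0$.

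For a tight edge $(i,j)$ the slack changes by $\varepsilon\bigl(h(i)-h(\sigma^{-1}(j))\bigr)$. This is zero when $i$ and $\sigma^{-1}(j)$ lie in one strong component, which is exactly when $(i,j)\in\Es(u)$, and positive otherwise. Pairs that were already slack stay slack for small $\varepsilon$. The objective does not change, because $\sum_i h(i)=\sum_i h(\sigma^{-1}(i))$. From there both arguments symmetrize to $w=(\alpha+\beta)/2$ and verify the remaining claims the same way you do.

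What each route buys:
\begin{itemize}
\item Your route is shorter and works for any assignment cost without looking at the graph. It rests on a general LP theorem and gives no formula for $w$.
\item The paper's route is elementary and stays inside the matching machinery used elsewhere (\cref{lem:D,lem:strong}, \cref{thm:H}). It produces $w=u+\tfrac{\varepsilon}{2}\bigl(h-h\circ\sigma^{-1}\bigr)$, which is arbitrarily close to $u$ and gives $u-w\in Y^*(u)$ explicitly.
\end{itemize}
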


\begin{proof}[Idea of the proof]
Perturb $u$ by $\varepsilon h$ in the rows and by $-\varepsilon h\circ\sigma^{-1}$ in the columns, where $h$ is a linear extension of the order of the strong components of the switching digraph $D_\sigma$ of a tight permutation $\sigma$; this keeps the allowed pairs tight and makes the other tight pairs slack, and symmetrizing gives $w$. The remaining claims follow because $\Es(u)$ meets every anti-diagonal and contains the tight permutations of $u$. The proof is in \cref{sec:SM-symdual}.
\end{proof}

The potential $w$ is a strictly complementary dual solution of the assignment problem with weights $v(i+j)$, symmetrized. \Cref{thm:symdual} says that a failure of total support, when it does not leave an anti-diagonal empty, is invisible in the inequality: $I(u)$ is also $I(w)$ for a potential $w$ with total support, and $u$ and $w$ differ by an element of $Y^*(u)$.

\begin{theorem}[Facet criterion]\label{thm:F}
Let $u$ be a potential with a tight permutation. Then $I(u)$ defines a facet of $\Ph(k)$ if and only if
\begin{enumerate}[label=(\roman*)]
\item every anti-diagonal carries an edge of $\Es(u)$, and
\item $\dim Z^*(u)-\operatorname{bip}(u)=3$.
\end{enumerate}
Under (i), $\dim\mathcal{F}(u)=(2k-1)-\bigl(\dim Z^*(u)-\operatorname{bip}(u)\bigr)$.
\end{theorem}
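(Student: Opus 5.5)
The plan is to compute $\dim W(u)$ directly and feed it into \cref{lem:T}. We do not go through \cref{lem:U}, since an index adjacent to every other index need not exist.

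First, necessity of (i). If (i) fails, \cref{thm:empty}(b) shows that $I(u)$ is not a facet. So from now on we assume (i); it then suffices to prove the dimension formula, because a facet has dimension $2k-4$ exactly when $\dim Z^*(u)-\operatorname{bip}(u)=3$.

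Since a tight permutation exists, $\mathcal{F}(u)$ is nonempty, and \cref{lem:T} gives $\dim\mathcal{F}(u)=(2k-1)-\dim W(u)$. It therefore remains to show that $\dim W(u)=\dim Z^*(u)-\operatorname{bip}(u)$.

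Define a linear map $\Phi:Z^*(u)\to W(u)$ as follows. For $z\in Z^*(u)$, let $c(t)=z(i)+z(j)$ for any $(i,j)\in\Es(u)$ with $i+j=t$. This is well defined by the definition of $Z^*(u)$, and it is defined for every $t$ by (i). Then $c\in W(u)$ with $\alpha=\beta=z$.

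\emph{Surjectivity.} Let $c\in W(u)$ with witnesses $\alpha,\beta$. Since $\Es(u)$ is symmetric (\cref{lem:U}), the same $c$ also satisfies $c(i+j)=\beta(i)+\alpha(j)$ on $\Es(u)$. Averaging the two identities, $z=(\alpha+\beta)/2$ satisfies $z(i)+z(j)=c(i+j)$ on $\Es(u)$. Hence $z\in Z^*(u)$ and $\Phi(z)=c$.

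\emph{Kernel.} We have $z\in\ker\Phi$ exactly when $z(i)+z(j)=0$ for every $(i,j)\in\Es(u)$, that is, when $z\in Y^*(u)$. By the computation before \cref{thm:empty}, $\dim Y^*(u)=\operatorname{bip}(u)$. Rank–nullity then gives $\dim W(u)=\dim Z^*(u)-\operatorname{bip}(u)$, which is the stated formula.

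The main point requiring care is the kernel identification. A component of the allowed graph consisting of a single isolated vertex cannot occur, because the tight permutation covers every index. So every bipartite component genuinely carries edges, and the count $\dim Y^*(u)=\operatorname{bip}(u)$ is exactly what \cref{lem:U} missed when it assumed a universal looped vertex.

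Finally, note that $Y^*(u)\subseteq Z^*(u)$ holds because sums along edges vanish, so in particular they depend only on $i+j$. The dimension count is therefore consistent, and no further obstacle is expected.
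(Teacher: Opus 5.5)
Your proof is correct and follows the paper's argument essentially step for step. Necessity of (i) comes from \cref{thm:empty}(b); then \cref{lem:T} is combined with the map $z\mapsto c$, $c(i+j)=z(i)+z(j)$ on $\Es(u)$, which is onto $W(u)$ by symmetrizing the witnesses to $(\alpha+\beta)/2$ and has kernel $Y^*(u)$ of dimension $\operatorname{bip}(u)$.

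Your closing remark about isolated vertices is unnecessary, because the count $\dim Y^*(u)=\operatorname{bip}(u)$ already treats an isolated vertex as a bipartite component. Also, \cref{lem:U} did not miss anything: under its hypotheses $\operatorname{bip}(u)=0$, and the theorem simply removes those hypotheses.
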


\begin{proof}
Condition (i) is necessary by \cref{thm:empty}(b); assume it. By \cref{lem:T}, $\dim\mathcal{F}(u)=(2k-1)-\dim W(u)$. For $z\in Z^*(u)$ let $\Lambda z\in\R^{2k-1}$ be given by $(\Lambda z)(i+j)=z(i)+z(j)$ for $(i,j)\in\Es(u)$; this is well defined by the definition of $Z^*(u)$ and by (i). Clearly $\Lambda z\in W(u)$, with $\alpha=\beta=z$. Conversely, if $c\in W(u)$ with witnesses $\alpha,\beta$, then, $\Es(u)$ being symmetric, also $c(i+j)=\alpha(j)+\beta(i)$ on $\Es(u)$, so $z=(\alpha+\beta)/2$ satisfies $c(i+j)=z(i)+z(j)$ on $\Es(u)$; hence $z\in Z^*(u)$ and, by (i), $c=\Lambda z$. So $W(u)=\Lambda(Z^*(u))$. The kernel of $\Lambda$ consists of the $z$ with $z(i)+z(j)=0$ on $\Es(u)$, which is $Y^*(u)$, of dimension $\operatorname{bip}(u)$. Hence $\dim W(u)=\dim Z^*(u)-\operatorname{bip}(u)$, which gives the formula, and $\mathcal{F}(u)$ is a facet iff this number is $3$.
\end{proof}

The number $\dim Z^*(u)-\operatorname{bip}(u)$ is always at least $2$, because $\Lambda\one=2\cdot\one$ and $(\Lambda\iota)(t)=t$ are independent; it equals $2$ exactly when $\mathcal{F}(u)=\Ph(k)$. When $\Es(u)$ contains a loop $(e,e)$ at an index $e$ that is adjacent in $\Es(u)$ to every index, the allowed graph is connected and not bipartite, so $\operatorname{bip}(u)=0$, and \cref{thm:F} reduces to \cref{lem:U}.

\begin{proposition}\label{prop:raybip}
Let $u$ be a ray with a tight permutation. Then the tight graph $\cE(u)$, viewed as a graph, has no bipartite component.
\end{proposition}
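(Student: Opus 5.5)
The plan is to argue by contradiction: a bipartite component of $\cE(u)$ produces an element of $Z(u)$ that is not in $\spn\{\one,\iota,u\}$.

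\emph{The test vector.} Suppose $\cE(u)$ has a bipartite component $C$, and $2$-colour it as $C=C_+\sqcup C_-$, so that every edge of $C$ joins $C_+$ to $C_-$. A loop would be an odd closed walk, so $C$ has no loops. Define $y\in\R^k$ by $y=1$ on $C_+$, $y=-1$ on $C_-$ and $y=0$ off $C$. Then $y(i)+y(j)=0$ for every $(i,j)\in\cE(u)$:
\begin{itemize}[leftmargin=*]
\item an edge inside $C$ joins opposite colours;
\item an edge with both ends outside $C$ has both values $0$;
\item no edge joins $C$ to its complement, since $C$ is a component.
\end{itemize}
In particular $z(i)+z(j)$ depends only on $i+j$ along $\cE(u)$ for $z=y$, so $y\in Z(u)$. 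Also $y\neq0$, because $C$ is nonempty.

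\emph{Using the ray hypothesis.} Since $u$ is a ray, $y=a\one+b\iota+cu$ for some scalars $a,b,c$. Every anti-diagonal $t$ carries a tight edge $(i,j)$. Evaluating $y(i)+y(j)=0$ on it gives
\[
2a+bt+c\,v(t)=0,\qquad t=0,\dots,2k-2.
\]
Note $k\ge2$, since for $k=1$ every potential is affine.
\begin{itemize}[leftmargin=*]
\item If $c=0$, then $2a+bt=0$ for all $2k-1\ge3$ values of $t$. This forces $a=b=0$, hence $y=0$, a contradiction.
\item If $c\neq0$, then $v$ is affine, say $v(t)=p+qt$.
\end{itemize}

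\emph{Main step: $v$ affine forces $u$ affine.} Here the tight permutation is used. Put $w=u-\tfrac p2\one-q\iota$. By \cref{lem:A}, $w$ has $w\,\square\,w\equiv0$, so $w(i)+w(j)\ge0$ for all $i,j$. Taking $i=j$ gives $w\ge0$. Let $\sigma$ be a tight permutation, which is also tight for $w$ by \cref{lem:A}. Summing equality over its pairs,
\[
2\sum_iw(i)=\sum_i\bigl(w(i)+w(\sigma(i))\bigr)=0.
\]
Since $w\ge0$, this gives $w=0$. So $u$ is affine, contradicting the definition of a ray.

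Hence no bipartite component exists. The only delicate point is this last step: without a tight permutation, "$v$ affine" need not force $u$ affine. The argument should be checked carefully there, in particular that $\sigma$ remains tight after the affine change, which \cref{lem:A} guarantees.
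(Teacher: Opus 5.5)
Your proof is correct and is essentially the paper's argument: a nonzero $y$ with $y(i)+y(j)=0$ on $\cE(u)$ lies in $Z(u)=\spn\{\one,\iota,u\}$, a nonzero $u$-coefficient forces $v$ to be affine, and then $2u(i)\ge v(2i)$ together with a tight permutation forces $u$ to be affine. The only differences are cosmetic: you build $y$ explicitly from a $2$-colouring of the component rather than citing the dimension count of the solution space, and you normalize via \cref{lem:A} rather than comparing $u$ with the affine function $\ell$ directly.
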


\begin{proof}
Let $y(i)+y(j)=0$ for every $(i,j)\in\cE(u)$; we show $y=0$, which is the claim by the dimension count at the beginning of \cref{sec:criterion} applied to $\cE(u)$. Such a $y$ lies in $Z(u)$, so $y=\alpha+\beta\iota+\gamma u$. Suppose $\gamma\ne0$. Every anti-diagonal carries a tight edge, and on it $0=y(i)+y(j)=2\alpha+\beta t+\gamma v(t)$, so $v(t)=a+bt$ is affine. Put $\ell(i)=a/2+bi$, so that $\ell(i)+\ell(j)=a+b(i+j)=v(i+j)$. Taking $i=j$ in $u(i)+u(j)\ge v(i+j)$ gives $2u(i)\ge v(2i)=2\ell(i)$, so $u\ge\ell$. For a tight permutation $\sigma$, $\sum_i\bigl(u(i)+u(\sigma(i))\bigr)=\sum_iv(i+\sigma(i))=\sum_i\bigl(\ell(i)+\ell(\sigma(i))\bigr)$, so $\sum_i(u-\ell)(i)=0$, hence $u=\ell$ is affine, which a ray is not. So $\gamma=0$, and $2\alpha+\beta t=0$ on every anti-diagonal $t$, so $\alpha=\beta=0$.
\end{proof}

\begin{theorem}[Ray plus total support]\label{thm:R}
Let $u$ be a ray whose tight graph has total support. Then $I(u)$ defines a facet of $\Ph(k)$.
\end{theorem}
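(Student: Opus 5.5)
The plan is to apply the facet criterion \cref{thm:F} directly and check its two conditions. By the definition of total support, $u$ has a tight permutation and $\Es(u)=\cE(u)$. Hence $Z^*(u)=Z(u)$, and since $u$ is a ray this space is $\spn\{\one,\iota,u\}$, of dimension $3$ because $u$ is not affine.

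Condition (i) of \cref{thm:F} is immediate. Every anti-diagonal carries a tight edge, namely a minimizing pair for $v(t)$, as noted in \cref{sec:tight}. Since $\Es(u)=\cE(u)$, that edge is allowed.

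For condition (ii) we need $\operatorname{bip}(u)=0$, that is, $\Es(u)$ has no bipartite component. Because $\Es(u)=\cE(u)$, this is exactly \cref{prop:raybip}: a ray with a tight permutation has a tight graph without bipartite components. So $\dim Z^*(u)-\operatorname{bip}(u)=3-0=3$.

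Both conditions hold, so \cref{thm:F} shows that $I(u)$ defines a facet. There is no real obstacle here. The only point that needs care is using the identity $\Es(u)=\cE(u)$ to transfer \cref{prop:raybip}, which is stated for $\cE(u)$, to the allowed graph $\Es(u)$ that \cref{thm:F} requires.
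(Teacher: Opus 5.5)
Your proposal is correct and is essentially the paper's proof: total support gives a tight permutation and $\Es(u)=\cE(u)$, so every anti-diagonal carries an allowed edge and $Z^*(u)=Z(u)=\spn\{\one,\iota,u\}$ has dimension $3$. Then \cref{prop:raybip} gives $\operatorname{bip}(u)=0$, and \cref{thm:F} applies.
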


\begin{proof}
Total support gives a tight permutation and $\Es(u)=\cE(u)$. Every anti-diagonal carries a tight edge, hence an allowed one, and $Z^*(u)=Z(u)=\spn\{\one,\iota,u\}$ has dimension $3$. By \cref{prop:raybip}, $\operatorname{bip}(u)=0$, and \cref{thm:F} applies.
\end{proof}

\Cref{thm:R} needs no universal vertex. When there is one, \cref{lem:TS} gives total support and \cref{lem:U} computes $Z(u)$; the proofs of \cref{thm:S} below are of this kind.

\begin{corollary}\label{cor:facetbip}
If $I(u)$ defines a facet and the tight graph of $u$ does not have total support, then $\operatorname{bip}(u)\ge1$ and $\dim Z^*(u)\ge4$.
\end{corollary}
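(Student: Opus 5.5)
The plan is to combine \cref{thm:F} with \cref{thm:symdual}. First I need a tight permutation. A facet is a nonempty face of $\Ph(k)$, and every face is spanned by vertices of $\Ph(k)$, which are X-rays of permutation matrices. So some permutation $\sigma$ attains equality in $I(u)$, and $\sigma$ is tight. Then \cref{thm:F} applies. It gives condition (i): every anti-diagonal carries an edge of $\Es(u)$. It also gives the identity $\dim Z^*(u)-\operatorname{bip}(u)=3$.

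Next I show $\operatorname{bip}(u)\ge1$ by contradiction. Suppose $\operatorname{bip}(u)=0$. Then $Y^*(u)=0$, since $\dim Y^*(u)=\operatorname{bip}(u)$ by the count at the start of \cref{sec:criterion}. A tight permutation exists and (i) holds, so \cref{thm:symdual} supplies a potential $w$ with $u-w\in Y^*(u)=0$, that is, $w=u$. The same theorem says $\cE(w)=\Es(u)$ and that the tight graph of $w$ has total support. With $w=u$ this reads $\cE(u)=\Es(u)$ together with the existence of a tight permutation, which is total support for $u$. This contradicts the hypothesis, so $\operatorname{bip}(u)\ge1$.

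Finally, the identity from \cref{thm:F} gives $\dim Z^*(u)=3+\operatorname{bip}(u)\ge4$. The only point needing care is the first step: the tight permutation must come from the facet being nonempty, because \cref{thm:F} and \cref{thm:symdual} both assume one. Everything after that is bookkeeping with results already proved.
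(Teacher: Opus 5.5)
Your proposal is correct and follows the paper's proof essentially step for step: you get a tight permutation from the nonempty facet, you use \cref{thm:symdual} with $Y^*(u)=0$ to force $u=w$ and hence total support, and you finish with \cref{thm:F}(ii). The only cosmetic difference is that you take condition (i) from \cref{thm:F} rather than directly from \cref{thm:empty}(b), which is where \cref{thm:F} gets it anyway.
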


\begin{proof}
A facet is nonempty, so $u$ has a tight permutation, and every anti-diagonal carries an allowed edge by \cref{thm:empty}(b). Take $w$ from \cref{thm:symdual}. If $\operatorname{bip}(u)=0$, then $Y^*(u)=0$, so $u=w$, whose tight graph has total support, contrary to the hypothesis. Then \cref{thm:F}(ii) gives $\dim Z^*(u)=3+\operatorname{bip}(u)\ge4$.
\end{proof}

By \cref{rem:fan} only rays matter. For $6\le k\le12$ a ray gives a facet exactly when its tight graph has total support: this was checked on all 113,842 rays (\cref{sec:census}). The converse of \cref{thm:R} is nevertheless false in general.

\begin{example}\label{ex:noTS}
Let $k=19$, $O=\{2,4,5,7,9,10,11,12,13\}$ and $\chi=\ind{i\in O}$, so that $\chi$ vanishes on $O^c=\{0,1,3,6,8,14,15,16,17,18\}$. The sums of two elements of $O^c$ cover $\{0,\dots,36\}$ except $5,10,13,27$, and these four are sums of an element of $O^c$ and an element of $O$ ($5=0+5$, $10=0+10$, $13=0+13$, $27=16+11$). Hence $v(t)=1$ for $t\in\{5,10,13,27\}$ and $v(t)=0$ otherwise. A tight pair $(11,j)$ needs $1+\chi(j)=v(11+j)\le1$, so $\chi(j)=0$ and $11+j\in\{13,27\}$; since $\chi(2)=1$, the only tight neighbor of $11$ is $16$. So every tight permutation exchanges $11$ and $16$, and the $19$ tight pairs that contain $16$ but not $11$ lie in no tight permutation: the tight graph does not have total support. By exact computation, $\chi$ is a ray ($\dim Z(\chi)=3$), every anti-diagonal carries an allowed edge, and the face of $I(\chi)$ has dimension $34=2k-4$, so $I(\chi)$ defines a facet of $\Ph(19)$. In the terms of \cref{thm:F}, the edge $\{11,16\}$ is a bipartite component of $\Es(\chi)$, $\operatorname{bip}(\chi)=1$ and $\dim Z^*(\chi)=4$. After adding $\iota$, $\chi$ has increments in $\{0,1,2\}$, so it has height two. \Cref{prop:cubical} below proves all of these statements, for $f=1$, without computation (proof in \cref{sec:SM-cubical}).
\end{example}

\begin{lemma}[Distinct rays, distinct facets]\label{lem:distinct}
Let $u$ be a ray whose tight graph has total support, and let $u'$ be a potential that is not affine. Then $I(u)$ and $I(u')$ define the same face of $\Ph(k)$ if and only if $u'=\alpha+\beta\iota+\gamma u$ with $\alpha,\beta\in\R$ and $\gamma>0$. Equivalently, with the increments $d(i)=u(i+1)-u(i)$, $0\le i\le k-2$, and the \emph{pattern}
\[
p(u)=\frac{d-\min d}{\max d-\min d}\in[0,1]^{k-1},
\]
they define the same face if and only if $p(u)=p(u')$.
\end{lemma}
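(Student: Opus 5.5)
The "if" direction follows from \cref{lem:A} and positive scaling. The "only if" direction recovers $u'$ from the shared face: the two inequalities have the same tight permutations, so $u'$ lies in $Z(u)$, which for a ray is $\spn\{\one,\iota,u\}$. The pattern formulation is then a reparametrization.

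\emph{The ``if'' direction.} If $u'=\alpha+\beta\iota+\gamma u$ with $\gamma>0$, then $\gamma u$ has $v_{\gamma u}=\gamma v$. So $I(\gamma u)$ is $\gamma$ times $I(u)$ and exposes the same face. By \cref{lem:A}, adding $\alpha+\beta\iota$ does not change the face either.

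\emph{The ``only if'' direction.} By \cref{thm:R}, $\mathcal{F}(u)$ is a facet; in particular it is nonempty, so it contains the X-rays of tight permutations. I would argue as follows.
\begin{enumerate}
\item A permutation $\sigma$ is tight for $u$ iff its X-ray lies in $\mathcal{F}(u)$, because equality in $I(u)$ at the X-ray of $\sigma$ is exactly tightness of $\sigma$. The same holds for $u'$.
\item If $\mathcal{F}(u)=\mathcal{F}(u')$, then by step 1 the two potentials have the same tight permutations. Hence $\Es(u')=\Es(u)$.
\item By total support, $\cE(u)=\Es(u)=\Es(u')\subseteq\cE(u')$.
\item On $\cE(u')$ we have $u'(i)+u'(j)=v'(i+j)$, which depends only on $i+j$. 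So this also holds on the smaller set $\cE(u)$, that is, $u'\in Z(u)$.
\item Since $u$ is a ray, $Z(u)=\spn\{\one,\iota,u\}$, so $u'=\alpha+\beta\iota+\gamma u$. Since $u'$ is not affine, $\gamma\neq0$.
\end{enumerate}

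\emph{Excluding $\gamma<0$ (the main obstacle).} This is the one step that needs an actual argument. Suppose $\gamma<0$. Then on an anti-diagonal $t$ the minimizers of $u'(i)+u'(j)=2\alpha+\beta t+\gamma(u(i)+u(j))$ are the pairs that \emph{maximize} $u(i)+u(j)$. Every anti-diagonal carries an edge of $\cE(u)$, which is a minimizing pair for $u$. By step 3 that edge lies in $\cE(u')$, so it is also a maximizing pair for $u$. Hence the minimum and maximum of $u(i)+u(j)$ on each anti-diagonal coincide, so $u(i)+u(j)$ depends only on $i+j$ over all pairs. Comparing $(i,j+1)$ with $(i+1,j)$, as in \cref{cor:affhull}, shows that $u$ is affine. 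This contradicts $u$ being a ray, so $\gamma>0$.

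\emph{The pattern formulation.} Since $u$ is not affine, its increments are not constant, so $\max d>\min d$ and $p(u)$ is defined; the same holds for $u'$. If $u'=\alpha+\beta\iota+\gamma u$ with $\gamma>0$, then $d'=\beta+\gamma d$. Normalization removes both $\beta$ and the factor $\gamma>0$, so $p(u')=p(u)$. Conversely, if $p(u)=p(u')$, then $d'=a d+b$ with
\[
a=\frac{\max d'-\min d'}{\max d-\min d}>0 .
\]
Integrating the increments gives $u'=u'(0)+b\iota+a\bigl(u-u(0)\bigr)$, which has the required form, so the face is the same by the first part.
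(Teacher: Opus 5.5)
Your proposal is correct and follows the paper's proof essentially step for step. Both arguments get equal tight permutations from equal faces, derive $u'\in Z(u)=\spn\{\one,\iota,u\}$ from the chain $\cE(u)=\Es(u)=\Es(u')\subseteq\cE(u')$, exclude $\gamma<0$ because a tight pair of $u$ on each anti-diagonal would then both minimize and maximize $u(i)+u(j)$, and treat the pattern as a reparametrization.

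The differences are cosmetic. You show $u$ is affine by comparing $(i,j+1)$ with $(i+1,j)$, while the paper compares $(i,i+2)$ with $(i+1,i+1)$. You also invoke \cref{thm:R} for nonemptiness of the face, which is not needed, since total support includes a tight permutation by definition.
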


\begin{proof}
If $u'=\alpha+\beta\iota+\gamma u$ with $\gamma>0$, then $I(u')$ is $\gamma$ times $I(u)$ up to an identity that holds on the affine hull (\cref{lem:A}), so the faces agree. Conversely, a permutation $\sigma$ is tight for $u$ if and only if its X-ray lies on the face of $I(u)$, because for this X-ray $\sum_tv(t)x(t)=\sum_iv(i+\sigma(i))\le\sum_i\bigl(u(i)+u(\sigma(i))\bigr)$, with equality iff every pair $(i,\sigma(i))$ is tight. So equal faces have the same tight permutations, and $\cE(u)=\Es(u)=\Es(u')\subseteq\cE(u')$. Every $z\in Z(u')$ satisfies the relations along $\cE(u')$, in particular those along $\cE(u)$, so $u'\in Z(u')\subseteq Z(u)=\spn\{\one,\iota,u\}$ and $u'=\alpha+\beta\iota+\gamma u$; $\gamma\ne0$ because $u'$ is not affine. If $\gamma<0$, then $\cE(u)\subseteq\cE(u')=\cE(-u)$, since adding affine functions and multiplying by a positive number do not change the tight graph. Every anti-diagonal carries a pair of $\cE(u)$, which then both minimizes and maximizes $u(i)+u(j)$ on that anti-diagonal; so $u(i)+u(j)$ is constant on every anti-diagonal, and comparing $(i,i+2)$ with $(i+1,i+1)$ shows that $u$ is affine, a contradiction. Finally, $u\mapsto\alpha+\beta\iota+\gamma u$ with $\gamma>0$ acts on increments by $d\mapsto\beta+\gamma d$, which leaves $p(u)$ unchanged, and two non-constant increment vectors with the same pattern are related by such a map.
\end{proof}

For facets, the following theorem describes all potentials that define a given facet, with or without total support.

\begin{theorem}[The fiber of a facet]\label{thm:fiber}
Let $I(u_0)$ define a facet $F$ of $\Ph(k)$, let $v_0$ be the infimal convolution of $u_0$, $b=\operatorname{bip}(u_0)$, and
\[
D(F)=\Bigl\{w\in\R^k:\ w(i)+w(j)\ge v_0(i+j)\ \text{for all }i,j,\ \ \textstyle\sum_iw(i)=\sum_iu_0(i)\Bigr\}.
\]
\begin{enumerate}[label=(\alph*)]
\item $D(F)$ is a polytope of dimension $b$ with affine hull $u_0+Y^*(u_0)$. Every $w\in D(F)$ has infimal convolution $v_0$, so $I(w)$ is the inequality $I(u_0)$, and $\Es(w)=\Es(u_0)$.
\item A point $w\in D(F)$ lies in the relative interior of $D(F)$ if and only if $\cE(w)$ has total support. The smallest face of $D(F)$ containing $w$ has dimension equal to the number of bipartite components of the graph $\cE(w)$, and $\dim Z(w)$ exceeds that number by $3$.
\item A potential $u$ has $I(u)$ defining $F$ if and only if $u=\alpha+\beta\iota+\gamma w$ with $\gamma>0$ and $w\in D(F)$; then $w$ is unique, and $u$ is a ray if and only if $w$ is a vertex of $D(F)$. So the rays that define $F$, up to affine functions and positive scaling, correspond bijectively to the vertices of $D(F)$.
\item If $b=0$, then $F$ is defined by exactly one ray, which has total support. If $b\ge1$, then $F$ is defined by at least $b+1$ distinct rays, none of which has total support. In particular, every facet ray of $\Ph(k)$ has total support if and only if distinct facet rays define distinct facets of $\Ph(k)$.
\end{enumerate}
\end{theorem}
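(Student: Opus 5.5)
We work throughout with the polyhedron $D(F)$. The anchor is \cref{thm:symdual}. Since $F$ is a facet, $u_0$ has a tight permutation and condition (i) of \cref{thm:F} holds, so the theorem supplies a point $w_0\in D(F)$. This $w_0$ satisfies $w_0(i)+w_0(j)=v_0(i+j)$ exactly on $\Es(u_0)$, has total support, and lies in $u_0+Y^*(u_0)$.

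\textbf{Step 1: every $w\in D(F)$ behaves like $u_0$.} Fix $w\in D(F)$ and a tight permutation $\sigma$ of $u_0$. Then
\[
\sum_i\bigl(w(i)+w(\sigma(i))\bigr)-\sum_iv_0(i+\sigma(i))=2\sum_iw(i)-2\sum_iu_0(i)=0 .
\]
Each term is nonnegative, so every pair of $\sigma$ is tight for the inequalities defining $D(F)$. Hence $w(i)+w(j)=v_0(i+j)$ on all of $\Es(u_0)$. By (i) every anti-diagonal carries such a pair, so $\min_{i+j=t}(w(i)+w(j))=v_0(t)$, and $I(w)$ is literally $I(u_0)$.

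It follows that $\cE(w)\supseteq\Es(u_0)$, and the tight permutations of $w$ are exactly those of $u_0$: both are the permutations whose X-rays lie on $F$. Therefore $\Es(w)=\Es(u_0)$. Moreover $w-u_0$ satisfies $y(i)+y(j)=0$ on $\Es(u_0)$, so $D(F)\subseteq u_0+Y^*(u_0)$.

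\textbf{Step 2: dimension and boundedness (part (a)).} Around $w_0$ the inequalities off $\Es(u_0)$ are strict. So a small ball in $u_0+Y^*(u_0)$ about $w_0$ lies in $D(F)$, and the sum condition is automatic there: a bipartite component carries $y$ with $\pm$ values, and summing $y(i)+y(\sigma(i))=0$ over a tight $\sigma$ shows $\sum y=0$. Hence $\dim D(F)=b$.

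For boundedness, each $w(i)$ is bounded below by taking $j=i$, which gives $w(i)\ge v_0(2i)/2$. The fixed total $\sum_iw(i)$ then bounds each $w(i)$ above.

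\textbf{Step 3: faces of $D(F)$ (part (b)).} The smallest face containing $w$ is cut out by the inequalities tight at $w$, namely the pairs in $\cE(w)$. Its direction space is therefore $\{y: y(i)+y(j)=0 \text{ on } \cE(w)\}$; the sum condition holds automatically as before, since $\cE(w)\supseteq\Es(u_0)$ contains a tight permutation. This space has dimension equal to the number of bipartite components of $\cE(w)$.

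The point $w$ is relatively interior exactly when $\cE(w)=\Es(u_0)=\Es(w)$, which is total support. For the claim on $Z(w)$, the map $\Lambda$ from the proof of \cref{thm:F}, now applied with $\cE(w)$, sends $Z(w)$ into the $c$'s. I expect this step to be the main obstacle.

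The image of $Z(w)$ under $\Lambda$ contains $\Lambda\one$, $\Lambda\iota$ and $\Lambda w=v_0$, and is contained in $W(u_0)$, which has dimension $3$ by the facet hypothesis and \cref{thm:F}. The three vectors are independent because $u_0$ is non-affine; an affine $v_0$ would make $u_0$ affine, by the argument of \cref{prop:raybip}. So the image has dimension exactly $3$, the kernel is the $y$-space above, and $\dim Z(w)$ equals the number of bipartite components of $\cE(w)$ plus $3$.

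\textbf{Step 4: parts (c) and (d).} For (c), suppose $I(u)$ defines $F$. By \cref{lem:distinct}-style reasoning (same tight permutations), the tight inequality $I(u)$ must be a positive multiple of $I(u_0)$ modulo the affine hull. The two equations of the affine hull correspond to $\one$ and $\iota$, so after subtracting $\alpha+\beta\iota$ and dividing by $\gamma>0$ we get $v=v_0$ and $\sum u=\sum u_0$, which places the normalized potential in $D(F)$. Uniqueness of $w$ follows because the normalization is determined by $v$.

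The potential $u$ is a ray iff $\dim Z(w)=3$, iff by (b) the smallest face of $D(F)$ containing $w$ has dimension $0$, iff $w$ is a vertex.

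For (d): if $b=0$, then $D(F)=\{w_0\}$, which is a single vertex with total support. If $b\ge1$, a $b$-polytope has at least $b+1$ vertices. Vertices lie on the boundary, so by (b) none of them has total support. The final equivalence combines these two cases with \cref{lem:distinct}.
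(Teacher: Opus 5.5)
Your proposal is correct and follows essentially the same route as the paper. Zero total slack along a tight permutation of $u_0$ pins $D(F)$ into $u_0+Y^*(u_0)$, the potential of \cref{thm:symdual} is a relative interior point, and the map $\Lambda$ gives $\dim Z(w)$; parts (c) and (d) then follow from uniqueness of a facet-defining inequality up to a positive factor and the affine hull, and from counting vertices of a $b$-polytope. One small correction: in (c) you should cite that standard polyhedral fact directly, not \cref{lem:distinct}-style reasoning, because that argument needs total support of $u_0$; the converse direction of (c), which you omit, is immediate from (a) and \cref{lem:A}.
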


\begin{proof}[Idea of the proof]
For $w\in D(F)$ the slacks along a tight permutation of $u_0$ are nonnegative and sum to zero, so $w$ is tight along $\Es(u_0)$ and $w-u_0\in Y^*(u_0)$; the potential of \cref{thm:symdual} is an interior point, which gives (a). The inequalities of $D(F)$ that are not tight on all of it are those for pairs outside $\Es(u_0)$, which gives the first part of (b), and the map $\Lambda$ of the proof of \cref{thm:F} computes $\dim Z(w)$. Part (c) uses that two inequalities defining one facet differ by a positive factor and an affine function, and (d) counts vertices of a $b$-dimensional polytope. The proof is in \cref{sec:SM-fiber}.
\end{proof}

\begin{remark}[Distinct rays, one facet]\label{rem:samefacet}
In \cref{ex:noTS} let $\chi'=\chi-\epsilon_{11}+\epsilon_{16}$, the indicator of $(O\setminus\{11\})\cup\{16\}$, where $\epsilon_a(i)=\ind{i=a}$. Here $b=1$, and $D(F)$ is the segment $\{\chi+\lambda(\epsilon_{11}-\epsilon_{16}):-1\le\lambda\le0\}$ (the loop inequalities at $11$ and $16$ are its only constraints). Its endpoints $\chi$ and $\chi'$ are the two rays that define the facet $F$ of $I(\chi)$, the only tight neighbor of $16$ for $\chi'$ is $11$, and the midpoint has total support. So the hypothesis of total support in \cref{lem:distinct} cannot be dropped. For $k\le12$ every facet ray has total support (\cref{sec:census}). So the smallest $k$ for which $\Ph(k)$ has a facet ray without total support lies between $13$ and $19$. A search over the rays of height at most two for $k\le18$, with ranks computed modulo two large primes, found no facet ray without total support. This search is evidence only: it covers only height at most two, and a rank computed modulo a prime can be smaller than the rational rank, which can make a ray look like a non-ray, so the search could miss a facet ray. The bounds $13$ and $19$ do not depend on it.
\end{remark}

The example extends to every number of bipartite components. In the following proposition $f\ge1$ is a multiplicity: each index of \cref{ex:noTS} is replaced by a block of $f$ consecutive indices.

\begin{proposition}[Cubical fibers]\label{prop:cubical}\label{rem:cubical}
Let $O$ and $\chi$ be as in \cref{ex:noTS}, let $f\ge1$, $k=19f$, and $u_f(i)=\chi(\lfloor i/f\rfloor)$ for $0\le i<k$. Write $B_q=\{fq,\dots,fq+f-1\}$ for $0\le q\le18$ (the \emph{blocks}), and put
\[
\begin{gathered}
S=\{0,1,3,6,8,14,15,17,18\},\qquad O'=O\setminus\{11\},\\
T=\{5,10,13,27\},\qquad T_f=\{f(\tau+1)-1:\tau\in T\},
\end{gathered}
\]
$V_0=\bigcup_{s\in S}B_s$, $V_1=\bigcup_{q\in O'}B_q$, $I_a=11f+a$, $J_a=17f-1-a$ and $y_a=\epsilon_{I_a}-\epsilon_{J_a}$ for $0\le a<f$; so $B_{11}=\{I_a\}$, $B_{16}=\{J_a\}$, and $u_f$ is $0$ on $V_0\cup B_{16}$ and $1$ on $V_1\cup B_{11}$. For $q\in O$ let $N_q=\{s\notin O:q+s\in T\}$. Let
\[
\begin{gathered}
A=(V_0\times V_0)\cup L\cup L^{\top}\cup\{(I_a,J_a),(J_a,I_a):0\le a<f\},\quad\text{where}\\
L=\{(fq+a,\,fs+b):q\in O',\ s\in N_q,\ a+b=f-1\}.
\end{gathered}
\]
\begin{enumerate}[label=(\alph*)]
\item $v=\ind{t\in T_f}$, and $\cE(u_f)$ consists of the pairs of indices where $u_f$ vanishes together with the pairs $(i,j)$ with $u_f(i)+u_f(j)=1$ and $i+j\in T_f$. The only tight neighbor of $I_a$ is $J_a$.
\item $\Es(u_f)=A$. The $19f^2$ ordered pairs of $\cE(u_f)\setminus A$ are the pairs of zeros of $u_f$ that meet $B_{16}$. Every anti-diagonal carries an edge of $A$; the bipartite components of $A$ are the $f$ edges $\{I_a,J_a\}$, so $\operatorname{bip}(u_f)=f$.
\item $Z^*(u_f)=\spn\{\one,\iota,u_f,y_0,\dots,y_{f-1}\}$ has dimension $f+3$, and $Z(u_f)=\spn\{\one,\iota,u_f\}$; so $u_f$ is a ray.
\item $I(u_f)$ defines a facet $F_f$ of $\Ph(k)$; its dimension is $2k-4$.
\item $D(F_f)=\bigl\{u_f+\sum_a\lambda_ay_a:\lambda\in[-1,0]^f\bigr\}$, an $f$-cube. Its $2^f$ vertices are $0/1$ potentials; up to affine functions and positive scaling they are exactly the rays $u$ for which $I(u)$ defines $F_f$, and none of them has total support.
\item $C=\{u:A\subseteq\cE(u)\}$ is a closed cone of the fan of \cref{rem:fan}. Modulo $\spn\{\one,\iota\}$ it is the cone over $D(F_f)\times\{1\}$: it has dimension $f+1$ and $2^f$ extreme rays, so it is not simplicial for $f\ge2$.
\end{enumerate}
\end{proposition}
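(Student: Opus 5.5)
The plan is to prove (a)--(f) in order, each part resting on the previous ones, and to reduce the combinatorics of the blown-up potential $u_f$ to that of $\chi$ on $19$ points as far as possible. For (a) I compute $v=u_f\,\square\,u_f$ directly. Since $u_f\in\{0,1\}$, we have $v(t)=0$ exactly when $t$ is a sum of two zeros of $u_f$. The zero set is $\bigcup_{s\in O^c}B_s$, so its sumset is $\bigcup_{s,s'\in O^c}(B_s+B_{s'})$, and $B_s+B_{s'}$ is the interval $[f(s+s'),f(s+s')+2f-2]$. Using the coverage statement of \cref{ex:noTS} (the sums $s+s'$ with $s,s'\in O^c$ miss exactly $T$), the union misses exactly the points $f\tau+f-1$ with $\tau\in T$: these lie strictly between the intervals for $\tau-1$ and $\tau+1$. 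The four witnesses $0+5$, $0+10$, $0+13$, $16+11$ show that $v=1$ there, so $v=\ind{t\in T_f}$. The description of $\cE(u_f)$ follows at once. For $I_a$, a tight partner $j$ must satisfy $u_f(j)=0$ and $I_a+j\in T_f$. Checking the four targets leaves only $j=J_a$: the target $13$ would need $j$ in block $2\subseteq O$, and $27$ gives $j=f\cdot16+(f-1-a)=J_a$.

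For (b) the key step is to identify the allowed graph, and I expect this to be the main obstacle. One inclusion is structural. Every tight permutation must pair $I_a$ with $J_a$ by (a), so pairs through $B_{16}$ other than the $(I_a,J_a)$ are not allowed, and this removes the $19f^2$ pairs. I would also show that the edges of $L$, across anti-diagonals in $T_f$, must respect $a+b=f-1$, which is forced by $i+j\in T_f$. For the reverse inclusion $A\subseteq\Es$ I would exhibit a tight permutation $\sigma$ that matches $V_1$ into $V_0$ via $L$ and $V_0$ among itself, and then apply \cref{lem:D}-style switching restricted to $V_0\cup V_1$. The vertices of $V_0$ are mutually adjacent, so $V_0$ acts like a universal vertex, as in \cref{lem:TS}. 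Concretely, each $q\in O'$ has some $s\in N_q\cap S$. Since $|S|=9>|O'|=8$, Hall's condition has to be checked on $N_q$, and I would do this by hand from the list $T$. The rest of (b) is then direct: $V_0$ contains loops $(i,i)$ at points with $2i$ a sum of zeros, so $V_0\cup V_1$ forms one nonbipartite component, and the edges $\{I_a,J_a\}$ are isolated. Anti-diagonal coverage follows from (a), with $T_f$ covered by $L$ together with the edges $\{I_a,J_a\}$ (for $27$).

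For (c), \cref{lem:K}(B1) applied inside $V_0$, block by block together with cross comparisons, shows that $z\in Z^*$ is affine on $V_0$. The edges of $L$ then determine $z$ on $V_1$ up to one extra constant shared by all of $V_1$, namely the $u_f$ direction. The pairs $(I_a,J_a)$ impose only the single condition $z(I_a)+z(J_a)=c(27f+f-1)$, leaving one free parameter each, which gives the $y_a$. This yields $\dim Z^*=f+3$. For $Z(u_f)$, the extra tight pairs through $B_{16}$ force $z$ on $B_{16}$ to agree with the affine function on $V_0$, which kills the $y_a$ and leaves $\spn\{\one,\iota,u_f\}$.

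The remaining parts follow from the general theory. Part (d) is \cref{thm:F}, since $f+3-f=3$. Part (e) follows from \cref{thm:fiber}(a), with affine hull $u_f+\spn\{y_a\}$, together with the loop constraints $2w(I_a)\ge v(2I_a)=0$ and $2w(J_a)\ge0$, which give exactly $\lambda_a\in[-1,0]$. I would check that the other constraints are slack in that range. The vertices are then the $0/1$ potentials, and \cref{thm:fiber}(c),(d) gives the rays and the failure of total support. For (f), $\{u:A\subseteq\cE(u)\}$ is a cone of the fan, and modulo lineality it is $\R_{>0}\cdot D(F_f)$ by \cref{thm:fiber}(c). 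An $f$-cube has $2^f$ vertices, which exceeds $f+1$ for $f\ge2$, so the cone is not simplicial.
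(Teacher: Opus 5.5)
Your outline has the same architecture as the paper's proof. You get $v$ and $\cE(u_f)$ from the intervals $B_s+B_{s'}$, use the forced pairing $I_a\leftrightarrow J_a$ to remove the pairs through $B_{16}$, show affineness on $V_0$ and a common offset on $V_1$ for $Z^*(u_f)$, use tight pairs through $B_{16}$ to kill the $y_a$ in $Z(u_f)$, and get (d)--(f) from \cref{thm:F,thm:fiber} and the loop inequalities at $I_a,J_a$.

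The step that does not go through as written is the inclusion $A\subseteq\Es(u_f)$. The analogy with \cref{lem:TS} fails. A row $i\in V_1$ is tight only with the columns $N_L(i)\subseteq V_0$, so its arcs in the switching digraph go to $\sigma^{-1}(N_L(i))$, which may lie inside $V_1$. The mutual adjacency of $V_0$ does not bring you back. Plain Hall's condition for the sets $N_q$ only gives a tight permutation. What you need, and what is in fact necessary, is Hall's condition with surplus one. If some nonempty $X\subseteq V_1$ had $|N_L(X)|=|X|$, every tight permutation would map $X$ onto $N_L(X)$. The tight pairs of $V_0\times V_0$ ending in a column of $N_L(X)$ would then lie in no tight permutation.

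The paper gets the surplus from the lists $N_2=\{3,8\}$, $N_4=\{1,6\}$, $N_5=\{0,8\}$, $N_7=\{3,6\}$, $N_9=\{1,18\}$, $N_{10}=\{0,3,17\}$, $N_{12}=\{1,15\}$, $N_{13}=\{0,14\}$. Taking one pair inside each (say $\{0,17\}$ inside $N_{10}$) gives eight edges forming a spanning tree on the nine points of $S$. Since $m\ge1$ edges of a forest cover at least $m+1$ vertices, $|\bigcup_{q\in Q}N_q|\ge|Q|+1$ for every nonempty $Q\subseteq O'$. $L$ is a disjoint union of $f$ copies of the graph $q\mapsto N_q$ (offset $a$ in $V_1$ against offset $f-1-a$ in $V_0$), so this lifts to $|N_L(X)|\ge|X|+1$. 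A pair $(i,j)\in L$ then extends to a tight permutation by matching $V_1\setminus\{i\}$ into $V_0\setminus\{j\}$. A pair $(i,j)\in V_0\times V_0$ extends by matching the rows of $V_1$ into $V_0\setminus\{j\}$ and the columns of $V_1$ into $V_0\setminus\{i\}$, then finishing inside the complete graph on the remaining $f$ rows and $f$ columns of $V_0$. (With the surplus in hand, your switching-digraph route on $V_0\cup V_1$ also closes.)

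Two smaller points.
\begin{enumerate}
\item Anti-diagonal coverage in (b) and the killing of the $y_a$ in (c) need that the sums of two elements of $S$, not of $O^c$, are exactly $\{0,\dots,36\}\setminus T$. This is because pairs through $B_{16}$ other than $(I_a,J_a)$ are not allowed. It is not part of (a), but it holds because $16+O^c\subseteq S+S$.
\item In (f) you should say why $C$ is a cone of the fan: it is an intersection over $t$ of normal cones of faces of the simplices $Q_t$. You should also say why every non-affine $u\in C$ defines $F_f$: the tight permutations of $u_f$ use only pairs of $A=\Es(u_f)$, so they are tight for $u$, and the proper face of $I(u)$ contains the facet $F_f$. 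Then \cref{thm:fiber}(c) gives $C=\spn\{\one,\iota\}+\R_{\ge0}D(F_f)$. The cone is closed, so it is $\R_{\ge0}$ rather than $\R_{>0}$.
\end{enumerate}
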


\begin{proof}[Idea of the proof]
Everything follows from two facts about the levels: the sums of two elements of $S$ are exactly $\{0,\dots,36\}\setminus T$, and for $i\in B_p$, $j\in B_q$ one has $i+j\in T_f$ if and only if $p+q\in T$ and the offsets of $i$ and $j$ in their blocks add up to $f-1$. These give $v$ and the tight pairs in (a). In (b), the forced pairing of $I_a$ with $J_a$ excludes the other pairs that meet $B_{16}$, and a spanning tree of $S$ formed by one edge from each $N_q$ gives Hall's condition with surplus one for $L$, which shows that every pair of $A$ lies in a tight permutation. In (c), the relations along $A$ make $z$ affine on $V_0$ with one slope and force $z-\alpha-ci$ to be constant on $V_1$, leaving only the free directions $y_a$; the loops at $J_a$ remove these from $Z(u_f)$. Then (d) is \cref{thm:F}, (e) follows from \cref{thm:fiber} and the loop inequalities at $I_a$ and $J_a$, and (f) from (e). The full proof is in \cref{sec:SM-cubical}.
\end{proof}

All statements of \cref{prop:cubical} were also confirmed for $f\le6$ ($k\le114$) by an exact computation independent of the proof (\cref{sec:SM-cubical}).

\begin{remark}[Padding the cube]\label{rem:padcount}
Let $w$ be a vertex of $D(F_f)$ in \cref{prop:cubical} and $\hat w=w+\iota$, which is normalized in the sense of \cref{sec:padding} below, where $\operatorname{pad}_L$ and $\operatorname{pad}_R$ are defined, with increments in $\{0,1,2\}$ and $H=2$. Then $\operatorname{pad}_L^a\operatorname{pad}_R^b\hat w$ is a ray for all $a,b\ge0$, and distinct choices of the vertex and of $(a,b)$ give distinct increment words (\cref{sec:SM-padcount}). Hence for $k\ge19$ the fan has at least $\sum_{f=1}^{\lfloor k/19\rfloor}2^f(k-19f+1)$ rays of height two, counted up to affine functions and positive scaling. This bound counts rays of the fan, not facets.
\end{remark}

%% file: b4_constructions.tex
\section{Padding, refinement, families and height}\label{sec:constructions}

\subsection{Padding}\label{sec:padding}
Under the hypotheses of \cref{thm:pad} below, a facet of $\Ph(k)$ given by a potential propagates to $\Ph(k+1)$ by adding a flat step at the left end or a steepest step at the right end of the potential. By \cref{lem:A} we may add an affine function to a potential without changing its inequality, its tight graph or $Z(u)$. So for a non-affine potential $u$ on $\{0,\dots,k-1\}$ we may assume that its increments $d(i)=u(i+1)-u(i)$ have minimum $0$; we call such $u$ \emph{normalized}, and we put $H=\max_id(i)>0$. A normalized potential is nondecreasing. For normalized $u$ define two potentials on $\{0,\dots,k\}$:
\[
\begin{aligned}
(\operatorname{pad}_Lu)(0)&=u(0), &(\operatorname{pad}_Lu)(i)&=u(i-1)\quad(1\le i\le k),\\
(\operatorname{pad}_Ru)(i)&=u(i)\quad(0\le i\le k-1), &(\operatorname{pad}_Ru)(k)&=u(k-1)+H.
\end{aligned}
\]
So $\operatorname{pad}_L$ puts an increment $0$ in front of the increments of $u$, and $\operatorname{pad}_R$ appends an increment $H$. Both potentials are normalized with the same $H$, and $\operatorname{pad}_L$ and $\operatorname{pad}_R$ commute. The normalized reflection of $u$ is $\bar u(i)=u(k-1-i)+Hi$. Its increments are $H-d(k-2-i)$, so it is normalized with the same $H$, $\bar{\bar u}$ is $u$ plus a constant, and $\operatorname{pad}_Ru$ is the normalized reflection of $\operatorname{pad}_L\bar u$ up to an additive constant. By \cref{lem:A}, $\cE(\bar u)$ is the image of $\cE(u)$ under $(i,j)\mapsto(k-1-i,k-1-j)$, and the reflection preserves the ray property, total support and facets.

We use two conditions on a normalized potential $u$:
\begin{itemize}[leftmargin=3.5em]
\item[$(\mathrm{P}_L)$] some $q$ has $d(q)=0$ and $(0,q)\in\cE(u)$;
\item[$(\mathrm{P}_R)$] some $q$ has $d(q)=H$ and $(q+1,k-1)\in\cE(u)$.
\end{itemize}
Condition $(\mathrm{P}_R)$ for $u$ is condition $(\mathrm{P}_L)$ for $\bar u$, and conversely. We call $\cE(u)$ \emph{connected} if the bipartite graph with row vertices and column vertices $0,\dots,k-1$, and an edge from row $i$ to column $j$ for every $(i,j)\in\cE(u)$, is connected.

\begin{lemma}\label{lem:strong}
If $\cE(u)$ is connected and has total support, then the digraph $D_\sigma$ of \cref{lem:D} is strongly connected for every tight permutation $\sigma$.
\end{lemma}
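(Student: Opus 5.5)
We must show: if the bipartite graph of $\cE(u)$ is connected and every tight edge lies in a tight permutation, then for any tight $\sigma$ the digraph $D_\sigma$ (vertices $0,\dots,k-1$, arc $i\to\sigma^{-1}(j)$ for each tight $(i,j)$) is strongly connected. The plan is to use the standard fact that a digraph is strongly connected if and only if it is weakly connected and every arc lies on a directed cycle. Total support will give the cycles and connectivity of $\cE(u)$ will give weak connectivity.

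\emph{Every arc lies on a directed cycle.} Let $i\to i'$ come from the tight edge $(i,j)$ with $j=\sigma(i')$. If $i=i'$ the arc is a loop and there is nothing to prove. Otherwise, by total support there is a tight permutation $\tau$ with $\tau(i)=j$. The permutation $\sigma^{-1}\tau$ sends $i$ to $i'$. Each step $l\mapsto\sigma^{-1}(\tau(l))$ is an arc of $D_\sigma$, since $(l,\tau(l))$ is tight. Following the cycle of $\sigma^{-1}\tau$ through $i$ therefore gives a directed cycle in $D_\sigma$ that contains the arc $i\to i'$. This is the alternating-cycle argument already used in \cref{lem:TS}.

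\emph{$D_\sigma$ is weakly connected.} Identify the column vertex $j$ with the row vertex $\sigma^{-1}(j)$. Under this identification the bipartite graph of $\cE(u)$ becomes a graph whose underlying undirected edges are exactly the arcs of $D_\sigma$, together with the matching edges $\{i,\sigma(i)\}$. The matching edges are contracted by the identification, so they contribute only the loops $i\to i$. Contracting edges of a connected graph leaves a connected graph, so the undirected graph underlying $D_\sigma$ is connected.

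\emph{Conclusion.} Suppose $D_\sigma$ had two or more strong components. By weak connectivity, some arc joins two different components. But that arc lies on a directed cycle, so both of its endpoints lie in the same strong component, a contradiction. Hence $D_\sigma$ is strongly connected. The only step that needs care is the identification in the weak-connectivity step, namely checking that every bipartite edge maps to an arc of $D_\sigma$ or to a loop. Everything else is routine.
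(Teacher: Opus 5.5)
Your proposal is correct and follows essentially the same route as the paper's proof. The paper also puts each arc on a directed cycle through the cycle of $\sigma^{-1}\pi$ for a tight $\pi$ with $\pi(i)=j$, and obtains weak connectivity by contracting the matching edges $\{\text{row }a,\ \text{column }\sigma(a)\}$ of the connected bipartite graph of $\cE(u)$.
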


\begin{proof}
An arc $i\to i'$ of $D_\sigma$ with $i\ne i'$ comes from a tight edge $(i,j)$ with $j=\sigma(i')$. By total support some tight permutation $\pi$ has $\pi(i)=j$. For every row $a$ the edge $(a,\pi(a))$ is tight and gives the arc $a\to\sigma^{-1}(\pi(a))$, so the cycle of the permutation $\sigma^{-1}\pi$ through $i$ is a directed cycle of $D_\sigma$, and it contains the arc $i\to i'$. So every arc lies on a directed cycle, and every weakly connected component of $D_\sigma$ is strongly connected. Contracting each edge $\{\text{row }a,\ \text{column }\sigma(a)\}$ of the bipartite graph of $\cE(u)$ to a vertex $a$ turns an edge $(i,j)$ into the arc $i\to\sigma^{-1}(j)$. So the underlying graph of $D_\sigma$ is obtained from a connected graph by contracting edges, and it is connected.
\end{proof}

\begin{theorem}[Padding]\label{thm:pad}
Let $u$ be a normalized potential on $\{0,\dots,k-1\}$ with $k\ge3$, let $u'=\operatorname{pad}_Lu$, and let $v'$ be the infimal convolution of $u'$.
\begin{enumerate}[label=(\alph*)]
\item $v'(0)=v'(1)=2u(0)$ and $v'(t)=v(t-2)$ for $2\le t\le 2k$. The tight graph $\cE(u')$ consists of the pairs $(i+1,j+1)$ with $(i,j)\in\cE(u)$, the pairs $(0,0)$, $(0,1)$, $(1,0)$, and the pairs $(0,q+2)$, $(q+2,0)$ for every $q$ with $d(q)=0$ and $(0,q)\in\cE(u)$.
\item If $(\mathrm{P}_L)$ fails, $u'$ is not a ray.
\item Suppose that $u$ is a ray, that $\cE(u)$ is connected and has total support, and that $(\mathrm{P}_L)$ holds. Then $u'$ is a ray, $\cE(u')$ is connected and has total support, and $I(u')$ defines a facet of $\Ph(k+1)$. Moreover $u'$ satisfies $(\mathrm{P}_L)$, and it satisfies $(\mathrm{P}_R)$ if $u$ does.
\item The same statements hold for $\operatorname{pad}_Ru$ with the roles of $(\mathrm{P}_L)$ and $(\mathrm{P}_R)$ exchanged: if $(\mathrm{P}_R)$ fails, $\operatorname{pad}_Ru$ is not a ray; under the hypotheses of (c) with $(\mathrm{P}_R)$ in place of $(\mathrm{P}_L)$, $\operatorname{pad}_Ru$ is a ray with connected tight graph and total support, $I(\operatorname{pad}_Ru)$ defines a facet of $\Ph(k+1)$, and $\operatorname{pad}_Ru$ satisfies $(\mathrm{P}_R)$, and also $(\mathrm{P}_L)$ if $u$ does.
\end{enumerate}
\end{theorem}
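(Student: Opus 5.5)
The plan is to prove (a) by direct computation, derive (b) from the resulting loss of a relation, and then establish (c) by checking the ray property, connectivity, total support and the padding conditions one at a time. Part (d) will follow from (a)--(c) by the normalized reflection.

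\emph{Part (a).} Write $u'(0)=u'(1)=u(0)$ and $u'(i)=u(i-1)$ for $i\ge1$. Since $u$ is normalized it is nondecreasing, so $u(0)$ is its minimum. Hence $v'(0)=2u(0)$ and $v'(1)=u'(0)+u'(1)=2u(0)$.

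For $t\ge2$, the pairs $(i,j)$ with $i,j\ge1$ and $i+j=t$ give exactly the values $u(i-1)+u(j-1)$ over pairs summing to $t-2$, whose minimum is $v(t-2)$. The remaining pairs are $(0,t)$, with value $u(0)+u(t-1)$. Because $u(t-1)\ge u(t-2)$, this value is at least $u(0)+u(t-2)\ge v(t-2)$. So $v'(t)=v(t-2)$.

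The pair $(0,t)$ is tight exactly when both inequalities are equalities. That means $d(t-2)=0$ and $(0,t-2)\in\cE(u)$, which with $q=t-2$ gives the listed pairs $(0,q+2)$. All other tight pairs are shifts of pairs in $\cE(u)$. At $t=0,1$ the pairs $(0,0)$, $(0,1)$, $(1,0)$ are tight; for $t=2$ the pair $(1,1)$ is the shift of $(0,0)$.

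\emph{Part (b).} If $(\mathrm{P}_L)$ fails, index $0$ is tight only with $0$ and $1$. Take $z=\epsilon_0$ and check $z\in Z(u')$:
\begin{itemize}
\item anti-diagonal $0$ carries only $(0,0)$;
\item anti-diagonal $1$ carries only $(0,1)$ and $(1,0)$, which agree;
\item every other tight pair avoids $0$, so $z$ vanishes on it.
\end{itemize}
The potentials $\one,\iota,u',\epsilon_0$ are independent. Indeed, $u'$ is non-affine because $u$ is, and $u'$ agrees with $u$ shifted on $\{1,\dots,k\}$, where $\epsilon_0$ vanishes. So $\dim Z(u')\ge4$ and $u'$ is not a ray.

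\emph{Part (c), ray property.} Let $z\in Z(u')$. Restricted to $\{1,\dots,k\}$, the shifted relations give $z(\cdot+1)\in Z(u)=\spn\{\one,\iota,u\}$. So $z(i)=\alpha+\beta(i-1)+\gamma u(i-1)$ for $i\ge1$, and only $z(0)$ remains free. Pick $q$ as in $(\mathrm{P}_L)$. On anti-diagonal $q+2$ both $(0,q+2)$ and $(1,q+1)$ are tight (the latter is the shift of $(0,q)$). So $z(0)+z(q+2)=z(1)+z(q+1)$, which gives
\[
z(0)=z(1)+z(q+1)-z(q+2)=\alpha-\beta+\gamma u(0),
\]
using $d(q)=0$. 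This is exactly the value of $\alpha+\beta(i-1)+\gamma u'(i)$ at $i=0$. So $z\in\spn\{\one,\iota,u'\}$, and $u'$ is a ray.

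\emph{Part (c), connectivity and total support.} This is the step I expect to be the main obstacle. Connectivity is easy: the shifted copy of $\cE(u)$ is connected, and row $0$ and column $0$ attach through $(0,1)$ and $(1,0)$.

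For total support I would use \cref{lem:D}. Take a tight permutation $\sigma$ of $u$ and set $\sigma'(0)=0$, $\sigma'(i)=\sigma(i-1)+1$. By \cref{lem:strong}, $D_\sigma$ is strongly connected, so its shifted copy inside $D_{\sigma'}$ is strongly connected. It remains to join vertex $0$ in both directions:
\begin{itemize}
\item the edge $(0,1)$ gives the arc $0\to\sigma'^{-1}(1)$, which lands in the shifted part;
\item the edge $(1,0)$ gives the arc $1\to\sigma'^{-1}(0)=0$.
\end{itemize}
So $D_{\sigma'}$ is strongly connected and $\cE(u')$ has total support. Then \cref{thm:R} shows that $I(u')$ defines a facet of $\Ph(k+1)$.

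\emph{Part (c), the padding conditions.} Condition $(\mathrm{P}_L)$ for $u'$ holds with $q=0$: $u'$ has increment $0$ at position $0$, and $(0,0)\in\cE(u')$. If $u$ satisfies $(\mathrm{P}_R)$ via $q$ with $(q+1,k-1)\in\cE(u)$, the shift $(q+2,k)$ is tight for $u'$, and the increment of $u'$ at $q+1$ is $d(q)=H$. So $u'$ satisfies $(\mathrm{P}_R)$ with $q+1$.

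\emph{Part (d).} Apply (a)--(c) to the normalized reflection $\bar u$. Reflection preserves rays, connectivity, total support and facets, $(\mathrm{P}_R)$ for $u$ is $(\mathrm{P}_L)$ for $\bar u$, and $\operatorname{pad}_Ru$ is the reflection of $\operatorname{pad}_L\bar u$ up to an additive constant.
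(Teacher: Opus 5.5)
Your proposal is correct and follows the paper's proof essentially step for step. It uses a cost comparison via monotonicity for (a) and the indicator $\epsilon_0\in Z(u')$ for (b). For (c), it restricts $z$ to a shifted element of $Z(u)$ and uses the tight pairs $(0,q+2)$ and $(1,q+1)$ to fix $z(0)$, then obtains total support from \cref{lem:strong,lem:D} applied to a shifted copy of $D_\sigma$ together with arcs to and from the new index. For (d), it uses the normalized reflection.

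The only point left implicit is routine bookkeeping in (d): $(\mathrm{P}_L)$ for $u$ is $(\mathrm{P}_R)$ for $\bar u$, which $\operatorname{pad}_L$ preserves by (c), and which reflects back to $(\mathrm{P}_L)$ for $\operatorname{pad}_Ru$.
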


\begin{proof}[Idea of the proof]
Part (a) is a direct comparison of costs, using that $u$ is nondecreasing. In (b), if no $q$ qualifies, the indicator of the index $0$ lies in $Z(u')$ and is not in $\spn\{\one,\iota,u'\}$. In (c), every $z\in Z(u')$ restricts on $\{1,\dots,k\}$ to a shifted element of $Z(u)$, and the tight pairs $(0,q+2)$ and $(1,q+1)$ given by $(\mathrm{P}_L)$ fix $z(0)$; total support follows from \cref{lem:strong,lem:D}, because the switching digraph of $u'$ contains a shifted copy of that of $u$ and arcs to and from the new index; then \cref{thm:R} gives the facet. Part (d) follows from (b) and (c) by the normalized reflection. The proof is in \cref{sec:SM-pad}.
\end{proof}

\begin{corollary}\label{cor:pad}
Let $u$ be a normalized ray whose tight graph is connected and has total support, and which satisfies $(\mathrm{P}_L)$ and $(\mathrm{P}_R)$. Then for all $a,b\ge0$ the potential $\operatorname{pad}_L^a\operatorname{pad}_R^bu$ satisfies the same hypotheses, and $I(\operatorname{pad}_L^a\operatorname{pad}_R^bu)$ defines a facet of $\Ph(k+a+b)$.
\end{corollary}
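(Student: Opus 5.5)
This is an induction on $a+b$ driven by \cref{thm:pad}. We fix the hypothesis set $\mathcal{H}$ for a normalized potential: it is a ray, its tight graph is connected and has total support, and it satisfies both $(\mathrm{P}_L)$ and $(\mathrm{P}_R)$. The claim is that $\mathcal{H}$ is preserved by $\operatorname{pad}_L$ and by $\operatorname{pad}_R$, and that every potential satisfying $\mathcal{H}$ defines a facet.

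\emph{Facet step.} If $u$ satisfies $\mathcal{H}$, then $I(u)$ is a facet of $\Ph(k)$ by \cref{thm:R}, since $u$ is a ray with total support. This settles the case $a=b=0$. For the other cases we need the size to stay at least $3$, as \cref{thm:pad} requires $k\ge3$. We note that a non-affine normalized potential needs two distinct increments, so $k\ge3$ holds automatically, and padding only increases $k$.

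\emph{Closure under $\operatorname{pad}_L$.} Apply \cref{thm:pad}(c) to $u$ satisfying $\mathcal{H}$. It gives that $\operatorname{pad}_Lu$ is a ray, that its tight graph is connected with total support, that it satisfies $(\mathrm{P}_L)$, and that it satisfies $(\mathrm{P}_R)$ because $u$ does. We also check that $\operatorname{pad}_Lu$ is normalized with the same $H$; this was noted when the pads were defined. So $\operatorname{pad}_Lu$ satisfies $\mathcal{H}$ on $\{0,\dots,k\}$.

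\emph{Closure under $\operatorname{pad}_R$.} Apply \cref{thm:pad}(d) in the same way, with the roles of $(\mathrm{P}_L)$ and $(\mathrm{P}_R)$ exchanged.

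\emph{Conclusion.} Since $\operatorname{pad}_L$ and $\operatorname{pad}_R$ commute, we have $\operatorname{pad}_L^a\operatorname{pad}_R^bu=\operatorname{pad}_L^a(\operatorname{pad}_R^b u)$. We first apply the $\operatorname{pad}_R$ closure $b$ times and then the $\operatorname{pad}_L$ closure $a$ times, using induction on the number of pads. This shows that the result satisfies $\mathcal{H}$ on $k+a+b$ points, and the facet step gives a facet of $\Ph(k+a+b)$.

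There is no real obstacle here. The only point needing care is that \cref{thm:pad}(c),(d) preserve both padding conditions at once, not just the one used, and this is exactly what is stated there.
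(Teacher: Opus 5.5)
Your proposal is correct and is essentially the paper's proof: induction on $a+b$ using \cref{thm:pad}(c),(d), each of which preserves both padding conditions. Your explicit base case via \cref{thm:R}, your check that $k\ge3$ holds automatically, and your note that $H$ is unchanged by padding fill in details the paper leaves implicit. The commutation of the paddings is not actually needed for the order you chose, since $\operatorname{pad}_L^a\operatorname{pad}_R^bu$ is by definition $\operatorname{pad}_L^a(\operatorname{pad}_R^bu)$.
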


\begin{proof}
Induction on $a+b$, using \cref{thm:pad}(c),(d) and the commutation of the two paddings.
\end{proof}

Padding maps step potentials to step potentials, since it adds an increment $0$ or $H=1$ to a $0/1$ sequence, and it maps non-step potentials to non-step potentials; so the paddings of a non-step facet ray that satisfies the hypotheses of \cref{cor:pad} are non-step facets. These hypotheses are not automatic: by \cref{thm:pad}(b), a ray without $(\mathrm{P}_L)$ pads to a non-ray.

\emph{Cores.} Consider potentials up to positive scaling, and take facet rays (\cref{sec:tight}) normalized. A facet ray on $\{0,\dots,k-1\}$ is a \emph{core} if it is neither $\operatorname{pad}_Lw$ nor $\operatorname{pad}_Rw$ for a facet ray $w$ on $\{0,\dots,k-2\}$. Removing paddings as long as the result is a facet ray shows that every facet ray is $\operatorname{pad}_L^a\operatorname{pad}_R^bc$ for a core $c$ and some $a,b\ge0$. A facet ray that begins with a flat step need not be a padding: for $k=12$ the facet ray with increments $(0,2,0,2,1,1,1,0,1,1,1)$ is a core, and removing its first increment gives a potential that is not a ray.

\begin{corollary}[Counting by cores]\label{cor:cores}
Let $c_j$ be the number of cores on $\{0,\dots,j-1\}$. Suppose that every core on at most $k$ points satisfies the hypotheses of \cref{cor:pad}, and that every facet ray on $\{0,\dots,k-1\}$ has only one representation $\operatorname{pad}_L^a\operatorname{pad}_R^bc$ with a core $c$. Then the number of facets of $\Ph(k)$ of the form $I(u)$ is
\[
\sum_{j=3}^{k}c_j\,(k-j+1).
\]
\end{corollary}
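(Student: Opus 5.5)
The plan is to set up a bijection between the facets of $\Ph(k)$ of the form $I(u)$ and the pairs $(c,(a,b))$, where $c$ is a core on $j$ points with $3\le j\le k$ and $a,b\ge0$ satisfy $a+b=k-j$. Counting such pairs gives exactly $\sum_{j=3}^kc_j(k-j+1)$, because for fixed $j$ there are $k-j+1$ splittings of $k-j$ into $a+b$.

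\emph{Facets to facet rays.} By \cref{rem:fan}, every facet of the form $I(u)$ comes from a ray, so it is defined by at least one facet ray. I need this facet ray to be unique up to normalization. Each core satisfies the hypotheses of \cref{cor:pad}, and by that corollary so does every padding of a core, so $\operatorname{pad}_L^a\operatorname{pad}_R^bc$ has total support. Every facet ray on $k$ points has such a representation (removal of paddings, as explained before the corollary). Hence every facet ray on $k$ points has total support. Then \cref{thm:fiber}(d), or \cref{lem:distinct} directly, shows that distinct normalized facet rays define distinct facets. So the facets correspond bijectively to normalized facet rays on $\{0,\dots,k-1\}$, counted up to positive scaling.

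\emph{Facet rays to pairs $(c,(a,b))$.} The map from pairs to facet rays is well defined: for a core $c$ on $j\le k$ points, \cref{cor:pad} says that $\operatorname{pad}_L^a\operatorname{pad}_R^bc$ with $a+b=k-j$ is a facet ray on $k$ points. The map is surjective by the core-removal argument. It is injective by the uniqueness hypothesis of the corollary. That hypothesis concerns the representation as a word, so the pairs $(c,(a,b))$ are distinct as data, and the hypothesis says the resulting rays are distinct.

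\emph{Range of $j$.} A core has $j\ge3$. A non-affine normalized potential needs at least $3$ points to have an increment $0$ and an increment $H>0$. On $2$ points every potential is affine, so there are no rays, and cores with $j\le2$ contribute nothing.

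The main subtle step is the identification of facets with normalized facet rays. It requires every facet ray on $k$ points to have total support, and this is obtained only through the padding structure. This needs care: the hypothesis is stated for cores on at most $k$ points, and these are exactly the cores that can occur. The remaining steps are bookkeeping.
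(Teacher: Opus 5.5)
Your proposal is correct and follows the paper's proof. Pairs $(c,(a,b))$ map to facet rays with total support by \cref{cor:pad}; the map is surjective by removing paddings and injective by the uniqueness hypothesis; and \cref{lem:distinct} sends distinct facet rays with total support to distinct facets. You also state explicitly that every facet ray on $k$ points has total support because it is a padding of a core, a fact the paper's short proof uses implicitly when it applies \cref{lem:distinct}.
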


\begin{proof}
By \cref{cor:pad}, $\operatorname{pad}_L^a\operatorname{pad}_R^bc$ with $a+b=k-j$ is a facet ray on $\{0,\dots,k-1\}$ with total support for every core $c$ on $j$ points, and there are $k-j+1$ such pairs $(a,b)$. Every facet of the form $I(u)$ is defined by a ray (\cref{rem:fan}), which after normalization is a facet ray and so has a representation, unique by assumption. By \cref{lem:distinct}, distinct facet rays with total support define distinct facets.
\end{proof}

Under the hypotheses of \cref{cor:cores}, let $F_j$ be the number of facets of $\Ph(j)$ of the form $I(u)$, with $F_1=F_2=0$. Then $F_j-F_{j-1}=\sum_{i=3}^{j}c_i$, and hence
\[
c_k=F_k-2F_{k-1}+F_{k-2}.
\]
For $3\le k\le12$ the hypotheses of \cref{cor:cores} hold by an exact check on the complete facet lists of \cref{sec:census} (computer-assisted), so the identity above holds for $k\le12$. For larger $k$ they can fail: the facet ray of \cref{ex:noTS} ($k=19$) does not have total support. We do not know whether total support of a ray implies the other hypotheses of \cref{cor:pad}.

\subsection{Refinement}\label{sec:refinement}
Padding adds one index at an end. The following operation multiplies the grid; by \cref{thm:refine} below it maps a facet ray whose tight graph is connected and has total support to a facet ray with the same properties. For a normalized potential $u$ on $\{0,\dots,K\}$, $K=k-1$, and an integer $m\ge2$, define potentials on $\{0,\dots,mK\}$ and on $\{0,\dots,mk-1\}$ by
\[
(\operatorname{spr}_mu)(i)=u(\lceil i/m\rceil),\qquad(\operatorname{ref}_mu)(i)=u(\lfloor i/m\rfloor).
\]
Each increment of $u$ appears in $\operatorname{spr}_mu$ followed by $m-1$ zeros, and $\operatorname{ref}_mu$ is $\operatorname{pad}_L^{m-1}\operatorname{spr}_mu$. Both are normalized with the same $H$. The potential $\operatorname{ref}_mu$ is the grid form on $(1/(mk))\Z$ of the step function $\tilde u$ on $[0,1]$ that equals $u(i)$ on $(i/k,(i+1)/k)$; so $I(u)$ on the grid $(1/k)\Z$ and $I(\operatorname{ref}_mu)$ on the grid $(1/(mk))\Z$ come from the same step function $\tilde u$.

In the rest of this subsection $u$ is a normalized potential on $\{0,\dots,K\}$, $m\ge2$, $u'=\operatorname{spr}_mu$ and $v'$ is the infimal convolution of $u'$. For $0\le i\le mK$ write $a_i=\lceil i/m\rceil$ and $r_i=ma_i-i\in\{0,\dots,m-1\}$; the \emph{block} $B_a=\{i:a_i=a\}$ is $\{0\}$ for $a=0$ and $\{m(a-1)+1,\dots,ma\}$ for $1\le a\le K$, and $u'=u(a)$ on $B_a$. A normalized potential is nondecreasing, and so is $v$: if $(i,j)$ attains $v(t+1)$, we may take $i\ge1$, and then $v(t)\le u(i-1)+u(j)\le v(t+1)$.

\begin{lemma}[Tight graph of $\operatorname{spr}_mu$]\label{lem:sprtight}
For $0\le t\le 2mK$, $v'(t)=v(\lceil t/m\rceil)$. A pair $(i,j)$ is tight for $u'$ if and only if $(a_i,a_j)\in\cE(u)$ and either $r_i+r_j\le m-1$, or $r_i+r_j\ge m$ and $v(a_i+a_j-1)=v(a_i+a_j)$.
\end{lemma}

\begin{proof}
Let $s=\lceil t/m\rceil$ and $\delta=ms-t\in\{0,\dots,m-1\}$. For $i+j=t$ we have $m(a_i+a_j)=t+r_i+r_j\in[t,t+2m-2]$, so $a_i+a_j=s$ if $r_i+r_j=\delta$, and $a_i+a_j=s+1$ if $r_i+r_j=\delta+m$; no other case occurs. The cost of $(i,j)$ is $u(a_i)+u(a_j)\ge v(a_i+a_j)\ge v(s)$, since $v$ is nondecreasing. Every pair $(a,b)$ with $a+b=s$ occurs among the block pairs $(a_i,a_j)$ of the anti-diagonal $t$: take $(i,j)=(ma-\delta,mb)$ if $a\ge1$, $(0,mb-\delta)$ if $a=0<b$, and $(0,0)$ if $s=0$ (then $t=0$). Hence $v'(t)=v(s)$. A pair with $a_i+a_j=s$ is tight iff $(a_i,a_j)\in\cE(u)$. A pair with $a_i+a_j=s+1$ is tight iff $u(a_i)+u(a_j)=v(s)$; since $u(a_i)+u(a_j)\ge v(s+1)\ge v(s)$, this holds iff $(a_i,a_j)\in\cE(u)$ and $v(s)=v(s+1)$. Finally $r_i+r_j\le m-1$ means $a_i+a_j=\lceil(i+j)/m\rceil$, and $r_i+r_j\ge m$ means $a_i+a_j=\lceil(i+j)/m\rceil+1$.
\end{proof}

For an anti-diagonal $s$ of $u$ let $P_s=\{p\ge1:(p,s-p)\in\cE(u)\}$, and let $\Theta(u)$ be the graph on $\{1,\dots,K\}$ in which each set $P_s$ is a clique.

\begin{lemma}\label{lem:blockgraph}
Let $u$ be a ray whose tight graph is connected and has total support. Then $\Theta(u)$ is connected.
\end{lemma}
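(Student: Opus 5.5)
The plan is to argue by contradiction. A splitting of $\Theta(u)$ should produce an extra element of $Z(u)$, and the ray property, connectivity and total support should rule that element out.

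Suppose $\Theta(u)$ is disconnected, and split $\{1,\dots,K\}=C\sqcup C'$ with $C,C'$ nonempty and no clique $P_s$ meeting both. First I record what this says about tight pairs. If $(p,q)\in\cE(u)$ with $p,q\ge1$, then $p,q\in P_{p+q}$, so $p$ and $q$ lie on the same side. If $(0,s)\in\cE(u)$ with $s\ge1$, then $s\in P_s$. Hence on each anti-diagonal $s\ge1$, every index $\ge1$ occurring in a tight pair lies on one side, the side containing $P_s$.

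\textbf{Step 1: the extra element.} Put $z(i)=i\cdot\ind{i\in C}$, so $z(0)=0$. I claim $z\in Z(u)$. On an anti-diagonal $s$ whose $P_s$ lies in $C$, every tight pair has $z(i)+z(j)=i+j=s$; this includes pairs $(0,s)$, because $z(0)=0$. If $P_s$ lies in $C'$, every tight pair gives $0$. The anti-diagonal $s=0$ carries only $(0,0)$. So $z\in Z(u)=\spn\{\one,\iota,u\}$ and $z=\alpha+\beta\iota+\gamma u$.

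Here $\gamma\ne0$. Otherwise $z$ would be affine with $z(0)=0$, which is impossible because $z$ equals $i$ on $C$ and $0$ on $C'$. Solving for $u$, we get $u(i)=a+b\,i$ on $C\cup\{0\}$ and $u(i)=a+b'i$ on $C'\cup\{0\}$, with $b-b'=1/\gamma\ne0$ and a common value $a=u(0)$. By symmetry $\ind{i\in C'}\,i=\iota-z$ gives nothing new, so this two-slope description is everything the ray property yields directly.

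\textbf{Step 2: use connectivity.} No tight pair joins $C$ to $C'$, so the connected bipartite graph of $\cE(u)$ must pass through index $0$. Hence there are tight pairs $(0,p)$ with $p\in C$ and $(0,q)$ with $q\in C'$. By total support these pairs lie in tight permutations. The resulting tightness conditions are
\begin{equation*}
2a+b\,p=v(p),\qquad 2a+b'q=v(q).
\end{equation*}
They are to be confronted with the validity inequalities $u(i)+u(j)\ge v(i+j)$ for $i,j$ on opposite sides. Such pairs are never tight, so these inequalities are strict.

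\textbf{Step 3: the contradiction (main obstacle).} I expect this step to be the hardest. Here the two slopes must be played against each other. Consider a pair $(i,j)$ with $i\in C$, $j\in C'$ and $i+j=p$ or $i+j=q$. Its cost is $2a+bi+b'j$, which must strictly exceed $v(i+j)$; this yields a sign condition on $b-b'$. The reversed configuration, or the mirrored pair, should yield the opposite sign. To find suitable pairs I would use that $u$ is normalized, hence nondecreasing, so $b,b'\ge0$. I would also use that $v$ is nondecreasing, and that $C$ and $C'$ interleave in $\{1,\dots,K\}$, since both are nonempty.

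If this direct comparison does not close, my fallback has two parts. First, use \cref{lem:strong}: with $\sigma$ tight and $\sigma(0)\in C$, the digraph $D_\sigma$ is strongly connected. This gives an arc $i\to\sigma^{-1}(j)$ crossing from the $C$-part to the $C'$-part, through a tight edge at $0$. Second, build a second test vector with $z(0)\ne0$, for instance $\ind{i\in C}$ with $z(0)=1$. The aim is to show it also lies in $Z(u)$, which would make $\dim Z(u)\ge4$ and contradict the ray property. The anti-diagonals that could obstruct this are those carrying both $(0,s)$ and an interior pair on the $C'$ side. Excluding them with the two-slope form of $u$ is exactly where the real work lies.
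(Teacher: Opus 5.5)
\textbf{There is a genuine gap: Step 3 is never carried out, and the route you sketch for it cannot work.}

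Your Steps 1 and 2 are correct. The vector $i\ind{i\in C}$ lies in $Z(u)$; the paper starts the same way, to see that $\Theta(u)$ has at most two components. The two-slope form follows, and connectivity forces tight pairs $(0,p)$ with $p\in C$ and $(0,q)$ with $q\in C'$.

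Your sketch uses total support only to say that $(0,p)$ and $(0,q)$ are allowed, and the lemma is false without total support. A counterexample:
\begin{itemize}
\item Take $k=7$ and $u=(0,1,2,0,0,0,0)$; its normalized form is $u+2\iota$, with increments $(3,3,0,2,2,2)$.
\item Here $v(1)=1$, $v(2)=2$, and $v=0$ elsewhere.
\item The anti-diagonals carrying two unordered tight pairs are $2,6,8,9,10$. Their relations have rank $4$, so $\dim Z(u)=3$ and $u$ is a ray.
\item Index $0$ is tight with every index, so $\cE(u)$ is connected.
\item $\Theta(u)$ has the two components $\{1,2\}$ and $\{3,4,5,6\}$.
\end{itemize}
Everything you assemble holds in this example: the slopes are $3$ and $2$, the pairs $(0,1),(0,2),(0,3),\dots$ are tight, every crossing pair is strict, and $u$ and $v$ are nondecreasing. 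So no comparison of crossing pairs at the anti-diagonals $p$ and $q$ can yield opposite signs for $b-b'$.

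Only total support fails. Row $2$ is tight only with column $0$, and row $1$ only with columns $0$ and $1$. So every tight permutation sends $0\mapsto2$, and $(0,0)$ is never used. The example also refutes your remark that $C$ and $C'$ interleave: here they are intervals, and the correct argument forces that.

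\textbf{The missing idea is a tight permutation $\sigma$ with $\sigma(0)=0$.} It exists because $(0,0)$ is the only pair on anti-diagonal $0$, and total support makes it allowed. Each $(c,\sigma(c))$ with $c\ge1$ is tight with both entries $\ge1$, so $\sigma$ maps $C$ onto $C$. With this, your Step 2 can be finished as follows.
\begin{itemize}
\item Name the sides so that $b>b'$. Then $u-a-b'\iota=(b-b')\,i\ind{i\in C}$, so you may work with $z_C(i)=i\ind{i\in C}$, which has the same tight graph.
\item Your tight pair $(0,p)$ forces $[1,p]\subseteq C$: any pair $(i,p-i)$ with $1\le i\le p-1$ and an entry in $C'$ costs less than $p$.
\item Hence $m'=\min C'\ge2$, and $L=[1,m'-1]\subseteq C$ is nonempty.
\item For $c\in L$ put $s=c+\sigma(c)$. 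If $s\ge m'$, the pair $(m',s-m')$ is admissible (since $s-m'\le K-1$) and costs $z_C(s-m')<s$, so $(c,\sigma(c))$ would not be tight.
\item So $\sigma$ permutes $L$ with $c+\sigma(c)\le m'-1$ for every $c\in L$. Summing over $L$ gives $m'(m'-1)\le(m'-1)^2$, a contradiction.
\end{itemize}
The paper's outline uses the same permutation fixing $0$ in a different order. It forces the two sides to be complementary intervals of $\{1,\dots,K\}$, with the steeper side on top. Then no index of the steeper side is tight with $0$, and the bipartite graph of $\cE(u)$ is disconnected.
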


\begin{proof}[Idea of the proof]
For each component $C$ of $\Theta(u)$ the vector $i\,\ind{i\in C}$ lies in $Z(u)$, and since $Z(u)$ has dimension $3$, there are at most two components. Two components force, through a tight permutation fixing $0$, a split of $\{1,\dots,K\}$ into two intervals that disconnects the bipartite graph of $\cE(u)$. The proof is in \cref{sec:SM-blockgraph}.
\end{proof}

\begin{lemma}[A flat level]\label{lem:flatlevel}
Let $u$ be normalized with a tight permutation $\sigma$ such that $\sigma(0)=0$. Then some $(a,b)\in\cE(u)$ with $a,b\ge1$ has $v(a+b-1)=v(a+b)$.
\end{lemma}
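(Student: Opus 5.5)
The proof is a direct exchange argument using one flat increment of $u$; no contradiction or counting is needed.

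Since $u$ is normalized, its increments have minimum $0$, so we fix an index $q$ with $d(q)=0$, that is, $u(q)=u(q+1)$, where $0\le q\le K-1$. Let $\sigma$ be the given tight permutation with $\sigma(0)=0$. We will show that the pair
\[
(a,b)=(q+1,\sigma(q+1))
\]
has the required property. It is tight because $\sigma$ is tight. We have $a=q+1\ge1$. Also $b\ge1$, since $\sigma$ is a bijection with $\sigma(0)=0$ and $q+1\ne0$.

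Next form the permutation $\sigma'=\sigma\circ(q\ q+1)$. It agrees with $\sigma$ except that $\sigma'(q)=\sigma(q+1)$ and $\sigma'(q+1)=\sigma(q)$. Its total cost $\sum_i\bigl(u(i)+u(\sigma'(i))\bigr)$ equals that of $\sigma$, because only the row values $u(q)$ and $u(q+1)$ trade places, and these are equal. Every permutation $\pi$ satisfies $\sum_iv(i+\pi(i))\le\sum_i\bigl(u(i)+u(\pi(i))\bigr)$ termwise, and the right-hand side is always $2\sum_iu(i)$. Hence the tight permutations are exactly the permutations attaining equality in the termwise bound. In particular $\sigma'$ is tight, so each of its pairs is tight, and so is the pair $(q,\sigma(q+1))$:
\[
v\bigl(q+\sigma(q+1)\bigr)=u(q)+u(\sigma(q+1)).
\]

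Finally compare the two anti-diagonals. The right-hand side above equals $u(q+1)+u(\sigma(q+1))$, and this equals $v\bigl(q+1+\sigma(q+1)\bigr)$ because $(q+1,\sigma(q+1))$ is tight for $\sigma$. Therefore
\[
v(a+b-1)=v(a+b)\qquad\text{with }a=q+1,\ b=\sigma(q+1),
\]
which is the claim.

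There is no real obstacle here. The only points to check are that the exchanged permutation is still tight, which follows from the equal-cost and termwise argument above, and that $b\ne0$, which is exactly where the hypothesis $\sigma(0)=0$ is used. Monotonicity of $v$ is not needed, since equality comes directly from the two tight pairs.
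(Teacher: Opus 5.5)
The step that fails is your claim that $\sigma'=\sigma\circ(q\ q+1)$ is tight. The sum $\sum_i\bigl(u(i)+u(\pi(i))\bigr)$ equals $2\sum_iu(i)$ for \emph{every} permutation $\pi$, so saying that $\sigma'$ has the same total cost as $\sigma$ tells you nothing. Tightness of $\sigma'$ means $\sum_iv(i+\sigma'(i))=2\sum_iu(i)$, and the swap changes this $v$-sum.

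The step is also circular. Since $u(q)=u(q+1)$ and $(q+1,\sigma(q+1))$ is tight, the pair $(q,\sigma(q+1))$ is tight if and only if $v(q+\sigma(q+1))=v(q+1+\sigma(q+1))$. That is exactly the conclusion you want.

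The conclusion can in fact fail for your pair. Take $k=4$ and $u=(0,0,1,1)$, so $v=(0,0,0,1,1,2,2)$. The permutation $\sigma(0)=0$, $\sigma(1)=3$, $\sigma(2)=1$, $\sigma(3)=2$ is tight. For the flat increment $q=2$ your pair is $(a,b)=(3,2)$, and $v(4)=1\ne2=v(5)$. Correspondingly, $\sigma'$ uses the pair $(2,2)$, whose cost $2$ exceeds $v(4)=1$.

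The exchange only works upward, and it needs exactly the monotonicity of $v$ that you set aside. Since $u$ is nondecreasing, so is $v$. If $(q,j)$ is tight, then $u(q+1)+u(j)=u(q)+u(j)=v(q+j)\le v(q+1+j)\le u(q+1)+u(j)$. Hence $(q+1,j)$ is tight and $v(q+j)=v(q+j+1)$.

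So take $j=\sigma(q)$, the partner of $q$ rather than of $q+1$. For $q\ge1$, the hypothesis $\sigma(0)=0$ gives $j\ge1$. For $q=0$, take $j=1$ instead: $(0,1)$ is the only unordered pair on anti-diagonal $1$, so it is tight, and you get $(a,b)=(1,1)$. This is the paper's argument. In the example above, $q=2$ and $j=\sigma(2)=1$ give the tight pair $(3,1)$ with $v(3)=v(4)=1$.
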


\begin{proof}
Let $q$ be an index with $u(q+1)=u(q)$. For every $j$ the pairs $(q,j)$ and $(q+1,j)$ cost the same, and $v(q+j)\le v(q+1+j)$; so if $(q,j)$ is tight, then $u(q+1)+u(j)=v(q+j)\le v(q+1+j)$ forces $(q+1,j)$ to be tight and $v(q+j)=v(q+1+j)$. If $q\ge1$, take $j=\sigma(q)\ge1$; if $q=0$, take $j=1$, since $(0,1)$ is the only pair on its anti-diagonal. Then $(a,b)=(q+1,j)$ works.
\end{proof}

\begin{theorem}[Grid refinement]\label{thm:refine}
Let $u$ be a normalized ray on $\{0,\dots,K\}$, $k=K+1\ge3$, whose tight graph is connected and has total support, and let $m\ge2$. Then $u'=\operatorname{spr}_mu$ is a normalized ray whose tight graph is connected and has total support, and $I(u')$ defines a facet of $\Ph(mK+1)$. Moreover $u'$ satisfies $(\mathrm{P}_L)$, and it satisfies $(\mathrm{P}_R)$ if $u$ does.
\end{theorem}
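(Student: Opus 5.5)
We plan to establish four properties of $u'=\operatorname{spr}_mu$, in order: connectedness of $\cE(u')$, total support, the ray property, and the padding conditions. The facet statement then follows from \cref{thm:R}. Throughout we use the description of $\cE(u')$ in \cref{lem:sprtight}. Normalization is immediate, since the increments of $u'$ are those of $u$ interleaved with zeros.

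\emph{Total support and connectedness.} Fix a tight permutation $\sigma$ of $u$. Since $u$ is normalized and $(0,0)$ is the only pair on anti-diagonal $0$, we may take $\sigma(0)=0$; we plan to get such a $\sigma$ from total support of $\cE(u)$. Lift $\sigma$ to $u'$ as follows. Fix $0\mapsto0$, and for $a\ge1$ map block $B_a$ to block $B_{\sigma(a)}$ by $r\mapsto m-1-r$. Then $r_i+r_j=m-1$, so every lifted pair is tight by \cref{lem:sprtight}, and this gives a tight permutation $\sigma'$ of $u'$. Next we show that $D_{\sigma'}$ is strongly connected, which gives total support by \cref{lem:D}. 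Inside the block pair $B_a\times B_{\sigma(a)}$, every pair with $r_i+r_j\le m-1$ is tight. The resulting arcs go from residue $r$ to every residue $\le r$ in the same block. To reach larger residues we need one pair with $r_i+r_j\ge m$, and this is where \cref{lem:flatlevel} enters: it supplies a tight $(a,b)$ with $a,b\ge1$ and $v(a+b-1)=v(a+b)$, so that all residue pairs in $B_a\times B_b$ are tight. Across blocks, we lift each cycle of $D_\sigma$ (strongly connected by \cref{lem:strong}) through residues compatible with $r_i+r_j\le m-1$; in particular residue $0$ connects to everything. Connectedness of the bipartite graph of $\cE(u')$ follows in the same way from connectedness of $\cE(u)$: each tight $(a,b)$ with $a,b\ge1$ joins all of $B_a$ to all of $B_b$ through the pairs of residue sum $\le m-1$, which link every row residue to column residue $0$ and conversely.

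\emph{Ray.} Let $z\in Z(u')$. Within a block $B_a$ with $a\ge1$, consider a tight pair $(a,b)$, which exists since $\sigma(a)$ is tight. The pairs $(i,j)$ and $(i+1,j-1)$ in $B_a\times B_b$ with residue sum $\le m-1$ lie on a common anti-diagonal, so comparing them shows that $z$ has the same slope on $B_a$ as on $B_b$, reversed. Chaining these comparisons along the cliques $P_s$ of $\Theta(u)$, which is connected by \cref{lem:blockgraph}, shows that $z$ has one common slope $c$ on all blocks. Then $z-c\,\iota$ is constant on each block, and the blockwise constants define a potential $\hat z$ on $\{0,\dots,K\}$. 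The pairs with residue sum exactly $0$ or $m-1$ induce exactly the relations of $Z(u)$ on $\hat z$, up to a correction of the form $c\cdot(\text{affine})$. Hence $\hat z$ is affine plus $\gamma u$, and $z$ lies in $\spn\{\one,\iota,u'\}$. We expect the main obstacle to be this bookkeeping: checking that the slope comparisons truly force one global slope, and that block $B_0=\{0\}$, together with the $m$-scaling of $\iota$, fits consistently. For this we would track the anti-diagonal $ms-\delta$ with $\delta$ varying.

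\emph{Padding conditions.} For $(\mathrm{P}_L)$, the increment $d'(0)=u(1)-u(0)$ need not be $0$. We therefore use $q=1$ when $m\ge3$, or any interior zero increment $q$ inside a block, and check that $(0,q)$ is tight via residue sums $\le m-1$ with $a_0=0$. The underlying block pair is $(0,1)$, which is tight because it is the only pair on its anti-diagonal. For $(\mathrm{P}_R)$, take $q$ with $d(q)=H$ and $(q+1,K)\in\cE(u)$. The corresponding increment of $u'$ sits at position $mq$, from index $mq$ to $mq+1\in B_{q+1}$ with residue $m-1$. Pairing this index with $mK$, which has residue $0$, gives residue sum $m-1$, so the pair is tight.
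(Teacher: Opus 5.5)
The ray part has a genuine gap. Every relation you use there comes from a tight pair with $r_i+r_j\le m-1$, and for such pairs $a_i+a_j=\lceil (i+j)/m\rceil$. So $z_0(i)=\lceil i/m\rceil$ satisfies all of them. It has slope $0$ on every block, and its block values $\hat z_0(a)=a$ are affine, hence lie in $Z(u)$.

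But $z_0\notin\spn\{\one,\iota,u'\}$. A combination $\alpha+\beta\iota+\gamma u'$ is constant on a block of $m\ge2$ points only if $\beta=0$. Then $a=\alpha+\gamma u(a)$ for all $a$, which is impossible for a non-affine $u$.

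In general, $\hat z=\alpha+\beta\iota+\gamma u$ only gives $z(i)=\alpha+ci+\gamma u'(i)+\beta\lceil i/m\rceil$. So your final ``hence'' needs $\beta=0$, and no bookkeeping on the anti-diagonals $ms-\delta$ with residue sums at most $m-1$ can supply it.

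What you need is a tight pair with $r_i+r_j\ge m$, and such a pair exists only at a flat level. This is where \cref{lem:flatlevel} belongs, and it is where the paper uses it (to force $\beta=cm$ in its normalization). Take the tight $(a,b)$ with $a,b\ge1$ and $v(a+b-1)=v(a+b)$. A pair of $B_a\times B_b$ with residue sum $\rho\ge m$ is tight and lies on the anti-diagonal $t=m(a+b)-\rho$, where $\lceil t/m\rceil=a+b-1$. Take a tight pair $(a',b')$ of $u$ with $a'+b'=a+b-1$; its lift with residue sum $\rho-m$ lies on the same anti-diagonal. Since $u(a)+u(b)=v(a+b)=v(a+b-1)=u(a')+u(b')$, equating the two sums gives $\beta(a+b)=\beta(a+b-1)$, so $\beta=0$.

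Conversely, \cref{lem:flatlevel} is not needed for total support, where you placed it. The arcs given inside $B_a$ by the pairs of $B_a\times B_{\sigma(a)}$ with residue sum at most $m-1$ go from residue $r$ to every residue $\ge r$, not $\le r$. The residue-$0$ layer $\{0\}\cup\{ma:a\ge1\}$ and the residue-$(m-1)$ layer $\{0\}\cup\{m(a-1)+1:a\ge1\}$ each contain a copy of $D_\sigma$ through the common vertex $0$. Every vertex of $B_a$ is reached from the first layer and reaches the second, so $D_{\sigma'}$ is strongly connected.

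The slope step is also incomplete. Comparing $(i,j)$ with $(i+1,j-1)$ inside one block pair gives $c_a=c_b$ only for tight partners $a,b\ge1$. It never relates two blocks $p,p'$ of one clique $P_s$, and those are the edges of $\Theta(u)$ whose connectivity \cref{lem:blockgraph} provides. You have to compare different block pairs on one anti-diagonal:
\begin{itemize}
\item $(mp,m(s-p))$ with $(mp',m(s-p'))$ on the anti-diagonal $ms$;
\item $(mp-1,m(s-p))$ with $(mp'-1,m(s-p'))$ on the anti-diagonal $ms-1$.
\end{itemize}
These pairs have residue sums $0$ and $1$, and the partner is the index $0$ when $s=p$. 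Subtracting gives $z(mp)-z(mp-1)=z(mp')-z(mp'-1)$, that is, $c_p=c_{p'}$.

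The lift of $\sigma$, the connectivity argument and the padding checks are fine. Note that $q=1$ gives $(\mathrm{P}_L)$ for every $m\ge2$, since $1,2\in B_1$.
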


\begin{proof}[Structure of the proof]
Fix a tight permutation $\sigma$ of $u$ with $\sigma(0)=0$. For the ray property let $z\in Z(u')$. Tight pairs inside the lifted block pairs $(a,\sigma(a))$ make $z$ affine on every block with some slope $c_a$; pairs on the anti-diagonals $ms$ and $ms-1$ make the slopes of the blocks in one set $P_s$ equal, so by \cref{lem:blockgraph} all slopes are one number $c$; the values $z(ma)$ form an element $\alpha+\beta\iota+\gamma u$ of $Z(u)$; and a tight pair given by \cref{lem:flatlevel} forces $\beta=cm$, so $z=\alpha+c\iota+\gamma u'$. For total support, the lift $\pi$ of $\sigma$ that reverses offsets inside blocks is tight, and its switching digraph contains two copies of $D_\sigma$, strongly connected by \cref{lem:strong}, joined through every block; \cref{lem:D} applies. Connectivity lifts from $\cE(u)$, the facet is \cref{thm:R}, and $(\mathrm{P}_L)$, $(\mathrm{P}_R)$ are checked at explicit tight pairs. The proof is in \cref{sec:SM-refine}.
\end{proof}

\begin{corollary}[Finer grids]\label{cor:finer}
Under the hypotheses of \cref{thm:refine}, for every $m\ge2$ the potential $\operatorname{ref}_mu$ is a normalized ray with connected tight graph and total support, and $I(\operatorname{ref}_mu)$ defines a facet of $\Ph(mk)$. So the step function $\tilde u$ of such a potential, which gives a facet on the grid $(1/k)\Z$ by \cref{thm:R}, gives a facet on every grid $(1/(mk))\Z$. For $k\le12$ every facet ray satisfies the hypotheses (computer-assisted: by the exact check on the facet lists stated at the end of \cref{sec:padding}).
\end{corollary}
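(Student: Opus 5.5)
The plan is to obtain everything from \cref{thm:refine} by applying it to $u$ and then padding on the left $m-1$ times. The identity $\operatorname{ref}_mu=\operatorname{pad}_L^{m-1}\operatorname{spr}_mu$, stated in the definition of refinement, makes this direct.

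First, \cref{thm:refine} applied to $u$ gives four facts about $u'=\operatorname{spr}_mu$. It is a normalized ray on $\{0,\dots,mK\}$, so on $mK+1=mk-m+1$ points. Its tight graph is connected and has total support. It satisfies $(\mathrm{P}_L)$. Its largest increment is the same $H$, since spreading only inserts zero increments.

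Second, I apply \cref{thm:pad}(c) to $u'$ repeatedly, $m-1$ times. Each application needs four hypotheses: a normalized ray, a connected tight graph, total support, and $(\mathrm{P}_L)$. The conclusion of \cref{thm:pad}(c) returns exactly these four properties for $\operatorname{pad}_Lu'$, so a simple induction on the number of paddings works. This is \cref{cor:pad} with $b=0$, which needs only $(\mathrm{P}_L)$ because only left paddings are used. The one check is the size condition $k\ge3$ in \cref{thm:pad}. It holds because $u'$ already lives on $mK+1\ge 2\cdot2+1\ge3$ points, using $K=k-1\ge2$.

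After $m-1$ paddings the potential lives on $mK+1+(m-1)=mk$ points and equals $\operatorname{ref}_mu$. It is a normalized ray with connected tight graph and total support. By \cref{thm:R}, or directly from \cref{thm:pad}(c), $I(\operatorname{ref}_mu)$ defines a facet of $\Ph(mk)$.

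The statement about step functions is then a matter of reading off the definitions. The refinement discussion shows that $\operatorname{ref}_mu$ is the grid form of $\tilde u$ on $(1/(mk))\Z$. On $(1/k)\Z$, $I(u)$ is a facet by \cref{thm:R}. Via \cref{prop:C}, both inequalities are the constraint $\mathbb{E}(\tilde u\,\square\,\tilde u)(X+Y)\le2\,\mathbb{E}\tilde u(X)$ restricted to the respective lattice.

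The final sentence, that every facet ray with $k\le12$ satisfies the hypotheses, is the computer-assisted check cited at the end of \cref{sec:padding}. I will simply invoke it: every facet ray there has connected tight graph and total support, as verified on the complete facet lists.

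The only step needing care is confirming that $\operatorname{ref}_mu$ really coincides with $\operatorname{pad}_L^{m-1}\operatorname{spr}_mu$ as potentials, not merely up to an affine change. This is a direct index check. $\operatorname{spr}_mu$ has increment sequence equal to that of $u$ with $m-1$ zeros after each increment. Prepending $m-1$ zeros gives blocks of length $m$ with values $u(\lfloor i/m\rfloor)$, and both potentials start at the value $u(0)$. Beyond this check there is no real obstacle.
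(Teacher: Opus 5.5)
Your argument for the main claim is the paper's: $\operatorname{ref}_mu=\operatorname{pad}_L^{m-1}\operatorname{spr}_mu$, \cref{thm:refine} gives the hypotheses of \cref{thm:pad}(c) for $\operatorname{spr}_mu$, and $m-1$ applications of \cref{thm:pad}(c) give the conclusion, including the facet of $\Ph(mk)$. Your direct induction is the right way to say it. The aside that this is \cref{cor:pad} with $b=0$ is not a valid citation, though: \cref{cor:pad} as stated also assumes $(\mathrm{P}_R)$, and \cref{thm:refine} supplies $(\mathrm{P}_R)$ only when $u$ has it.

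The step that needs repair is the last sentence. The exact check at the end of \cref{sec:padding} verifies the hypotheses of \cref{cor:cores}. That is, every \emph{core} on at most $12$ points satisfies the hypotheses of \cref{cor:pad}, and each facet ray has a unique core representation. It is not a verification, ray by ray, that every facet ray has a connected tight graph. Among the hypotheses of \cref{thm:refine}, only total support is reported as checked directly on every facet ray (\cref{sec:census}). Whether total support of a ray implies connectivity is open (\cref{prob:C}(b)), so connectivity cannot be read off from total support.

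You therefore need the transfer argument the paper gives:
\begin{enumerate}
\item Strip end paddings to write a facet ray $w$ with $k\le12$ as $\operatorname{pad}_L^a\operatorname{pad}_R^bc$ with a core $c$.
\item Apply the exact check to $c$, which gives the hypotheses of \cref{cor:pad} for $c$.
\item Use \cref{cor:pad} to conclude that $w$ is a normalized ray whose tight graph is connected and has total support.
\end{enumerate}
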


\begin{proof}
$\operatorname{ref}_mu=\operatorname{pad}_L^{m-1}u'$ with $u'=\operatorname{spr}_mu$, and $u'$ satisfies the hypotheses of \cref{thm:pad}(c) by \cref{thm:refine}. Each padding preserves them, so $m-1$ applications of \cref{thm:pad}(c) give the claim. For the last sentence let $w$ be a facet ray on $\{0,\dots,k-1\}$ with $k\le12$. Removing end paddings writes $w=\operatorname{pad}_L^a\operatorname{pad}_R^bc$ with a core $c$ and $a,b\ge0$ (\cref{sec:padding}, \cref{sec:census}). By the exact check stated at the end of \cref{sec:padding}, the core $c$ satisfies the hypotheses of \cref{cor:pad}, and \cref{cor:pad} shows that $w$ satisfies them too; in particular $w$ is a normalized ray whose tight graph is connected and has total support.
\end{proof}

Let $u$ satisfy the hypotheses of \cref{thm:refine}. If the first increment of $u$ is nonzero, then $\operatorname{spr}_mu$ begins with a nonzero increment and ends with the increment $0<H$, so it is neither $\operatorname{pad}_Lw$ nor $\operatorname{pad}_Rw$, and it is a core. Applied to the potentials $G_{a,b}$ with $a\le b+1$ of \cref{thm:height} below, which satisfy these hypotheses and begin with a nonzero increment, \cref{thm:refine} gives non-step cores of every height $h\ge2$ for infinitely many $k$ (take $a=1$).

Conversely, properties of $\operatorname{spr}_mu$ descend to $u$ as follows.

\begin{proposition}[Descent under refinement]\label{prop:descent}
Let $u$ be a normalized potential on $\{0,\dots,K\}$, $m\ge2$ and $u'=\operatorname{spr}_mu$.
\begin{enumerate}[label=(\alph*)]
\item If $\cE(u')$ has total support, so does $\cE(u)$. If $\cE(u')$ is connected, so is $\cE(u)$.
\item Let $\mathcal{S}$ be the set of $s\ge2$ with $v(s)=v(s-1)$ such that some $(a,b)\in\cE(u)$ with $a,b\ge1$ has $a+b=s$, and for $z\in Z(u)$ let $h_z(s)=z(a)+z(b)$ for any tight pair $(a,b)$ with $a+b=s$. Suppose that for every $z\in Z(u)$ the differences $h_z(s)-h_z(s-1)$, $s\in\mathcal{S}$, are all equal; this holds if $u$ has exactly one zero increment. If $u'$ is a ray, then $u$ is a ray.
\item The condition in (b) cannot simply be dropped: for the increments $(0,1,1,2,0)$ and $m=2$, $u'$ is a ray and $u$ is not; here $\cE(u')$ does not have total support.
\end{enumerate}
\end{proposition}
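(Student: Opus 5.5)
The plan is to use \cref{lem:sprtight} throughout. It gives every tight pair of $u'$ in terms of the block pair $(a_i,a_j)$ and the offsets $r_i,r_j$. In particular, for $(a,b)\in\cE(u)$, the pair $(i,j)$ with $i$ the top of $B_a$ (so $r_i=0$) and $j\in B_b$ arbitrary is tight for $u'$. Conversely, every tight pair $(i,j)$ of $u'$ projects to the tight pair $(a_i,a_j)$ of $u$.

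For (a), take a tight permutation $\pi$ of $u'$. The bipartite graph of $\cE(u')$ projects onto that of $\cE(u)$ by $i\mapsto a_i$. This map sends connected graphs to connected graphs, which gives the connectivity claim directly.

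For total support, project $\pi$ to the doubly stochastic-type matrix $N_{ab}=\#\{i\in B_a:a_{\pi(i)}=b\}$. Its row and column sums are $|B_a|$ and $|B_b|$, namely $1$ for index $0$ and $m$ otherwise, and it is supported on $\cE(u)$. Scale row $a$ and column $b$ by $1/|B_a|$ and $1/|B_b|$ via the diagonal matrices $D^{-1/2}ND^{-1/2}$. The row sums are then not equal to $1$ in general, so instead I will argue combinatorially. Given a tight edge $(a,b)$ of $u$, lift it to a tight edge $(i,j)$ of $u'$. Total support of $u'$ gives a tight permutation $\pi$ through $(i,j)$, and its projection $N$ is a nonnegative integer matrix with margins $|B_a|$ and $N_{ab}\ge1$. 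I then want a permutation inside the support of $N$ using $(a,b)$. Hall's condition with these margins holds because the margins differ only at index $0$: for a set $X$ of rows, the neighborhood $Y$ satisfies $m|X|-(m-1)\ind{0\in X}\le m|Y|-(m-1)\ind{0\in Y}$. This gives $|Y|\ge|X|$ unless $0\in Y\setminus X$, and in that case $m(|Y|-|X|)\ge m-1>0$. For containing $(a,b)$, apply the same count to $N$ minus one unit at $(a,b)$, with rows $a$ and column $b$ removed. I expect this bookkeeping with the exceptional block $\{0\}$ to be the main obstacle in (a).

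For (b), take $z\in Z(u)$ and lift it to $z'$ on $\{0,\dots,mK\}$. First extend $h_z$ to all $s$. Define $z'(i)=z(a_i)-c\,r_i$, where $c$ is the common difference $h_z(s)-h_z(s-1)$ over $s\in\mathcal S$. If $\mathcal S$ is empty, use $c=0$; I expect $\mathcal S$ is nonempty whenever needed, by \cref{lem:flatlevel}. Check $z'\in Z(u')$ using \cref{lem:sprtight}. On anti-diagonal $t$ with $s=\lceil t/m\rceil$, the tight pairs with $a_i+a_j=s$ give $z'(i)+z'(j)=h_z(s)-c(ms-t)$. The pairs with $a_i+a_j=s+1$ occur only when $v(s)=v(s+1)$, so $s+1\in\mathcal S$ when they exist with $a,b\ge1$; they give $h_z(s+1)-c(m(s+1)-t)$, and these agree exactly by the hypothesis. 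Block $0$ is handled separately, since pairs with $a_i=0$ have $r_i=0$ and lie in the first case.

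Since $u'$ is a ray, $z'=\alpha+\beta\iota+\gamma u'$. Restricting to the block tops $i=ma$ gives $z=\alpha+\beta m\iota+\gamma u$, so $Z(u)=\spn\{\one,\iota,u\}$. The function $u$ is non-affine because $u'$ is. When $u$ has exactly one zero increment $d(q)=0$, I would show that $\mathcal S$ is essentially the set of anti-diagonals $s$ adjacent through the pairs $(q,\cdot),(q+1,\cdot)$ of the proof of \cref{lem:flatlevel}. There $h_z(s)-h_z(s-1)=z(q+1)-z(q)$ is a single number.

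For (c), I would just verify by direct computation with $u=(0,0,1,2,4,4)$ and $m=2$. The task is to exhibit $Z(u)$ of dimension $4$, $\dim Z(u')=3$, and an unallowed tight edge of $u'$. This is a finite check.
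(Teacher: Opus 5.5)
Your connectivity argument and your Hall count for the \emph{existence} of a tight permutation of $u$ are correct. The step you call bookkeeping in the total-support part of (a) fails as you set it up, though. The support of the block-count matrix $N$ need not contain any permutation through $(a,b)$, so no count on $N$ with row $a$ and column $b$ removed can close.

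Take $u=(0,0,1)$ and $m=2$. Then $u'=(0,0,0,1,1)$ is a single step, whose tight graph has total support by \cref{thm:S}. The permutation $\pi=(0\,1)(3\,4)$, which fixes $2$, is tight for $u'$ and uses $(2,2)$, a lift of $(1,1)\in\cE(u)$. Its block counts are $N=\begin{pmatrix}0&1&0\\1&1&0\\0&0&2\end{pmatrix}$. A permutation through $(1,1)$ inside this support would also have to send row $0$ to column $1$, which is impossible.

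The missing idea is that $(0,0)$ is always tight for $u$, since it is the only pair on anti-diagonal $0$. Add $m-1$ to the $(0,0)$ entry of $N$. The result is supported on $\cE(u)$, has every row and column sum equal to $m$, and has a positive $(a,b)$ entry. Divided by $m$ it is doubly stochastic, and Birkhoff--von Neumann gives a tight permutation of $u$ through $(a,b)$. (Equivalently, an $m$-regular bipartite multigraph is a union of $m$ perfect matchings.) This is essentially the paper's argument, and it makes your separate existence count unnecessary.

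In (b) your plan is the paper's: lift $z\in Z(u)$ into $Z(u')$, then restrict to the block tops $ma$. However, your constant is off by a factor $m$. For a tight pair of the second kind, $r_i+r_j=m(s+1)-t$. So the two kinds of tight pairs on anti-diagonal $t$ give $h_z(s)-c(ms-t)$ and $h_z(s+1)-c(ms-t)-cm$, and these agree iff $h_z(s+1)-h_z(s)=cm$. With $c$ equal to the common difference $\Delta$ they differ by $(1-m)\Delta$. You need $c=\Delta/m$, which is the paper's map $z\mapsto z(a_i)-\beta_zr_i/m$; with that change the rest of your argument goes through.

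For the sufficient condition with exactly one zero increment, you need the converse direction of \cref{lem:flatlevel}. Suppose $s\in\mathcal{S}$ is witnessed by a tight pair $(a,b)$ with $a,b\ge1$. Then $(a-1,b)$ costs at most $v(s)=v(s-1)$, so it is tight and $u(a-1)=u(a)$. A single zero increment at $q$ forces $a=q+1$, and then $h_z(s)-h_z(s-1)=z(q+1)-z(q)$ for every $s\in\mathcal{S}$.

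Part (c) is the same finite check as in the paper.
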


\begin{proof}[Idea of the proof]
For (a), a tight permutation of $u'$ through a lifted pair induces, after counting indices block by block, a doubly stochastic matrix supported on $\cE(u)$, and the Birkhoff--von Neumann theorem gives a tight permutation of $u$. For (b), the condition makes $z\mapsto z(a_i)-\beta_zr_i/m$ an injective linear map from $Z(u)$ into $Z(u')$. Part (c) is an exact computation. The proof is in \cref{sec:SM-descent}.
\end{proof}

Whether $u$ is a ray whenever $\operatorname{spr}_mu$ is a ray with total support is open (\cref{sec:conclusion}).

\subsection{Infinite families of facets from no step potential}\label{sec:nonstepfamilies}
Under the hypotheses of \cref{cor:pad}, padding produces facets from smaller ones. The following families give non-step facets for all large $k$. Write $K=k-1$, $L=k-2$, $\rho(i)=K-i$ and $\epsilon_a(i)=\ind{i=a}$.

\begin{proposition}[Non-step families]\label{thm:fam}
Each of the following potentials defines a facet of $\Ph(k)$.
\begin{enumerate}[label=(\alph*)]
\item For $a\ge2$ and $b\ge1$, with $n=a+b$ and $k=n+3$: $u_{a,b}(i)=\min(i,a)$ for $0\le i\le n$ and $u_{a,b}(n+1)=u_{a,b}(n+2)=a+2$. This gives $k-5$ potentials for each $k\ge6$.
\item For $k\ge9$: $\omega=\epsilon_1-\epsilon_L$, and its negative $-\omega$.
\item For $k\ge7$: $\nu=\epsilon_K-\epsilon_0-\epsilon_2$; for $k\ge9$: its negative $-\nu$.
\end{enumerate}
Moreover:
\begin{enumerate}[label=(\alph*),resume]
\item For $k\ge9$, the facets in (a)--(c), their reflections and the facets given by step potentials are pairwise distinct, up to $-\omega=\omega\circ\rho$ (the facet of $-\omega$ in (b) is the reflection of the facet of $\omega$).
\item Every potential in (a)--(c) satisfies the hypotheses of \cref{cor:pad}, so all its paddings $\operatorname{pad}_L^a\operatorname{pad}_R^b$ define facets.
\end{enumerate}
\end{proposition}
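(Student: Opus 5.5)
The plan is to use \cref{thm:R} for each potential in (a)--(c): I will show that it is a ray and that its tight graph is connected and has total support. For each family I will first compute $v=u\,\square\,u$ explicitly. For (a), $u_{a,b}$ is convex up to $n$ (slope $1$, then slope $0$) followed by a jump of $2$ at $n+1$. For (b) and (c) the potentials are supported on two or three indices. With $v$ in hand I will list the tight graph $\cE(u)$ anti-diagonal by anti-diagonal. For the potentials $\omega,\nu$ almost all pairs are tight, since $u$ vanishes off a few indices. The only constraints come from pairs involving the special indices $1,L$ (resp.\ $0,2,K$) and the anti-diagonals on which every pair touches one of them. Examples are the anti-diagonals $t=0,1,2$ and $t=2K-1,2K$ near the corners, and $t=2$ or $t=2L$ itself. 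The hypothesis $k\ge9$ (or $k\ge7$) should be exactly what makes every middle anti-diagonal contain a pair of zeros of $u$.

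For total support I plan to use \cref{lem:TS} whenever possible. I need an index $e$ with $u(e)+u(j)=v(e+j)$ for all $j$, plus a tight permutation fixing $e$. For $\omega$ and $\nu$, a middle index $e$ with $u(e)=0$ works if, for every $j$, the anti-diagonal $e+j$ has minimum $u(j)$. This can fail only when $j$ is a special index with positive value, so I will choose $e$ such that $e+j$ is forced to use $j$. Where no universal vertex exists (the plateau and jump of $u_{a,b}$, and $-\nu$), I will instead build one tight permutation $\sigma$ explicitly, for instance $i\mapsto K-i$ adjusted near the ends. I will then check that the switching digraph $D_\sigma$ is strongly connected and apply \cref{lem:D}. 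Connectivity of the bipartite graph is easy because of the many tight pairs among zeros, or the full convex region.

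For the ray property, let $z\in Z(u)$. \Cref{lem:K}(B1) applied to a large block of indices where $u$ is constant and all pairs are tight makes $z$ affine there. For $u_{a,b}$ I will use (B2) on the convex part $[0,a]$, together with the region $[a,n]$, where pairs with sums near the diagonal are tight. This leaves finitely many extra unknowns: the values at the special indices and a second slope. Each is eliminated by one comparison along a carefully chosen anti-diagonal that contains a tight pair using the special index and a tight pair avoiding it. The expected result is a space spanned by $\one,\iota,u$. I expect this bookkeeping for the two-slope family (a) to be the main obstacle. There, the constraint on the slope change at $a$ and the value at $n+1,n+2$ must be tied to the same coefficient $\gamma$ of $u$, and I would look first at anti-diagonals $t=a+n+1$ and $t=2n+2$ for the needed relations.

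For (d), by \cref{lem:distinct} distinct facets correspond to distinct patterns of increments, since all these rays have total support. I will compare the patterns directly. The step potentials have patterns in $\{0,1\}^{k-1}$. The potentials in (a) have patterns with entries $1/2$ except for a single $1$. The potential $\omega$ has increment pattern with values in $\{0,1/2,1\}$ at prescribed positions, and similarly for $\nu$. Reflection reverses the increments and complements them, which identifies $-\omega$ with $\omega\circ\rho$ and distinguishes the rest. For (e), I will verify $(\mathrm{P}_L)$ and $(\mathrm{P}_R)$ by exhibiting the tight pairs $(0,q)$ with $d(q)=0$ and $(q+1,K)$ with $d(q)=H$ from the tight-graph lists already computed. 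Connectivity and total support are already established above.
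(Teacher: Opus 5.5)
Your overall route is the paper's: \cref{thm:R} for each potential, total support from a universal vertex via \cref{lem:TS}, the ray property by comparing two tight pairs on one anti-diagonal, patterns (\cref{lem:distinct}) for (d), and explicit tight pairs for (e). The problem is that several concrete facts you plan to use are wrong, precisely in the ray computations you call the main obstacle. As written, those steps would not go through.

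\emph{The family $u_{a,b}$.} This potential is concave on $[0,n]$ (slope $1$, then $0$), not convex, so on each anti-diagonal the extreme pairs are the minimizers, not the balanced ones. Up to transposition, its tight pairs are:
all pairs with $i+j\le a$; the pairs $(0,t)$ for $t\le n$; the pairs $(j,n)$ for every $j$; the pairs in $[a,n]^2$ with $i+j\ge n+a$; the pairs $(n+2,j)$ for $0\le j\le a-2$; and $(n-1,n+2)$, $(n+1,n+2)$, $(n+2,n+2)$.
This has three consequences.
\begin{enumerate}
\item (B2) fails on $[0,a]$, because the pairs with $a<i+j\le 2a$ are slack. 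Only its mirror image, comparing $(i,0)$ with $(i-1,1)$, works there. (B2) does apply on $[a,n]$.
\item A universal vertex exists: $e=n$, the paper's choice. A tight permutation fixing it reverses $[0,a]$, sends $i\mapsto n+a-i$ on $[a+1,n-1]$, fixes $n$ and swaps $n+1,n+2$. No switching-digraph argument is needed.
\item Once $z$ is affine with slope $\beta$ on $[0,a]$ and $\beta'$ on $[a,n]$, two relations remove the extra parameters. On $t=n+2+j$, $0\le j\le a-2$, the tight pairs $(n+2,j)$ and $(j+2,n)$ give $z(n+2)-z(n)=2\beta$; this is where $a\ge2$ is used. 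On $t=2n+1$, the pairs $(n-1,n+2)$ and $(n,n+1)$ give $z(n+1)=z(n+2)-\beta'$, using $b\ge1$.
\end{enumerate}
The anti-diagonals $a+n+1$ and $2n+2$ that you propose carry no relation involving $n+1$ or $n+2$.

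\emph{The potentials $\omega$, $\nu$, $-\nu$.} It is not true that almost all pairs are tight: the indices with value $-1$ take over long runs of anti-diagonals.
\begin{enumerate}
\item For $\omega$, every anti-diagonal $t\in[L,2L+1]$ other than $L+1$ carries only the pair through $L$. So the universal vertex is $L$. No middle zero $e$ is universal, since $(e,K)$ loses to $(L,e+1)$; this failure comes from a negative index, not a positive one as you expect.
\item Among the zeros of $\omega$, only pairs with $i+j\le L-1$ and those on $t=L+1$ are tight, so there is no large block on which (B1) applies. What makes $z$ affine on the zeros is the mirror of (B2) on $[2,L-3]$, plus $(0,4)$ against $(2,2)$ on $t=4$. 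Then $t=L+1$ leaves only the direction $\epsilon_1-\epsilon_L$. The requirement $4\le L-3$ is exactly where $k\ge9$ comes from; it has nothing to do with middle anti-diagonals containing pairs of zeros.
\item For $\nu$ the universal vertex is $2$, and the tight pairs among zeros are exactly those in $[3,L]^2$ with $i+j\ge L+3$, so (B2) works there.
\item For $-\nu$ a universal vertex does exist, namely $K$.
\end{enumerate}

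\emph{Part (d).} The pattern of $u_{a,b}$ is $((1/2)^a,0^b,1,0)$, not ``$1/2$ except a single $1$''. Your distinctness argument, which only uses the entry $1/2$, still goes through.
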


The negative $-\omega$ is the reflection of $\omega$, since $\omega\circ\rho=-\omega$; the negative $-\nu$ is not a reflection of $\nu$. The behavior for smaller $k$ is recorded in \cref{sec:SM-famproof}.

\begin{proof}[Structure of the proof]
For (a)--(c), in each case we compute $v$, exhibit an index $e$ that is tight with every index ($e=n$ for $u_{a,b}$, $e=L$ for $\omega$, $e=2$ for $\nu$, $e=K$ for $-\nu$) together with a tight permutation fixing $e$, so that \cref{lem:TS} gives total support, and show $Z(u)=\spn\{\one,\iota,u\}$ using two tight pairs on one anti-diagonal at a time; then \cref{thm:R} applies. Part (d) compares the patterns of \cref{lem:distinct}, all of which contain the entry $\tfrac12$ while step potentials have patterns in $\{0,1\}^{k-1}$. Part (e) checks $(\mathrm{P}_L)$ and $(\mathrm{P}_R)$ at explicit tight pairs. The proof is in \cref{sec:SM-famproof}, and the value tables and the computations of $Z(u)$ are in \cref{sec:SM-fam}.
\end{proof}

The written proofs need $k\ge9$ in (b) and for $-\nu$, and their formulas were also checked by exact computation (\cref{sec:SM-famproof}). Together with their reflections and paddings (\cref{cor:pad}), the potentials of \cref{thm:fam} give $44$, $80$, $130$ and $196$ of the $195$, $622$, $1{,}902$ and $5{,}748$ non-step facets for $k=9,10,11,12$; this count is exact, from the complete facet lists.

\begin{remark}[No divisor-periodic facets]\label{prop:P}
Let $k=nd$ with $n,d\ge2$. No facet of $\Ph(k)$ has a normal of the form $\gamma(t)=A+Bt+\psi(t\bmod d)$. If $\psi$ is not constant, the face exposed by such a normal has codimension at least $2n-1\ge3$. The proof, which uses permutations that preserve the blocks of $d$ consecutive indices, is in \cref{sec:SM-divisor}. Normals of this form with a period $d$ that does not divide $k$ do occur: at $k=6$, two of the four facets that come from no step potential have normals $A+Bt+\psi(t\bmod5)$.
\end{remark}

\subsection{Quadratic height}\label{sec:height}
The height of a facet ray (\cref{def:height}) is the largest multiplicity of a threshold in its tail-sum reading (\cref{rem:multiplicity}). The following family shows that it grows at least quadratically in $k$.

\begin{theorem}[Quadratic height]\label{thm:height}
For integers $a\ge1$ and $b\ge2$ let $G_{a,b}$ be the normalized potential on $\{0,\dots,a+2b\}$ with $G_{a,b}(0)=0$ and increments $(b^a,0^{b-1},(b-1)a+1,1^b)$, where $x^j$ denotes $j$ repetitions of $x$. Its tight graph is connected and has total support, it satisfies $(\mathrm{P}_L)$ and $(\mathrm{P}_R)$, and $\dim Z(G_{a,b})=3$ if $a\le b+1$ and $4$ otherwise. For $a\le b+1$, $G_{a,b}$ is a ray, $I(G_{a,b})$ defines a facet of $\Ph(a+2b+1)$, and $G_{a,b}$ is a core of height $(b-1)a+1$. Consequently, for every $k\ge3$ some facet ray of $\Ph(k)$ has height at least $\max\bigl(\lfloor(k-2)/2\rfloor,\ \lfloor(k-2)/3\rfloor^2\bigr)$, which is $\lfloor(k-2)/3\rfloor^2$ for $k\ge8$.
\end{theorem}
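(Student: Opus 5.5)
The plan is to write down $G=G_{a,b}$ explicitly, compute its infimal convolution $v$ and tight graph $\cE(G)$, and then feed these into the general results: \cref{lem:TS} for total support, \cref{thm:R} for the facet, and \cref{thm:pad} for the padding conditions. Index the domain $\{0,\dots,a+2b\}$ in four segments.
\begin{itemize}[leftmargin=*]
\item $A=\{0,\dots,a\}$, where $G(i)=bi$, a steep linear segment.
\item $F=\{a,\dots,a+b-1\}$, a flat plateau at level $ab$.
\item The jump at $a+b-1\to a+b$ of size $h=(b-1)a+1$.
\item $T=\{a+b,\dots,a+2b\}$, a slope-one tail starting at level $ab+h$.
\end{itemize}
The first step is to tabulate $v(t)=\min_{i+j=t}(G(i)+G(j))$ anti-diagonal by anti-diagonal. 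Since $G$ is convex on $A\cup F$ except for the kink, I expect the minimizing pairs to fall into a few regimes. For small $t$, balanced pairs inside $A$ are minimal, and all pairs in $A\times A$ tie because $G$ is linear there. For middle $t$, pairs with one index on the plateau $F$ win. For large $t$, pairs in $F\times T$ compete with pairs in $T\times T$ and with pairs in $A\times T$. The jump size $h=(b-1)a+1$ is tuned exactly so that certain pairs $(i,j)$ with $i\in A$, $j\in T$ tie with pairs in $F\times F$ or $F\times T$ on the same anti-diagonal. I would verify this tie by comparing $bi+ab+h+(j-a-b)$ with the plateau cost.

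Next, for total support, I would look for a universal index $e$ as in \cref{lem:TS}, presumably $e=a+b-1$, the right end of the plateau. On the plateau, pairing with $e$ costs least among the indices of equal value that lie furthest right. I would then exhibit a tight permutation fixing $e$, built by reversing within segments: $A$ paired internally, $F$ reversed, and $T$ reversed. If no single $e$ is universal, I would fall back on \cref{lem:D} and show directly that the switching digraph is strongly connected. Connectivity of the tight graph then follows from the universal vertex. For $(\mathrm{P}_L)$, I would take $q=a$ (the first zero increment) and check that $(0,a)$ is tight. For $(\mathrm{P}_R)$, the maximal increment is $h$ at $q=a+b-1$, so I need $(a+b,a+2b)$ to be tight, which follows from the tail regime.

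The main obstacle is computing $Z(G)$. Take $z\in Z(G)$ and subtract an affine function so that $z(0)=z(1)=0$. I then propagate relations along tied pairs. The ties in $A\times A$ make $z$ affine on $A$ by \cref{lem:K}(B1), and the ties on the plateau and on the tail make $z$ affine on $F$ and on $T$. This leaves a few parameters: the slopes on $F$ and $T$ and the two offsets, all to be matched against $G$'s own values. The cross ties between $A$ and $T$ give one linear relation for each admissible $i\in A$. I expect these relations to be independent enough to cut the space down to dimension $3$ exactly when $a\le b+1$, because the number of anti-diagonals carrying an $A$-to-$T$ tie is governed by $\min(a,b+1)$. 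For $a>b+1$, I would exhibit an explicit extra element of $Z(G)$, supported on the part of $A$ beyond $b+1$, to show $\dim Z(G)=4$.

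Finally, for $a\le b+1$, \cref{thm:R} gives the facet. The core property holds because the first increment $b$ is nonzero, so $G$ is not $\operatorname{pad}_L$ of anything, and the last increment $1<h$ when $a\ge1$, $b\ge2$, so $G$ is not $\operatorname{pad}_R$ of anything. The height of $G$ is $\max(b,h)=h$.

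For the consequence, given $k\ge8$, take $a=b=\lfloor (k-2)/3\rfloor$ up to adjustment so that $a\le b+1$, giving height $(b-1)a+1\ge\lfloor(k-2)/3\rfloor^2$ roughly. Then use \cref{cor:pad} with the verified hypotheses, padding up to exactly $k$ points; padding preserves height. For small $k$, use $a=1$, which gives height $b$, together with padding to obtain $\lfloor(k-2)/2\rfloor$.
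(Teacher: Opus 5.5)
Your overall route matches the paper's: compute the tight graph, use the universal index $e=a+b-1$ with a tight permutation fixing it and \cref{lem:TS}, show $z$ is affine on the three segments, then apply \cref{thm:R} and \cref{cor:pad}. However, the step you call the main obstacle is misdiagnosed, and the case $a>b+1$ would fail as you plan it.

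Write $n=a+2b$. There are exactly two anti-diagonals on which tight pairs from different segments meet. The first is $t=2e+1$, where $(a-1,n)$ ties with $(e,e+1)$ for all $a,b$. The second is $t=2a+b-1$, where $(0,2a+b-1)$ ties with the plateau pairs $(a,e),(a+1,e-1),\dots$; this pair exists only if $2a+b-1\le n$, i.e.\ $a\le b+1$. That is the whole source of the threshold. There is no family of $\min(a,b+1)$ cross ties, one for each admissible $i\in A$. If that many independent relations existed, they would push $\dim Z(G)$ below $3$ for large $a,b$, which is impossible since $\spn\{\one,\iota,G\}\subseteq Z(G)$.

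With your normalization, $z=0$ on $A$, $z(i)=\beta(i-a)$ on $F$, and $z(i)=\gamma+\delta(i-e-1)$ on the tail. The two ties read $b\delta=(b-1)\beta$ and $\gamma+(a-1)\delta=(b-1)\beta$. Together they leave one free parameter; without the second tie, $\gamma$ is free. So for $a>b+1$ the extra element of $Z(G)$ is $\ind{i\ge a+b}$, the indicator of the tail. Your proposed extra element ``supported on the part of $A$ beyond $b+1$'' cannot exist: you have already shown that $z$ is affine on $A$ with $z(0)=z(1)=0$, so $z=0$ on all of $A$.

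The consequence needs a different parameter choice. Taking $a=b$ gives height $(b-1)b+1<b^2$, which does not reach $\lfloor(k-2)/3\rfloor^2$. You need the extreme admissible case $a=b+1$. It has height exactly $(b-1)(b+1)+1=b^2$ on $3b+2$ points. Take $b=\lfloor(k-2)/3\rfloor\ge2$ for $k\ge8$ and pad $k-3b-2\in\{0,1,2\}$ times. The cases $k\le5$ are covered by single steps (\cref{thm:S}).

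Two smaller repairs:
\begin{itemize}
\item (B1) does not apply on $A$, because only the pairs of $A\times A$ with $i+j\le a$ are tight. Compare $(0,j+1)$ with $(1,j)$ instead.
\item Your permutation ``$A$ internally, $F$ reversed'' uses $a$ twice and moves $e$. Instead reverse $[0,a]$, $[a+1,e-1]$ (index sum $2a+b-1$, tight) and $[a+b,n]$, and fix $e$.
\end{itemize}
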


\begin{proof}
By \cref{lem:A} we may work with $w=G_{a,b}-\iota$. Put $p=b-1$, $e=a+b-1$, $n=a+2b$, $C=p(2a-1)$, $T=a+e$ and $S=e+1+n$, and the blocks $P=[0,a]$, $B=[a,e]$ and $V=[e+1,n]$. Then $w(i)=pi$ on $P$, $w(i)=ab-i$ on $B$ and $w=C$ on $V$. Comparing the least costs of the pairs of each block type on each anti-diagonal gives $v$ piecewise linear in $t$ and the list of tight pairs; this computation is in \cref{sec:SM-height}. Up to transposition the tight pairs are the pairs in $P\times P$ with $i+j\le a$, $(0,j)$ for $j\in B$, $(i,e)$ for $i\in P$, the pairs in $B\times B$ with $i+j\ge T$, $(e,j)$ for $j\in V$, the pairs in $V\times V$ with $i+j\ge S$, the pair $(a-1,n)$, and $(0,T)$ if and only if $T\le n$, that is, $a\le b+1$.

The index $e$ is tight with every index, and a tight permutation fixing $e$ exists, so \cref{lem:TS} gives total support and the tight graph is connected. Every $z\in Z(w)$ is affine on each of $P$, $B$ and $V$, which leaves five parameters, and two block types meet on one anti-diagonal only at $t=2e+1$ and, if $T\le n$, at $t=T$; the two resulting equations are independent. So $\dim Z(w)=3$ if $a\le b+1$ and $4$ otherwise, and in the first case \cref{thm:R} gives the facet. These verifications are in \cref{sec:SM-heightZ}.

The increments of $G_{a,b}$ are nonnegative integers with minimum $0$ and greatest common divisor $1$ (the last one is $1$), and the largest is $H=(b-1)a+1\ge2$. For $(\mathrm{P}_L)$: the increment at $a$ is $0$ and $(0,a)$ is tight. For $(\mathrm{P}_R)$: the increment at $e$ is $H$ and $(e+1,n)$ is tight. The first increment is $b\ne0$ and the last is $1<H$, so $G_{a,b}$ is neither $\operatorname{pad}_Lw'$ nor $\operatorname{pad}_Rw'$, and it is a core.

For the last statement, single steps give height $1$ for $k\ge3$ (\cref{thm:S}), which settles $k\le5$. For $k\ge6$ write $k=2b+2+j$ with $b=\lfloor(k-2)/2\rfloor\ge2$ and $j\in\{0,1\}$, and for $k\ge8$ write $k=3b+2+j$ with $b=\lfloor(k-2)/3\rfloor\ge2$ and $j\in\{0,1,2\}$. By \cref{cor:pad}, $\operatorname{pad}_L^jG_{1,b}$, respectively $\operatorname{pad}_L^jG_{b+1,b}$, is a facet ray of $\Ph(k)$ of height $b$, respectively $b^2$, since padding does not change the height. For $k\ge8$, $\lfloor(k-2)/3\rfloor^2\ge\lfloor(k-2)/2\rfloor$.
\end{proof}

The case $a=1$, $G_{1,b}$, has height $b$ on $2b+2$ points. The largest heights that occur for $k=4,\dots,12$ are $1,1,2,3,4,5,7,9,12$ (\cref{tab:shapes}); they equal the bound of \cref{thm:height} for $k=4,5,6,8,11$ and exceed it otherwise.

%% file: b5_steps.tex
\section{Step potentials}\label{sec:steps}

\subsection{Staircases}
A step set $J=\{j_1<\dots<j_r\}\subseteq\{1,\dots,k-1\}$ is recorded by its \emph{block lengths} $g=(g_0,\dots,g_r)$, a composition of $k$ with $g_0=j_1$, $g_l=j_{l+1}-j_l$ and $g_r=k-j_r$. The staircase $u=u(g)=u_J$ takes the value $l$ on block $l$; it has $r$ unit steps, at distinct positions, so here the number of unit steps counted with multiplicity is $r=|J|$. We call $l$ the \emph{level} of the indices in block $l$, and $p+q$ the level sum of a pair with levels $p$ and $q$. Write $S_l$ and $E_l$ for the first and last index of block $l$, and
\[
M(s)=\max_{p+q=s,\ 0\le p,q\le r}\,(E_p+E_q),\qquad 0\le s\le 2r.
\]
A block pair $(p,q)$ \emph{attains} $M(s)$ if $p+q=s$ and $E_p+E_q=M(s)$. For $r=0$ and $r=k-1$ the potential is affine and $I(u)$ holds with equality on $\Ph(k)$, so we assume $1\le r\le k-2$.

\begin{lemma}[Tight edges]\label{lem:stair}
The index sums reachable by pairs of level sum at most $s$ form the interval $[0,M(s)]$, and $M$ is strictly increasing. Hence $v(t)=\min\{s:M(s)\ge t\}$, and an edge $(i,j)$ with levels $p,q$ is tight if and only if $i+j>M(p+q-1)$ (no condition if $p=q=0$).
\end{lemma}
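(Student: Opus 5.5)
The plan is to first describe the set of index sums reachable at a fixed level sum, then read off $v$ and the tight edges. Block $p$ is the interval $[S_p,E_p]$ with $S_0=0$ and $S_{p+1}=E_p+1$. For fixed $p,q$, the sums $i+j$ with $i$ in block $p$ and $j$ in block $q$ form the interval $[S_p+S_q,\,E_p+E_q]$.

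Strict monotonicity of $M$ comes first. Take a pair $(p,q)$ attaining $M(s)$ with $s<2r$. Some coordinate, say $p$, is below $r$, and then $E_{p+1}+E_q>E_p+E_q$. Hence $M(s+1)>M(s)$.

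Next, the union over $p+q\le s$ of the intervals $[S_p+S_q,E_p+E_q]$ should be $[0,M(s)]$. I will prove this by induction on $s$; the case $s=0$ is the interval $[0,2E_0]$. Suppose the union up to level $s-1$ is $[0,M(s-1)]$. Each pair $(p,q)$ with $p+q=s$ and, say, $p\ge1$ has $S_p+S_q=E_{p-1}+1+S_q\le E_{p-1}+E_q+1\le M(s-1)+1$. So every new interval starts no later than $M(s-1)+1$, and the union stays an interval. The maxima give the right end $M(s)$, and the left end stays $0$. This gap-free covering is the main point to get right: it uses only $S_q\le E_q$ and the fact that block starts follow block ends.

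The formula for $v$ follows. Since the potential equals the level, $v(t)$ is the least level sum of a pair with index sum $t$. By the covering, that is the least $s$ with $t\le M(s)$, so $v(t)=\min\{s:M(s)\ge t\}$.

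Finally, the tight edges. An edge $(i,j)$ with levels $p,q$ costs $p+q$, so it is tight iff $v(i+j)=p+q$. We always have $i+j\le E_p+E_q\le M(p+q)$, so $v(i+j)\le p+q$, with equality iff $i+j>M(p+q-1)$. When $p=q=0$ there is no lower level sum and the edge is automatically tight.
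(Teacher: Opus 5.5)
Your proof is correct and follows the paper's structure: the gap-free covering of $[0,M(s)]$, strict monotonicity of $M$ by moving a maximizing pair up one block, and then reading off $v$ and the tight edges. The only difference is in the covering step. The paper gets it in one line from down-closure: lowering one index of a pair by one never raises its level sum, because $u$ is nondecreasing. You instead use an induction on $s$ with overlapping block-pair intervals, which is also valid.
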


\begin{proof}
If $(i',j')$ has level sum at most $s$, so does $(i'-1,j')$, because $u$ is nondecreasing; so the reachable sums are down-closed, contain $0$, and have maximum $M(s)$. For $s<2r$ a maximizing pair of $M(s)$ has an index in a block below $r$, and moving it up one block raises the sum, so $M(s+1)>M(s)$. An edge is tight iff no pair of smaller level sum lies on its anti-diagonal.
\end{proof}

Write $x(>\tau)=\sum_{t>\tau}x(t)$.

\begin{corollary}[Staircases as tail inequalities]\label{cor:tails}
For a staircase, $v(t)=\#\{s<2r:\ t>M(s)\}$, so $I(u)$ reads
\[
\sum_{s=0}^{2r-1}x\bigl(>M(s)\bigr)\ \le\ 2\sum_{l=0}^{r}l\,g_l .
\]
Put $\eta_q=j_q/k$ for $1\le q\le r$, $\eta_{r+1}=1$, and $\xi_\ell=\max_{i+j=\ell-1,\ 0\le i,j\le r}(\eta_{i+1}+\eta_{j+1})$ for $1\le\ell\le2r$. Then $\xi_\ell=(M(\ell-1)+2)/k$ and $\sum_{q=1}^r(1-\eta_q)=\frac1k\sum_ll\,g_l$. So if $x\ge0$ lies in the affine hull and $S$ has mass $x(t)/k$ at $(t+1)/k$, then $I(u_J)$ divided by $k$ is
\begin{equation}\label{eq:tails}
\sum_{\ell=1}^{2r}\mathbb{P}(S\ge\xi_\ell)\ \le\ 2\sum_{q=1}^{r}(1-\eta_q).
\end{equation}
\end{corollary}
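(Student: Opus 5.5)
The claim has three parts: the formula $v(t)=\#\{s<2r:t>M(s)\}$, the rewriting of $I(u)$ as a sum of tail masses, and the translation to thresholds $\xi_\ell$, $\eta_q$. We take them in that order, each from \cref{lem:stair} and direct bookkeeping.

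\emph{The formula for $v$.} By \cref{lem:stair}, $v(t)=\min\{s:M(s)\ge t\}$ and $M$ is strictly increasing. Also $M(2r)=2E_r=2k-2$ is the largest index sum. Hence for $0\le t\le 2k-2$ the set of $s\in\{0,\dots,2r\}$ with $M(s)<t$ is an initial segment $\{0,\dots,v(t)-1\}$, which lies inside $\{0,\dots,2r-1\}$. So $v(t)=\#\{s<2r:t>M(s)\}$.

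\emph{Tail form.} Substitute this into $\sum_tv(t)x(t)$ and exchange the order of summation:
\[
\sum_t v(t)x(t)=\sum_{s=0}^{2r-1}\sum_{t>M(s)}x(t)=\sum_{s=0}^{2r-1}x\bigl(>M(s)\bigr).
\]
On the right-hand side of $I(u)$, we have $2\sum_iu(i)=2\sum_l l\,g_l$, because the staircase equals $l$ on the $g_l$ indices of block $l$. This gives the displayed inequality.

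\emph{Thresholds.} Block $l$ ends at $E_l=j_{l+1}-1$ for $l<r$ and at $E_r=k-1$. Thus $E_l=k\eta_{l+1}-1$ for $0\le l\le r$, with $\eta_{r+1}=1$. It follows that
\[
M(\ell-1)=\max_{i+j=\ell-1}\bigl(k\eta_{i+1}+k\eta_{j+1}-2\bigr)=k\xi_\ell-2,
\]
that is, $\xi_\ell=(M(\ell-1)+2)/k$.

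Next, $\sum_ll\,g_l=\sum_{q=1}^r\#\{i:i\ge j_q\}=\sum_q(k-j_q)$, since the level of $i$ counts the steps $j_q\le i$. This gives $\sum_q(1-\eta_q)=\frac1k\sum_ll\,g_l$.

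Finally, $S$ has mass $x(t)/k$ at $(t+1)/k$. So
\[
\tfrac1k\,x\bigl(>M(s)\bigr)=\mathbb{P}\bigl(S>(M(s)+1)/k\bigr)=\mathbb{P}\bigl(S\ge(M(s)+2)/k\bigr),
\]
because $S$ is supported on multiples of $1/k$. With $\ell=s+1$ this is $\mathbb{P}(S\ge\xi_\ell)$, and dividing $I(u_J)$ by $k$ gives \eqref{eq:tails}.

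None of these steps is a real obstacle. The only points needing care are the index shift between $\ell$ and $s$, and the fact that $v(t)\le 2r$, so that $s=2r$ is correctly excluded from the count.
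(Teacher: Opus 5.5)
Your proof is correct and follows essentially the same route as the paper's: $v(t)$ is read off from \cref{lem:stair} using $M(2r)=2k-2$, then $E_l=k\eta_{l+1}-1$ gives $\xi_\ell$, $\sum_q(k-j_q)=\sum_ll\,g_l$ gives the right-hand side, and $S$ lives on $(1/k)\Z$ so that $T>M(s)$ becomes $S\ge(M(s)+2)/k$. The only difference is that you spell out the interchange of summation and the lattice step explicitly, where the paper states them in one line through $T=kS-1$.
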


\begin{proof}
The first formula is \cref{lem:stair} with $M(2r)=2k-2$, and $2\sum_iu(i)=2\sum_ll\,g_l$. The first index of block $i+1$ is $E_i+1=j_{i+1}$, so $\eta_{i+1}=(E_i+1)/k$ for $0\le i\le r$, which gives $\xi_\ell=(M(\ell-1)+2)/k$. With $T=kS-1$, $S\ge\xi_\ell$ iff $T>M(\ell-1)$, and $\mathbb{P}(T=t)=x(t)/k$. Finally $\sum_q(k-j_q)=\sum_iu_J(i)=\sum_ll\,g_l$.
\end{proof}

By \cref{prop:C}, $x\in\Ph(k)$ exactly when $S=X+Y$ for a coupling of two standard uniforms, so a staircase inequality bounds the expected number of the levels $\xi_1\le\dots\le\xi_{2r}$ that $X+Y$ reaches, and \cref{thm:S,thm:S4,thm:T3} below say which of these bounds are facets. The companion paper \cite[Theorem~4.1]{CKcompanion} shows that \eqref{eq:tails} holds for all thresholds $0<\eta_1<\dots<\eta_r<1$ and that these bounds characterize the laws of $X+Y$; we do not use this. In the rest of this section we index these levels by the numbers $M(s)$.

\begin{lemma}[Corner triangles]\label{lem:corner}
For blocks $p,q\ge1$ put $\operatorname{tri}(p,q)=\max\bigl(0,\ E_p+E_q-M(p+q-1)\bigr)$. The tight edges between blocks $p$ and $q$ are exactly the pairs $(E_p-a,E_q-b)$ with $a,b\ge0$ and $a+b<\operatorname{tri}(p,q)$, and $\operatorname{tri}(p,q)\le\min(g_p,g_q)$. For $p=0<q$ define $\operatorname{tri}(0,q)$ by the same formula; the tight edges between blocks $0$ and $q$ are the pairs $(E_0-a,E_q-b)$ with $0\le a\le g_0-1$, $b\ge0$ and $a+b<\operatorname{tri}(0,q)$, and only $\operatorname{tri}(0,q)\le g_q$ is guaranteed (for $g=(1,3)$, $\operatorname{tri}(0,1)=3>g_0$). The case $q=0<p$ is symmetric, and block pair $(0,0)$ is complete.
\end{lemma}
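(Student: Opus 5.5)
We will derive everything from \cref{lem:stair}: an edge $(i,j)$ with levels $p,q$ is tight if and only if $i+j>M(p+q-1)$. The plan is to fix a block pair, parametrize its indices from the top corner, and read off when this inequality holds.

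First take $p,q\ge1$. Write $i=E_p-a$ and $j=E_q-b$ with $0\le a\le g_p-1$ and $0\le b\le g_q-1$. Then $i+j=E_p+E_q-(a+b)$, so tightness says exactly that $a+b<E_p+E_q-M(p+q-1)$. If the right-hand side is not positive, there are no tight edges, which agrees with $\operatorname{tri}(p,q)=0$. Otherwise the tight edges are the pairs with $a+b<\operatorname{tri}(p,q)$, and in that case we must still check that they all lie inside the block. This is where we prove the bound $\operatorname{tri}(p,q)\le\min(g_p,g_q)$. By symmetry it suffices to prove $\operatorname{tri}(p,q)\le g_p$. Since $p\ge1$, the block pair $(p-1,q)$ has level sum $p+q-1$, so
\[
M(p+q-1)\ge E_{p-1}+E_q=E_p-g_p+E_q,
\]
which gives $E_p+E_q-M(p+q-1)\le g_p$. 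So $a+b<\operatorname{tri}(p,q)$ forces $a<g_p$ and $b<g_q$ automatically, and the whole triangle lies inside the block pair.

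Next take $p=0<q$. The same computation gives the condition $a+b<\operatorname{tri}(0,q)$. The argument above, applied with $q\ge1$, still gives $\operatorname{tri}(0,q)\le g_q$, so the constraint on $b$ is automatic. There is no block $-1$, however, so no such bound holds for $g_0$, and the constraint $a\le g_0-1$ must be kept explicitly. We will check the example by hand: for $g=(1,3)$ we have $M(0)=0$ and $E_1=3$, so $\operatorname{tri}(0,1)=3>g_0=1$. The case $q=0<p$ follows by swapping the roles of $i$ and $j$. The block pair $(0,0)$ is complete by the "no condition" clause of \cref{lem:stair}, since $v=0$ there.

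The only step that needs an idea is the bound on $\operatorname{tri}$, and it reduces to the single comparison with the block pair $(p-1,q)$. We expect no serious obstacle.
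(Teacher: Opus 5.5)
Your proof is correct and follows the paper's argument. Both reduce tightness to $i+j>M(p+q-1)$ via \cref{lem:stair}, and both bound the corner by comparing with the block pairs $(p-1,q)$ and $(p,q-1)$. That comparison confines the triangle to the block pair and gives $\operatorname{tri}(p,q)\le\min(g_p,g_q)$; when $p=0$ only the comparison with $(0,q-1)$ is available. The paper phrases this as $M(p+q-1)+1\ge\max(S_p+E_q,\,E_p+S_q)$ rather than through offsets from the corner, but the content is the same.
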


\begin{proof}
By \cref{lem:stair} the tight edges of the pair are those with $i+j\ge t_*=M(p+q-1)+1$. For $p,q\ge1$, $M(p+q-1)\ge E_{p-1}+E_q$ and $M(p+q-1)\ge E_p+E_{q-1}$ give $t_*\ge\max(S_p+E_q,\ E_p+S_q)$. So $i+j\ge t_*$ forces $i\ge S_p$ and $j\ge S_q$, and the region is the corner triangle below $(E_p,E_q)$ with $E_p+E_q-t_*+1=\operatorname{tri}(p,q)\le\min(g_p,g_q)$ anti-diagonals. For $p=0<q$ only the second bound is available: it forces $j\ge S_q$ and gives $\operatorname{tri}(0,q)\le g_q$, while $i\ge S_0=0$ is the constraint $a\le g_0-1$.
\end{proof}

\begin{lemma}[The last anti-diagonal of a level]\label{lem:last}
The tight pairs on the anti-diagonal $M(s)$ are exactly the corners $(E_p,E_q)$ of the block pairs $(p,q)$ that attain $M(s)$. In particular, if a single block pair attains $M(s)$, its corner and the transpose are the only tight pairs there.
\end{lemma}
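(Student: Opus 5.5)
The plan is to combine the tightness criterion of \cref{lem:stair} with the description of tight pairs between two blocks in \cref{lem:corner}. A pair $(i,j)$ with levels $p,q$ lies on the anti-diagonal $M(s)$ only if $p+q\le s$, because the index sums of pairs with level sum at most $s$ fill exactly $[0,M(s)]$. If $p+q<s$, then $i+j=M(s)>M(p+q)\ge M(p+q-1)$. Tightness requires $i+j>M(p+q-1)$, but $i+j\le E_p+E_q\le M(p+q)<M(s)$, since $M$ is strictly increasing. This contradicts $i+j=M(s)$, so every tight pair on the anti-diagonal $M(s)$ has level sum exactly $s$.

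Now fix a pair $(i,j)$ on the anti-diagonal $M(s)$ with levels $p,q$ and $p+q=s$. Since $i\le E_p$ and $j\le E_q$, we have $M(s)=i+j\le E_p+E_q\le M(s)$. Equality is forced throughout, so $i=E_p$, $j=E_q$, and $(p,q)$ attains $M(s)$.

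For the converse, let $(p,q)$ attain $M(s)$. The corner $(E_p,E_q)$ has sum $M(s)$, which exceeds $M(s-1)$ because $M$ is strictly increasing. By \cref{lem:stair} the corner is therefore tight; when $p=q=0$ there is nothing to check. The last sentence of the statement follows at once, since the transpose of an attaining block pair also attains $M(s)$.

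I expect no real obstacle. The only care needed is the boundary case $s=0$ and the convention that pairs with $p=q=0$ are always tight, which is covered by the parenthetical remark in \cref{lem:stair}.
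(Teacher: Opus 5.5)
Your second and third paragraphs match the paper's proof. There, $i\le E_p$, $j\le E_q$ and $E_p+E_q\le M(s)$ force the corner, and the corner is tight because $M(s)>M(s-1)$ and \cref{lem:stair}. The problem is in your first paragraph. The step that excludes level sums above $s$ is false as stated.

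You claim that a pair with levels $p,q$ lies on the anti-diagonal $M(s)$ only if $p+q\le s$. This fails for general pairs. Take $k=3$ and $g=(2,1)$, so $u=(0,0,1)$, $E_0=1$ and $M(0)=2$. The pair $(0,2)$ lies on the anti-diagonal $2$ and has level sum $1>0$. The fact you cite, that pairs of level sum at most $s$ have index sums filling $[0,M(s)]$, only shows that the anti-diagonal contains some pair of level sum at most $s$. To rule out a tight pair with $p+q>s$ you must use tightness. By \cref{lem:stair} such a pair would need $M(s)=i+j>M(p+q-1)\ge M(s)$, which is impossible.

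The case $p+q<s$, where you do invoke the tightness condition, needs no tightness at all. Every such pair has $i+j\le E_p+E_q\le M(p+q)<M(s)$, so it is simply not on that anti-diagonal. Your inequalities are attached to the wrong cases, but once the case $p+q>s$ is repaired as above the proof is complete.

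The paper handles both cases in one line. Since $v(t)=\min\{s':M(s')\ge t\}$ and $M$ is strictly increasing, $v(M(s))=s$, so every tight pair on that anti-diagonal has level sum exactly $s$. Also, \cref{lem:corner}, which you announce in your plan, is never actually needed.
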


\begin{proof}
$v(M(s))=s$ by \cref{lem:stair}. A pair of levels $(p,q)$ with $p+q=s$ and index sum $M(s)$ has $i\le E_p$, $j\le E_q$ and $E_p+E_q\le M(s)$, so $i=E_p$, $j=E_q$, and $(p,q)$ attains $M(s)$.
\end{proof}

\begin{lemma}[Neighbors of block starts]\label{lem:starts}
For $0\le q\le r-1$, the tight neighbors of the block start $S_{q+1}=E_q+1$ are exactly the block ends $E_l$, $0\le l\le r$, such that $(q,l)$ attains $M(q+l)$.
\end{lemma}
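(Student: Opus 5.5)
The plan is to apply \cref{lem:stair} directly at the block start $i=S_{q+1}=E_q+1$, which has level $q+1$, together with \cref{lem:corner} and \cref{lem:last}. By \cref{lem:stair}, a pair $(i,j)$ with $j$ in block $l$ is tight if and only if $i+j>M(q+l)$, where $M(q+1+l-1)=M(q+l)$. Since $j\le E_l$, we have $i+j\le E_q+1+E_l\le M(q+l)+1$. Tightness therefore requires equality throughout: $j=E_l$ and $E_q+E_l=M(q+l)$, that is, the block pair $(q,l)$ attains $M(q+l)$. Conversely, if $(q,l)$ attains $M(q+l)$, then the pair $(E_q+1,E_l)$ has index sum $M(q+l)+1>M(q+l)$, so it is tight.

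The edge cases need care. If $q+1=l=0$, the condition ``no condition if $p=q=0$'' could intervene, but this cannot happen because $q+1\ge1$. When $l=0$, the level sum is $q+1\ge1$, so the criterion of \cref{lem:stair} applies as stated with $M(q)$, and the relevant pair is $(q,0)$ attaining $M(q)$. The same argument covers this case, since $j\le E_0$ still holds. I also need to confirm that $q\le r-1$ is exactly what makes $S_{q+1}$ a genuine block start, so that the level of $S_{q+1}$ is $q+1$. The range $0\le l\le r$ is automatic.

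The only step that requires attention is the upper bound $E_q+E_l\le M(q+l)$. This holds by the definition of $M$ as a maximum over the block pairs with level sum $q+l$, and both $q$ and $l$ lie in $[0,r]$. I expect no real obstacle. The step most likely to hide a slip is the direction of the inequality and the strictness ($>$ rather than $\ge$) in \cref{lem:stair}, which I would recheck against \cref{lem:last}: the corner $(E_q,E_l)$ sits on the anti-diagonal $M(q+l)$ at level sum $q+l$, and moving one index up by one raises the level sum by one while stepping just past that anti-diagonal.
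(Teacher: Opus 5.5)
Your proposal is correct and is essentially the paper's proof: apply \cref{lem:stair} to the edge $(S_{q+1},j)$ with $j$ in block $l$. Then $j\le E_l$ and $E_q+E_l\le M(q+l)$ force $j=E_l$ and $E_q+E_l=M(q+l)$, and the converse is immediate. The appeals to \cref{lem:corner} and \cref{lem:last} are not needed.
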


\begin{proof}
Let $j$ lie in block $l$. The edge $(S_{q+1},j)$ has level sum $q+1+l\ge1$, so by \cref{lem:stair} it is tight iff $E_q+1+j>M(q+l)$. Since $j\le E_l$ and $E_q+E_l\le M(q+l)$, this holds iff $j=E_l$ and $E_q+E_l=M(q+l)$.
\end{proof}

So the tight graph of a staircase is determined by the sequence $E_0<\dots<E_r$ through the numbers $\operatorname{tri}(p,q)$, and the tight neighborhood of a set of block starts $\{S_{q+1}:q\in K\}$ is $\{E_l:l\in A(K)\}$, where $A(K)$ is the set of $l$ such that $(q,l)$ attains $M(q+l)$ for some $q\in K$.

\emph{Symmetries.} By \cref{lem:A}, the reflection sends the sign $-$ at $g$ to the sign $+$ at the reversed composition, and the sign $-$ at $J$ is the sign $+$ at $J^c$. Combining the two, the sign $+$ at $g^*$, the composition of $k-J^c$, exposes the mirror image $x(t)\mapsto x(2k-2-t)$ of the face exposed by the sign $+$ at $g$. The involution $g\mapsto g^*$ preserves facet status, total support and the ray property, and sends $r$ steps to $k-1-r$ steps. It does not preserve the facet itself: at $k=7$, $(2,1,1,3)$ and $(1,1,4,1)$ give mirror-image facets.

\subsection{Single, double and consecutive steps}

\begin{theorem}\label{thm:S}
For every $k\ge3$, each of the following potentials defines a facet of $\Ph(k)$, in both signs:
\begin{enumerate}[label=(\arabic*)]
\item single steps $u=\ind{i\ge a}$, $1\le a\le k-1$;
\item double steps $u=\ind{i\ge a}+\ind{i\ge b}$, $1\le a<b\le k-1$, $k\ge4$;
\item consecutive steps $u=\sum_{l=0}^{r-1}\ind{i\ge a+l}$, $1\le r\le k-2$, $1\le a\le k-r$.
\end{enumerate}
\end{theorem}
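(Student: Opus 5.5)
We follow the route indicated after \cref{thm:R}: for each potential we find an index $e$ that is tight with every index and a tight permutation fixing $e$. Then \cref{lem:TS} gives total support, and it remains to show $Z(u)=\spn\{\one,\iota,u\}$, after which \cref{thm:R} gives the facet. By \cref{lem:A} the sign $-$ at $J$ is the sign $+$ at $J^c$, and the reflection sends $(J,+)$ to $(k-J,-)$. The complement of a consecutive run $[a,a+r-1]$ in $\{1,\dots,k-1\}$ is in general two runs, so the negative signs are not all covered by case (3) for free. We therefore treat both signs directly, working with staircases via \cref{lem:stair,lem:corner,lem:last,lem:starts}, and reduce by reflection wherever it applies.

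\emph{Single and double steps, sign $+$.} For $u=\ind{i\ge a}$ we have $g=(a,k-a)$, $E_0=a-1$, $E_1=k-1$ and $M(0)=2a-2$. We take $e=E_0=a-1$. Every pair in block $0\times$block $0$ is tight, and $(a-1,j)$ with $j\ge a$ has sum $>M(0)$, so it is tight; hence $e$ is universal. The tight permutation is the reversal $i\mapsto a-1-i$ on block $0$ together with the reversal inside block $1$, which is tight since block $1\times$block $1$ needs $i+j>M(1)=a-1+k-1$, and this holds for pairs in the corner triangle. Reversal inside block $1$ is tight only if $S_1+E_1>M(1)$, i.e.\ $a+k-1>a+k-2$, which is true. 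However, this permutation does not fix $e$, so we choose instead $\sigma$ fixing $a-1$ and otherwise made of tight transpositions. For $Z(u)$ we use (B1) of \cref{lem:K} on block $0$, so $z$ is affine there. On block $1$ we use (B2), since the tight pairs of block $1\times$block $1$ form the triangle $i+j\ge M(1)+1=S_1+E_1$. The slopes are then matched through the anti-diagonals carrying both a pair $(e,j)$ and a pair in block $0\times$block $0$ or in block $1\times$block $1$; this gives a three-dimensional space. The sign $-$ is $J^c=\{1,\dots,k-1\}\setminus\{a\}$, which has $k-2$ steps and falls under the $g^*$ symmetry; it is handled by the same universal-vertex argument at $e=E_{r}$ of the relevant block. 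Double steps repeat this with three blocks: we take $e=E_1=b-1$ (the middle block end), check via \cref{lem:starts} that $e$ is adjacent to every start, and use \cref{lem:corner} to see that $e$ is adjacent to everything.

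\emph{Consecutive steps.} The blocks are $g=(a,1,\dots,1,k-a-r+1)$ with $r-1$ singleton middle blocks. Then $E_l=a-1+l$ for $l<r$, so $M(s)=2a-2+s$ for $s<r+\dots$, and the computation is explicit. We take $e=E_{r-1}$ or a middle singleton whose sums with everything hit the last anti-diagonal of their level, and we check through \cref{lem:stair} that $e+j>M(\text{level sum}-1)$ for all $j$. The space $Z(u)$ is then computed as before: affine on the two big blocks by (B1) or (B2), with the singletons attached through the universal vertex.

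The main obstacle is finding, uniformly in all parameters including small edge cases ($a=1$, $a+r=k$, $k=3$), both a universal index $e$ and a tight permutation fixing it. The potential failure point is a block of size one or two, where the triangle regions degenerate. There we would instead fall back on \cref{lem:D}, exhibiting strong connectivity of the switching digraph directly, together with the exact facet criterion \cref{thm:F}.
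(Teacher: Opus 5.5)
Your strategy is the paper's: a universal index $e$ with a tight permutation fixing it, then \cref{lem:TS}, then $Z(u)=\spn\{\one,\iota,u\}$ and \cref{thm:R}. However, the concrete choices that carry the proof are wrong or missing for (2), (3) and the sign $-$.

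\emph{The sign $-$.} It needs no separate treatment. The reflection of \cref{lem:A} is an automorphism of $\Ph(k)$ that sends the sign $+$ at $k-J$ to the sign $-$ at $J$. Moreover $k-J$ is again a single step, a double step, or the run $[k-a-r+1,k-a]$ of the same length. As written, your plan never actually handles the sign $-$ of double steps or runs.

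\emph{Double steps.} The index $e=E_1=b-1$ is not universal. The pair $(b-1,0)$ has level sum $1$ and is tight only if $b-1>M(0)=2a-2$. For $k=5$, $a=2$, $b=3$, the potential $u=(0,0,1,2,2)$ has $v(2)=0<u(2)+u(0)$.

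\emph{Runs.} You fix no index, and your candidates fail. For $a\ge2$ no ramp index $E_l$, $1\le l\le r-1$, is tight with $0$, since $E_l\le M(l-1)=2a+l-3$. Also $E_{r-1}$ is not even tight with itself when $r\ge3$ and $g_r\ge2$: for $k=6$, $u=(0,1,2,3,3,3)$ has $v(4)=3<2u(2)$.

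\emph{What works.} In all three families take $e=E_0=a-1$. There $M(q-1)=E_0+E_{q-1}$ for $1\le q\le r$, so $E_0+j\ge E_0+S_q>M(q-1)$ for every $j$ in block $q$.
For single and double steps, the identity on block $0$ together with the reversal of every other block is a tight permutation fixing $e$. This is what your unspecified ``tight transpositions'' should be.
For runs that permutation is not tight. When $g_r\ge2$, the loop at $E_l$ with $2l-1\ge r$ has $2E_l\le E_r+E_{2l-1-r}\le M(2l-1)$; the same $k=6$ example shows this. Instead keep the identity on block $0$ and the reversal of the last block, and reverse the whole ramp $[a,a+r-2]$. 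The ramp pairs have index sum $2a+r-2>M(r-1)$, so they are tight.

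\emph{The computation of $Z(u)$ for a single step.} The slope matching you describe cannot happen. No anti-diagonal carries a tight pair $(e,j)$ with $j\ge a$ together with a tight pair inside one block: for $t\le2a-2$ only pairs in block $0$ are tight, for $2a-1\le t\le a+k-2$ only cross pairs, and for $t\ge a+k-1$ only pairs in block $1$. The slopes are equated instead by the two cross pairs $(a-1,j)$ and $(a-2,j+1)$ with $a\le j\le k-2$. If $a=1$ or $a=k-1$, one block is a singleton and the parameter count already gives three.

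\emph{The computation of $Z(u)$ for runs.} ``Attaching the singletons through the universal vertex'' yields no relation by itself. What is needed:
the tight pairs $(a-1,a+l)$ and $(a,a+l-1)$ on the anti-diagonal $2a+l-1$, for $1\le l\le r-1$, which make all $r$ boundary increments equal;
(B1) of \cref{lem:K} on block $0$ and (B2) on the last block, whose tight pairs are exactly those with $i+j\ge S_r+E_r$;
the cross pairs $(a-1,j)$ and $(a-2,j+1)$ with $j\ge a+r-1$, which equate the two plateau slopes.
Double steps need the analogous three-block bookkeeping; for instance, $(E_0,S_2)$ and $(S_1,E_1)$ lie on one anti-diagonal and equate the two boundary increments.

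Without these choices the proposal does not yet prove (2), (3), or the sign $-$.
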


\begin{proof}
By \cref{lem:A} the reflection sends the sign $-$ at $J$ to the sign $+$ at $k-J$, which is again a single step, a double step or a consecutive run of the same length, so it suffices to treat the sign $+$. In each part we exhibit an index $e$ adjacent to every index and a tight permutation fixing $e$; then \cref{lem:TS} gives $\Es(u)=\cE(u)$, so $Z^*(u)=Z(u)$, and it remains to show $Z(u)=\spn\{\one,\iota,u\}$ (\cref{lem:U} or \cref{thm:R}).

For a single step at $a$ the index $e=a-1$ is universal, and every $z\in Z(u)$ is affine on each of the two blocks by \cref{lem:K}, with slopes equated by one pair of tight edges; double and consecutive steps follow the same pattern, with three blocks and with a ramp between two plateaus. The case checks for (1)--(3) are in \cref{sec:SM-S}.
\end{proof}

\begin{proposition}[Distinctness]\label{prop:Sprime}
Let $s\,u_J$ and $s'u_{J'}$ with $|J|=r$, $|J'|=r'$ and signs $s,s'\in\{\pm1\}$ be members of the families of \cref{thm:S}, and suppose $k-1>r+r'$. If they define the same facet, then $J=J'$ and $s=s'$.
\end{proposition}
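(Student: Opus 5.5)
The plan is to reduce the statement to \cref{lem:distinct} and then compare patterns. The proof of \cref{thm:S} shows that every member $s\,u_J$ of the three families is a ray whose tight graph has total support. For the sign $+$ this is what that proof establishes directly: \cref{lem:TS} gives total support, and the computation of $Z(u)$ gives the ray property. For the sign $-$ it follows by the reflection of \cref{lem:A}, which preserves rays and total support. Moreover $u_{J'}$ is not affine, because $1\le r'\le k-2$ in all three families: $r'=1$, $r'=2$ with $k\ge4$, or $1\le r'\le k-2$. So \cref{lem:distinct} applies with $u=s\,u_J$ and $u'=s'u_{J'}$. If the two define the same facet, their patterns $p(s\,u_J)$ and $p(s'u_{J'})$ agree.

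Next I compute the patterns. The increments of $u_J$ are $d(i)=\ind{i+1\in J}$ for $0\le i\le k-2$. This $0/1$ vector is nonconstant, since $1\le r\le k-2$, so $p(u_J)=d$, the indicator of $J-1$ in $\{0,\dots,k-2\}$. The increments of $-u_J$ are $-d$, with minimum $-1$ and maximum $0$, so $p(-u_J)=\one-d$, the indicator of $(J^c)-1$, where $J^c$ is the complement in $\{1,\dots,k-1\}$. This agrees with $-u_J$ and $u_{J^c}$ differing by the affine function $\iota$ (\cref{lem:A}).

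The final step is the case split. If $s=s'$, equal patterns give $J-1=J'-1$, so $J=J'$. If $s\ne s'$, equal patterns give $J'=J^c$, hence $r'=k-1-r$, that is $r+r'=k-1$, which the hypothesis $k-1>r+r'$ excludes. So $J=J'$ and $s=s'$.

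I expect no real obstacle. The only care needed is in the first step: \cref{lem:distinct} needs the full hypothesis of a ray with total support for one of the two potentials, not merely that it defines a facet. I would cite the proof of \cref{thm:S} for this and note explicitly that the reflection handles the sign $-$.
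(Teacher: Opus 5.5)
Your proof is correct and is essentially the paper's argument. The paper reruns inline the reasoning of \cref{lem:distinct}: equal faces give the same tight permutations, so $s'u_{J'}\in Z^*(s\,u_J)=Z(s\,u_J)$, and a negative factor is excluded by comparing tight graphs. You instead cite the lemma and compare patterns, exactly as the paper does for distinctness in \cref{thm:dec}. The only difference is where $k-1>r+r'$ enters: the paper uses a position outside $J\cup J'$ to kill the coefficient of $\iota$, while your version uses it to exclude $J'=J^c$ with opposite signs, which shows that the hypothesis rules out precisely the coincidence $-u_{J^c}=u_J-\iota$.
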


\begin{proof}
Equal faces of $\Ph(k)$ have equal preimage faces in the Birkhoff polytope, hence the same tight permutations and the same allowed graph. Then $z=s'u_{J'}$ satisfies the defining equations of $Z^*(s\,u_J)$, which equals $Z(s\,u_J)$ by the total support shown in the proof of \cref{thm:S}, so $s'u_{J'}=A+D\iota+C\,s\,u_J$. Successive differences at $1,\dots,k-1$ give $s'\ind{J'}=D+C\,s\,\ind{J}$. Some position lies outside $J\cup J'$, so $D=0$, and then $J'=J$ and $s'=Cs$ with $C=\pm1$. If $s'=-s$, the tight graphs of $u$ and $-u$ coincide, so on every anti-diagonal a pair minimizing $u(i)+u(j)$ also maximizes it; then $u(i)+u(t-i)$ is constant along every anti-diagonal and $u$ is affine, a contradiction.
\end{proof}

So single steps give $2(k-1)$ distinct facets for $k\ge4$ and double steps give $2\binom{k-1}{2}$ distinct facets for $k\ge6$; at $k=5$ the twelve signed double steps give only six distinct facets, so the bound $k\ge6$ is sharp.

\subsection{Convex and concave rays}
For $x\ge0$ on the affine hull of $\Ph(k)$ let $T$ be a random variable with $\mathbb{P}(T=t)=x(t)/k$, $0\le t\le2k-2$, and let $U_k$ be uniform on $\{0,\dots,k-1\}$. Then $I(u)$ reads $\mathbb{E}\,v(T)\le2\,\mathbb{E}\,u(U_k)$; by \cref{prop:C}, $T=kS-1$ when $x$ comes from a lattice law of $S$. Convex and concave potentials give the classical convex-order conditions on $T$ \cite[Proposition~3.3]{CKcompanion}, and on the grid they account for exactly $2(k-2)$ facets: $k-2$ from the convex hinges defined below and $k-2$ from the concave hinges, their negatives. Here, for random variables $X,Y$ with finitely many values, $X\le_{\mathrm{cx}}Y$ (convex order) means $\mathbb{E}\,\phi(X)\le\mathbb{E}\,\phi(Y)$ for every convex $\phi:\R\to\R$, and for laws $\mu,\nu$ we write $\mu\le_{\mathrm{cx}}\nu$ in the same sense. For $1\le j\le k-2$ let $h_j(i)=(i-j)^+$, the \emph{hinge} at $j$. Parts (ii) and (iii) of \cref{prop:hinges}, \cref{lem:griddual,lem:convminorant} and \cref{cor:smallatoms} also appear in the companion paper \cite[Proposition~3.3, Lemmas~3.2 and~3.3, Corollary~3.2]{CKcompanion}; we include them with proofs so that this paper is self-contained.

\begin{proposition}[Hinges]\label{prop:hinges}
Let $k\ge3$.
\begin{enumerate}[label=(\roman*)]
\item A non-affine convex potential is a ray if and only if it equals $\alpha+\beta\iota+\gamma h_j$ with $\gamma>0$ and $1\le j\le k-2$; a non-affine concave potential is a ray if and only if the same holds with $\gamma<0$. All $2(k-2)$ of these rays give facets, and these facets are pairwise distinct.
\item For $u=h_j$, $v(t)=(t-2j)^+$, and $I(h_j)$ reads $\mathbb{E}(T-2j)^+\le\mathbb{E}(2U_k-2j)^+$. For $x\ge0$ on the affine hull, the $k-2$ inequalities $I(h_j)$ hold if and only if $T\le_{\mathrm{cx}}2U_k$.
\item For $u=-h_j$, $I(-h_j)$ reads $\mathbb{E}(T^\flat-j)^++\mathbb{E}(\kappa_k-j)^+\ge2\,\mathbb{E}(U_k-j)^+$, where $\kappa_k=(k-1)\ind{T\ge k}$ and $T^\flat=T-\kappa_k$. For $x\ge0$ on the affine hull, the $k-2$ inequalities $I(-h_j)$ hold if and only if $U_k\le_{\mathrm{cx}}\frac12\bigl(\law(T^\flat)+\law(\kappa_k)\bigr)$.
\end{enumerate}
\end{proposition}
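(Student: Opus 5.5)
The plan is to reduce everything about a convex or concave potential to its increment sequence, and then use \cref{thm:R} and \cref{lem:distinct}.

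For (i), start with a non-affine convex $u$ whose increments change at the breakpoints $b_1<\dots<b_m$ in $\{1,\dots,k-2\}$, so that $u$ is affine on the segments $[0,b_1],[b_1,b_2],\dots,[b_m,K]$. Convexity makes $i\mapsto u(i)+u(t-i)$ convex and symmetric, so $v(t)=u(\lfloor t/2\rfloor)+u(\lceil t/2\rceil)$. A pair $(i,j)$ with $i\le j$ is then tight iff $u$ is affine on $[i,j]$, that is, iff both indices lie in one segment. By (B1) of \cref{lem:K}, every $z\in Z(u)$ is affine on each segment; consecutive segments share their breakpoint, so such a $z$ is continuous piecewise linear with breaks only at the $b_l$. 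Conversely, every such function satisfies the relations, since each tight pair lies in one segment and $z$ is affine there. Hence $\dim Z(u)=m+2$, and $u$ is a ray iff $m=1$, i.e.\ $u=\alpha+\beta\iota+\gamma h_j$ with $\gamma>0$.

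The concave case runs the same way. Now $u(i)+u(t-i)$ is minimized at the extreme pairs $(\max(0,t-K),\min(t,K))$, and a pair is tight iff $u$ is affine on the span from the extreme index to the pair. I would check that $Z(u)$ is again the space of continuous piecewise linear functions with the same breaks, and treat the extreme pairs as the comparison edges.

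For the facet claim for $h_j$, the index $j$ lies in both segments, so it is tight with every index. The identity is a tight permutation fixing $j$, since all loops are tight. \Cref{lem:TS} then gives total support, and \cref{thm:R} gives the facet.

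For $-h_j$ I would take $e=0$ as the universal vertex, since every $(0,t)$ is an extreme pair. What remains is a tight permutation fixing $0$. The identity works on $[0,j]$, but loops beyond $j$ are not tight, so on $[1,K]$ I would try an involution built from extreme pairs and affine stretches, such as reversing $[j,K]$ combined with pairs inside $[1,j]$. Failing that, I would apply \cref{lem:D} to a tight permutation not fixing $0$. I expect this to be the main obstacle of (i).

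Distinctness then follows from \cref{lem:distinct}. The pattern of $h_j$ is the indicator of the increments from $j$ on, and that of $-h_j$ is the complementary indicator, so all $2(k-2)$ patterns are different.

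For (ii), the formula $v(t)=(t-2j)^+$ comes directly from the balanced pair. On lattice laws, $2\sum_i(i-j)^+=k\,\mathbb{E}(2U_k-2j)^+$ and $\sum_tv(t)x(t)=k\,\mathbb{E}(T-2j)^+$. The affine hull gives $\mathbb{E}T=k-1=\mathbb{E}\,2U_k$, so $T\le_{\mathrm{cx}}2U_k$ is equivalent to $\mathbb{E}(T-s)^+\le\mathbb{E}(2U_k-s)^+$ for all real $s$. For $s$ outside $[0,2k-2]$ this holds automatically, by the equal means. Between consecutive even integers the right side is affine in $s$ and the left side is convex, so it suffices to check the even strikes $s=2j$ for $0\le j\le k-1$. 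The endpoints $j=0$ and $j=k-1$ are again automatic.

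For (iii), I would compute $v$ for $-h_j$ from the extreme pairs. For $t\le K$ this gives $v(t)=-(t-j)^+$. For $t>K$ it gives $v(t)=-(t-K-j)^+-(K-j)$, which I will rewrite as $-(T^\flat-j)^+-(\kappa_k-j)^+$. Multiplying by $-1$ gives the stated reading. The convex-order equivalence then uses the same strike-reduction argument for the mixture $\frac12(\law T^\flat+\law\kappa_k)$ against $U_k$, with integer strikes $1,\dots,k-2$. The mean identity comes from the affine hull, since $\mathbb{E}T^\flat+\mathbb{E}\kappa_k=\mathbb{E}T=2\,\mathbb{E}U_k$.
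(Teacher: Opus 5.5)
Your treatment of (ii), (iii) and of the convex half of (i) is correct. Distinctness via \cref{lem:distinct} is a valid alternative to the paper's comparison of second differences of $v$, but it needs total support for the rays being compared, and the concave hinges are where the proposal has a gap.

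\textbf{The gap: no tight permutation for $-h_j$.} Your claim that the identity works on $[0,j]$ is false. A loop $(i,i)$ with $j/2<i\le j$ costs $0$, but $v(2i)<0$. For instance, at $k=5$, $j=2$, the loop $(2,2)$ costs $0$ while $(0,4)$ costs $-2$. So you have no tight permutation fixing $0$, hence no total support for $-h_j$. Consequently neither \cref{thm:R} (the facet claim) nor \cref{lem:distinct} is available for the concave hinges; the latter needs total support of one of the two rays, so it cannot separate $-h_j$ from $-h_{j'}$.

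The missing step is short. Take the involution that fixes $0$, reverses $[1,j-1]$ and reverses $[j,K]$. It uses only pairs on the anti-diagonals $j$ and $K+j$. These anti-diagonals are entirely tight, because $-h_j$ is constant on $[0,j]$ and affine on $[j,K]$. With the universal index $0$, \cref{lem:TS} then gives total support. The paper avoids this computation: $h_j=u_{\{j+1,\dots,K\}}$ is a consecutive run of steps, so \cref{thm:S}(3) gives the ray property and the facet in both signs at once.

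\textbf{A second error: $Z(u)$ in the concave case.} For concave $u$, a non-extreme pair on the anti-diagonal $t$ is tight if and only if $u$ is affine on the whole span $[\max(0,t-K),\min(t,K)]$, and then the entire anti-diagonal is tight. An extreme pair alone on its anti-diagonal imposes no relation. So relations come only from $t\le b_1$ and $t\ge K+b_m$. For $m\ge2$ the values of $z$ strictly between $b_1$ and $b_m$ are unconstrained, and $Z(u)$ is strictly larger than the space of continuous piecewise linear functions with breaks at the $b_l$. For example, $u=(0,2,3,4,4)$ at $k=5$ has one tight pair per anti-diagonal up to transposition, so $Z(u)=\R^5$.

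Your conclusion survives anyway. The piecewise linear space is contained in $Z(u)$, which gives $\dim Z(u)\ge m+2$. Equality holds when $m=1$: the relations from $t\le j$ make $z$ affine on $[0,j]$, and those from $t\ge K+j$ make it affine on $[j,K]$. The inclusion alone is the paper's argument: all potentials with the same runs of increments share a tight graph and lie in $Z(u)$.
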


\begin{proof}[Idea of the proof]
For a convex potential, a pair minimizes $u(i)+u(t-i)$ iff the increments are constant between its entries, so all convex potentials with the same runs of increments share a tight graph and lie in $Z(u)$ (for a concave potential the endpoint pair of each anti-diagonal always minimizes, and the other pairs do iff the increments are constant between the endpoints, with the same consequence); a ray therefore has at most two runs, that is, it is a hinge up to affine functions, and \cref{thm:S}(3) shows that hinges are rays that give facets in both signs. For (ii) and (iii), $v$ is computed from the balanced split, respectively the endpoint split, and convex order of integer-valued laws is decided by the hinges at integer points. Distinct hinge facets have second differences of $v$ that are not positive multiples of each other. The proof is in \cref{sec:SM-hingesproof} and \cref{sec:SM-hinges}.
\end{proof}

The convex hinges decide membership for X-rays with entries at most one. The proof uses two lemmas. The first is the finite duality for $\Ph(k)$.

\begin{lemma}[Grid duality]\label{lem:griddual}
Let $x\ge0$ on $\{0,\dots,2k-2\}$. Then $x\in\Ph(k)$ if and only if $\sum_t v(t)\,x(t)\le2\sum_iu(i)$ for every $u:\{0,\dots,k-1\}\to\R$.
\end{lemma}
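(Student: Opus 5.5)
The plan is to prove \cref{lem:griddual} by finite linear programming duality for the transportation problem defining $\Ph(k)$. Necessity is immediate: $x\in\Ph(k)$ is a convex combination of X-rays of permutations, and each permutation satisfies $I(u)$ termwise, as noted in \cref{sec:duality}.

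For sufficiency, suppose $x\ge0$ satisfies every $I(u)$ but $x\notin\Ph(k)$. By Farkas' lemma applied to the feasibility system of \cref{sec:count}, there are multipliers $\alpha,\beta\in\R^k$ (rows, columns) and $c\in\R^{2k-1}$ (anti-diagonals) with
\[
\alpha(i)+\beta(j)+c(i+j)\ge0 \quad\text{for all } i,j,
\qquad\text{and}\qquad
\sum_i\alpha(i)+\sum_j\beta(j)+\sum_t c(t)\,x(t)<0 .
\]
Replace $c$ by $-c$ so that the pointwise constraint reads $c(i+j)\le\alpha(i)+\beta(j)$. Since that constraint is symmetric under $(i,j)\mapsto(j,i)$, we may swap $\alpha$ and $\beta$ and average. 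The potential $u=(\alpha+\beta)/2$ then satisfies $c(i+j)\le u(i)+u(j)$ for all $i,j$, and $2\sum_iu(i)=\sum\alpha+\sum\beta$ is unchanged. Minimizing over pairs on each anti-diagonal gives $c\le v=u\,\square\,u$.

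This is where the hypothesis $x\ge0$ is needed: it gives $\sum_t c(t)x(t)\le\sum_t v(t)x(t)$. Combining with the Farkas certificate,
\[
\sum_t v(t)\,x(t)\ \ge\ \sum_t c(t)\,x(t)\ >\ 2\sum_i u(i),
\]
so $I(u)$ is violated, a contradiction.

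The only care needed is the sign convention in the Farkas alternative, checking that the certificate's inequality is strict in the right direction. Also, $x$ is not assumed to lie on the affine hull, and none is needed. The potentials $\pm\one$ and $\pm\iota$ are among the $u$, and they enforce the two affine-hull equations automatically, consistently with \cref{cor:affhull}.

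I expect no real obstacle. The one delicate step is the symmetrization: one must confirm that averaging $\alpha$ and $\beta$ preserves feasibility of $c(i+j)\le\alpha(i)+\beta(j)$. It does, because each swapped inequality is the original one at the transposed pair $(j,i)$, which lies on the same anti-diagonal $i+j$.
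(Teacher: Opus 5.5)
Your proof is correct and is essentially the paper's argument. The paper separates $x$ from $\Ph(k)$ by a functional $c$ and then takes an optimal dual pair $(\alpha,\beta)$ of the assignment problem with cost $c(i+j)$, whereas you get $(\alpha,\beta,c)$ in one step from Farkas' lemma on the feasibility system; the symmetrization $u=(\alpha+\beta)/2$, the bound $c\le v$, and the use of $x\ge0$ to reach a violated $I(u)$ are identical.
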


\begin{proof}[Idea of the proof]
Necessity is the termwise bound $v(i+j)\le u(i)+u(j)$. For sufficiency, a separating functional $c$ pulls back to an assignment problem; an optimal dual pair, symmetrized, is a potential $u$ with $c\le v$ and the same objective value, and $x\ge0$ turns the separation into a violation of $I(u)$. The proof is in \cref{sec:SM-griddual}.
\end{proof}

For $u:\{0,\dots,k-1\}\to\R$ let $\varphi$ be its \emph{convex minorant}, the largest convex function on $[0,k-1]$ with $\varphi(i)\le u(i)$ for all $i$. It is the lower boundary of the convex hull of the points $(i,u(i))$, so it is piecewise linear with integer breakpoints, and $\varphi=u$ at the \emph{contact points} $0=a_0<a_1<\dots<a_N=k-1$, the endpoints and the breakpoints; $\varphi$ is affine on each $[a_{l-1},a_l]$.

\begin{lemma}[Convex minorant]\label{lem:convminorant}
Let $\varphi$ be the convex minorant of $u$ and $w=u-\varphi\ge0$ on $\{0,\dots,k-1\}$. Then
\[
\sum_{t=0}^{2k-2}\bigl(v(t)-2\varphi(t/2)\bigr)^+\ \le\ 2\sum_{i=0}^{k-1}w(i).
\]
\end{lemma}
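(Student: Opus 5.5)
The plan is to bound each term $\bigl(v(t)-2\varphi(t/2)\bigr)^+$ by a single value $w(i)$, using one well-chosen pair on the anti-diagonal $t$. We then check that, summing over $t$, each index $i$ is charged at most twice. The key fact is that $\varphi$ is affine on each segment $[a_{l-1},a_l]$ between consecutive contact points. So for $i,j$ in the same segment with $i+j=t$ we have $\varphi(i)+\varphi(j)=2\varphi(t/2)$, and therefore
\[
v(t)-2\varphi(t/2)\le u(i)+u(j)-\varphi(i)-\varphi(j)=w(i)+w(j).
\]
Since $w\ge0$, the same bound holds for the positive part.

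Next we choose the pair. Fix a segment $[a,b]=[a_{l-1},a_l]$ and an anti-diagonal $t$ with $2a\le t\le 2b$, so that $t/2\in[a,b]$. If $t\le a+b$, take $(i,j)=(a,t-a)$; both entries lie in $[a,b]$, and $w(a)=0$ because $a$ is a contact point. This gives $\bigl(v(t)-2\varphi(t/2)\bigr)^+\le w(t-a)$. If $t>a+b$, take $(i,j)=(t-b,b)$, which gives the bound $w(t-b)$ since $w(b)=0$.

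As $t$ runs over $[2a,a+b]$, the index $t-a$ runs injectively over $[a,b]$. As $t$ runs over $(a+b,2b]$, the index $t-b$ runs injectively over $(a,b]$. Hence the sum of the terms over $t\in[2a,2b]$ is at most $2\sum_{i=a}^{b}w(i)$.

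Finally, we sum over segments. The ranges $[2a_{l-1},2a_l]$ cover $\{0,\dots,2k-2\}$ and overlap only at the points $t=2a_l$. At such a point the pair $(a_l,a_l)$ gives the term $\le 2w(a_l)=0$, so the double counting is harmless. Likewise the index blocks $[a_{l-1},a_l]$ overlap only at contact points, where $w=0$. Adding the segment bounds then yields $\sum_t\bigl(v(t)-2\varphi(t/2)\bigr)^+\le2\sum_iw(i)$.

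There is no real obstacle here. The only care needed is the bookkeeping at segment boundaries and at $t=a+b$, and this is handled by the vanishing of $w$ at the contact points $a_l$ (including $a_0=0$ and $a_N=k-1$).
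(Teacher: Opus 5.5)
Your proof is correct and is essentially the paper's argument. On each piece $[a,b]$ of $\varphi$ you use the pairs $(a,t-a)$ and $(t-b,b)$ on the two halves of $[2a,2b]$ to bound each term by a single value of $w$, then sum over $t$ and over the pieces, with $w=0$ at the contact points absorbing the overlaps.
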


\begin{proof}[Idea of the proof]
On a piece $[a,b]$ of $\varphi$, the pairs $(a,t-a)$ and $(b,t-b)$ are symmetric about $t/2$, so $v(t)\le2\varphi(t/2)+w(t-a)$ and $v(t)\le2\varphi(t/2)+w(t-b)$ on the two halves of $[2a,2b]$; summing over $t$ and over the pieces gives the bound. The proof is in \cref{sec:SM-convminorant}.
\end{proof}

\begin{corollary}[Small atoms]\label{cor:smallatoms}
Let $x$ lie in the affine hull of $\Ph(k)$ with $0\le x(t)\le1$ for all $t$. Then $x\in\Ph(k)$ if and only if $x$ satisfies the $k-2$ convex hinge inequalities $I(h_j)$ of \cref{prop:hinges}(ii); the concave hinge inequalities $I(-h_j)$ are then implied.
\end{corollary}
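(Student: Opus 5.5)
We plan to use \cref{lem:griddual}: since $x\ge0$ lies in the affine hull, it suffices to show $\sum_tv(t)x(t)\le2\sum_iu(i)$ for every potential $u$. Necessity of the hinge inequalities is clear, since they are valid on $\Ph(k)$. The concave-hinge statement then follows at once, because once $x\in\Ph(k)$ every valid inequality holds.

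\emph{Step 1: reduce to the convex minorant.} Fix $u$, let $\varphi$ be its convex minorant and $w=u-\varphi\ge0$. Write $v(t)\le2\varphi(t/2)+\bigl(v(t)-2\varphi(t/2)\bigr)^+$. Since $0\le x(t)\le1$, we get
\[
\sum_tv(t)x(t)\le\sum_t2\varphi(t/2)x(t)+\sum_t\bigl(v(t)-2\varphi(t/2)\bigr)^+.
\]
By \cref{lem:convminorant}, the last sum is at most $2\sum_iw(i)$. Since $2\sum_iu(i)=2\sum_i\varphi(i)+2\sum_iw(i)$, it remains to prove
\[
\sum_t2\varphi(t/2)\,x(t)\le2\sum_i\varphi(i).
\]
Here the bound $x(t)\le1$ is used exactly once, to absorb the positive part with the unit weight that \cref{lem:convminorant} provides.

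\emph{Step 2: the convex case.} The function $\psi(t)=2\varphi(t/2)$ is convex on $[0,2k-2]$ and piecewise linear. Its breakpoints lie at the even integers $2a_l$. The inequality of Step 2 says $\mathbb{E}\,\psi(T)\le\mathbb{E}\,\psi(2U_k)$. Write $\psi$ on the integer grid as an affine function plus a nonnegative combination of the hinges $(t-2j)^+$ with $1\le j\le k-2$. This uses only even breakpoints strictly inside the range; the endpoint kinks do not matter on $[0,2k-2]$. The affine part is handled by the affine hull. Each hinge term is the inequality $I(h_j)$ in the form of \cref{prop:hinges}(ii), namely $\mathbb{E}(T-2j)^+\le\mathbb{E}(2U_k-2j)^+$. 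Summing with nonnegative coefficients gives the claim.

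\emph{Main obstacle.} The delicate point is Step 2's decomposition. We must check that $\psi$ restricted to integers has breakpoints only at even integers $2j$ with $1\le j\le k-2$. This requires that the contact points $a_l$ be integers, which holds by construction of the minorant. The same expansion must also be valid at the odd $t$, where $\psi(t)=2\varphi(t/2)$ is evaluated at half-integers. This holds because $\psi$ is linear between consecutive even breakpoints. The affine part must match both equations of the affine hull; since $\mathbb{E}\,T=\mathbb{E}\,2U_k=k-1$ on the affine hull, it does.
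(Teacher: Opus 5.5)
Your proof is correct and follows the paper's argument. It uses the same chain $v(t)\le2\varphi(t/2)+\bigl(v(t)-2\varphi(t/2)\bigr)^+$, the bound $x(t)\le1$ to absorb the positive part, and \cref{lem:convminorant}. The one difference is in the last step: you expand $\psi(t)=2\varphi(t/2)$ directly as an affine function plus a nonnegative combination of the hinges $(t-2j)^+$. The paper instead cites the convex order $T\le_{\mathrm{cx}}2U_k$ from \cref{prop:hinges}(ii), which rests on the same decomposition.
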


\begin{proof}
Necessity holds because each $I(h_j)$ is valid. For sufficiency, \cref{prop:hinges}(ii) gives $T\le_{\mathrm{cx}}2U_k$, and by \cref{lem:griddual} it suffices to show $\mathbb{E}\,v(T)\le2\,\mathbb{E}\,u(U_k)$ for every $u$. With $\varphi$ and $w$ as in \cref{lem:convminorant},
\begin{align*}
\mathbb{E}\,v(T)
&\le 2\,\mathbb{E}\,\varphi(T/2)+\mathbb{E}\bigl(v(T)-2\varphi(T/2)\bigr)^+\\
&\le 2\,\mathbb{E}\,\varphi(U_k)+\frac1k\sum_{t=0}^{2k-2}\bigl(v(t)-2\varphi(t/2)\bigr)^+\\
&\le 2\,\mathbb{E}\,\varphi(U_k)+\frac2k\sum_{i=0}^{k-1}w(i)\ =\ 2\,\mathbb{E}\,u(U_k).
\end{align*}
The first line is $y\le y^+$. In the second, $s\mapsto\varphi(s/2)$ is convex, so $\mathbb{E}\varphi(T/2)\le\mathbb{E}\varphi(2U_k/2)$ by $T\le_{\mathrm{cx}}2U_k$, and $\mathbb{P}(T=t)=x(t)/k\le1/k$ bounds the expectation of the nonnegative variable. The third line is \cref{lem:convminorant}.
\end{proof}

\begin{remark}\label{rem:LLYtest}
Li, Liu and Yao \cite[Corollary~4.4]{LiLiuYao2026} give a finite test for X-rays with all entries at most one through two families of linear inequalities; with the mean identity these reduce to the hinge inequalities $I(h_j)$, so their test is equivalent to \cref{cor:smallatoms}. \Cref{lem:griddual} is the content of \cite[Proposition~3.1]{LiLiuYao2026} used here. The bound $x\le1$ enters only in the second line of the chain, and it cannot be dropped: $x=(0,\frac32,0,\frac32,0)$ lies in the affine hull of $\Ph(3)$ and satisfies $I(h_1)$, but $x(0)=x(2)=x(4)=0$ forces $Z_{01}=Z_{21}=1$, so column $1$ has sum $2$.
\end{remark}

By \cref{prop:C}, \cref{cor:smallatoms} says that a lattice law on $\{1/k,\dots,(2k-1)/k\}$ with mean one and all atoms at most $1/k$ is the law of a sum of two standard uniforms if and only if $kS-1\le_{\mathrm{cx}}2U_k$, just as convex order decides membership for sums of three or more uniforms \cite{MaoWangWang2019}; this is the reading used in \cite[Corollary~3.2]{CKcompanion}. A facet is not implied by other valid inequalities, so every other facet is a constraint that the hinges do not imply: at $k=12$, $7{,}008$ of the $7{,}018$ ray facets, and $6{,}998$ if the concave hinges are also excluded.

\subsection{A criterion for any number of steps}

\begin{theorem}\label{thm:S4}
Let $g$ be a composition of $k$ with $r\ge1$ and $g_0\ge2$. Suppose
\begin{enumerate}[label=(\Alph*)]
\item[(U)] $E_p+E_q\le E_0+E_{p+q}$ whenever $p+q\le r-1$, and
\item[(R)] $E_{l-1}+E_l\ge M(2l-1)$ for every $1\le l\le r$.
\end{enumerate}
Then $u(g)$ is a ray with total support, hence a facet. By the symmetry $g\mapsto g^*$, the same conclusion holds whenever $g^*$ satisfies these hypotheses.
\end{theorem}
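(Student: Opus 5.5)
The plan is to run the pattern of the proof of \cref{thm:S}. We exhibit a universal index $e$ together with a tight permutation fixing $e$. Then \cref{lem:TS} gives total support, and it remains to show $Z(u)=\spn\{\one,\iota,u\}$, after which \cref{thm:R} gives the facet. The last sentence of the statement follows from the involution $g\mapsto g^*$, which preserves the ray property, total support and facet status. So only $g$ itself has to be treated.

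\emph{Universal vertex and tight permutation.} Take $e=E_0$. Hypothesis (U) says that the block pair $(0,s)$ attains $M(s)$ for every $s\le r-1$, so $M(l-1)=E_0+E_{l-1}$ for $1\le l\le r$. By \cref{lem:stair}, an edge $(E_0,j)$ with $j$ in block $l\ge1$ is tight iff $E_0+j>M(l-1)=E_0+E_{l-1}$. This holds for every $j\ge S_l$, and edges inside block $0$ are always tight, so $E_0$ is adjacent to every index. For the tight permutation, take any permutation of block $0$ fixing $E_0$; block $0$ is complete. On each block $l\ge1$, take the reversal $i\mapsto S_l+E_l-i$. By \cref{lem:corner}, this reversal is tight as soon as $\operatorname{tri}(l,l)\ge g_l$, that is, $2E_l-M(2l-1)\ge E_l-E_{l-1}$, which is exactly (R). Then \cref{lem:TS} gives $\Es(u)=\cE(u)$, and \cref{lem:U} applies with $e=E_0$, whose loop is tight.

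\emph{Computing $Z(u)$.} Let $z\in Z(u)$. Block $0$ has $g_0\ge2$ indices and is complete, so $z$ is affine there with some slope $c$, by (B1) of \cref{lem:K}. By (R), each block $l\ge1$ contains every pair with $i+j\ge S_l+E_l$, so $z$ is affine on block $l$ by (B2). To equate slopes, compare $(E_0,j)$ with $(E_0-1,j+1)$ for consecutive $j,j+1$ in block $l$. The second pair is tight by the same computation as above, and here $g_0\ge2$ is needed so that $E_0-1$ lies in block $0$. Hence every block has slope $c$, and $z-c\iota$ is a step function with jumps $\delta_1,\dots,\delta_r$ at the block starts.

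\emph{Pinning the jumps (main obstacle).} We must show $\delta_l=\delta_1$ for all $l$, that is, $z-c\iota\in\spn\{\one,u\}$. The relations come from anti-diagonals that carry tight pairs of two different block types with the same level sum. One source is the universal pairs $(E_0,j)$ with $j$ in block $s$. The other is the diagonal corner pairs $(E_{l-1},E_l)$, which by (R) attain $M(2l-1)$, together with the pairs $(S_l+a,E_l-a)$ in the corner of block pair $(l,l)$. Where $2l\le r$ or $2l-1\le r-1$, these anti-diagonals also meet pairs $(E_0,j)$ in block $2l$ or $2l-1$, giving linear relations such as $\Delta_{l-1}+\Delta_l=\Delta_{2l-1}$ among the cumulative jumps $\Delta_l=\delta_1+\dots+\delta_l$. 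I would first try an induction on $l$ that proves $\Delta_l=l\,\delta_1$. At each stage I would pick a tight pair of level sum $l$ between two blocks $p,q\ge1$, whose existence comes from the corners guaranteed by (R), and compare it with the universal pair on the same anti-diagonal to get $\Delta_p+\Delta_q=\Delta_{p+q}$.

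This bookkeeping is the delicate step. One has to check that for every level a mixed comparison exists whose anti-diagonal actually hits block $p+q$ beyond $E_{p+q-1}$. That check will use (U) and (R) in the form of \cref{lem:last} and \cref{lem:starts}. The top levels, where $p+q>r-1$ and (U) gives no information, may need the loops at block ends together with $M(2r)=2k-2$.
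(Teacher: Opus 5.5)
Your setup agrees with the paper's proof. The index $e=E_0$ is universal by (U) and \cref{lem:stair}. The permutation that fixes block $0$ and reverses every block $l\ge1$ is tight exactly by (R), and \cref{lem:TS} gives total support. Comparing $(E_0,j)$ with $(E_0-1,j+1)$, which uses $g_0\ge2$, gives one slope on all blocks; your appeal to (B1) and (B2) is harmless but redundant.

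The gap is the step you yourself call the main obstacle. You never establish the relations that force the jumps to be equal, and the sources you propose cannot do it in general. The pairs you list are of two kinds. The corners of the block pairs $(l,l)$ only give level sums $2l$. The single corner $(E_{l-1},E_l)$, which (R) places on the anti-diagonal $M(2l-1)$, carries a universal pair only if $(0,2l-1)$ also attains $M(2l-1)$. Take $g=(2,3,3,1)$, which satisfies (U) and (R). Here $(E_0,\dots,E_3)=(1,4,7,8)$ and $M(3)=E_1+E_2=11>E_0+E_3=9$, so no pair $(E_0,j)$ lies on that anti-diagonal, and your list yields no relation involving $\Delta_3$. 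So your worry about the top level is real for the route you chose, but the remedy is not loops or $M(2r)$.

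The missing idea is that (U), not (R), supplies the mixed pairs. By \cref{lem:starts}, the start $S_1=E_0+1$ is tight with $E_{q-1}$ as soon as $(0,q-1)$ attains $M(q-1)$, and (U) gives this for every $q\le r$. So for $2\le q\le r$ compare $(S_1,E_{q-1})$, with levels $(1,q-1)$, and $(E_0,S_q)$, with levels $(0,q)$. Both have index sum $E_0+E_{q-1}+1>M(q-1)=E_0+E_{q-1}$, so both are tight by \cref{lem:stair}, and together they give $\Delta_q=\Delta_1+\Delta_{q-1}$.

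Only $M(q-1)$ with $q-1\le r-1$ enters, so $q=r$ is covered too. Induction gives $\Delta_q=q\,\delta_1$, hence $z\in\spn\{\one,\iota,u\}$. In the example this is the pair $(2,7)$ against $(1,8)$. This is how the paper closes the argument, and it shows that (R) is needed only for the tight permutation.
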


\begin{proof}
Put $e=E_0$. \emph{Universal vertex.} For $j$ in block $q\ge1$ the edge $(e,j)$ has level sum $q$, and (U) gives $M(q-1)=E_0+E_{q-1}<e+j$, so it is tight by \cref{lem:stair}; edges into block $0$ have cost $0$. \emph{A tight permutation fixing $e$.} Take the identity on block $0$ and the reversal on each block $l\ge1$. Its pairs have index sum $S_l+E_l=E_{l-1}+E_l+1>M(2l-1)$ by (R), so they are tight, and \cref{lem:TS} gives total support. \emph{Ray.} Let $z\in Z(u)$. For $j,j+1$ in one block $q$, the edges $(e,j)$ and $(e-1,j+1)$ are tight and lie on one anti-diagonal: for $q=0$ both lie in the complete block pair $(0,0)$, and for $q\ge1$ the second has level sum $q$ and index sum $E_0+j>E_0+E_{q-1}$ (here $g_0\ge2$ is used). So $z(j+1)-z(j)=z(e)-z(e-1)=:D$ in every block. Write $z(i)=A+Di+\omega(\text{level of }i)$ with $\omega(0)=0$. For $2\le q\le r$ the edges $(E_0,S_q)$ and $(S_1,E_{q-1})$ have levels $(0,q)$ and $(1,q-1)$ and the same index sum $E_0+E_{q-1}+1>M(q-1)$, so both are tight and $\omega(q)=\omega(1)+\omega(q-1)$. Hence $\omega(q)=q\,\omega(1)$ and $z\in\spn\{\one,\iota,u\}$.
\end{proof}

With the symmetry, \cref{thm:S4} certifies 764 of the 2,811 compositions $g$ with $3\le k\le12$ for which $u(g)$ with sign $+$ defines a facet, including 308 for which neither $J$ nor $J^c$ belongs to the families of \cref{thm:S}.

\subsection{Certificates of non-facets}

\begin{lemma}[One empty anti-diagonal]\label{lem:C1}
Let $u=u(g)$ be a staircase with $1\le r\le k-2$. If some anti-diagonal $t$ carries no edge of any tight permutation, then $I(u)$ is not a facet.
\end{lemma}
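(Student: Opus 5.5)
This is a direct consequence of \cref{thm:empty}(b). The edges that lie in some tight permutation are by definition the allowed graph $\Es(u)$. So the hypothesis says exactly that the anti-diagonal $t$ carries no edge of $\Es(u)$.

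If $u$ has no tight permutation at all, the face exposed by $I(u)$ contains no vertex of $\Ph(k)$, since the vertices are X-rays of permutations. The face is then empty and in particular not a facet. If $u$ has a tight permutation, \cref{thm:empty}(b) applies verbatim and shows that $I(u)$ does not define a facet. For staircases a tight permutation always exists: by \cref{lem:stair} the tight pairs form corner triangles, and for instance the reversal on each block, with the identity on block $0$, could be checked. But we do not even need this, because of the empty-face case just treated.

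For completeness I would also recall the mechanism behind \cref{thm:empty}(b), since the lemma is meant as a usable certificate. The face $\mathcal{F}(u)$ is the convex hull of the X-rays of tight permutations, all of which vanish at coordinate $t$. Hence $\mathcal{F}(u)\subseteq\{x(t)=0\}$. If $\mathcal{F}(u)$ were a facet, it would coincide with the facet $\{x(t)=0\}$ (for $k\ge4$, \cref{lem:coord}; for $k=3$ the containment still forces equality of faces if both are facets). That would contradict \cref{thm:empty}(a). The hypothesis $1\le r\le k-2$ only ensures that $u$ is not affine, so that $I(u)$ is not an equation on $\Ph(k)$ and the statement is not vacuous.

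No step is a real obstacle; the only point of care is to note that "no edge of any tight permutation" is precisely "no edge of $\Es(u)$".
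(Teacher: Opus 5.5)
Your argument is correct and is the paper's proof: the lemma is \cref{thm:empty}(b) specialized to staircases. That theorem needs no tight-permutation hypothesis, so your separate empty-face case is harmless but redundant.

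One caution about an aside you do not use: a staircase need not have a tight permutation. The block reversal is tight only under condition (R) of \cref{thm:S4}. For example, for $g=(1,1,2,3,4)$ the starts $S_2,S_3,S_4$ all have only the tight neighbors $E_0$ and $E_4$, so Hall's condition fails.
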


\begin{proof}
This is the case of \cref{thm:empty}(b) for staircases.
\end{proof}

A block start $S_l$ with a single tight neighbor gives such anti-diagonals in two cases. By \cref{lem:starts} the neighbor is a block end $E_q$, with $\operatorname{tri}(l,q)=g_l$ by \cref{lem:corner}. Every tight permutation maps $S_l$ to $E_q$, and since the tight graph is symmetric and inverses of tight permutations are tight, it also maps $E_q$ to $S_l$. Row $S_0=0$ is tight with all of block $0$, and with $E_1$ when $g_0=1$, so $l\ge1$.

\begin{proposition}\label{prop:H1}
If some block start $S_l$ has the single tight neighbor $E_0$ or $E_r$, then $I(u)$ is not a facet.
\end{proposition}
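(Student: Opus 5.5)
The plan is to reduce to \cref{lem:C1} by exhibiting an anti-diagonal that carries no allowed edge. The discussion before the statement already shows the key fact. If $S_l$ ($l\ge1$) has a single tight neighbor $E_q$, then every tight permutation $\sigma$ has $\sigma(S_l)=E_q$. Because the tight graph is symmetric and inverses of tight permutations are tight, also $\sigma(E_q)=S_l$. Hence every tight edge at $E_q$ other than $\{E_q,S_l\}$ is not allowed. If no tight permutation exists, $I(u)$ is not a facet anyway, since a facet is nonempty. So I assume one exists.

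\emph{Case $q=0$.} Here $E_0=g_0-1$, and I look at the anti-diagonal $t=2E_0$. Its pair $(E_0,E_0)$ has cost $0$, so $v(2E_0)=0$. Every tight pair on this anti-diagonal must therefore have both indices at level $0$, that is, in $[0,E_0]$. With sum $2E_0$, this forces the pair to be $(E_0,E_0)$. This loop is not allowed, because $\sigma(E_0)=S_l\ne E_0$ (as $S_l\ge S_1>E_0$). So the anti-diagonal $2E_0$ carries no allowed edge, and \cref{lem:C1} applies.

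\emph{Case $q=r$.} Here $E_r=k-1$, and the anti-diagonal $2k-2$ consists only of the loop $(k-1,k-1)$. If $S_l\ne E_r$, this loop is not allowed, because $\sigma(k-1)=S_l\ne k-1$, and \cref{lem:C1} applies. The remaining subcase is $S_l=E_r$, that is, $l=r$ and $g_r=1$, where the single neighbor is the index itself. I then use the anti-diagonal $2k-3$ instead, whose only pairs are $(k-2,k-1)$ and $(k-1,k-2)$. Every tight permutation fixes $k-1$, because $k-1$ has no tight neighbor other than itself. So neither pair lies in a tight permutation, the anti-diagonal $2k-3$ is empty of allowed edges, and \cref{lem:C1} again applies.

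The only delicate points are the degenerate coincidence $S_l=E_q$ in the case $q=r$, handled above by shifting to the neighboring anti-diagonal, and checking that in the case $q=0$ no pair involving a higher block lands on the anti-diagonal $2E_0$. The latter follows immediately from $v(2E_0)=0$ together with the fact that indices above $E_0$ have level at least $1$, so their pairs cost at least $1$.
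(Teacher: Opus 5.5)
Your proof is correct and follows the paper's route: both arguments reduce to \cref{lem:C1} through the anti-diagonals $2E_0$ and $2k-2$, whose only tight pairs are the loops $(E_0,E_0)$ and $(k-1,k-1)$, and neither loop can be used because $E_q$ is forced to $S_l$. The one difference is the subcase $S_l=E_r$: the paper notes it cannot occur, since $E_r$ would then also be tight with $E_{r-1}$, and likewise one of the two pairs on the anti-diagonal $2k-3$ is always tight, so your detour through that anti-diagonal is valid but only treats a vacuous case.
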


\begin{proof}
Partner $E_0$: by \cref{lem:last} with $s=0$, the only tight pair on the anti-diagonal $2E_0=M(0)$ is $(E_0,E_0)$, which is never used because $E_0$ is forced to $S_l\ne E_0$. Partner $E_r=k-1$: the only pair on the anti-diagonal $2k-2$ is $(k-1,k-1)$. Here $S_l\ne E_r$: otherwise $l=r$, $g_r=1$, and $E_r$ would also be tight with $E_{r-1}$, because $E_{r-1}+E_r=M(2r-1)>M(2r-2)$ by \cref{lem:stair}, contradicting uniqueness. \Cref{lem:C1} finishes both cases.
\end{proof}

\subsection{Step relations}
The following lemma gives a sufficient condition for the ray property of a staircase in terms of relations among its steps, for any number of steps; it is not necessary (see the examples in \cref{sec:s_census-proofs}). An index $i$ of block $l$ is written $(l,a)$, where $a=E_l-i$ is its offset from the block end. For $z\in Z(u)$, put $w_z(t)=z(i)+z(j)$ for any tight edge $(i,j)$ with $i+j=t$; membership in $Z(u)$ makes this well defined.

\begin{lemma}[Step relations]\label{lem:square}
Let $u=u(g)$ be a staircase, $z\in Z(u)$ and $d(i)=z(i+1)-z(i)$. Call $i$ a within-block step if $i$ and $i+1$ lie in one block, and a boundary step if $i=E_p$ with $p<r$.
\begin{enumerate}[label=(\roman*)]
\item Within-block steps $i=(p,a)$ and $j=(q,b)$ satisfy $d(i)=d(j)$ whenever $a+b\le \operatorname{tri}(p,q)$ (always if $p=q=0$).
\item Boundary steps $E_p$ and $E_q$ satisfy $d(E_p)=d(E_q)$ whenever $p$ and $q$ both occur in block pairs attaining $M(s)$ for one level sum $s$; in particular whenever $(p,q)$ attains $M(p+q)$. The equivalence classes of boundary steps generated by these relations are the \emph{classes} of $u$.
\item If all within-block steps are related by chains of (i) and all boundary steps by chains of (ii), then $u$ is a ray.
\item If the boundary steps of $u(g)$ form one class under (ii), and so do the boundary steps of $u(g^*)$, then $u(g)$ is a ray.
\end{enumerate}
\end{lemma}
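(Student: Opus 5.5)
The plan is to work directly with the relations that define $Z(u)$: if two tight edges lie on one anti-diagonal, then $z$ has equal pair sums on them. Each part comes from choosing two tight edges on one anti-diagonal, or on two adjacent anti-diagonals, and subtracting. Throughout, $1\le r\le k-2$ and $z\in Z(u)$.

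For (i), let $i=(p,a)$ and $j=(q,b)$ be within-block steps, so that $i+1=(p,a-1)$ and $j+1=(q,b-1)$ lie in the same blocks with $a,b\ge1$. Consider the edges $(i,j+1)$ and $(i+1,j)$. Both have index sum $i+j+1$, and their offset sums are $a+b-1$. By \cref{lem:corner}, both are tight as soon as $a+b-1<\operatorname{tri}(p,q)$, that is, $a+b\le\operatorname{tri}(p,q)$. When block $0$ is involved, the extra constraint $a'\le g_0-1$ of \cref{lem:corner} holds automatically, because the indices lie in block $0$. When $p=q=0$ the block pair is complete and both edges are tight with no condition. Equating $z(i)+z(j+1)=z(i+1)+z(j)$ gives $d(i)=d(j)$.

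For (ii), use two adjacent anti-diagonals. Let $(p,q)$ attain $M(s)$ with $p<r$. By \cref{lem:last}, the corner $(E_p,E_q)$ is tight on the anti-diagonal $M(s)$, so $w_z(M(s))=z(E_p)+z(E_q)$. The edge $(E_p+1,E_q)=(S_{p+1},E_q)$ has level sum $s+1$ and index sum $M(s)+1>M(s)$, so it is tight by \cref{lem:stair}. Hence $w_z(M(s)+1)-w_z(M(s))=d(E_p)$. Symmetrically this difference equals $d(E_q)$ if $q<r$, and it also equals $d(E_{p'})$ for every other boundary index $p'$ in a block pair attaining $M(s)$. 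So all boundary steps occurring at level sum $s$ share one value, namely $w_z(M(s)+1)-w_z(M(s))$, which is independent of the chosen pair. The special case in the statement is the case where $(p,q)$ itself attains $M(p+q)$.

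For (iii), the hypotheses give one value $D$ for $d$ at all within-block steps and one value $D'$ at all boundary steps. Then $d=D+(D'-D)\,\ind{i+1\in J}$, so $z=z(0)+D\iota+(D'-D)u$. This shows $Z(u)=\spn\{\one,\iota,u\}$. The potential $u$ is not affine because $1\le r\le k-2$, so both kinds of steps occur; hence $u$ is a ray.

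For (iv), I would use the symmetry $g\mapsto g^*$. By \cref{lem:A}, $-u_J=u_{J^c}-\iota$, so $Z(u_J)=Z(u_{J^c})$. The reflection $i\mapsto k-1-i$ carries $Z(u_{J^c})$ onto $Z(u(g^*))$, sending $z$ to $z^*(i)=z(k-1-i)$, whose increments are $d^*(i)=-d(k-2-i)$. A boundary step of $g^*$ sits at a position $i$ with $i+1\in k-J^c$. This corresponds to the step of $z$ at position $k-2-i$, and the index $k-1-i$ lies in $J^c$; so that step of $z$ is a within-block step of $g$. Applying (ii) to $z^*\in Z(u(g^*))$, the assumed single class makes all within-block increments of $z$ equal. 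The assumed single class for $g$ makes all boundary increments of $z$ equal. Then (iii) finishes.

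The main obstacle is this index bookkeeping in (iv): one must check that the composition of the complement and the reflection sends boundary steps of $g^*$ bijectively onto within-block steps of $g$. A secondary point is the edge cases $p=r$ or $q=r$ in (ii), where the index $E_r+1$ does not exist. There one relates only the boundary steps that exist, which is all that the notion of a class requires.
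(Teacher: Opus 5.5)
Your route is the paper's. Part (i) compares $(i,j+1)$ with $(i+1,j)$ inside a corner triangle. Part (ii) reads $d(E_p)$ off $w_z(M(s)+1)-w_z(M(s))$ via \cref{lem:last,lem:stair}. Part (iii) is the same computation, and (iv) transports (ii) from $g^*$. Parts (i)--(iii) are correct as written.

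The flaw is in the justification of (iv). Each of your two identifications silently replaces a potential by its negative, and \cref{lem:A} gives neither.
\begin{itemize}
\item \cref{lem:A} does not give $Z(u_J)=Z(u_{J^c})$. The identity $u_{J^c}=\iota-u_J$ is an affine change of $-u_J$, not of $u_J$, so what the lemma gives is $Z(u_{J^c})=Z(-u_J)$. The tight graph of $-u_J$ consists of the \emph{maximizing} pairs of $u_J$, which is not $\cE(u_J)$ in general. For $k=5$ and $J=\{1\}$, the anti-diagonal $4$ carries only $(0,4)$ in $\cE(u_J)$, but $(1,3)$ and $(2,2)$ in $\cE(-u_J)$.
\item By \cref{lem:A} the reflection $\rho(i)=k-1-i$ sends $(J^c,+)$ to $(k-J^c,-)$. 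So what it gives is that $z\mapsto z\circ\rho$ carries $Z(u_{J^c})$ onto $Z(-u(g^*))$, not onto $Z(u(g^*))$ as you state.
\end{itemize}

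The two sign slips cancel, so the map you actually use, $z\mapsto z^*=z\circ\rho$ from $Z(u(g))$ to $Z(u(g^*))$, is correct. It needs a direct justification, which is short. From $u_{J^c}=\iota-u_J$ and $u_{k-J'}=|J'|-u_{J'}\circ\rho$ one gets $u(g^*)=u(g)\circ\rho+\iota+c$ for a constant $c$; this is the identity the paper uses. By \cref{lem:A}, $\cE(u(g^*))$ is then the image of $\cE(u(g))$ under $(i,j)\mapsto(\rho(i),\rho(j))$, which maps anti-diagonals to anti-diagonals. Hence $Z(u(g^*))=\{z\circ\rho:z\in Z(u(g))\}$.

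With this replacement, the rest of your argument for (iv) goes through. A boundary step of $g^*$ at $i$ has $k-1-i\in J^c$, so it corresponds to the within-block step $k-2-i$ of $g$, and $d^*(i)=-d(k-2-i)$. The proof then agrees with the paper's.
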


\begin{proof}[Idea of the proof]
(i) and (ii) compare two tight edges on one anti-diagonal, given by \cref{lem:corner} and by \cref{lem:last,lem:stair}. Under (iii), $d$ takes one value on within-block steps and one on boundary steps, so $z\in\spn\{\one,\iota,u\}$. For (iv), $u(g^*)=u(g)\circ\rho+\iota+c$, and the boundary steps of $g^*$ correspond to the within-block steps of $g$, so (ii) for $g^*$ supplies the relations that (iii) needs. The proof is in \cref{sec:SM-squareproof}.
\end{proof}

Relation (ii) is needed in its general form, not only for pairs $(p,q)$ that attain $M(p+q)$; an example with $k=22$ is in \cref{sec:SM-square}.

\subsection{Three steps}
In the offset notation, by \cref{lem:corner}, an edge joining $(l,a)$ and $(q,b)$ is tight iff $a+b<\operatorname{tri}(l,q)$; block pair $(0,0)$ is complete, and block $0$ only adds the bound $a\le g_0-1$.

\begin{theorem}\label{thm:T3}
Let $k\ge5$ and let $g=(g_0,g_1,g_2,g_3)$ be a composition of $k$. The staircase $u(g)$ with sign $+$ defines a facet of $\Ph(k)$ if and only if it is not the case that $g_1,g_2,g_3$ are pairwise distinct with $g_1<g_3$. The value of $g_0$ plays no role.
\end{theorem}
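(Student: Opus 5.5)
The plan is to compute the tight graph of $u(g)$ explicitly from the block ends $E_0<E_1<E_2<E_3$. By \cref{lem:stair,lem:corner} it is determined by the six numbers $M(0),\dots,M(5)$ and by the corner sizes $\operatorname{tri}(p,q)$. Put $e=E_0$, so that $E_l=e+g_1+\dots+g_l$. Then $M(1)=e+E_1$, $M(5)=E_2+E_3$, $M(4)=\max(2E_2,E_1+E_3)$, $M(3)=\max(E_1+E_2,e+E_3)$ and $M(2)=\max(2E_1,e+E_2)$. All of these except $M(0)$ and $M(1)$ depend on comparisons among $g_1,g_2,g_3$ only:
\begin{itemize}[leftmargin=*]
\item $2E_2\gtrless E_1+E_3$ compares $g_2$ with $g_3$;
\item $E_1+E_2\gtrless e+E_3$ compares $g_1$ with $g_3$;
\item $2E_1\gtrless e+E_2$ compares $g_1$ with $g_2$.
\end{itemize}
Every $\operatorname{tri}(p,q)$ with $p,q\ge1$ is then a function of $(g_1,g_2,g_3)$. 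This explains why $g_0$ plays no role for those blocks. For block $0$ I would check separately, using the cap $a\le g_0-1$ of \cref{lem:corner}, that $g_0$ only changes edges inside block $0$ or on anti-diagonals already covered. So the proof splits into the $13$ weak orderings of $(g_1,g_2,g_3)$, with equalities recorded as ties in the maxima.

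For the facet direction I would use \cref{thm:R}. It suffices to show that $u$ is a ray and that $\cE(u)$ has total support.
\begin{itemize}[leftmargin=*]
\item Ray property: use \cref{lem:square}(iii)/(iv). The three boundary steps $E_0,E_1,E_2$ should form one class under (ii) whenever some $M(s)$ is attained by two block pairs (a tie) or by suitable single pairs. The within-block relations (i) link blocks through $\operatorname{tri}(p,q)\ge1$, which holds for the attaining pairs.
\item Total support: first try \cref{thm:S4}, which handles many orderings, together with the symmetry $g\mapsto g^*$. Note that three steps go to $k-4$ steps, so I would rather give direct arguments. For the remaining cases, exhibit an explicit tight permutation, namely the identity on block $0$ and reversal or cross pairing on the corner triangles. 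Then check strong connectivity of $D_\sigma$ and apply \cref{lem:D}.
\end{itemize}

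For the non-facet direction, assume $g_1,g_2,g_3$ are pairwise distinct with $g_1<g_3$; there are three orderings. In each I would find a block start with a single tight neighbor, using \cref{lem:starts}. The condition $g_1<g_3$ gives $M(3)=E_1+E_2>e+E_3$, with no ties since the $g$'s are distinct. I would aim to show that some $S_l$ has its unique partner $E_0$ or $E_3$ and then conclude by \cref{prop:H1}. Failing that, I would exhibit an anti-diagonal whose only tight pairs meet a forced edge, and conclude by \cref{lem:C1}. If neither works, I would instead exhibit a nonzero element of $Z^*(u)$ outside $\spn\{\one,\iota,u\}$ with $\operatorname{bip}(u)=0$ and apply \cref{thm:F}.

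The main obstacle is this case analysis. One must verify, uniformly in the sizes, which certificate applies in each of the three bad orderings. One must also establish total support in the good orderings where \cref{thm:S4} fails, for instance when $g_0=1$, where (U) or the hypothesis $g_0\ge2$ breaks. I would organize the work by tabulating the attaining pairs of each $M(s)$ for every ordering first, and then read off all the certificates from that table.
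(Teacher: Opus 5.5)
Your overall route is the paper's: split by the relative order of $g_1,g_2,g_3$. In the three bad orderings you certify a non-facet by forced matchings and an anti-diagonal without allowed edges (\cref{lem:C1}). In the good orderings you get total support from a tight involution through \cref{lem:D}, the ray property from \cref{lem:square}, and then apply \cref{thm:R}.

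However, the one concrete computation you give for the failing direction has the wrong sign. Since $(E_1+E_2)-(E_0+E_3)=g_1-g_3$, the hypothesis $g_1<g_3$ gives $M(3)=E_0+E_3>E_1+E_2$, attained only by $(0,3)$, not $M(3)=E_1+E_2>e+E_3$ as you wrote. This fact drives every failure certificate. By \cref{lem:starts} it keeps $E_2$ out of $N(S_2)$ and $E_1$ out of $N(S_3)$. With your sign, $(1,2)$ would attain $M(3)$, and then $E_2\in N(S_2)$ and $E_1\in N(S_3)$. In none of the three bad orderings would you find a start with a single neighbor or a set of starts with $|N(X)|\le|X|$. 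The case analysis would point toward total support, the opposite of what you need.

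With the correct signs, the pairs attaining $M(2),M(3),M(4)$ are:
\begin{itemize}
\item $(0,2),(0,3),(1,3)$ when $g_1<g_2<g_3$;
\item $(0,2),(0,3),(2,2)$ when $g_1<g_3<g_2$;
\item $(1,1),(0,3),(1,3)$ when $g_2<g_1<g_3$.
\end{itemize}
\cref{lem:starts} then gives, respectively, $N(S_2)=N(S_3)=\{E_0,E_3\}$, then $N(S_2)=\{E_0\}$, then $N(S_3)=\{E_3\}$. So \cref{prop:H1} settles the last two orderings.

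Your primary plan, a block start with a single tight neighbor in each ordering, fails for $g_1<g_2<g_3$. There you need the two-element set $X=\{S_2,S_3\}$ with $|N(X)|=|X|=2$. Every tight permutation maps $\{E_0,E_3\}$ onto $X$, and $E_3\notin X$ because $g_3\ge3$. So $(E_3,E_3)$, the only pair on the anti-diagonal $2k-2$, lies in no tight permutation, and \cref{lem:C1} applies. The same argument also empties the anti-diagonal $2E_0$.

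Your third fallback, $\operatorname{bip}(u)=0$ with extra dimension in $Z^*(u)$, cannot occur. By \cref{prop:stairbip}, a staircase with a tight permutation but without total support has $\operatorname{bip}(u)\ge1$.

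In the facet direction, relation (i) of \cref{lem:square} compares within-block steps with offsets $a,b\ge1$. So linking blocks needs corners with $\operatorname{tri}(p,q)\ge a+b\ge2$, in practice $\operatorname{tri}(l,l)$ or $\operatorname{tri}(l-1,l)$ large enough to cover all offsets of block $l$. The condition $\operatorname{tri}\ge1$ on attaining pairs that you mention is not enough.
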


Equivalently, the sign $-$ at $g$ fails to give a facet iff $g_2,g_1,g_0$ are pairwise distinct with $g_2<g_0$.

\begin{proof}
\emph{Triangle sizes.} For three steps, the pairs attaining $M(2)$, $M(3)$, $M(4)$ are decided by the signs of $g_2-g_1$, $g_3-g_1$ and $g_3-g_2$, and \cref{lem:corner} gives every $\operatorname{tri}(p,q)$ as an explicit expression in $g_1,g_2,g_3$ (\cref{sec:SM-T3fail}).

\emph{Failing direction.} Suppose $g_1,g_2,g_3$ are pairwise distinct and $g_1<g_3$. There are three cases: (A) $g_1<g_2<g_3$, (B) $g_1<g_3<g_2$ and (C) $g_2<g_1<g_3$. In case A the rows $S_2$ and $S_3$ both have exactly the tight neighbors $E_0$ and $E_3$; in case B the row $S_2$ has the single tight neighbor $E_0$; in case C the row $S_3$ has the single tight neighbor $E_3$. In each case every tight permutation is forced on these indices, and the anti-diagonal $2k-2$ (cases A and C) or $2E_0$ (case B), whose only tight pair is $(E_3,E_3)$ or $(E_0,E_0)$, carries no edge of a tight permutation; \cref{lem:C1} then rules out a facet. The neighbor computations are in \cref{sec:SM-T3fail}.

\emph{Facet direction.} Otherwise (I) $g_1\ge g_3$, or (II) $g_1=g_2<g_3$, or (III) $g_1<g_2=g_3$. In each case a tight involution that is the identity on block $0$ and reverses or pairs the other blocks has a strongly connected switching digraph, so \cref{lem:D} gives total support, and the boundary and within-block steps are each related by chains of \cref{lem:square}(i), (ii), so $u(g)$ is a ray; \cref{thm:R} gives the facet. The case checks are in \cref{sec:SM-T3}.
\end{proof}

\begin{corollary}\label{cor:T3}
\begin{enumerate}[label=(\alph*)]
\item Equal-gap triples $J=\{a,a+d,a+2d\}$ give facets in both signs, and triples with $\max J=k-1$ give facets with sign $+$.
\item For $k\ge7$, the triple $J=\{1,2,k-2\}$ with sign $+$ does not give a facet.
\item Every signed step potential with $r$ unit steps (counted with multiplicity), $1\le r\le k-2$ and $r\le3$ or $r\ge k-4$, is classified by \cref{thm:S,thm:T3} and the symmetry $g\mapsto g^*$.
\end{enumerate}
\end{corollary}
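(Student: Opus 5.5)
The three parts are read off from \cref{thm:T3}, the sign-$-$ reformulation stated right after it, \cref{thm:S} and \cref{lem:A}. The only work is translating step sets into block lengths $g=(g_0,g_1,g_2,g_3)$ via $g_0=j_1$, $g_l=j_{l+1}-j_l$, $g_3=k-j_3$.

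For (a), the equal-gap triple $J=\{a,a+d,a+2d\}$ has $g=(a,d,d,k-a-2d)$.
\begin{itemize}
\item With sign $+$, $g_1=g_2=d$, so $g_1,g_2,g_3$ are not pairwise distinct, and \cref{thm:T3} gives a facet.
\item With sign $-$, the failing condition is that $g_2,g_1,g_0$ are pairwise distinct with $g_2<g_0$. Again $g_1=g_2$, so this fails to hold and we get a facet.
\end{itemize}
If $\max J=k-1$, then $g_3=1$. The inequality $g_1<g_3$ would force $g_1<1$, which is impossible, so the sign $+$ gives a facet. In both cases $r=3\le k-2$ forces $k\ge5$, so \cref{thm:T3} applies.

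For (b), $J=\{1,2,k-2\}$ has $g=(1,1,k-4,2)$. For $k\ge7$ we have $g_2=k-4\ge3$, so $g_1=1$, $g_2$ and $g_3=2$ are pairwise distinct and $g_1<g_3$. By \cref{thm:T3} the staircase is not a facet.

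For (c), a step potential $u_J$ has its unit steps at distinct positions, so $r=|J|$.
\begin{itemize}
\item For $r=1,2$, both signs give facets by \cref{thm:S}(1),(2). For $r=2$, the bound $r\le k-2$ supplies the needed $k\ge4$.
\item For $r=3$, the sign $+$ is decided by \cref{thm:T3}. The sign $-$ is decided by the equivalent form after it, which comes from the reflection in \cref{lem:A} sending the sign $-$ at $g$ to the sign $+$ at the reversed composition. In both cases $k\ge5$.
\item For $r\ge k-4$, \cref{lem:A} ($u_J+u_{J^c}=\iota$) shows that the sign $\pm$ at $J$ gives the same inequality as the sign $\mp$ at $J^c$. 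The complement has $r'=k-1-r$ steps with $1\le r'\le3$, because $r\le k-2$. So it falls under the previous two cases. Alternatively, apply the involution $g\mapsto g^*$, which sends $r$ steps to $k-1-r$ steps and preserves facet status.
\end{itemize}

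The only point needing care is bookkeeping: checking that the hypotheses $k\ge4$ and $k\ge5$ of the invoked theorems follow from $1\le r\le k-2$, and that the sign convention is transported correctly under complement and reflection. There is no real obstacle.
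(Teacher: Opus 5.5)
Your proposal is correct and follows the paper's argument: translate $J$ into the block lengths $g$ and read off \cref{thm:T3}, using its sign-$-$ reformulation, which is equivalent to the paper's reversal of the equal-gap triple, and \cref{thm:S}. For (c) you reduce $r\ge k-4$ to $k-1-r\le3$ steps through the complement identity of \cref{lem:A}, which gives the same inequality with the opposite sign, instead of the paper's $g\mapsto g^*$. This works equally well because the cases $r\le3$ are classified in both signs.
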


\begin{proof}
(a) Equal gaps mean $g_1=g_2$, and $\max J=k-1$ means $g_3=1\le g_1$; the sign $-$ of an equal-gap triple is the sign $+$ of the reversed triple, which again has equal gaps. (b) Here $g=(1,1,k-4,2)$, which is case B for $k\ge7$. (c) $g\mapsto g^*$ sends $r$ steps to $k-1-r$ steps and preserves facet status.
\end{proof}

Every step of the proof of \cref{thm:T3} was also checked by brute force on all three-step potentials with $k\le30$, and the statement itself on all 91,389 three-step potentials with $k\le40$ (\cref{app:computation}).

\subsection{Nonincreasing tails}
Case (I) of \cref{thm:T3} contains the three-step staircases with $g_1\ge g_2\ge g_3$. This case extends to any number of steps.

\begin{theorem}[Nonincreasing tails]\label{thm:dec}
Let $g=(g_0,\dots,g_r)$ be a composition of $k$ with $1\le r\le k-2$ and $g_1\ge g_2\ge\dots\ge g_r$; $g_0$ is arbitrary. Then $u(g)$ is a ray whose tight graph has total support, and $u(g)$ with sign $+$ defines a facet of $\Ph(k)$. Distinct such compositions give distinct facets.
\end{theorem}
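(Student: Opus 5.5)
We follow the pattern of the proofs of \cref{thm:S4,thm:T3}: compute the block pairs attaining each $M(s)$, exhibit a tight involution with strongly connected switching digraph for total support (\cref{lem:D}), use \cref{lem:square} for the ray property, and then apply \cref{thm:R}.

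\emph{Step 1: which pairs attain $M(s)$.} Since $g_1\ge\dots\ge g_r$, the block ends satisfy $E_l=E_0+g_1+\dots+g_l$ with concave increments in $l\ge1$. For $p,q\ge1$ with $p+q=s$ fixed, $E_p+E_q$ is therefore maximized at the most unbalanced split allowed by $0\le p,q\le r$. The relevant comparison is between using block $0$ and using block $1$ at one end: $E_0+E_s$ versus $E_1+E_{s-1}$, which differ by $g_1-g_s\ge0$. So we expect the following.
\begin{itemize}
\item For $s\le r$, $M(s)=E_0+E_s$, attained by $(0,s)$, and also by $(1,s-1)$ (and further pairs) exactly when the relevant $g$'s coincide.
\item For $s>r$, $M(s)=E_{s-r}+E_r$, attained by $(s-r,r)$.
\end{itemize}
This is exactly condition (U) of \cref{thm:S4}. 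In particular $E_0$ is tight with every index, as in the universal-vertex part of that proof.

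\emph{Step 2: a tight permutation fixing $E_0$.} We do not have (R) in general, so the block reversals may fail to be tight. Instead, take the identity on block $0$ and pair the top blocks using the corner triangles of \cref{lem:corner}, just as the involution in case (I) of \cref{thm:T3}. The triangles are $\operatorname{tri}(p,q)=E_p+E_q-M(p+q-1)$. For $p+q\le r$ this equals $g_q-g_{p+q}+\dots$ restricted by $\min(g_p,g_q)$, and the monotonicity makes the diagonal triangles $\operatorname{tri}(l,l)$, or the off-diagonal ones $\operatorname{tri}(l,r+1-l)$, large enough to contain a perfect matching between the relevant blocks. Once such a tight permutation with a fixed point $E_0$ exists, \cref{lem:TS} gives total support directly. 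If $g_0=1$, the fixed point $E_0=0$ is still universal, because block pair $(0,0)$ is complete and every $(0,j)$ is tight by (U).

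\emph{Step 3: the ray property via \cref{lem:square}.} Within-block steps are linked through block $0$: the pairs $(E_0-a,E_q-b)$ with $a+b<\operatorname{tri}(0,q)$ lie in the tight graph, and $\operatorname{tri}(0,q)=g_q$ when $(0,q)$ attains $M(q)$ uniquely relative to $M(q-1)$. When $g_0\ge2$ this links every within-block step to a step of block $0$, as in \cref{thm:S4}. When $g_0=1$ there are no within-block steps in block $0$, and one links instead through $(1,q)$-triangles, using $g_1\ge g_q$.

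For boundary steps, the pairs $(0,s)$ and $(1,s-1)$ both occur among the attaining pairs for some $s$, or else chains via $(s-r,r)$ relate $E_p$ to $E_{p+1}$. \Cref{lem:square}(ii) with $s\le r$ ties $E_0$ to $E_s$ through the common set of pairs attaining $M(s)$. If that is too weak, we apply \cref{lem:square}(iv) after checking the boundary classes of $g^*$ as well.

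\emph{Step 4: distinctness.} By \cref{lem:distinct}, distinct rays with total support give distinct facets, because the patterns $p(u)=\ind{J}$ differ.

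\emph{Main obstacle.} The hardest steps are Step 2 and the boundary-step chains in Step 3 when ties among the $g_l$ change which pairs attain $M(s)$, and the case $g_0=1$. We would first handle strictly decreasing tails and then argue that equalities only add tight edges. Adding tight edges preserves the existence of the matching; for the relations we use in Step 3, it can only create more of them.
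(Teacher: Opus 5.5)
\textbf{There is a genuine gap. Step~1 gets the concavity backwards, and Steps~2--3 depend on it.}

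For a concave sequence $E_0,\dots,E_r$, the sum $E_p+E_q$ with $p+q=s$ fixed is largest at the \emph{balanced} split, not the unbalanced one. The difference you compute, $E_1+E_{s-1}-(E_0+E_s)=g_1-g_s\ge0$, says exactly that $(1,s-1)$ beats $(0,s)$. The correct formula is $M(s)=E_{\lfloor s/2\rfloor}+E_{\lceil s/2\rceil}$. Consequently condition (U) of \cref{thm:S4} fails for a nonincreasing tail unless $g_1=\dots=g_{r-1}$, and $E_0$ is not universal.

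In general there is no universal vertex at all. Take $g=(1,4,3,2,1)$, so $u=(0,1,1,1,1,2,2,2,3,3,4)$:
\begin{itemize}
\item the index $1$ has only the tight neighbors $0$ and $4$;
\item $v(10)=u(3)+u(7)=3<u(0)+u(10)$, so $0$ is not tight with $10$;
\item $v(14)=2u(7)=4<u(4)+u(10)$, so $4$ is not tight with $10$.
\end{itemize}
So \cref{lem:TS}, on which your Step~2 relies (including the remark for $g_0=1$), cannot be used in any form. Step~3 has the same problem: it links steps through block~$0$ and through the pairs $(0,s)$, $(1,s-1)$, $(s-r,r)$, and these generally do not attain $M(s)$. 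Step~3 is also left open-ended (``if that is too weak, apply (iv)'').

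You also have (R) the wrong way round, and (R) is the fact the proof actually needs. With the balanced formula, $M(2l-1)=E_{l-1}+E_l$, so (R) holds with equality. \Cref{lem:corner} then gives $\operatorname{tri}(l,l)=\operatorname{tri}(l-1,l)=g_l$ for $1\le l\le r$. From this the argument goes as follows.

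\emph{Total support.} The involution $\sigma$ that is the identity on block~$0$ and reverses each block $l\ge1$ is tight. Total support then follows from \cref{lem:D}, with no universal vertex needed. Since $E_{l-1}$ is tight with all of block~$l$, in $D_\sigma$ the index $E_0$ reaches all of blocks~$0$ and~$1$, then $E_1$ reaches all of block~$2$, and so on. Conversely, each index of block $l\ge1$ is tight with $E_{l-1}$, which gives an arc to $\sigma(E_{l-1})$; this is $S_{l-1}$, or $E_0$ when $l=1$. Block~$0$ is complete, so its indices have arcs to $E_0$. Hence every index reaches $E_0$, and $D_\sigma$ is strongly connected.

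\emph{Ray property.} The pair $(l-1,l)$ attains $M(2l-1)$, so \cref{lem:square}(ii) puts all boundary steps $E_0,\dots,E_{r-1}$ in one class. The tail blocks of length at least two are consecutive, $1,\dots,m$. The equalities $\operatorname{tri}(l,l)=g_l$ and $\operatorname{tri}(l-1,l)=g_l$ (the case $l=1$ links block~$0$) then relate all within-block steps by \cref{lem:square}(i). \Cref{lem:square}(iii) makes $u(g)$ a ray, and \cref{thm:R} gives the facet.

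Your Step~4 on distinctness is correct as it stands.
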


\begin{proof}
\emph{The numbers $M(s)$.} Since the tail $g_1,\dots,g_r$ is nonincreasing, the sequence $E_0,\dots,E_r$ is concave, so $M(s)=E_{\lfloor s/2\rfloor}+E_{\lceil s/2\rceil}$, and \cref{lem:corner} gives $\operatorname{tri}(l,l)=\operatorname{tri}(l-1,l)=g_l$ for $1\le l\le r$ (\cref{sec:SM-dec}). So $(l,a)$ and $(l,b)$ are tight iff $a+b<g_l$, and $(l-1,a)$ and $(l,b)$ are tight iff $a+b<g_l$ (with $a\le g_0-1$ if $l=1$).

\emph{Total support.} The involution $\sigma$ that is the identity on block $0$ and reverses every block $l\ge1$ is tight. In its switching digraph every index reaches $E_0$ through the block starts $S_l\to S_{l-1}$, and $E_0$ reaches every index through the block ends $E_{l-1}$, which are tight with all of block $l$; so \cref{lem:D} gives total support.

\emph{Ray.} The pairs $(l-1,l)$ relate all boundary steps by \cref{lem:square}(ii), and since the tail $g_1,\dots,g_r$ is nonincreasing the blocks with at least two elements are consecutive, so (i) relates all within-block steps; \cref{lem:square}(iii) and \cref{thm:R} give a ray and a facet. The details of both parts are in \cref{sec:SM-dec}.

\emph{Distinctness.} The pattern of $u_J$ in the sense of \cref{lem:distinct} is $p(u_J)(i)=\ind{i+1\in J}$, since $1\le r\le k-2$. So different step sets have different patterns, and by \cref{lem:distinct} they define different facets.
\end{proof}

\begin{corollary}\label{cor:dec}
For $k\ge3$, $\Ph(k)$ has at least $\sum_{n=1}^{k-1}\mathrm{p}(n)-1$ facets, where $\mathrm{p}(n)$ is the number of partitions of $n$. In particular the number of facets of $\Ph(k)$ grows faster than any polynomial in $k$.
\end{corollary}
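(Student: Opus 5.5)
The plan is to count the compositions covered by \cref{thm:dec} and then use its distinctness clause. By \cref{thm:dec}, every composition $g=(g_0,\dots,g_r)$ of $k$ with $1\le r\le k-2$ and nonincreasing tail $g_1\ge\dots\ge g_r$ gives a facet $I(u(g))$ of $\Ph(k)$. Distinct such compositions give distinct facets. So it suffices to count these compositions.

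Fix $g_0=k-n$ with $1\le n\le k-1$. The tail $(g_1,\dots,g_r)$ is then a partition of $n$ into $r\ge1$ parts, listed in nonincreasing order, and conversely every partition of $n$ arises this way. The constraint $r\le k-2$ excludes exactly one case. Since $r\le n\le k-1$, we have $r=k-1$ only when $n=k-1$ and the partition is $1^{k-1}$, and this is the affine staircase excluded in \cref{thm:dec}. The number of admissible compositions is therefore $\sum_{n=1}^{k-1}\mathrm{p}(n)-1$. These facets are among the facets of $\Ph(k)$, and coordinate facets only add to the count, so the lower bound follows. For $k=3$ the formula gives $\mathrm{p}(1)+\mathrm{p}(2)-1=2$, which is consistent with the two single steps.

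For the growth statement we bound the sum below by its last term. We have $\sum_{n<k}\mathrm{p}(n)-1\ge\mathrm{p}(k-1)-1$. By the Hardy--Ramanujan asymptotic, $\mathrm{p}(n)=\exp\bigl((1+o(1))\pi\sqrt{2n/3}\bigr)$, which exceeds every polynomial. If we prefer to avoid that asymptotic, an elementary bound also works. Partitions of $n$ into distinct parts from $\{1,\dots,m\}$ with $m\approx\sqrt n$ give $\mathrm{p}(n)\ge 2^{c\sqrt n}$ for some $c>0$, which is still superpolynomial.

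The only delicate point is the bookkeeping: checking that $r\ge1$ and $r\le k-2$ remove exactly the single composition $(1,1,\dots,1)$. The rest follows directly from \cref{thm:dec}.
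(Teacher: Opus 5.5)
Your proof is correct and follows the paper's argument. The compositions of \cref{thm:dec} are a choice of $g_0$ together with a partition of $n=k-g_0$, the only excluded case is $1^{k-1}$ at $n=k-1$, and distinctness comes from \cref{thm:dec}. For the growth claim the paper uses the elementary bound $\mathrm{p}(n)\ge(n/c^2)^{c-1}$ (parts $2,\dots,c$ with bounded multiplicities, padded by ones) and cites Hardy--Ramanujan only parenthetically. Your elementary alternative works the same way, provided you take arbitrary subsets of $\{2,\dots,m\}$ with $m(m+1)/2\le n$ and pad them with ones, rather than asking for distinct parts that sum exactly to $n$.
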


\begin{proof}
A composition as in \cref{thm:dec} is a choice of $g_0\in\{1,\dots,k-1\}$ and a partition $g_1\ge\dots\ge g_r$ of $n=k-g_0$; the only choice with $r>k-2$ is $g_0=1$ with $g_1=\dots=g_{k-1}=1$. For a fixed $c\ge2$, take $\lambda_i$ parts equal to $i$ for $2\le i\le c$, with $0\le\lambda_i\le n/(c(c-1))$; these parts sum to at most $n$, and parts equal to $1$ fill up the rest. This gives distinct partitions of $n$, so $\mathrm{p}(n)\ge(n/c^2)^{c-1}$, and the sum in the statement is at least $\mathrm{p}(k-1)$. (Hardy and Ramanujan \cite{HardyRamanujan1918} give $\log\mathrm{p}(n)\sim\pi\sqrt{2n/3}$.)
\end{proof}

For $k=12$ the bound is $193$, and the family of \cref{thm:dec} has $2{,}085$ members at $k=20$. The symmetry $g\mapsto g^*$ maps these facets to facets as well. The reversed condition does not suffice: $g=(1,1,2,3)$ at $k=7$ (case A of \cref{thm:T3}) gives no facet. We also checked \cref{thm:dec} by the exact facet test for all $k\le14$ ($371$ compositions at $k=14$).

%% file: b6_census.tex
\section{The facet census and total support}\label{sec:census}

\subsection{Exact facet counts}
By \cref{rem:fan,lem:coord}, for $k\ge4$ the facets of $\Ph(k)$ are the $2k-1$ coordinate inequalities and the inequalities $I(u)$ for the rays $u$ that give facets. For each anti-diagonal $t$ let $Q_t$ be the simplex with vertices $e_i+e_j$, $i+j=t$, $i\le j$. The pairs attaining $v(t)$ span the face $Q_t^u$ of $Q_t$ minimized by $u$, so the fan of \cite{LiLiuYao2026} is the common refinement of the normal fans of the $Q_t$, that is, the normal fan of the Minkowski sum $Q=\sum_tQ_t$ \cite[Lemma~2.1.5]{GritzmannSturmfels1993}, and its rays are the facet normals of $Q$; it is rational because $Q$ has integer vertices. This is an instance of the dictionary between tropical hyperplane arrangements and Minkowski sums of simplices \cite{ArdilaDevelin2009,DevelinSturmfels2004,FinkRincon2015} (\cref{sec:s_census-dict}).

We walk the facet graph of $Q$, which is connected, as in Fukuda's output-sensitive algorithm \cite{Fukuda2004}. The ridges at a facet $u$ are the facets of the face $Q^u=\sum_tQ_t^u$ minimized by $u$ \cite[Proposition~2.1]{Fukuda2004}, again a Minkowski sum of simplices; we list them as the flats of corank one of its difference matroid (the vector matroid of differences of two vertices of one summand) that pass a face-rank test, and cross each by an exact line search. For $k\ge4$, \cref{prop:complete} proves that this lists every ray, starting from any certified facet, and \cref{app:computation} gives the details and the independent recomputation. A non-facet is certified by the absence of a tight permutation, by an anti-diagonal carrying no allowed edge (\cref{thm:empty}), or by an exact rank deficiency; a facet is certified by a rank computed modulo a prime, which can only undercount the rational rank, with an exact rational recomputation whenever it falls short. The growth of the output and the costs are discussed in \cref{sec:s_census-cross}.

\begin{table}[ht]
\centering
\caption{Exact facet counts. The count for $k=5$ is in \cite[Example~3.5]{LiLiuYao2026}; the values for $k\ge6$ are new, and for $k\le8$ they agree with independent floating-point vertex--facet enumerations whose rows were each verified in exact arithmetic. Neither sequence is in the OEIS (searched October 3, 2026: the full facet sequence, its terms from $k=5$, and the ray counts from $k=5$ and from $k=6$).}
\label{tab:counts}
\begin{tabular}{@{}lrrrrrrrrrrr@{}}
\toprule
$k$ & 2 & 3 & 4 & 5 & 6 & 7 & 8 & 9 & 10 & 11 & 12\\
\midrule
facets of $\Ph(k)$ & 2 & 6 & 13 & 23 & 45 & 85 & 187 & 426 & 1033 & 2633 & 7041\\
rays of the fan & & & & 16 & 42 & 110 & 338 & 1146 & 4370 & 18515 & 89321\\
\bottomrule
\end{tabular}
\end{table}

For $4\le k\le12$ the facet count is $2k-1$ plus the number of facet rays. For $k=2$ and $k=3$ the coordinate inequalities define only $2$ and $4$ distinct facets ($x(0)\ge0$ and $x(2)\ge0$ define the same facet of the segment $\Ph(2)$, and $x(2)\ge0$ is not a facet of $\Ph(3)$, \cref{sec:SM-prelim-ex}), and there are $0$ and $2$ facet rays. No inequality $I(u)$ defines a coordinate facet, for any $k$ (\cref{thm:empty}(a)). For $k\le12$ every facet ray has total support, so distinct facet rays define distinct facets (\cref{lem:distinct}); both facts were checked directly for $5\le k\le12$. For $k\le4$ they follow from a direct computation of the facets of $\Ph(k)$: $\Ph(2)$ has no facet ray, and for $k=3,4$ every facet ray is a single step in one of the two signs, which has total support by the proof of \cref{thm:S}. The injectivity fails in general: at $k=19$ two distinct facet rays define the same facet (\cref{rem:samefacet}), and at $k=19f$ there are $2^f$ (\cref{prop:cubical}), so for larger $k$ the number of facets of the form $I(u)$ can be smaller than the number of facet rays. By \cref{thm:fiber}(d), injectivity holds for a given $k$ exactly when every facet ray of $\Ph(k)$ has total support.

\begin{table}[ht]
\centering
\caption{Facets by the smallest step family that produces them. A step set and its complement give the same facet with opposite signs (\cref{lem:A}), so four-step facets first appear at $k=9$ and five-step facets at $k=11$. ``Other'' counts facets produced by no step potential.}
\label{tab:census}
\begin{tabular}{@{}rrrrrrrrr@{}}
\toprule
$k$ & facets & coordinate & single & double & triple & four-step & five-step & other\\
\midrule
5 & 23 & 9 & 8 & 6 & 0 & 0 & 0 & 0\\
6 & 45 & 11 & 10 & 20 & 0 & 0 & 0 & 4\\
7 & 85 & 13 & 12 & 30 & 17 & 0 & 0 & 13\\
8 & 187 & 15 & 14 & 42 & 58 & 0 & 0 & 58\\
9 & 426 & 17 & 16 & 56 & 88 & 54 & 0 & 195\\
10 & 1033 & 19 & 18 & 72 & 126 & 176 & 0 & 622\\
11 & 2633 & 21 & 20 & 90 & 174 & 276 & 150 & 1902\\
12 & 7041 & 23 & 22 & 110 & 234 & 404 & 500 & 5748\\
\bottomrule
\end{tabular}
\end{table}

The facets produced by no step potential are 72\% of all facets at $k=11$ and 82\% at $k=12$; for $k\le12$ each of them is given by a ray with total support, and infinite families of them are given in \cref{sec:nonstepfamilies}.

\subsection{Cores and heights}
Every facet ray with $k\le12$ arises from exactly one core (\cref{sec:padding}) by end paddings (exact check). A padding of a step potential is a step potential, and a padding of any other potential is not, so the facets produced by no step potential come from cores of the same kind. \Cref{tab:cores} counts them: padding explains 3,182 of the 5,748 non-step facets at $k=12$ (55\%), and the remaining 2,566 are cores, for which we have no classification.

\begin{problem}\label{prob:C}
(a) Classify the cores. (b) Show that every normalized ray with total support has a connected tight graph and satisfies $(\mathrm{P}_L)$ and $(\mathrm{P}_R)$.
\end{problem}

Lifts with a two-point prefix account for part of \cref{prob:C}(a): by a rule that we verified by exact computation but have not proved, applied to the facet rays at $k=10$ together with the mirror rule, they give $962$ of the $2{,}808$ cores at $k=12$, among them $880$ of the $2{,}566$ non-step cores (\cref{sec:s_census-lifts}).

The facet rays are counted by height (\cref{def:height}) in \cref{tab:shapes}; height $1$ means a step potential. Height $2$ accounts for all non-step facets up to $k=6$ but for only 2,780 of 5,748 (48\%) at $k=12$. The largest height is unbounded: by \cref{thm:height} it is at least $\lfloor(k-2)/3\rfloor^2$ for $k\ge8$, so no bound on the multiplicity of thresholds suffices for all $k$.

\subsection{Total support for step potentials}

\begin{conjecture}\label{conj:T}
A staircase $u(g)$ with $1\le r\le k-2$, ray or not, defines a facet of $\Ph(k)$ if and only if its tight graph has total support.
\end{conjecture}

The analogous statement for arbitrary rays is false: for a ray, total support suffices for a facet by \cref{thm:R}, but the facet ray of \cref{ex:noTS} ($k=19$, height two) does not have total support. For staircases, \cref{thm:empty,cor:facetbip} show that a facet staircase without total support would have an allowed graph that meets every anti-diagonal and has a bipartite component; by \cref{prop:stairbip} below, the bipartite component is present for every staircase with a tight permutation but without total support. By \cref{thm:R}, \cref{conj:T} follows from two statements.

\begin{problem}[Facet implies total support]\label{prob:A}
If a staircase defines a facet, its tight graph has total support.
\end{problem}

\begin{problem}[Total support implies ray]\label{prob:B}
If the tight graph of a staircase has total support, the staircase is a ray.
\end{problem}

Both hold for at most three and at least $k-4$ steps (\cref{thm:S,thm:T3} and the symmetry $g\mapsto g^*$). Below we prove \cref{prob:A} for $r\le7$ and $r\ge k-8$ and for several classes of tails of every length, and give sufficient conditions for both problems in terms of the sequence $E_0<\dots<E_r$ alone. Write $N(Y)$ for the set of tight neighbors of a set $Y$ of indices; the tight graph is symmetric. Recall that $A(K)$, for $K\subseteq\{0,\dots,r-1\}$, is the set of $l$ such that $(q,l)$ attains $M(q+l)$ for some $q\in K$, and that $N(\{S_{q+1}:q\in K\})=\{E_l:l\in A(K)\}$ (\cref{lem:starts}). We write $A(q)$ for $A(\{q\})$.

\begin{theorem}\label{thm:H}
The tight graph of a staircase $u(g)$ with $1\le r\le k-2$ fails to have total support if and only if some nonempty set $X\subseteq\{S_1,\dots,S_r\}$ of block starts satisfies $|N(X)|\le|X|$.
\end{theorem}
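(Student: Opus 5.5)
The plan is to translate total support into Hall-type statements about the bipartite graph of $\cE(u)$, and then to show that any obstruction can be pushed onto block starts. Throughout, $u=u(g)$ is nondecreasing and constant on blocks. By \cref{lem:stair}, for $i$ in block $l$ the tight neighbors of $i$ are the $j$ with $i+j>M(l+\text{level}(j)-1)$. So inside one block the neighborhoods are nested, $N(i)\subseteq N(i+1)$, and the block start $S_l$ has the smallest neighborhood; \cref{lem:starts} identifies it as $\{E_m:m\in A(l-1)\}$.

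\emph{Sufficiency of the condition.} Suppose $X\subseteq\{S_1,\dots,S_r\}$ is nonempty with $|N(X)|\le|X|$. If $|N(X)|<|X|$, Hall's condition fails, so there is no tight permutation and hence no total support. If $|N(X)|=|X|$, every tight permutation maps $X$ onto $N(X)$. Because tight permutations have tight inverses, every tight permutation also maps $N(X)$ onto $X$. It then suffices to exhibit one tight edge from $N(X)$ to an index outside $X$, since such an edge is in no tight permutation. To find it, I take the largest $m$ with $E_m\in N(X)$ and look for a tight partner of $E_m$ via \cref{lem:last}: on the anti-diagonal $M(s)$ with $s$ chosen so that $E_m$ is a corner, or else via the pair $(E_{m-1},E_m)$ as in the proof of \cref{prop:H1}. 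Such a partner is either a block end that is not a block start or a start of a block of larger level. If all of them lay in $X$, I expect the argument to force every block to have length one, that is $r=k-1$, which is excluded. I also need to treat the case $0\in N(X)$, which can only be $E_0$ with $g_0=1$; then row $0=S_0\notin X$ is a tight neighbor of $E_0$, which settles that case.

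\emph{Necessity.} Assume total support fails. Either there is no perfect matching, and Hall's theorem gives a row set $R$ with $|N(R)|<|R|$, or there is one and some tight edge $(i,j)$ is not allowed. In the second case the Dulmage--Mendelsohn / König characterization gives a row set $R$ with $|N(R)|=|R|$ and an edge leaving $N(R)$ to a row outside $R$; by symmetry of $\cE(u)$ I may take $R$ to be a proper nonempty set. The key reduction is a compression step. Within a block, replacing the rows of $R$ by an initial segment of that block starting at $S_l$ does not enlarge $N(R)$, by nestedness. I then want to show that each non-start row of block $l$ contributes at least one new neighbor beyond those of the lower rows. The natural candidate comes from the corner triangles of \cref{lem:corner}: the triangle at $(l,q)$ gives, for the row at offset $a$ from $E_l$, neighbors in block $q$ at offsets $<\operatorname{tri}(l,q)-a$, which grow by one as the row moves up. Granting this, deleting all non-start rows from $R$ decreases $|R|$ and $|N(R)|$ by at least the same amount. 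This leaves a set $X$ of block starts with $|N(X)|\le|X|$. Here $S_0=0$ can be removed, because row $0$ is tight with all of block $0$, which gives it surplus. I must also check that $X$ remains nonempty; if it became empty, the count would have given $|N(R)|\ge|R|$ with the extra leaving edge, contradicting the choice of $R$.

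\emph{Main obstacle.} I expect the hardest step to be the compression: showing that each non-start row in block $l$ strictly adds a neighbor that is not already in $N(R)$. The neighbors added along a block are block ends and near-ends of various blocks $q$, and these may already be covered by other rows of $R$ from different blocks. I would first try an exchange argument that orders $R$ by level and offset and charges each non-start row to the index $E_q-b$ on the boundary of its corner triangle in the block $q$ with the largest $\operatorname{tri}(l,q)$. If the charging is not injective, I would instead work directly with the switching digraph of \cref{lem:D}: I would show that a closed set of strong components, with no arc leaving it, must have its boundary detected at block starts.
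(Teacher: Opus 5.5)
The gap is in the compression step of the ``only if'' direction. Your skeleton there matches the paper's: take a Hall violator or a closed set $R$, then compress it to initial segments of blocks. But the step you flag as the main obstacle is false as stated, so no charging scheme can prove it.

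Take $g=(1,1,3,2)$, $k=7$, $u=(0,1,2,2,2,3,3)$, so $S_2=2$, $S_3=5$, $E_0=0$, $E_2=4$, $E_3=6$. Computing $v$ directly gives $N(2)=\{0\}$ and $N(3)=N(5)=\{0,4,6\}$. The set $R=\{2,3,5\}$ has $|N(R)|=|R|=3$ and a tight edge $(0,0)$ from $N(R)$ to a row outside $R$. It is even closed under the switching digraph of the tight involution $(0\,2)(3\,4)(5\,6)$, so your first step may return it. It misses block $0$ and is already compressed. Yet its starts $\{2,5\}$ have the three neighbors $\{0,4,6\}$.

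The non-start row $3$ adds nothing. The neighbors it gains over row $2$, namely $4$ and $6$, are already neighbors of $S_3$. The corner-triangle column it would be charged to, $E_0-1$, does not exist because $g_0=1$. The real obstruction is one layer deeper, at $\{S_2\}$.

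The missing idea is to examine every layer, not only the top one. Let $y_p$ be the depth of $R$ in block $p$, and set $T=\max_py_p$, $P_h=\{p:y_p\ge h\}$, $K_h=\{p-1:p\in P_h\}$ and $X_h=\{S_p:p\in P_h\}$. Then $|R|=\sum_h|P_h|$ and $|N(X_h)|=|A(K_h)|$.

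For $l\in A(p-1)$ one has $\operatorname{tri}(p,l)=g_p$, and $g_l\ge g_p$ when $l\ge1$. So the row $S_p+h-1$ is tight with the column $E_l-(h-1)$ of block $l$. These columns are distinct for distinct pairs $(l,h)$. The only pairs lost are those with $l=0$ and $h>g_0$, and there are at most $T-1$ of them. Hence $\sum_{h=1}^{T}\bigl(|N(X_h)|-|X_h|\bigr)\le|N(R)|-|R|+T-1\le T-1$. So some layer $X_h$, not necessarily $X_1$, is the required set of starts.

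Two smaller steps also need real arguments.

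First, removing $S_0$ ``because of surplus'' is not automatic when $g_0=1$. In that case $S_0=E_0$, and $E_0$ is a neighbor of every start $S_{q+1}$ with $0\in A(q)$. A clean reduction is to replace a violator $W$ that meets block $0$ by $\{0,\dots,k-1\}\setminus N(W)$. This set misses block $0$ because the block pair $(0,0)$ is complete, and it is again a violator by the symmetry of the tight graph.

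Second, in the ``if'' direction you only expect a tight edge from $N(X)$ to the outside of $X$, and the fallback pair $(E_{m-1},E_m)$ is not tight in general. Work from the lowest start instead of the largest end. Write $X=\{S_{q+1}:q\in K\}$, let $q_0=\min K$, and take $c\in A(q_0)$. This set is nonempty if a tight permutation exists; if none exists, total support fails anyway. Then $E_c\in N(X)$, and the corner $(E_c,E_{q_0})$ is tight by \cref{lem:last}. Also $E_{q_0}\notin X$, since $E_{q_0}=S_{q'+1}=E_{q'}+1$ forces $q'=q_0-1$, which minimality excludes. So this edge lies in no tight permutation.
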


The start $S_0=0$ is excluded because the block pair $(0,0)$ is complete: a set $W$ of rows with $|N(W)|\le|W|$ can be replaced by one that misses block $0$ (Step 2 of the proof). By the remark after \cref{lem:starts} the condition depends on the sequence $E$ alone.

\begin{proof}
We use three facts. (F1) If $|N(X)|=|X|$, every tight permutation $\sigma$ maps $X$ onto $N(X)$, and since $\sigma^{-1}$ is tight, it maps $N(X)$ onto $X$. (F2) The start $S_{q+1}=E_q+1$ equals a block end $E_d$ only if $d=q+1$ and $g_d=1$; in particular $E_0$ is none of $S_1,\dots,S_r$. (F3) Write an index of block $p$ as $E_p-a$ with offset $0\le a\le g_p-1$. For $p\ge1$, by \cref{lem:corner} and symmetry, the row $E_p-a$ is tight with the column $E_q-b$ of block $q$ iff $a+b<\operatorname{tri}(p,q)$. Hence the neighborhoods of the rows of block $p\ge1$ increase with the index. Moreover $M(p+l-1)\ge E_{p-1}+E_l$ gives $\operatorname{tri}(p,l)\le g_p$, with equality iff $l\in A(p-1)$, and \cref{lem:corner} gives $\operatorname{tri}(p,l)\le g_l$ for $l\ge1$; so $l\in A(p-1)$ and $l\ge1$ imply $g_l\ge g_p$.

\emph{If.} Let $X=\{S_{q+1}:q\in K\}$ be nonempty with $|N(X)|\le|X|$. If $|N(X)|<|X|$, Hall's theorem excludes a tight permutation. Let $|N(X)|=|X|$. If there is no tight permutation, total support fails by definition; so let one exist, and let $q_0=\min K$. The start $S_{q_0+1}$ has a tight neighbor, so by \cref{lem:starts} some $c\in A(q_0)$ exists, and by \cref{lem:last} the edge $(E_c,E_{q_0})$ is tight. Since $E_c\in N(X)$, every tight permutation maps $E_c$ into $X$ by (F1). By (F2) and the minimality of $q_0$, $E_{q_0}\notin X$. So $(E_c,E_{q_0})$ lies in no tight permutation.

\emph{Only if.} \emph{Step 1.} If there is no tight permutation, Hall's theorem gives a nonempty set $W$ of rows with $|N(W)|<|W|$. Otherwise let $\sigma$ be tight; by \cref{lem:D} the switching digraph $D_\sigma$ is not strongly connected, so it has a nonempty proper set $W$ of vertices closed under its arcs, that is, $\sigma^{-1}(N(W))\subseteq W$; with $\sigma(W)\subseteq N(W)$ this gives $|N(W)|=|W|$. \emph{Step 2.} Suppose $W$ meets block $0$. Then all of block $0$ lies in $N(W)$, since block pair $(0,0)$ is complete, so $W'=\{0,\dots,k-1\}\setminus N(W)$ misses block $0$. By symmetry no index of $W$ is tight with an index of $W'$, so $|N(W')|\le k-|W|$. If $|N(W)|<|W|$, then $|W'|=k-|N(W)|>|N(W')|$; if $|N(W)|=|W|$ and $W$ is proper, then $|W'|=k-|W|\ge\max(1,|N(W')|)$. In both cases we replace $W$ by $W'$. So we may assume that $W$ is nonempty, misses block $0$, and has $|N(W)|\le|W|$. \emph{Step 3.} For $1\le p\le r$ let $y_p=\max\{t+1:S_p+t\in W\}$, or $y_p=0$ if $W$ misses block $p$, and let $Y=\{S_p+t:1\le p\le r,\ 0\le t<y_p\}\supseteq W$. By (F3) every row of $Y$ in block $p$ has its neighbors among those of $S_p+y_p-1\in W$, so $N(Y)=N(W)$ and $|N(Y)|\le|Y|$. \emph{Step 4.} Let $T=\max_py_p\ge1$, and for $1\le h\le T$ let $P_h=\{p:y_p\ge h\}\ne\emptyset$, $K_h=\{p-1:p\in P_h\}$ and $X_h=\{S_p:p\in P_h\}$. Then $|Y|=\sum_h|P_h|$ and $|N(X_h)|=|A(K_h)|$. Fix $h$ and $l\in A(K_h)$, and choose $p\in P_h$ with $l\in A(p-1)$; the row $S_p+h-1$ lies in $Y$ and has offset $g_p-h$. If $l\ge1$, or $l=0$ and $h\le g_0$, the index $E_l-(h-1)$ lies in block $l$ (for $l\ge1$ because $h\le g_p\le g_l$ by (F3)), and it is tight with $S_p+h-1$ because the offsets add up to $g_p-1<\operatorname{tri}(p,l)=g_p$. These columns are distinct for distinct pairs $(l,h)$, and the excluded pairs, with $l=0$ and $h>g_0$, number at most $c=\max(0,T-g_0)\le T-1$. Hence
\[
\sum_{h=1}^{T}\bigl(|N(X_h)|-|X_h|\bigr)=\sum_{h=1}^{T}|A(K_h)|-|Y|\ \le\ |N(Y)|+c-|Y|\ \le\ T-1,
\]
so some $h$ has $|N(X_h)|\le|X_h|$, and $X_h$ is a nonempty set of starts among $S_1,\dots,S_r$.
\end{proof}

\Cref{thm:H} produces an unused tight edge rather than an unused anti-diagonal, so it does not settle \cref{prob:A}. For that we use the largest balanced set of starts. Let $\Gamma$ be the bipartite graph joining $q\in\{0,\dots,r-1\}$ to the elements of $A(q)\subseteq\{0,\dots,r\}$. It satisfies Hall's condition if $|A(K)|\ge|K|$ for all $K$, and then $K$ is \emph{balanced} if $|A(K)|=|K|$. Under Hall's condition balanced sets are closed under union: $|A(K_1\cup K_2)|+|A(K_1\cap K_2)|\le|A(K_1)|+|A(K_2)|$, and Hall's condition bounds both terms on the left from below by $|K_1\cup K_2|$ and $|K_1\cap K_2|$. Let $K^*$ be the largest balanced set, $A^*=A(K^*)$, and $B^*=\{d\ge1:g_d=1,\ d-1\in K^*\}$, the indices $d$ for which $E_d$ is one of the starts $X^*=\{S_{q+1}:q\in K^*\}$ (by (F2)). Call a level $s$ \emph{rigid} if a single block pair $\{c,d\}$ attains $M(s)$. By \cref{lem:last}, the corner $(E_c,E_d)$ and its transpose are then the only tight pairs on the anti-diagonal $M(s)$.

\smallskip
\noindent\emph{Statement $(\mathrm{E}^\flat)$ for a tail $(g_1,\dots,g_r)$: if $\Gamma$ satisfies Hall's condition and $K^*\ne\emptyset$, some rigid level $s$ has its attaining pair $\{c,d\}$ with $c\in A^*$ and $d\notin B^*$.}
\smallskip

Whether $(a,b)$ attains is decided by the signs of the differences $E_a+E_b-E_c-E_d$ with $a+b=c+d$, in which $g_0$ cancels, so $(\mathrm{E}^\flat)$ depends on $(g_1,\dots,g_r)$ only.

\begin{conjecture}[Rigid-corner certificate]\label{conj:E}
If the tight graph of a staircase has a tight permutation but not total support, then for some rigid level $s$ the corner edge $(E_c,E_d)$ lies in no tight permutation.
\end{conjecture}

\begin{theorem}\label{thm:Eflat}
Let $u(g)$, $1\le r\le k-2$, have a tight permutation but not total support, and suppose $(\mathrm{E}^\flat)$ holds for $(g_1,\dots,g_r)$. Then some rigid level $s$ satisfies the conclusion of \cref{conj:E}, the anti-diagonal $M(s)$ carries no allowed edge, and $I(u)$ is not a facet.
\end{theorem}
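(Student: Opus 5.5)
The plan is to take the largest balanced set $K^*$ and show that every tight permutation is forced on the starts $X^*=\{S_{q+1}:q\in K^*\}$. The hypothesis $(\mathrm{E}^\flat)$ then supplies a rigid corner that cannot be used, and \cref{lem:C1} (equivalently \cref{thm:empty}(b)) finishes the argument.

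\emph{Step 1: Hall's condition for $\Gamma$ and $K^*\ne\emptyset$.} A tight permutation exists, so Hall's condition holds for the tight graph. By \cref{lem:starts}, $N(\{S_{q+1}:q\in K\})=\{E_l:l\in A(K)\}$, and the $E_l$ are distinct, so $|A(K)|\ge|K|$ for every $K$; this is Hall's condition for $\Gamma$. Total support fails, so \cref{thm:H} gives a nonempty $X$ of starts with $|N(X)|\le|X|$, that is, a nonempty $K$ with $|A(K)|\le|K|$. By Hall this $K$ is balanced, and hence $K^*\supseteq K$ is nonempty. So $(\mathrm{E}^\flat)$ applies and gives a rigid level $s$ whose attaining pair $\{c,d\}$ has $c\in A^*$ and $d\notin B^*$.

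\emph{Step 2: forcing.} Put $N^*=N(X^*)=\{E_l:l\in A^*\}$, so $|N^*|=|X^*|$. By fact (F1) in the proof of \cref{thm:H}, every tight permutation $\sigma$ maps $X^*$ onto $N^*$ and $N^*$ onto $X^*$. Since $c\in A^*$, $E_c\in N^*$, so $\sigma(E_c)\in X^*$. By (F2), $E_d\in X^*$ holds exactly when $E_d=S_{q+1}$ with $q\in K^*$, which means $d=q+1$, $g_d=1$ and $d-1\in K^*$, that is, $d\in B^*$. Since $d\notin B^*$, we get $E_d\notin X^*$, so $\sigma(E_c)\ne E_d$ and the edge $(E_c,E_d)$ is in no tight permutation. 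The transpose $(E_d,E_c)$ is excluded as well, because $\sigma^{-1}$ is also tight. This is the conclusion of \cref{conj:E} at the level $s$.

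\emph{Step 3: empty anti-diagonal.} The level $s$ is rigid, so by \cref{lem:last} the only tight pairs on the anti-diagonal $M(s)$ are $(E_c,E_d)$ and its transpose. By Step 2 neither is allowed, so this anti-diagonal carries no edge of $\Es(u)$, and \cref{lem:C1} shows that $I(u)$ is not a facet.

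The step most likely to need care is Step 2, specifically the identification of $E_d\in X^*$ with $d\in B^*$, which rests on (F2). One boundary case has to be checked there: $d=0$ (and also $c=0$) is allowed, and then (F2) ensures $E_0$ is not a start, which is consistent with $B^*$ containing only indices $d\ge1$. The case $c=d$ is handled by the same argument.
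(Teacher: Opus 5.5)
Your proof is correct and follows the paper's argument step for step. You obtain Hall's condition for $\Gamma$ from \cref{lem:starts} and $K^*\ne\emptyset$ from \cref{thm:H}, then use (F1) to send $E_c$ into $X^*$ and (F2) to identify $E_d\in X^*$ with $d\in B^*$, and you finish with \cref{lem:last} and \cref{lem:C1}. You also spell out two points the paper leaves implicit: the set produced by \cref{thm:H} is balanced and therefore lies in $K^*$, and $d\notin B^*$ is equivalent to $E_d\notin X^*$.
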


\begin{proof}
A tight permutation matches every start $S_{q+1}$ into $N(S_{q+1})$, so $\Gamma$ satisfies Hall's condition by \cref{lem:starts}. By \cref{thm:H} some nonempty set of starts has $|N(X)|\le|X|$, hence $|N(X)|=|X|$, and $K^*\ne\emptyset$. Take $\{c,d\}$ from $(\mathrm{E}^\flat)$. Then $E_c\in N(X^*)$, $E_d\notin X^*$ and $|N(X^*)|=|X^*|$. By (F1) every tight permutation $\tau$ maps $E_c$ into $X^*$, so $\tau(E_c)\ne E_d$; applied to $\tau^{-1}$ this gives $\tau(E_d)\ne E_c$. So neither $(E_c,E_d)$ nor $(E_d,E_c)$ is used, and these are the only tight pairs on the anti-diagonal $M(s)$. \Cref{lem:C1} finishes the proof.
\end{proof}

\begin{theorem}[computer-assisted]\label{thm:Eflat6}
Statement $(\mathrm{E}^\flat)$ holds for every tail $(g_1,\dots,g_r)$ with $r\le7$.
\end{theorem}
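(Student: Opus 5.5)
Since $(\mathrm{E}^\flat)$ depends only on the tail $(g_1,\dots,g_r)$, and in fact only on the signs of the differences $E_a+E_b-E_c-E_d$ with $a+b=c+d$, the plan is to reduce the statement to a finite check and carry it out by computer. Put $F_l=E_l-E_0=g_1+\dots+g_l$ for $0\le l\le r$. Then $F$ is a strictly increasing sequence with $F_0=0$, and every object in $(\mathrm{E}^\flat)$ is determined by the \emph{sign pattern} of $F$: the map assigning to each pair $\{(a,b),(c,d)\}$ with $a+b=c+d$ the sign of $F_a+F_b-F_c-F_d$. These objects are the attaining pairs of each $M(s)$, the sets $A(q)$, the graph $\Gamma$, Hall's condition, the largest balanced set $K^*$, $A^*$, and the rigid levels. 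The only exception is $B^*$, which also needs to know which $g_d$ equal $1$. I would therefore enumerate the realizable pairs (sign pattern, set $\{d:g_d=1\}$) rather than tails.

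The key steps, in order, are as follows.
\begin{enumerate}
\item For each $r\le7$, enumerate all sign vectors on the finitely many linear forms $F_a+F_b-F_c-F_d$ that are realized by some $F$ with positive increments. Equivalently, enumerate the cells of this hyperplane arrangement restricted to the open cone $g_l>0$, together with the faces on which some $g_d=1$ is forced.
\item Produce, for each cell, an explicit integer tail as a witness. I would get these by a linear-programming feasibility test with strict inequalities replaced by $\ge1$, which is legitimate by scaling. The same scaling lets me add $g_d=1$ or $g_d\ge2$ for the chosen set of unit blocks, since the constraints are homogeneous apart from these.
\item Conversely, to make sure nothing is missed, show that every integer tail falls into an enumerated class. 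This holds by construction of the arrangement: each tail determines a sign vector and a unit set, and the LP test decides feasibility of every candidate vector exactly.
\item For each representative, compute $A(q)$ via \cref{lem:starts}, test Hall's condition, find $K^*$ as the union of all balanced sets (balanced sets are closed under union, so this is a fixpoint), compute $A^*$ and $B^*$, and search the rigid levels $s\in\{0,\dots,2r\}$ for a pair $\{c,d\}$ with $c\in A^*$ and $d\notin B^*$, trying both orientations.
\end{enumerate}

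As a cross-check, I would also brute-force all tails with entries bounded by some moderate $N$. By step 3 this reaches every class once $N$ exceeds the largest witness entry, and exact integer arithmetic keeps the check certified.

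The main obstacle is completeness of the cell enumeration in step 1. The number of forms grows roughly like $r^3$, so sign vectors cannot be enumerated naively. I would build the arrangement incrementally, adding one hyperplane at a time and splitting each existing cell by two LP tests. Alternatively, I would enumerate tails with bounded entries and prove a bound on the entries needed to realize every cell, using integrality of the vertices of the cones involved: the constraint matrix has entries in $\{0,\pm1,\pm2\}$, so Cramer's rule gives a bound. If that bound is too large, I would fall back on the incremental LP method, which is certified by exact rational LP solves. The remaining check in step 4 is routine.
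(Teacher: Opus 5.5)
Your proposal is correct and follows the paper's proof. The paper likewise shows that the verdict of $(\mathrm{E}^\flat)$ is determined by the sign vector of the forms $E_a+E_b-E_c-E_d$ ($a+b=c+d$) together with the set $\{i:g_i=1\}$, enumerates the realizable sign vectors depth first with an integer witness for each and an exact infeasibility certificate (via Gordan's alternative) for each discarded branch, and checks the verdict on both nonempty cells of each realizable sign vector. The one refinement you miss is that these forms are invariant under adding a constant to all $g_i$ and include every difference $g_i-g_j$, so for each sign vector only the unit sets $\emptyset$ and the set of minimal coordinates occur; this translation invariance, not homogeneity alone, is also what justifies your rescaling when $g_d=1$ is fixed.
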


The proof partitions the tails with $r$ parts into finitely many polyhedral cells on which the verdict of $(\mathrm{E}^\flat)$ is constant, and checks the verdict on each nonempty cell. Fix $r\ge1$ and regard the tail $x=(g_1,\dots,g_r)$ as a vector of variables. Since $E_a-E_0=x_1+\dots+x_a$, every difference
\[
L_{ab,cd}(x)=E_a+E_b-E_c-E_d,\qquad a+b=c+d,\quad 0\le a,b,c,d\le r,
\]
is a linear form in $x$ with integer coefficients, independent of $g_0$. If $\{a,b\}\ne\{c,d\}$, say $a<c\le d<b$, then $L_{ab,cd}=(x_{d+1}+\dots+x_b)-(x_{a+1}+\dots+x_c)$ with disjoint index ranges, so the form is not zero. Let $L_1,\dots,L_N$ be these forms up to sign; $N=0,1,3,7,13,22,34$ for $r=1,\dots,7$.

\begin{lemma}\label{lem:cellverdict}
The truth of $(\mathrm{E}^\flat)$ for a tail $x$ is determined by the set $\Omega(x)=\{i:x_i=1\}$ and the sign vector $\theta(x)=(\operatorname{sign}L_j(x))_{j\le N}$.
\end{lemma}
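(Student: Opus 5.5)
We must show that every ingredient of $(\mathrm{E}^\flat)$ can be computed from $\Omega(x)$ and $\theta(x)$ alone. We trace the definitions one at a time and check at each step that only comparisons of the sums $E_a+E_b$ with equal level sum are needed, together with the knowledge of which tail parts equal $1$.

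\emph{Step 1: attainment.} For each $s$, the pairs $\{p,q\}$ with $p+q=s$ that attain $M(s)$ are those whose value $E_p+E_q$ is maximal among all pairs with level sum $s$. Deciding whether $(p,q)$ attains $M(s)$ therefore means checking that $L_{pq,cd}(x)\ge0$ for every other pair $\{c,d\}$ with $c+d=s$. Each such form is $\pm L_j$ for some $j$, so the answer is read off from $\theta(x)$. Consequently the following are all functions of $\theta(x)$:
\begin{itemize}[leftmargin=*]
\item the sets $A(q)$ for $0\le q\le r-1$, and hence $A(K)$ for every $K$;
\item the bipartite graph $\Gamma$;
\item whether $\Gamma$ satisfies Hall's condition;
\item the largest balanced set $K^*$ and $A^*=A(K^*)$;
\item which levels are rigid, namely those $s$ with exactly one attaining unordered pair, together with that pair $\{c,d\}$.
\end{itemize}
The index $g_0$ never enters, since it cancels in every $L_{ab,cd}$. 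This matches the remark preceding \cref{conj:E}.

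\emph{Step 2: the set $B^*$.} By definition, $B^*=\{d\ge1: g_d=1,\ d-1\in K^*\}$. For $d\ge1$, $g_d=x_d$, so the condition $g_d=1$ is exactly $d\in\Omega(x)$. Combined with $K^*$ from Step 1, $B^*=\{d\in\Omega(x): d-1\in K^*\}$ is determined by $(\Omega(x),\theta(x))$.

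\emph{Step 3: conclusion.} Statement $(\mathrm{E}^\flat)$ has the form
\begin{center}
(Hall's condition for $\Gamma$ and $K^*\ne\emptyset$) $\Rightarrow$ (there is a rigid level $s$ with attaining pair $\{c,d\}$, $c\in A^*$, $d\notin B^*$).
\end{center}
Every object appearing in it has been expressed through $\Omega(x)$ and $\theta(x)$ in Steps 1 and 2, so its truth value is a function of the pair.

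\emph{Main obstacle.} The only delicate point is that the attaining relation must be stated purely in terms of the signs of the forms $L_j$. One might worry about the degenerate case of identical pairs $\{a,b\}=\{c,d\}$, but these contribute no form. We also need the sign of every difference between pairs on a common level, including ties, which give sign $0$. This is exactly why $\theta$ records $\operatorname{sign}L_j\in\{-1,0,1\}$ and not just strict inequalities: ties matter both for attainment and for rigidity. Once this is in place, the rest is bookkeeping.
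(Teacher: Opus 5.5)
Your proposal is correct and follows the paper's own proof. The paper likewise reads attainment off the signs of the forms $L_{ab,cd}$, so that $\theta(x)$ determines $A(q)$, $\Gamma$, Hall's condition, $K^*$, $A^*$ and the rigid levels with their attaining pairs, and then notes that $B^*=\{d\ge1:x_d=1,\ d-1\in K^*\}$ is fixed by $K^*$ and $\Omega(x)$.
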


\begin{proof}
A pair $(a,b)$ attains $M(a+b)$ iff $L_{ab,cd}(x)\ge0$ for every $(c,d)$ with $c+d=a+b$, which is read off from $\theta(x)$. So $\theta(x)$ determines the sets $A(q)$, the graph $\Gamma$, Hall's condition, the balanced sets, $K^*$, $A^*$ and the rigid levels with their attaining pairs. The set $B^*=\{d\ge1:x_d=1,\ d-1\in K^*\}$ is determined by $K^*$ and $\Omega(x)$.
\end{proof}

For $\Omega\subseteq\{1,\dots,r\}$ and $\theta\in\{-,0,+\}^N$, the \emph{cell} $C(\Omega,\theta)$ is the set of $x\in\R^r$ with $x_i=1$ for $i\in \Omega$, $x_i\ge2$ for $i\notin \Omega$, and $\operatorname{sign}L_j(x)=\theta_j$ for all $j$. Every integer tail $x$ lies in $C(\Omega(x),\theta(x))$, so this cell is nonempty. The verdict of $(\mathrm{E}^\flat)$ is defined for every pair $(\Omega,\theta)$ by the recipe of \cref{lem:cellverdict}, and by that lemma it agrees with the verdict of every integer tail in the cell. Hence $(\mathrm{E}^\flat)$ holds for all tails with $r$ parts if its verdict holds on every nonempty cell. Every nonempty cell contains an integer tail, and the cells come in pairs:

\begin{lemma}\label{lem:cellpair}
Call $\theta$ \emph{realizable} if it is the sign vector of some $x\in\R^r$. For realizable $\theta$ let $J(\theta)$ be the set of indices at which such an $x$ is minimal; it depends only on $\theta$. Then $C(\Omega,\theta)$ is nonempty if and only if $\theta$ is realizable and $\Omega=\emptyset$ or $\Omega=J(\theta)$, and both of these cells contain integer points.
\end{lemma}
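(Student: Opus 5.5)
The plan is to exploit two structural features of the forms $L_{ab,cd}$. First, they are invariant under adding a constant vector. Second, every pairwise difference $x_j-x_i$ is among them up to sign. Together these reduce the lemma to a statement about a relatively open rational polyhedral cone.

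\emph{Step 1: invariance and the differences.} For a form $L_{ab,cd}$ with $a<c\le d<b$, the description before \cref{lem:cellverdict} writes it as $(x_{d+1}+\dots+x_b)-(x_{a+1}+\dots+x_c)$. The two ranges have lengths $b-d$ and $c-a$, which are equal because $a+b=c+d$. So every $L_j$ has coefficient sum zero, and
\[
L_j(x+\lambda\one)=L_j(x)\qquad\text{for all }\lambda\in\R .
\]
Next, for $1\le i<j\le r$ take $a=i-1$, $c=i$, $d=j-1$, $b=j$. Then $a+b=c+d$, $a<c\le d<b$, and $L_{ab,cd}=x_j-x_i$. Hence $\theta$ records the sign of every difference $x_j-x_i$, so it determines the weak order of the coordinates of any $x$ realizing it. In particular the set $J(\theta)$ of indices where $x$ is minimal depends only on $\theta$. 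For $r=1$ there are no forms and $J(\theta)=\{1\}$ trivially.

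\emph{Step 2: necessity.} If $C(\Omega,\theta)$ contains a point $x$, then $x$ realizes $\theta$. Suppose $\Omega\ne\emptyset$. Then $x_i=1$ for $i\in\Omega$ and $x_i\ge2$ otherwise, so $\Omega$ is exactly the set of indices where $x$ is minimal, that is, $\Omega=J(\theta)$.

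\emph{Step 3: sufficiency with integer points.} Let $\theta$ be realizable. The set $\{x:\operatorname{sign}L_j(x)=\theta_j\ \forall j\}$ is a nonempty relatively open polyhedral cone cut out by integer linear forms, so it contains a rational point. Scaling by a positive integer preserves all signs and gives an integer point $x$. By Step 1 we may add $\lambda\one$ with $\lambda\in\Z$ without changing $\theta$. For $\Omega=J(\theta)$, choose $\lambda$ so that $\min_ix_i=1$. The minimal coordinates, indexed by $J(\theta)$, then equal $1$, and all others are integers greater than $1$, hence at least $2$; so $x\in C(J(\theta),\theta)$. For $\Omega=\emptyset$, choose $\lambda$ so that $\min_ix_i=2$. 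Both points are integral.

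The only step that needs care is Step 1: the zero coefficient sum and the realization of the differences $x_j-x_i$ as forms $L_{ab,cd}$. Once these are checked, the remaining steps are routine.
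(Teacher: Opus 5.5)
Your proposal is correct and follows the paper's proof essentially step for step. The zero coefficient sum gives invariance under adding $\lambda\one$; the forms $E_{i-1}+E_j-E_i-E_{j-1}=x_j-x_i$ let $\theta$ fix the weak order of the coordinates, and hence $J(\theta)$; minimality gives necessity; and an integer point of the relatively open rational cone, shifted to have minimum $1$ or $2$, lies in the two cells. The differences are only cosmetic: you derive the zero coefficient sum from the equal lengths of the two index ranges, and you obtain the integer point by scaling a rational one.
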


\begin{proof}
The coefficients of $L_{ab,cd}$ sum to $a+b-c-d=0$, so $L_j(x+\tau\one)=L_j(x)$. For $i<j$ the pairs $(i,j-1)$ and $(i-1,j)$ have the same sum and $L_{i(j-1),(i-1)j}=x_i-x_j$, so $\theta$ fixes the order of the coordinates and hence $J(\theta)$. If $\Omega\ne\emptyset$ and $x\in C(\Omega,\theta)$, the coordinates in $\Omega$ equal $1$ and the others are at least $2$, so $\Omega=J(\theta)$. Conversely the realizations of $\theta$ form a relatively open rational cone, which contains an integer point $w$; then $2+w-\min_iw_i$ and $1+w-\min_iw_i$ (coordinatewise) lie in $C(\emptyset,\theta)$ and $C(J(\theta),\theta)$.
\end{proof}

The two cells of a sign vector can have different verdicts, because $B^*$ depends on $\Omega$; both are checked.

\begin{proof}[Proof of \cref{thm:Eflat6} (outline)]
By \cref{lem:cellverdict,lem:cellpair}, $(\mathrm{E}^\flat)$ holds for all tails with $r$ parts if and only if its verdict holds on both cells of every realizable sign vector; by Gordan's alternative, realizability is a homogeneous feasibility question. A computer enumerates the sign vectors depth first, with an integer witness for each realizable one and an infeasibility certificate for each discarded branch, and a separate exact checker verifies the witnesses, the certificates, a tiling of all branches and the verdict on every nonempty cell. The checker trusts its own implementation of the verdict recipe of \cref{lem:cellverdict}, which agrees with a second implementation by the authors on every cell for $r\le7$. The full proof is in \cref{sec:SM-Eflat6}.
\end{proof}

\begin{corollary}[computer-assisted]\label{cor:probA}
\Cref{prob:A} holds for every staircase with $r\le7$ or $r\ge k-8$, and \cref{conj:E} holds for every staircase with $r\le7$. For all these staircases, a tight permutation without total support leaves some anti-diagonal without allowed edges.
\end{corollary}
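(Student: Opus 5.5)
The plan is to derive \cref{cor:probA} from \cref{thm:Eflat} and \cref{thm:Eflat6}, and to reach the range $r\ge k-8$ through the symmetry $g\mapsto g^*$.

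\emph{Case $r\le7$.} Let $u(g)$ be a staircase with $1\le r\le7$ that has a tight permutation but not total support. By \cref{thm:Eflat6}, $(\mathrm{E}^\flat)$ holds for the tail $(g_1,\dots,g_r)$, so \cref{thm:Eflat} applies directly. It gives a rigid level $s$ whose corner edge $(E_c,E_d)$ lies in no tight permutation, which is \cref{conj:E}. It also shows that the anti-diagonal $M(s)$ carries no allowed edge, so $I(u)$ is not a facet. This proves the last sentence of the statement for $r\le7$.

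For \cref{prob:A}, suppose $u(g)$ defines a facet. A facet is nonempty, so a tight permutation exists. If total support failed, the argument just given would show that $I(u)$ is not a facet, which is a contradiction.

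\emph{Case $r\ge k-8$.} The involution $g\mapsto g^*$ sends $r$ steps to $k-1-r\le7$ steps, and it preserves facet status and total support (remark after \cref{lem:starts}). So \cref{prob:A} for $g$ follows from \cref{prob:A} for $g^*$.

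For the anti-diagonal statement, I would check that the symmetry acts on the face. It comes from \cref{lem:A}: a complement flips the sign, and the reflection $i\mapsto k-1-i$ is a bijection of indices. Hence it maps tight graphs to tight graphs and tight permutations to tight permutations, and so maps allowed graphs onto allowed graphs. Under it, anti-diagonal $t$ corresponds to $2k-2-t$. The existence of a tight permutation transfers as well, because the complement changes $u$ only by the affine function $\iota$, which leaves the tight graph unchanged. So an anti-diagonal with no allowed edge for $g^*$ gives one for $g$.

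The main point needing care is exactly this transport of the allowed graph and the tight permutations under $g\mapsto g^*$. It should follow directly from \cref{lem:A}, since affine changes leave $\cE$ unchanged and the reflection conjugates everything by an index bijection. Everything else is a direct combination of earlier results.
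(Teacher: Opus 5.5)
Your proposal is correct and follows the paper's proof: \cref{thm:Eflat,thm:Eflat6} handle $r\le7$, and the involution $g\mapsto g^*$ handles $r\ge k-8$, with the anti-diagonal $t$ sent to $2k-2-t$.

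One slip needs fixing in the transport step. The complement alone does not change $u$ by an affine function: it sends $u_J$ to $\iota-u_J$, and this sign change does alter the tight graph. It is the reflection, which flips the sign a second time, that yields $u(g^*)=u(g)\circ\rho+\iota+c$ with $\rho(i)=k-1-i$. This identity is what carries the tight permutations and the allowed graph across via $(i,j)\mapsto(\rho(i),\rho(j))$.
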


\begin{proof}
For $r\le7$ combine \cref{thm:Eflat,thm:Eflat6}; a staircase without a tight permutation has an empty face. The involution $g\mapsto g^*$ preserves facet status and total support and sends $r$ steps to $k-1-r$ steps. For the last statement with $r\ge k-8$: since $u_{J^c}=\iota-u_J$ and $u_{k-J}=|J|-u_J\circ\rho$, with $\rho(i)=k-1-i$, we have $u(g^*)=u(g)\circ\rho+\iota+c$ for a constant $c$, so the tight permutations and the allowed graph of $u(g^*)$ are the images of those of $u(g)$ under $(i,j)\mapsto(\rho(i),\rho(j))$, which maps the anti-diagonal $t$ to $2k-2-t$.
\end{proof}

Several classes of tails satisfy $(\mathrm{E}^\flat)$ for every length $r$. Write $H_j=E_j-E_0=g_1+\dots+g_j$.

\begin{proposition}\label{prop:Eflatclasses}
Statement $(\mathrm{E}^\flat)$ holds, and therefore a staircase defines a facet only if its tight graph has total support (for every $g_0$ and every $r$), in each of the following cases.
\begin{enumerate}[label=(\alph*)]
\item The tail is unimodal: $g_1\le\dots\le g_\nu\ge\dots\ge g_r$ for some $\nu$.
\item Every level $s$ has a unique maximizing pair. This holds if $E_a+E_b\ne E_c+E_d$ whenever $a+b=c+d$ and $\{a,b\}\ne\{c,d\}$, in particular if $(g_1,\dots,g_r)$ is any ordering of positive integers $\beta_1<\dots<\beta_r$ with $\beta_j>\beta_1+\dots+\beta_{j-1}$ for all $j$.
\item The prefix is superadditive: $H_p+H_q\le H_{p+q}$ whenever $p+q\le r-1$, with $g_r$ arbitrary. This holds if $g_1\le\dots\le g_{r-1}$; if $(g_1,\dots,g_{r-1})=(a,b,a,b,\dots)$ with $a\le b$; and if $g_j=a+\lfloor\alpha j\rfloor-\lfloor\alpha(j-1)\rfloor$ for $j\le r-1$, with an integer $a\ge1$ and $0\le\alpha\le1$.
\end{enumerate}
In cases (a) and (c), if $u(g)$ has a tight permutation but not total support, the anti-diagonal $2E_0$ carries no allowed edge.
\end{proposition}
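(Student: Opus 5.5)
The plan is to choose the rigid level of $(\mathrm{E}^\flat)$ explicitly in each case, using $K^*\ne\emptyset$ and, where needed, Hall's condition for $\Gamma$. Throughout, $B^*\subseteq\{1,\dots,r\}$, so an attaining pair with $d=0$ automatically has $d\notin B^*$. The level $s=0$ is always rigid: $M(0)=2E_0$ is attained only by $\{0,0\}$. So in cases (a) and (c) it suffices to show $0\in A^*$, that is, that some $q\in K^*$ has $(q,0)$ attaining $M(q)$, or equivalently $H_a+H_b\le H_q$ whenever $a+b=q$. With $c=d=0$, \cref{thm:Eflat} then says the anti-diagonal $M(0)=2E_0$ carries no allowed edge. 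This is the last sentence of the proposition. The consequence ``facet only if total support'' is \cref{thm:Eflat} combined with the observation that a staircase without a tight permutation has an empty face.

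\emph{Case (c).} Here $K^*\subseteq\{0,\dots,r-1\}$, and superadditivity of $H$ gives $H_a+H_b\le H_{a+b}$ for every $a+b=q\le r-1$. So $0\in A(q)$ for every $q$, and in particular $0\in A^*$. For the listed subfamilies I only need to check superadditivity.
\begin{itemize}
\item If $g_1\le\dots\le g_{r-1}$, then $H$ is convex with $H_0=0$, hence superadditive.
\item For $(a,b,a,b,\dots)$ with $a\le b$, compute $H_p$ by parity and compare directly.
\item For the Sturmian sequences, $H_j=aj+\lfloor\alpha j\rfloor$, and $\lfloor x\rfloor+\lfloor y\rfloor\le\lfloor x+y\rfloor$ gives superadditivity.
\end{itemize}

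\emph{Case (b).} Every level is rigid. Let $q=\min K^*$ and pick $l\in A(q)$; this is possible because Hall's condition gives $|A(\{q\})|\ge1$. Then $\{l,q\}$ is the unique pair attaining $M(q+l)$, so this level is rigid. Take $c=l\in A^*$ and $d=q$. If $q\in B^*$, then $q\ge1$ and $q-1\in K^*$, contradicting the minimality of $q$; hence $d\notin B^*$. For the sufficient condition, distinct sums $E_a+E_b$ on each level mean that all forms $L_{ab,cd}$ are nonzero. Writing $L_{ab,cd}$ with disjoint index ranges as in the text, the superincreasing hypothesis $\beta_j>\beta_1+\dots+\beta_{j-1}$ makes the largest $\beta$ occurring in either range decide the sign, so the form cannot vanish.

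\emph{Case (a).} This is the main obstacle. If the tail is unimodal with peak $\nu$, then $g_1\le\dots\le g_\nu$ makes $H$ superadditive for index sums up to $\nu$. So any $q\in K^*$ with $q\le\nu$ gives $0\in A(q)$, as in case (c). It remains to rule out $K^*\subseteq\{\nu+1,\dots,r-1\}$.

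For $q$ in the nonincreasing part, $E$ is concave to the right of $\nu$. I will describe $A(q)$ by locating the maximizers of $E_q+E_l$ on level $q+l$: on the concave part, near-balanced splits win. This should show that $A(q)$ contains the neighbors $q-1$ and $q+1$ whenever they exist. Then for any nonempty $K$ inside the decreasing range, $A(K)$ contains the set $K$ shifted by $\pm1$. That set has at least $|K|+1$ elements, so no such $K$ is balanced, contradicting $K^*\ne\emptyset$.

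The delicate point is the interaction with the increasing prefix. A pair $(q,l)$ with $l<\nu$ could beat the balanced split, and I would first try to show that such a competitor forces $0\in A(q)$ anyway. If that fails, I would fall back on the explicit description $M(s)=E_{\lfloor s/2\rfloor}+E_{\lceil s/2\rceil}$ from the proof of \cref{thm:dec}, restricted to the decreasing window.
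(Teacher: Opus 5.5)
Your reduction to $0\in A^*$ through the always-rigid level $s=0$ is sound, and cases (b) and (c) go through as you wrote them. One small point: for the last sentence of the proposition you need the proof of \cref{thm:Eflat}, which works with the level supplied by $(\mathrm{E}^\flat)$, not just its statement.

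Case (a), however, is not proved, because the claim it rests on is false. For $q$ past the peak, $A(q)$ need not contain $q-1$ or $q+1$: a pair that uses the increasing prefix can beat the balanced split. Take the tail $(g_1,\dots,g_5)=(1,5,4,4,4)$, so $\nu=2$ and $(H_0,\dots,H_5)=(0,1,6,10,14,18)$. For $q=3$, the level $5$ is attained only by $(0,5)$, since $H_5=18>16=H_2+H_3>15=H_1+H_4$. Hence $2\notin A(3)$. The same example defeats your fallback. The formula $M(s)=E_{\lfloor s/2\rfloor}+E_{\lceil s/2\rceil}$ needs $E$ to be concave on all of $\{0,\dots,r\}$, and you cannot restrict to the decreasing window, because the competing pairs on a level range over all indices. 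So your bound $|A(K)|\ge|K|+1$ for $K\subseteq\{\nu+1,\dots,r-1\}$ is unsupported as it stands.

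What is needed is the dichotomy you name only as something to try: for $q\ge\nu$, either $0\in A(q)$ or $\{q-1,q+1\}\subseteq A(q)$. In the example, $0\in A(3)$.

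Here is a proof. Put $c=g_q$. Since $q\ge\nu$, the indices $i\le q$ with $g_i<c$ form an initial segment $\{1,\dots,j_0\}$. Suppose $0\notin A(q)$, and choose $a\le q-a$ with $H_a+H_{q-a}>H_q$.
\begin{itemize}
\item If $q-a<j_0$, pairing $g_i$ with $g_{q-a+i}$ gives $H_a\le H_q-H_{q-a}$, a contradiction.
\item So the entries $g_{q-a+1},\dots,g_q$ are all at least $c$. Hence $H_a>ac$, which forces $a>j_0$.
\item Consequently $H_q-H_j\ge(q-j)c$ for every $0\le j\le q$. This is trivial for $j\ge j_0$. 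For $j<j_0$ it holds because $H_q-H_j=(H_a-H_j)+(H_q-H_a)>(a-j)c+(q-a)c$.
\end{itemize}
Now use $g_{q+1},g_{q+2},\dots\le c$. Comparing $E_q+E_{q+1}$ with $E_{a'}+E_{b'}$, where $a'<q$ and $a'+b'=2q+1$, amounts to comparing the windows $[a'+1,q]$ and $[q+2,b']$, which have equal length. This gives $q+1\in A(q)$. The windows $[a'+1,q-1]$ and $[q+1,b']$, together with $H_{q-1}-H_{a'}=H_q-H_{a'}-c$, give $q-1\in A(q)$ in the same way.

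With this dichotomy, suppose $0\notin A^*$. Convexity of $H$ on $[0,\nu]$ puts $K^*$ inside $\{\nu+1,\dots,r-1\}$. The dichotomy then gives $A(K^*)\supseteq(K^*-1)\cup(K^*+1)$, a set with at least $|K^*|+1$ elements. This contradicts the balance of $K^*$.
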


The proof, an analysis of the graph $\Gamma$ in each case, is in \cref{sec:s_census-proofs}.

For nonincreasing tails, case (a) is contained in \cref{thm:dec}, which gives total support and a facet; nondecreasing tails, such as the tail $(1,2,3)$ of $g=(1,1,2,3)$, need not give facets. A failure of total support always produces a bipartite component of the allowed graph, without any facet hypothesis.

\begin{proposition}\label{prop:stairbip}
Let $u(g)$, $1\le r\le k-2$, have a tight permutation but not total support. Every nonempty set $X$ of block starts with $|N(X)|\le|X|$ (one exists by \cref{thm:H}) has $|N(X)|=|X|$ and $X\ne N(X)$, and $\ind{i\in X}-\ind{i\in N(X)}$ is a nonzero element of $Y^*(u)$. In particular $\operatorname{bip}(u)\ge1$.
\end{proposition}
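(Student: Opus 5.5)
The plan is to use the existence of a tight permutation $\sigma$ together with the symmetry of the tight graph. \textbf{Equality.} Since $\sigma$ is a bijection sending every $x\in X$ to a tight neighbor, $\sigma(X)\subseteq N(X)$, so $|N(X)|\ge|X|$. Combined with the hypothesis $|N(X)|\le|X|$ this gives $|N(X)|=|X|$, and then $\sigma(X)=N(X)$. By (F1) in the proof of \cref{thm:H}, every tight permutation maps $X$ onto $N(X)$ and $N(X)$ onto $X$.

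\textbf{$X\ne N(X)$.} By \cref{lem:starts} the neighbors of block starts are block ends $E_l$, so $N(X)$ consists of block ends. If $X=N(X)$, every start $S_{q+1}\in X$ would itself be a block end. By (F2), $S_{q+1}=E_d$ forces $d=q+1$ and $g_d=1$. I would then rule this out directly: $E_d=S_d$ would need itself as a tight neighbor, that is, $(d-1,d)$ attains $M(2d-1)$. Its full neighborhood, however, also contains $E_{d-1}$ (the pair $(E_{d-1},S_d)$ lies on anti-diagonal $M(2d-1)$ when $(d-1,d)$ attains, by \cref{lem:last}), and $E_{d-1}$ is not a start unless $g_{d-1}=1$ as well. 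Iterating down the chain of consecutive length-one blocks eventually reaches $E_0$, which is no start by (F2). So $N(X)\not\subseteq X$, and hence $X\ne N(X)$. This chain argument is the fiddliest step. A cleaner alternative is to show that $X=N(X)$ would make $X$ a union of tight-closed components on which every tight edge is allowed, and then to derive total support, contradicting the hypothesis; I would try the direct chain first.

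\textbf{The vector $y=\ind{i\in X}-\ind{i\in N(X)}$.} Take an allowed edge $(i,j)\in\Es(u)$, lying in a tight permutation $\tau$ with $\tau(i)=j$. We need $y(i)+y(j)=0$ and split into cases:
\begin{itemize}
\item If $i\in X$, then $j\in N(X)$. Conversely, if $j\in N(X)$, then $i=\tau^{-1}(j)\in X$, because $\tau$ maps $N(X)$ onto $X$ and $\tau^{-1}=\tau$ on these sets up to inversion; formally, $\tau^{-1}$ is tight and maps $N(X)$ onto $X$.
\item Symmetrically, $i\in N(X)$ if and only if $j\in X$.
\end{itemize}
The contributions therefore pair off: membership in $X$ at one end matches membership in $N(X)$ at the other. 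Indices lying in both $X$ and $N(X)$ have $y=0$, and the equivalences still give $y(i)+y(j)=0$ after summing the indicator identities, since $y(i)+y(j)=[\ind{i\in X}-\ind{j\in N(X)}]+[\ind{j\in X}-\ind{i\in N(X)}]=0$. Thus $y\in Y^*(u)$.

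Since $|X|=|N(X)|$ and $X\ne N(X)$, the vector $y$ is nonzero. Because $\dim Y^*(u)=\operatorname{bip}(u)$, this gives $\operatorname{bip}(u)\ge1$.
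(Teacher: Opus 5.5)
Your equality argument and your check that $y=\ind{i\in X}-\ind{i\in N(X)}$ lies in $Y^*(u)$ are correct. (F1) says that every tight permutation, and its inverse, exchanges $X$ and $N(X)$, and your case split then gives $y(i)+y(j)=0$ on every allowed edge.

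The gap is in the step $X\ne N(X)$. From $X=N(X)$ you only learn that a start $S_d\in X$ (with $g_d=1$, $S_d=E_d$) is a tight neighbor of \emph{some} element of $X$. You do not learn that it is its own tight neighbor, and nothing you wrote forces $(d-1,d)$ to attain $M(2d-1)$. Your argument does not exclude two starts in $X$ that are tight neighbors of each other but not of themselves. The same unjustified premise is reused at every step of your iteration, so the descent to $E_0$ does not go through as written. The fallback you mention does not help either. $X=N(X)$ would make $X$ closed under tight adjacency, but the failure of total support may lie outside $X$, so no contradiction with the hypothesis follows.

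The fix keeps your idea of descending to $E_0$ but routes each step through a neighbor instead of a loop. Write $X=\{S_{q+1}:q\in K\}$ and let $q_0=\min K$. Because a tight permutation exists, $S_{q_0+1}$ has a tight neighbor. So by \cref{lem:starts} there is $c\in A(q_0)$, and by \cref{lem:last} the corner $(E_c,E_{q_0})$ is tight. If $X=N(X)$, then $E_c\in N(X)=X$, hence $E_{q_0}\in N(X)=X$. But by (F2) the end $E_{q_0}$ is a start only if it equals $S_{q_0}$ with $g_{q_0}=1$. That would require $q_0-1\in K$, contradicting the minimality of $q_0$; and $E_0$ is not a start at all. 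This is exactly the argument in the ``If'' part of the proof of \cref{thm:H}. With this replacement the rest of your proof stands.
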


The proof is in \cref{sec:s_census-proofs}.

\Cref{prop:stairbip} strengthens the first conclusion of \cref{cor:facetbip} for staircases; it says nothing about the second, $\dim Z^*(u)\ge4$. Without an empty anti-diagonal, \cref{prob:A} would need $\dim Z^*(u)\ne3+\operatorname{bip}(u)$, and \cref{prop:stairbip} does not give this.

An exhaustive exact computation shows $(\mathrm{E}^\flat)$ for all $33{,}554{,}431$ tails with $g_1+\dots+g_r\le25$, of which $8{,}177{,}376$ satisfy Hall's condition with $K^*\ne\emptyset$. So \cref{prob:A} and \cref{conj:E} hold for every staircase with $k-g_0\le25$, in particular for all $k\le26$ (computer-assisted). The largest balanced set cannot be replaced by a minimal one (\cref{sec:s_census-dict}).

For \cref{prob:B} the relevant objects are the classes of boundary steps of \cref{lem:square}(ii). Call $q\in\{0,\dots,r-1\}$ \emph{solitary} if $|A(q)|\le1$.

\begin{lemma}\label{lem:R}
If some $q\in\{0,\dots,r-1\}$ is solitary, the tight graph of $u(g)$ does not have total support.
\end{lemma}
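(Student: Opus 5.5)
The plan is to apply \cref{thm:H} with a single block start. Let $q\in\{0,\dots,r-1\}$ be solitary and put $X=\{S_{q+1}\}$. Since $0\le q\le r-1$, the index $q+1$ lies in $\{1,\dots,r\}$, so $X$ is a nonempty set of block starts among $S_1,\dots,S_r$, which is the kind of set that \cref{thm:H} allows.

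Next I compute $N(X)$. By \cref{lem:starts}, the tight neighbors of $S_{q+1}=E_q+1$ are exactly the block ends $E_l$ with $(q,l)$ attaining $M(q+l)$, that is, $N(X)=\{E_l:l\in A(q)\}$; this is the remark after \cref{lem:starts} with $K=\{q\}$. Hence $|N(X)|\le|A(q)|\le1=|X|$.

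The ``if'' direction of \cref{thm:H} then says that the tight graph of $u(g)$ fails to have total support. The case $A(q)=\emptyset$ is also covered: then $|N(X)|=0<|X|$, Hall's theorem excludes a tight permutation, and total support fails by definition (this case is already part of \cref{thm:H}). I do not expect any step to be an obstacle. The only points to check are the index range, so that $S_{q+1}\ne S_0$, and that \cref{lem:starts} applies for every $q\le r-1$; both hold as stated.
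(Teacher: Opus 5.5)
Your proposal is correct and is essentially the paper's proof: apply \cref{thm:H} to $X=\{S_{q+1}\}$, whose neighborhood $\{E_l:l\in A(q)\}$ from \cref{lem:starts} has at most $|A(q)|\le1=|X|$ elements. In fact $|N(X)|=|A(q)|$, since distinct levels have distinct block ends, but your inequality is all that is needed.
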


\begin{proof}
Apply \cref{thm:H} to $X=\{S_{q+1}\}$, which has $|N(X)|=|A(q)|\le1$.
\end{proof}

\smallskip
\noindent\emph{Statement $(\mathrm{S}1)$ for a composition $g$: if the boundary steps of $u(g)$ form at least two classes, some $q\in\{0,\dots,r-1\}$ is solitary.}
\smallskip

\begin{theorem}\label{thm:S1}
If $(\mathrm{S}1)$ holds for every composition of $k$, then \cref{prob:B} holds for every staircase with $k$ indices.
\end{theorem}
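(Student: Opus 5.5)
We argue by contraposition: if a staircase $u(g)$ has total support, we show that $Z(u)=\spn\{\one,\iota,u\}$. Assume total support. Then no $q\in\{0,\dots,r-1\}$ is solitary, by \cref{lem:R}. By $(\mathrm{S}1)$ for $g$, the boundary steps of $u(g)$ therefore form a single class under \cref{lem:square}(ii).

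Next we pass to $g^*$. The involution $g\mapsto g^*$ preserves total support, since $u(g^*)=u(g)\circ\rho+\iota+c$ and the tight graphs correspond under $(i,j)\mapsto(\rho(i),\rho(j))$, as in the proof of \cref{cor:probA}. Also $g^*$ is a composition of the same $k$, and its number of steps is $k-1-r$, which again lies in $[1,k-2]$. Hence $u(g^*)$ has total support, and $(\mathrm{S}1)$ applied to $g^*$ shows that the boundary steps of $u(g^*)$ also form a single class.

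\cref{lem:square}(iv) now applies and gives that $u(g)$ is a ray, which is \cref{prob:B} for this staircase. Since $g$ was an arbitrary composition of $k$ with total support, this settles every staircase with $k$ indices.

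The one point to check with care is that $(\mathrm{S}1)$ is really available for $g^*$. The hypothesis quantifies over all compositions of $k$, and $g^*$ is one of them, so this is immediate. We also need that $1\le r\le k-2$ holds for both $g$ and $g^*$; this is the standing assumption, and the two affine cases $r=0$ and $r=k-1$ are excluded by the statement of \cref{prob:B}.
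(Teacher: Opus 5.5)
Your proof is correct and follows the paper's argument step for step: \cref{lem:R} and $(\mathrm{S}1)$ give one class of boundary steps for $g$; the involution $g\mapsto g^*$ preserves total support and keeps $1\le k-1-r\le k-2$, so $(\mathrm{S}1)$ also gives one class for $g^*$; and \cref{lem:square}(iv) then shows that $u(g)$ is a ray. The only quibble is wording: you assume total support and conclude that $u(g)$ is a ray, which is a direct proof, not a proof by contraposition.
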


\begin{proof}
Let $u(g)$ have total support. By \cref{lem:R} no index is solitary, so by $(\mathrm{S}1)$ the boundary steps of $u(g)$ form one class. The involution $g\mapsto g^*$ preserves total support, so the same holds for $u(g^*)$, and \cref{lem:square}(iv) shows that $u(g)$ is a ray.
\end{proof}

\begin{corollary}[computer-assisted]\label{cor:probB}
\Cref{prob:B} holds for all $k\le30$.
\end{corollary}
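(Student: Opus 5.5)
The plan is to reduce \cref{cor:probB} to \cref{thm:S1}: it suffices to verify statement $(\mathrm{S}1)$ for every composition $g$ of every $k\le30$ with $1\le r\le k-2$. For $r=0$ and $r=k-1$ the potential is affine and is excluded from \cref{prob:B}. By \cref{lem:stair,lem:last,lem:starts}, everything $(\mathrm{S}1)$ mentions depends only on the block ends $E_0<\dots<E_r$, and so is decidable by a finite exact integer computation. This covers the attaining pairs of each $M(s)$, the sets $A(q)$, solitariness, and the classes of boundary steps generated by \cref{lem:square}(ii).

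For each composition I would carry out the following steps.
\begin{enumerate}[label=(\arabic*)]
\item Compute $M(s)$ for $0\le s\le 2r$ and the set of block pairs attaining each $M(s)$.
\item Form a union--find structure on the boundary steps $E_0,\dots,E_{r-1}$. For each $s$, merge all $p,q<r$ that occur in an attaining pair of $M(s)$; index $r$ is not a boundary step and is skipped.
\item If more than one class results, check that some $q\le r-1$ has $|A(q)|\le1$, where $A(q)=\{l:(q,l)\text{ attains }M(q+l)\}$.
\end{enumerate}
Each check is polynomial in $r$. The only issue is volume: there are $2^{k-1}$ compositions of $k$, about $5\cdot10^8$ at $k=30$.

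To cut the work, I would note that the merges in step (2) and the sets $A(q)$ depend only on differences $E_a+E_b-E_c-E_d$, in which $E_0$ cancels. So both sides of $(\mathrm{S}1)$ depend only on the tail $(g_1,\dots,g_r)$, exactly as for $(\mathrm{E}^\flat)$. It then suffices to check all tails with $g_1+\dots+g_r\le29$, each once. This is about $2^{29}$ tails, which is the same order as the $(\mathrm{E}^\flat)$ enumeration up to $25$. The tails can be enumerated depth first, computing the prefix sums $H_j$ incrementally. I would implement the verdict twice, independently, and compare, as was done for \cref{thm:Eflat6}.

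The main obstacle is correctness of the exhaustive check rather than any mathematics. In particular, the relation in \cref{lem:square}(ii) must be implemented in its general form, merging all indices occurring at a common level and not only pairs $(p,q)$ attaining $M(p+q)$, since the $k=22$ example shows the weaker form is insufficient. The conclusion is then \cref{thm:S1}, which internally uses \cref{lem:R} and \cref{lem:square}(iv) together with the invariance of total support under $g\mapsto g^*$. If some composition violated $(\mathrm{S}1)$, I would fall back to checking \cref{prob:B} directly for it: test total support via \cref{thm:H}, and if it holds compute $\dim Z(u)$ exactly.
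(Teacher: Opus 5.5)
Your proposal is correct and follows the paper's proof: an exhaustive exact check of $(\mathrm{S}1)$ for every composition with $k\le30$ and $1\le r\le k-2$, followed by an application of \cref{thm:S1}. The one difference is that you enumerate tails $(g_1,\dots,g_r)$ with $g_1+\dots+g_r\le29$ instead of full compositions. This is valid because $E_0$ cancels in every comparison $E_a+E_b-E_c-E_d$. It also covers the trivial cases $k\le3$ automatically, and it roughly halves the paper's enumeration of $1{,}073{,}741{,}762$ compositions.
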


\begin{proof}
Statement $(\mathrm{S}1)$ was checked exactly for all $1{,}073{,}741{,}762$ compositions with $4\le k\le30$ and $1\le r\le k-2$, of which $483{,}464{,}468$ have at least two classes; for $k\le3$ there is a single boundary step. \Cref{thm:S1} applies.
\end{proof}

The classes of \cref{lem:square}(ii) and the sets $A(q)$ are defined by the attaining pairs, so they make sense for any real sequence $E_0,\dots,E_r$: the pair $(p,q)$ attains the level $s=p+q$ if $E_p+E_q=M(s)=\max_{a+b=s}(E_a+E_b)$, and the classes are the connected components on $\{0,\dots,r-1\}$ of the relation joining two indices that both occur in attaining pairs of one level.

\begin{theorem}[computer-assisted]\label{thm:S1cert}
Let $1\le r\le10$ and let $E_0,\dots,E_r$ be real numbers with at least two classes. Then some $q\le r-1$ has $|A(q)|\le1$. In particular $(\mathrm{S}1)$ holds for every composition with $1\le r\le10$ steps, for every $k$.
\end{theorem}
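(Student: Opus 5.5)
The plan follows the cell method used for \cref{thm:Eflat6}. For a real sequence $E_0,\dots,E_r$, whether $(p,q)$ attains $M(p+q)$ is decided by the signs of the forms $L_{ab,cd}(E)=E_a+E_b-E_c-E_d$ with $a+b=c+d$. Consequently the sets $A(q)$, the attaining pairs of each level, the relation of \cref{lem:square}(ii) on $\{0,\dots,r-1\}$, and hence the number of classes depend only on the sign vector $\theta(E)\in\{-,0,+\}^N$. Unlike in \cref{lem:cellverdict}, no set $\Omega$ enters, since $B^*$ plays no role here.

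The statement is invariant under adding a constant and under adding a multiple of $\iota$ to $E$. Indeed, every $L_{ab,cd}$ has coefficients with sum $0$ and with weighted index sum $a+b-c-d=0$. The realizable sign vectors are therefore the relatively open cones of the hyperplane arrangement $\{L_j=0\}$ in $\R^{r+1}$ modulo this two-dimensional lineality space. The claim then reduces to a finite check: for every realizable $\theta$ with at least two classes, some $q\le r-1$ has $|A(q)|\le1$.

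I would organize the computation as in the outline of \cref{thm:Eflat6}. The sign vectors are enumerated depth first over the forms. A branch is pruned as soon as its partial sign pattern is infeasible, and the pruning is certified by Gordan's or Motzkin's alternative, that is, by an exact nonnegative combination of the strict and weak forms giving a contradiction. Each surviving leaf gets an integer witness $E$. A separate exact checker then verifies the witnesses, the infeasibility certificates, the tiling of all branches, and, at each leaf, the verdict: compute $A(q)$ for every $q$, compute the classes by union-find over the levels, and confirm that one class implies nothing and that two or more classes imply a solitary $q$.

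Two points in the enumeration can be saved before checking. First, one should not branch on forms already forced by earlier signs, and in particular one should prune as soon as all classes have merged. Second, the reversal symmetry $E_i\mapsto -E_{r-i}$ (up to affine change) should be used. For the last sentence, the block ends $E_0<\dots<E_r$ of any composition form such a real sequence, and the classes and $A(q)$ agree with those of \cref{lem:square}(ii) and \cref{lem:starts}. So $(\mathrm{S}1)$ follows for every $k$.

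The main obstacle is size. For $r=10$ the number of forms $N$ is large, and the number of regions of the arrangement grows very quickly. Full enumeration of all faces, including the lower-dimensional cones with zero signs, may be infeasible. Zero signs cannot be skipped, because ties create extra attaining pairs, and extra attaining pairs both merge classes and enlarge the sets $A(q)$.

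My first attempt to control this would be a monotonicity reduction. Moving from an open region to a face of its closure only adds attaining pairs, and that both enlarges each $A(q)$ and coarsens the classes. It therefore seems plausible that a counterexample on a face yields one on an adjacent region, or the reverse. If this cannot be made rigorous, I would branch only until the classes are forced to merge, or until a solitary $q$ is certified, and stop there. I expect most branches to terminate early in this way, with the exact checker certifying each pruned node.
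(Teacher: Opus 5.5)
Your reduction is sound and matches the paper's. The verdict depends only on the attaining pairs, hence on the signs of the forms $E_a+E_b-E_c-E_d$ with $a+b=c+d$. These signs are invariant under adding $\alpha+\beta i$; the paper normalizes $E_0=E_1=0$ for this. So the theorem is a finite check over relatively open rational cones, done by a certified depth-first enumeration. That enumeration has Gordan-type emptiness certificates, a tiling check and a separate exact checker, and it stops early once the classes are connected or some $q$ can have at most one partner. Two parts of your plan, however, would not work as written.

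\emph{The claimed symmetry is false.} The map $E_i\mapsto -E_{r-i}$ is not a symmetry of the statement. Negation turns maximizing pairs into minimizing pairs. Even the reflection $E_i\mapsto E_{r-i}$, which does send attaining pairs of level $s$ to those of level $2r-s$, moves the index set $\{0,\dots,r-1\}$ of the classes and of the $q$'s to $\{1,\dots,r\}$. The statement singles out the index $r$ but not $0$. For $r=3$, $E=(0,0,-1,-\tfrac12)$ has the two classes $\{0,1\}$, $\{2\}$ with $A(2)=\{3\}$. Its images under both maps have only one class, so the check for the first cell cannot be inferred from the check for its image. Using this map to halve the enumeration would leave cells unchecked; the paper uses no symmetry.

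\emph{The monotonicity reduction fails.} Passing from a region to a face in its closure enlarges every $A(q)$, which weakens the conclusion, and merges classes, which weakens the hypothesis. So counterexamples transfer in neither direction, and the tie cells must be checked.

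\emph{How the paper handles ties and size.} The paper deals with both the tie cells and the size problem you flag through a coarser branching variable. You branch on the signs of all forms, of which there are $95$ for $r=10$. The paper instead branches level by level on the set of attaining pairs, a nonempty subset of the pairs with that sum, encoded as a mixed-radix choice. A prefix of choices defines a relatively open cone, ties are ordinary choices, and comparisons among non-maximal pairs are never split on. A prefix is terminal when one of the following holds:
\begin{itemize}
\item the classes are already connected;
\item some $q$ already has at most one possible partner;
\item the cone is empty by a zero row;
\item the cone is empty by Gordan's alternative.
\end{itemize}
The terminal prefixes are checked to tile the range of choices, which comes to about $1.2\times10^{8}$ records for $r=10$.
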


\begin{proof}[Proof of \cref{thm:S1cert} (outline)]
By hand, we take $E_0=E_1=0$ and encode the attaining pairs level by level as a mixed-radix choice; the sequences realizing a prefix of choices form a relatively open rational cone. A prefix is terminal when one of four certificates holds: the classes are already connected, some $q$ already has at most one possible partner, or the cone is empty by a zero row or by Gordan's alternative. We prove that if the terminal prefixes tile the whole range of choices, the theorem holds for that $r$. By computer, an overflow-checked generator produces the terminal prefixes and a separate exact checker, sharing no code with it, verifies the tiling and every certificate ($12{,}898{,}911$ records for $r\le9$ and $123{,}173{,}211$ for $r=10$; for $r=10$ the tiling is checked domain by domain, with a record whose prefix is shorter than the domain prefix clipped to the domain, and the domains are checked to tile the whole range). The computation does not check that the checker's encoding of the reduction (the pairs at each level, the numbering, the rows and the tests) matches the definitions; this is tested end to end on random sequences, not proved. The full proof is in \cref{sec:SM-S1cert}.
\end{proof}

The scope of \cref{thm:S1cert} for \cref{prob:B} is limited: \cref{thm:S1} uses $(\mathrm{S}1)$ for $g$ and for $g^*$, which has $k-1-r$ steps, so it gives \cref{prob:B} only for staircases with $r\le10$ and $k-1-r\le10$, which forces $k\le21$, already covered by \cref{cor:probB}.

The sufficient condition of \cref{lem:square}(iv) is not necessary: some staircases with two classes are rays (\cref{sec:s_census-proofs}).

\emph{Status of \cref{conj:T}.} The direction ``facet implies total support'' (\cref{prob:A}) is proved for $r\le7$, $r\ge k-8$ and $k\le26$, and for the tails of \cref{prop:Eflatclasses} at every depth; the direction ``total support implies facet'' (\cref{prob:B} with \cref{thm:R}) for $k\le30$. So \cref{conj:T} holds for all $k\le26$ and for $r\le3$ and $r\ge k-4$. What remains are the statements $(\mathrm{E}^\flat)$ for $r\ge8$ and $(\mathrm{S}1)$ for the compositions with $k\ge31$ and $r\ge11$ steps (\cref{thm:S1cert} covers $r\le10$), both about the integer sequence $E_0<\dots<E_r$ alone and checkable in polynomial time; \cref{prob:B} is open for $k\ge31$. \Cref{tab:evidence} summarizes the exact checks behind these statements, and the facets produced by no step potential are a separate problem (\cref{prob:C}).

%% file: b7_conclusion.tex
\section{Open problems}\label{sec:conclusion}

The facet criterion (\cref{thm:F}) decides each inequality $I(u)$ exactly, and the fiber theorem (\cref{thm:fiber}) says when distinct rays define the same facet. What is missing is a description of the facets other than the coordinate facets that does not pass through the criterion one ray at a time.

\begin{enumerate}[leftmargin=*]
\item \emph{Step potentials.} Prove \cref{conj:T}: a staircase gives a facet iff its tight graph has total support. By \cref{thm:Eflat,thm:S1} it suffices to prove $(\mathrm{E}^\flat)$ for tails with $r\ge8$ parts and $(\mathrm{S}1)$ for the compositions with $k\ge31$ and $r\ge11$ steps (\cref{thm:S1cert} covers $r\le10$ for every $k$). Both are statements about the integer sequence $E_0<\dots<E_r$ alone and can be checked for a given sequence in polynomial time. The rigid-corner statement \cref{conj:E} would give a uniform certificate for the direction of \cref{prob:A}.
\item \emph{Rays without total support.} Find the smallest $k$ for which $\Ph(k)$ has a facet ray without total support. It lies between $13$ and $19$: every facet ray has total support for $k\le12$, and \cref{ex:noTS} is one at $k=19$ (\cref{rem:samefacet}). More generally, describe the facets defined by rays whose allowed graph has a bipartite component. By \cref{thm:fiber} they are exactly the facets defined by more than one ray, and the rays that define such a facet are the vertices of a polytope whose dimension is the number of bipartite components; it is a cube in \cref{rem:cubical}.
\item \emph{Cores and padding.} Classify the cores (\cref{prob:C}(a)), which account for 2,566 of the 5,748 facets of $\Ph(12)$ that come from no step potential, and show that every normalized ray with total support has a connected tight graph and satisfies $(\mathrm{P}_L)$ and $(\mathrm{P}_R)$ (\cref{prob:C}(b)). These are the hypotheses of \cref{cor:pad,thm:refine}; they hold for every facet ray with $k\le12$, but we do not know whether total support of a ray implies them.
\item \emph{Refinement.} If $\operatorname{spr}_mu$ is a ray whose tight graph has total support, is $u$ a ray? \Cref{prop:descent} proves this under a condition on flat levels and shows that the conclusion can fail when $\operatorname{spr}_mu$ lacks total support.
\item \emph{Rays of the fan.} Li, Liu and Yao ask for a simple description of the rays of the fan \cite{LiLiuYao2026}. The facet rays are the part that matters for $\Ph(k)$; their heights grow at least quadratically (\cref{thm:height}), so no description by step potentials with bounded multiplicities can be complete. The complete ray lists for $k\le12$ are part of the deposited data (\cref{app:computation}).
\end{enumerate}

%% file: bA_computation.tex
\section{Certified computations}\label[appendix]{app:computation}

We distinguish four kinds of guarantee: \emph{exact arithmetic}; a \emph{certificate} for each outcome of the facet test; \emph{completeness} of each list of rays, a theorem about the algorithm (\cref{prop:complete}); and an \emph{independent recomputation} of each list (\cref{tab:vw}). Floating point enters only in checks whose outputs are verified exactly or which are not needed for any result.

\emph{Facet test for a potential.} Each outcome of the facet test is certified as described in \cref{sec:census}; the details, including the one-sided use of a prime modulus, are in \cref{sec:s_census-dict}.

\emph{Enumeration of rays: facet walk.} Let $k\ge4$ (for $k\le3$, $Q$ is a point or a segment and the facets of $\Ph(k)$ are found directly, \cref{sec:census}). The rays are the facet normals of $Q=\sum_tQ_t$ (\cref{sec:census}), a polytope of dimension $k-2$; we fix normals by $u(0)=u(k-1)=0$, which removes the lineality space $\spn\{\one,\iota\}$ of the normal fan. A potential $u$ is a ray if and only if the face $Q^u$ on which $u$ is minimal has dimension $k-3$ (\cref{rem:fan} and the start of \cref{sec:census}). For a Minkowski sum $N=\sum_tN_t$ of simplices, of dimension $d_N$, let $\mathcal{M}$ be the vector matroid of all differences of two vertices of one summand; for a direction $y$ let $N^y$ be the face of $N$ on which $y$ is minimal. The algorithm is:
\begin{enumerate}[leftmargin=*,label=(W\arabic*)]
\item Start from a random integer functional $w$. While the face $Q^w$ has dimension less than $k-3$, move $w$ along a random integer direction orthogonal to the affine hull of $Q^w$, by an exact line search, to the first point at which the minimal face grows. Stop when $\dim Q^w=k-3$, which gives a first facet $u$; after a fixed number of unsuccessful attempts, stop with failure.
\item At a facet $u$, form $N=Q^u=\sum_tQ_t^u$, of dimension $d_N=k-3$, and list its facets: split $\mathcal{M}$ into connected components; for each component list its flats of rank one less than its own rank, by a depth-first search that reaches each flat from its lexicographically greedy basis only; add all elements of the other components; take the two normals $\pm y$ of the resulting hyperplane of the linear span $\operatorname{lin}\mathcal{M}$ of all elements of $\mathcal{M}$, and keep those with $\dim N^y=d_N-1$.
\item Cross each listed ridge $y$ by the exact line search along $u+\lambda y$ (or along $y-\mu u$ if that ray stays in the normal cone of the ridge), which returns the facet $u'$ on the other side. If $y$ itself spans the ray of $u'$, the search returns $y$.
\item Repeat from every new facet, on orbits of the mirror $i\mapsto k-1-i$, until no unprocessed facet remains. In the recursive variant, a face $Q^u$ with at least $60$ distinct difference directions is handled in (W2) by walking its own facet graph with (W1)--(W3) instead of listing flats.
\end{enumerate}

\begin{proposition}[Completeness]\label{prop:complete}
Let $k\ge4$, and suppose the walk is started at a facet $u_0$ of $Q$ whose face dimension $\dim Q^{u_0}=k-3$ has been verified in exact arithmetic. Then, in exact arithmetic, (W2)--(W4) visit every facet of $Q$, so they list every ray of the fan. The same holds for the recursive variant, provided each of its inner walks is started at a verified facet of the face it walks.
\end{proposition}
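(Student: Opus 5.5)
The plan is to reduce completeness to connectivity of the facet graph of $Q$ together with the correctness of the ridge listing (W2) and the crossing step (W3). First, I would recall that $Q=\sum_tQ_t$ is a polytope of dimension $k-2$ after fixing $u(0)=u(k-1)=0$. Its facet graph, with facets adjacent when they share a ridge, is connected; this is a standard fact for any polytope of dimension at least $2$, since it is the graph of the polar polytope. By \cref{rem:fan}, the facets of $Q$ are exactly the rays of the fan, so a traversal that, from each visited facet, reaches every facet adjacent to it visits all facets. The process terminates because $Q$ has finitely many facets and (W4) processes each facet once. Working on orbits of the mirror is harmless, because the mirror is an automorphism of $Q$ and so maps the facet graph to itself.

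The core step is to show that, at a facet $u$, (W2) lists every ridge of $Q$ contained in $F=Q^u$. I would argue as follows. By \cite[Proposition~2.1]{Fukuda2004}, the ridges in $F$ are the facets of $N=Q^u=\sum_tQ_t^u$, which has dimension $k-3$. Every facet of $N$ is $N^y$ for a direction $y$ in the linear span $\operatorname{lin}\mathcal{M}$. Its face $N^y=\sum_tN_t^y$ spans the subspace generated by the differences of vertices of the faces $N_t^y$. These differences form a flat $\Phi$ of $\mathcal{M}$, namely the set of differences orthogonal to $y$, and this flat has rank $d_N-1$. A flat of corank one in a direct sum of matroids is a corank-one flat of a single component together with all the other components, which is exactly what (W2) enumerates. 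The depth-first search from the lexicographically greedy basis must still be shown to reach every corank-one flat of a component. I would prove this by noting that each flat has a unique greedy basis and that its lexicographic prefixes are themselves greedy bases of smaller flats, so the search tree contains every flat. Finally, of the two normals $\pm y$ of the hyperplane $\operatorname{lin}\Phi$, at least one gives $\dim N^{y}=d_N-1$, and the rank test keeps exactly the facet normals. Hence every facet of $N$ appears in the list.

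Next, for each ridge $R=N^y$, I would show that (W3) returns the unique other facet $u'$ of $Q$ containing $R$. The normal cone of $R$ in the normal fan is two-dimensional modulo lineality and is spanned by the rays $u$ and $u'$. Moving from $u$ in direction $y$, which lies in this cone's span, the exact line search stops at the first point where the minimal face grows back to dimension $k-3$; that point lies on the ray $u'$. The alternative direction $y-\mu u$, and the case in which $y$ itself spans $u'$, handle the two possible positions of $y$ relative to the cone. The recursive variant is handled the same way: by induction on dimension, an inner walk started at a verified facet of $N$ lists all facets of $N$, by the same connectivity argument applied to $N$ in place of $Q$.

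I expect the main obstacle to be the crossing step. It must be argued carefully that the line search along $u+\lambda y$ actually stays inside the normal cone of $R$ until it reaches $u'$, and does not exit through another wall first. I would address this by noting that $y$, being a facet normal of $N$, lies in the normal cone of $R$ modulo the lineality of $u$. This would need to be verified from the local structure of the normal fan of a Minkowski sum, namely the common refinement of the normal fans of the summands.
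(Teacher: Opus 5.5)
Your proposal is correct and follows the paper's proof essentially step for step. It has the same three parts: listing the flats of the difference matroid in (W2) via the direct-sum decomposition and greedy bases, crossing in (W3) through the two-dimensional normal cone of the ridge spanned by $u$ and $u'$, and connectivity of the facet graph as the graph of the polar polytope (Balinski). Two small points: the differences inside the $y$-minimal classes need not themselves form a flat, so, as the paper does, take the flat of all elements of $\mathcal{M}$ orthogonal to $y$, whose rank is at least $d_N-1$ because it contains them and at most $d_N-1$ because $0\ne y\in\operatorname{lin}\mathcal{M}$; and your worry about the crossing dissolves because $u+\lambda y$ has minimal face $(Q^u)^y=R$ for small $\lambda>0$ and the whole line lies in the two-dimensional span (modulo lineality) of the normal cone of $R$, whose only walls are the rays of $u$ and $u'$.
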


\begin{proof}
\emph{Facets of a sum of simplices.} For every direction $y$, $N^y=\sum_tN_t^y$ \cite[Proposition~2.1]{Fukuda2004}, \cite[Lemma~4]{FukudaWeibel2007}, and $\dim N^y$ is the rank of the differences inside the $y$-minimal vertex classes. If $N^y$ is a facet, these differences have rank $d_N-1$ and are orthogonal to $y$, so the elements of $\mathcal{M}$ orthogonal to $y$ form a flat of rank at least $d_N-1$, and at most $d_N-1$ because $y$ is not orthogonal to all of $\operatorname{lin}\mathcal{M}$. So every facet is found from its flat and one of the two normals of the flat's span inside $\operatorname{lin}\mathcal{M}$, and the test $\dim N^y=d_N-1$ keeps exactly the facets. The rank-$(d_N-1)$ flats of a direct sum of connected matroids are the unions of a corank-one flat of one component with all the other components, and the depth-first search lists every such flat of a component exactly once, because every flat has a unique lexicographically greedy basis, every prefix of a greedy basis is greedy for its own closure, and a branch is pruned as soon as it is not greedy.
\emph{Crossing.} A ridge $R$ of $Q$ lies in exactly two facets, and its normal cone is two-dimensional modulo the lineality space, spanned by their normals $u$ and $u'$. For small $\lambda>0$ the point $u+\lambda y$ lies in the relative interior of this cone; the exact line search returns the first point at which the minimal face grows beyond $R$, which is on the ray of $u'$, or, if the line never leaves the cone, the direction $y$ itself lies in the cone and the search from $y$ toward $-u$ returns $u'$. In the boundary case that $y$ spans the ray of $u'$, the face of $Q$ on which $y$ is minimal is the facet of $u'$, not the ridge; the line $u+\lambda y$ never leaves the cone, the search from $y$ stops at once, and it returns $y$, which spans the ray of $u'$.
\emph{Connectivity.} The facet graph of a polytope is the vertex graph of its polar, which is connected by Balinski's theorem \cite{Balinski1961}; see also \cite[Chapter~3]{Ziegler1995}. By the first two parts, when the walk stops, every neighbor in the facet graph of every visited facet has been visited, so the visited facets form a union of connected components containing $u_0$, which is all of them. The order in which facets are processed plays no role.
\end{proof}

\emph{The first facet.} Step (W1) is randomized, and its only failure mode is to stop without a facet; every facet it returns has passed the exact test $\dim Q^w=k-3$, so it is a certified starting point for \cref{prop:complete}. In all our runs, including the inner walks of the recursive variant (\cref{sec:SM-recursive}), which also start by (W1), it returned a facet. A deterministic start is also available: by \cref{prop:hinges}(i) the hinge $h_1(i)=(i-1)^+$ is a ray for every $k\ge3$, so $Q^{h_1}$ is a facet of $Q$ (\cref{rem:fan} and the duality between rays of the normal fan and facets of $Q$ in \cref{sec:census}); we checked $\dim Q^{h_1}=k-3$ exactly for $4\le k\le15$.

\emph{Exact arithmetic.} Normals and line searches use exact integers; in the walk, ranks and face dimensions are computed modulo the prime $2^{30}-35$ (the facet test uses $2^{31}-1$, \cref{sec:s_census-dict}), which by Hadamard's inequality gives the rational rank for $k\le25$ (\cref{sec:s_census-cross}). Every enumeration also checks that each new face $Q^u$ has dimension $k-3$ and that every ridge is found from both sides; the second check cannot detect a ridge missed from both sides, which is why completeness rests on \cref{prop:complete}.

\emph{Independent recomputation: vertex walk.} The rays are also the extreme rays of the normal cones of the vertices of $Q$, and we walked the vertex graph of $Q$ in two versions that share no code with (W1)--(W4): one computes each cone with cddlib in floating point and verifies every reported ray exactly, the other uses exact integer arithmetic throughout (\cref{sec:SM-vertexwalk}). Both agree with the facet walk for $5\le k\le\VWEXACTK$ (\cref{tab:vw}). So for $5\le k\le12$ every ray list behind \cref{tab:counts,tab:census} is confirmed by a second, independent exact enumeration. Which rays give facets is decided by the certified facet test described above, rerun on every ray in the reproduction below; for $k\le8$ the facet lists also agree with the floating-point vertex--facet enumerations of \cref{tab:counts}.

\begin{table}[ht]
\centering
\caption{Independent recomputation of the ray lists by a walk on the vertices of $Q$ (cddlib version, rays verified exactly). The last column reports whether the exact-integer version reproduces the same ray set.}
\label{tab:vw}
\begin{tabular}{@{}rrrrrr@{}}
\toprule
$k$ & vertices of $Q$ & rays (vertex walk) & rays (facet walk) & identical & exact version\\
\midrule
5 & 22 & 16 & 16 & yes & yes\\
6 & 100 & 42 & 42 & yes & yes\\
7 & 484 & 110 & 110 & yes & yes\\
8 & 2,689 & 338 & 338 & yes & yes\\
9 & 15,732 & 1,146 & 1,146 & yes & yes\\
10 & 97,690 & 4,370 & 4,370 & yes & yes\\
11 & 632,678 & 18,515 & 18,515 & yes & yes\\
12 & 4,363,898 & 89,321 & 89,321 & yes & yes\\
\bottomrule
\end{tabular}
\end{table}

\emph{Further checks.} Further cross-checks and the costs of all runs are in \cref{sec:s_census-cross}, the cell enumeration in \cref{app:cells}, and the certificate formats in \cref{sec:s_census-certs}.

\emph{Availability.} The code, the complete ray and facet lists for $k\le12$ with their certificates, the scripts for every check in \cref{tab:evidence}, \cref{tab:vw} and \cref{tab:cores}, the cell enumeration of \cref{app:cells} with its checks, the certificate streams for \cref{thm:Eflat6} and for $(\mathrm{S}1)$ with $r\le9$, and the hashes, generator and checker for $r=10$, with the $r=10$ streams as a separate file, are deposited in \ifanon an anonymized repository for review\else a permanent archive, \url{https://doi.org/10.5281/zenodo.23440726}\fi.

\emph{Reproduction.} Every computation behind the paper was rerun from the deposited code in one pass of independent pieces, on two machines: Linux x86\_64 cluster nodes (four lanes of 16 cores; 80 CPU-hours, about 6 hours elapsed) and an Apple Mac mini with an M6 processor (12 cores, 16~GB of memory; 47 CPU-hours over about 8 hours of running time, with a largest process of 3.7~GB). Every regenerated file, compared by the SHA-256 of its uncompressed content, and every computed number agree between the two runs and with the deposited data; the archive gives the cost of each piece.

%% file: main.bbl
\providecommand{\bysame}{\leavevmode\hbox to3em{\hrulefill}\thinspace}
\providecommand{\MR}{\relax\ifhmode\unskip\space\fi MR }
\providecommand{\MRhref}[2]{%
  \href{http://www.ams.org/mathscinet-getitem?mr=#1}{#2}
}
\providecommand{\href}[2]{#2}
\begin{thebibliography}{10}

\bibitem{AraozEvansGomoryJohnson2003}
Julian Ar{\'a}oz, Lisa Evans, Ralph~E. Gomory, and Ellis~L. Johnson,
  \emph{Cyclic group and knapsack facets}, Math. Program. \textbf{96} (2003),
  no.~2, 377--408.

\bibitem{ArdilaDevelin2009}
Federico Ardila and Mike Develin, \emph{Tropical hyperplane arrangements and
  oriented matroids}, Math. Z. \textbf{262} (2009), no.~4, 795--816.

\bibitem{Balinski1961}
M.~L. Balinski, \emph{On the graph structure of convex polyhedra in $n$-space},
  Pacific J. Math. \textbf{11} (1961), no.~2, 431--434.

\bibitem{BasuHildebrandKoppe2016a}
Amitabh Basu, Robert Hildebrand, and Matthias K{\"o}ppe, \emph{Light on the
  infinite group relaxation {I}: foundations and taxonomy}, 4OR \textbf{14}
  (2016), no.~1, 1--40.

\bibitem{BasuHildebrandKoppe2016b}
Amitabh Basu, Robert Hildebrand, and Matthias K{\"o}ppe, \emph{Light on the
  infinite group relaxation {II}: sufficient conditions for extremality,
  sequences, and algorithms}, 4OR \textbf{14} (2016), no.~2, 107--131.

\bibitem{BebeacuaMansourPostnikovSeverini2005}
Cecilia Bebeacua, Toufik Mansour, Alexander Postnikov, and Simone Severini,
  \emph{On the {X}-rays of permutations}, Electron. Notes Discrete Math.
  \textbf{20} (2005), 193--203, arXiv:math/0506334.

\bibitem{BrualdiFritscher2014}
Richard~A. Brualdi and Eliseu Fritscher, \emph{Hankel and {T}oeplitz {X}-rays
  of permutations}, Linear Algebra Appl. \textbf{449} (2014), 350--380.

\bibitem{BrualdiGibson1977}
Richard~A. Brualdi and Peter~M. Gibson, \emph{Convex polyhedra of doubly
  stochastic matrices. {I}. {A}pplications of the permanent function}, J.
  Combin. Theory Ser. A \textbf{22} (1977), no.~2, 194--230.

\bibitem{BrunettiDelLungoGritzmannDeVries2008}
Sara Brunetti, Alberto Del~Lungo, Peter Gritzmann, and Sven de~Vries, \emph{On
  the reconstruction of binary and permutation matrices under (binary)
  tomographic constraints}, Theoret. Comput. Sci. \textbf{406} (2008),
  no.~1--2, 63--71.

\bibitem{CKcompanion}
Prashant Chaudhary and Kapil Khandelwal, \emph{Sums of two dependent uniform
  random variables: a characterization by joint tail bounds}, arXiv:2610.24197,
  2026.

\bibitem{DevelinSturmfels2004}
Mike Develin and Bernd Sturmfels, \emph{Tropical convexity}, Doc. Math.
  \textbf{9} (2004), 1--27, Erratum: Doc. Math. 9 (2004) 205--206.

\bibitem{DulmageMendelsohn1958}
A.~L. Dulmage and N.~S. Mendelsohn, \emph{Coverings of bipartite graphs},
  Canad. J. Math. \textbf{10} (1958), 517--534.

\bibitem{FinkRincon2015}
Alex Fink and Felipe Rinc{\'o}n, \emph{Stiefel tropical linear spaces}, J.
  Combin. Theory Ser. A \textbf{135} (2015), 291--331.

\bibitem{Fukuda2004}
Komei Fukuda, \emph{From the zonotope construction to the {M}inkowski addition
  of convex polytopes}, J. Symbolic Comput. \textbf{38} (2004), no.~4,
  1261--1272.

\bibitem{FukudaWeibel2007}
Komei Fukuda and Christophe Weibel, \emph{f-vectors of {M}inkowski additions of
  convex polytopes}, Discrete Comput. Geom. \textbf{37} (2007), no.~4,
  503--516.

\bibitem{Gomory1969}
Ralph~E. Gomory, \emph{Some polyhedra related to combinatorial problems},
  Linear Algebra Appl. \textbf{2} (1969), no.~4, 451--558.

\bibitem{GritzmannSturmfels1993}
Peter Gritzmann and Bernd Sturmfels, \emph{{M}inkowski addition of polytopes:
  computational complexity and applications to {G}r{\"o}bner bases}, SIAM J.
  Discrete Math. \textbf{6} (1993), no.~2, 246--269.

\bibitem{HardyRamanujan1918}
G.~H. Hardy and S.~Ramanujan, \emph{Asymptotic formulae in combinatory
  analysis}, Proc. London Math. Soc. (2) \textbf{17} (1918), no.~1, 75--115.

\bibitem{LiLiuYao2026}
Bangzheng Li, Yuewei Liu, and Yuan Yao, \emph{Binary {X}-rays of doubly
  stochastic matrices}, arXiv:2608.01442v2, 2026.

\bibitem{MaoWangWang2019}
Tiantian Mao, Bin Wang, and Ruodu Wang, \emph{Sums of standard uniform random
  variables}, J. Appl. Probab. \textbf{56} (2019), no.~3, 918--936,
  arXiv:1811.11981.

\bibitem{Nordh2017}
Gustav Nordh, \emph{A note on {X}-rays of permutations and a problem of
  {B}rualdi and {F}ritscher}, arXiv:1707.03928, 2017.

\bibitem{Paffenholz2015}
Andreas Paffenholz, \emph{Faces of {B}irkhoff polytopes}, Electron. J. Combin.
  \textbf{22} (2015), no.~1, Paper 1.67.

\bibitem{PerroneSolusUhler2019}
Elisa Perrone, Liam Solus, and Caroline Uhler, \emph{Geometry of discrete
  copulas}, J. Multivariate Anal. \textbf{172} (2019), 162--179,
  arXiv:1802.06969.

\bibitem{SinkhornKnopp1967}
Richard Sinkhorn and Paul Knopp, \emph{Concerning nonnegative matrices and
  doubly stochastic matrices}, Pacific J. Math. \textbf{21} (1967), no.~2,
  343--348.

\bibitem{Ziegler1995}
G{\"u}nter~M. Ziegler, \emph{Lectures on polytopes}, Graduate Texts in
  Mathematics, vol. 152, Springer, New York, 1995.

\end{thebibliography}
